\newtheorem{theorem}{Theorem}[section]
\newtheorem{corollary}[theorem]{Corollary}
\newtheorem{proposition}[theorem]{Proposition}
\newtheorem{lemma}[theorem]{Lemma}
\newtheorem{conjecture}[theorem]{Conjecture}
\newtheoremstyle{defn}{12pt}{12pt}{}{}{\bfseries}{.}{ }{}
\theoremstyle{defn}
\newtheorem{definition}[theorem]{Definition}
\newtheorem{defprop}[theorem]{Definition/Proposition}
\theoremstyle{remark}
\newtheorem{remark}[theorem]{Remark}
\newtheorem{example}[theorem]{Example}
\def\Z{\mathbb{Z}}
\def\R{\mathbb{R}}
\def\C{\mathbb{C}}
\def\rot{\operatorname{rot}}
\def\alg{\mathcal{A}}
\def\d{\partial}
\def\A{\mathcal{A}}
\def\sft{\mathcal{A}_{\operatorname{SFT}}}
\def\sftold{\mathcal{A}^0_{\operatorname{SFT}}}
\def\F{\mathcal{F}}
\def\W{\mathcal{W}}
\def\SS{\mathcal{S}}
\def\tb{\tilde{\beta}}
\def\tocyc{^{\operatorname{cyc}}}
\def\tocomm{^{\operatorname{comm}}}
\def\sfttocomm{^{\operatorname{comm}}_{\operatorname{SFT}}}
\def\Unshuff{\operatorname{Unshuff}}
\def\kk{\Bbbk}
\def\co{\colon\thinspace}
\def\sgn{\operatorname{sgn}}
\def\vv{\mathbf{v}}
\def\ww{\mathbf{w}}
\def\ee{\mathbf{e}}  
\def\str{{\operatorname{str}}}
\def\tosft{^{\operatorname{SFT}}}
\def\Spec{\operatorname{Spec}}
\begin{document}

\title{An $L_\infty$ structure for Legendrian contact homology}
\author{Lenhard Ng}
\email{\href{mailto:lenhard.ng@duke.edu}{lenhard.ng@duke.edu}}
\address{Department of Mathematics, Duke University, Durham, NC 27708}

\begin{abstract}
For any Legendrian knot or link in $\R^3$, we construct an $L_\infty$ algebra that can be viewed as an extension of the Chekanov--Eliashberg differential graded algebra. The $L_\infty$ structure incorporates information from rational Symplectic Field Theory and can be formulated combinatorially. One consequence is the construction of a Poisson bracket on commutative Legendrian contact homology, and we show that the resulting Poisson algebra is an invariant of Legendrian links under isotopy.
\end{abstract}

\maketitle

\section{Introduction}

\subsection{Main results}

The modern study of the topology of $3$-dimensional contact manifolds is inextricably tied to the study of Legendrian links, which are knots or links in the $3$-manifold that are everywhere tangent to the contact structure. Associated to a Legendrian link is an algebraic invariant known as Legendrian contact homology (LCH), which dates back to work of Chekanov \cite{Che} and Eliashberg--Hofer \cite{Eli98}. LCH, which is the homology of a complex now commonly called the Chekanov--Eliashberg differential graded algebra (DGA), is a key tool for studying both Legendrian links and the topology of associated contact and symplectic manifolds.

In this paper, we will restrict our attention to the basic setting of Legendrian links in $\R^3$ equipped with the standard contact structure $\ker(dz-y\,dx)$. From its original definition in \cite{Che}, the study of Legendrian contact homology in $\R^3$ has developed into a rich subject. In just the $\R^3$ setting, one can use LCH to distinguish non-isotopic Legendrian links and analyze Lagrangian cobordisms between them; there are surgery formulas expressing invariants of symplectic manifolds constructed by critical handle attachment in terms of LCH;
% \cite{BEE,GPS}; 
and LCH has connections to the theory of constructible sheaves (see e.g.\ \cite{STZ,NRSSZ}) 
and cluster varieties (see e.g.\ \cite{CGGLSS,GSW}). 
We refer the reader to \cite{ENsurvey} for a fairly recent survey of results concerning LCH in standard contact $\R^3$.

The purpose of this paper is to extend the Chekanov--Eliashberg DGA for Legendrian links in $\R^3$ to a larger algebraic structure, namely the structure of an $L_\infty$ algebra (homotopy Lie algebra). If we write $(\A,\d)$ for the DGA, then the $L_\infty$ structure consists of a sequence of graded-symmetric multilinear maps $\ell_k :\thinspace \A^{\otimes k} \to \A$, $k\geq 1$, satisfying a sequence of $L_\infty$ relations. The first $L_\infty$ operation $\ell_1$ is the differential $\d$; the second operation $\ell_2$ is a homotopy Lie bracket, with $\ell_1$ being a derivation with respect to $\ell_2$ and $\ell_2$ satisfying the Jacobi identity up to correction terms involving $\ell_1$ and $\ell_3$. 
In particular, $\ell_2$ induces a Lie bracket on the homology $H_*(\A,\ell_1)$.

We now state our main results more precisely. We say that a \textit{pointed Legendrian link} is an oriented Legendrian link with a collection of base points, at least one on each component. Let $\Lambda$ be a pointed Legendrian link with $s$ base points and $n$ \textit{Reeb chords}, which are integral curves of the Reeb vector field ($\partial/\partial z$ for standard contact $\R^3$) with endpoints on $\Lambda$. We
associate to $\Lambda$ the graded polynomial ring
\[
\A\tocomm = \Z[q_1,\ldots,q_n,t_1^{\pm 1},\ldots,t_s^{\pm 1}]
\]
where the $q_j$'s represent Reeb chords, the $t_i$'s represent base points, and the grading is induced by a Maslov potential on $\Lambda$. 
This is the algebra underlying a differential graded algebra $(\A\tocomm,\partial)$, which is the commutative version of the Chekanov--Eliashberg DGA of $\Lambda$.
Here the differential $\partial$ counts holomorphic disks in the symplectization $\R\times\R^3$ with boundary on the Lagrangian $\R\times\Lambda$, asymptotic to a single Reeb chord of $\Lambda$ at $+\infty$ and some collection of Reeb chords at $-\infty$. 

We note that the standard definition of the Chekanov--Eliashberg DGA involves a tensor algebra $\A$ generated by the same generators as $\A\tocomm$: $q_1,\ldots,q_n,t_1^{\pm 1},\ldots,t_s^{\pm 1}$. However, in the usual DGA, the $q_j$ and $t_i$ variables do not commute with each other; $\A\tocomm$ is the quotient of $\A$ by the two-sided ideal generated by graded commutators. In order for the $L_\infty$ relations to hold, we will be forced to use $\A\tocomm$ rather than $\A$; see however Section~\ref{ssec:knot} for a variant $L_\infty$ structure in the case of single-component Legendrian knots that does retain some of the noncommutativity of $\A$.

The homology of the usual Chekanov--Eliashberg DGA $(\A,\d)$ is an invariant of the pointed Legendrian link $\Lambda$ under Legendrian isotopy, and is called the Legendrian contact homology of $\Lambda$. The same is true for the commutative DGA once we tensor by a field $\kk$ of characteristic $0$: it follows from \cite[Theorem~3.14]{ENS} that the homology of $(\A\tocomm\otimes\kk,\d)$ is a Legendrian-isotopy invariant of $\Lambda$. We will refer to $H_*(\A\tocomm\otimes\kk,\d)$ as the commutative Legendrian contact homology of $\Lambda$ and denote it by $LCH\tocomm_*(\Lambda)$. 

The main construction of this paper is an $L_\infty$ algebra structure on $\A\tocomm$. In fact it follows from the construction of the $\ell_k$ maps that each $\ell_k$ satisfies the Leibniz rule with respect to standard (associative) multiplication on $\A\tocomm$; an $L_\infty$ algebra satisfying Leibniz is usually called a \textit{homotopy Poisson algebra}.

\begin{theorem}[see Definition~\ref{def:main} and Proposition~\ref{prop:htpyPoisson}]
Let $\Lambda$ be any pointed Legendrian link in $\R^3$ with commutative Chekanov--Eliashberg DGA $(\A\tocomm,\d)$. There are maps $\ell_k :\thinspace (\A\tocomm)^{\otimes k} \to \A\tocomm$ for $k\geq 1$, with $\ell_1 = \partial$, such that $(\A\tocomm,\{\ell_k\})$ is a homotopy Poisson algebra.
\label{thm:main-intro}
\end{theorem}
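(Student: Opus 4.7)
The approach I would take is suggested by the paper's own framing: the operations $\ell_k$ should count holomorphic disks in $\R\times\R^3$ with boundary on $\R\times\Lambda$ having \emph{$k$} positive Reeb-chord ends (and arbitrarily many negative ends), generalizing the single-positive-end disks already counted by $\d=\ell_1$. Combinatorially, in the Lagrangian projection, such disks correspond to immersed polygons whose corners sit at crossings of the projection and whose boundary lies on the projected link; the only new ingredient compared to the usual DGA is the designation of $k$ of the corners as ``positive.'' For generators $q_{i_1},\ldots,q_{i_k}$ I would set
\[
\ell_k(q_{i_1},\ldots,q_{i_k}) = \sum_\Delta \pm\, w(\Delta),
\]
where $\Delta$ ranges over polygons with positive corners at $q_{i_1},\ldots,q_{i_k}$ and $w(\Delta)\in\A\tocomm$ is the monomial read off from the negative corners and from the base points encountered along the boundary, with signs determined by the Maslov potential and orientations. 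Each $\ell_k$ would then be extended to all of $\A\tocomm$ by imposing the graded Leibniz rule in each slot, which is precisely what promotes the $L_\infty$ structure to a homotopy Poisson structure.

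The first substantive point to verify is graded-symmetry of $\ell_k$ in its arguments. The positive corners of $\Delta$ inherit only a cyclic order from the boundary orientation, so \emph{a priori} $w(\Delta)$ is invariant only under cyclic permutations of $(q_{i_1},\ldots,q_{i_k})$; but once $w(\Delta)$ is viewed in the commutative ring $\A\tocomm$, the difference between cyclic and graded-symmetric invariance is only a Koszul sign, which can be absorbed into the overall sign convention. This is exactly the reason the construction requires the commutative quotient $\A\tocomm$ rather than the noncommutative DGA $\A$, matching the discussion just before the theorem statement.

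For the $L_\infty$ relations I would run the standard boundary-of-moduli argument, carried out combinatorially rather than analytically: consider the one-dimensional moduli of polygons with $k$ positive corners in the next stratum up, and decompose the boundary of its compactification as pairs $(\Delta_1,\Delta_2)$ in which $\Delta_1$ has $i$ positive corners, $\Delta_2$ has $j$ positive corners with $i+j=k+1$, and a negative corner of one polygon is glued to a positive corner of the other. Summing the contributions of such splittings by input distribution reproduces the identity $\sum_{i+j=k+1}\sum_\sigma\pm\,\ell_j(\ell_i(x_{\sigma(1)},\ldots,x_{\sigma(i)}),x_{\sigma(i+1)},\ldots,x_{\sigma(k)})=0$, and the verification reduces to matching up polygon pairs along shared chords. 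The extension by Leibniz is compatible because each $\ell_k$ was defined to be a multi-derivation and $\ell_1=\d$ is already a derivation.

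The step I expect to be the main obstacle is the sign bookkeeping: the signs $\pm$ attached to each polygon must be chosen so that graded-symmetry in the inputs, the graded Leibniz rule in each slot, and the full family of $L_\infty$ relations all hold simultaneously and specialize to the known signs for the Chekanov--Eliashberg differential when $k=1$. Secondary technical issues include weighting polygons that have repeated inputs among their positive corners by the correct symmetry factor, keeping track of the base-point variables $t_i^{\pm 1}$ that arise whenever the boundary of $\Delta$ passes through a base point, and specifying the behavior of $\ell_k$ on $t_i^{\pm 1}$ themselves. Once these combinatorial and sign-related details are fixed, the proof reduces to an explicit polygon-level verification, which I would expect to be the content of Definition~\ref{def:main} and the subsequent homotopy-Poisson check in Proposition~\ref{prop:htpyPoisson}.
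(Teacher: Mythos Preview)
Your proposal has a genuine gap at the heart of the $L_\infty$ relations. You write that the boundary of the one-dimensional moduli space decomposes into pairs $(\Delta_1,\Delta_2)$ glued along a shared Reeb chord, one positive corner to one negative corner. In the \emph{closed} (contact homology) setting this is correct and leads to $\{h,h\}=0$, from which the $L_\infty$ relations follow formally. In the \emph{relative} setting of disks with boundary on $\R\times\Lambda$, however, there is a second type of degeneration: boundary bubbling, where a one-parameter family of disks breaks into two rigid disks joined at a common boundary point rather than at a Reeb chord (see Figure~\ref{fig:degenerations}). These contributions do not cancel and are exactly the reason the naive $\ell_k$'s you wrote down fail the $L_\infty$ relations. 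Algebraically this is encoded by the quantum master equation $\frac{1}{2}\{h,h\}=\delta h$ of Proposition~\ref{prop:qme}, where the string coproduct $\delta$ is nonzero precisely because of boundary degenerations; your argument implicitly assumes $\delta h=0$.

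The paper's fix is to keep your $\ell_k$ for $k\neq 2$ but to add a string-topology correction term $\ell_2^{\mathrm{str}}$ to $\ell_2$, built from $\delta$ and an auxiliary bilinear form $\tilde\beta$ depending on the base points (Definition~\ref{def:main}). This correction absorbs the $\delta h$ contribution and restores the $L_\infty$ relations; the verification (Section~\ref{sec:l-infty-proof}) is carried out algebraically via an auxiliary operation $m_k$ on $\mathcal{A}_{\mathrm{SFT}}^{\mathrm{comm}}$ and the modified derivation property of $\delta$ (Proposition~\ref{prop:delta}), not by a direct moduli-space count. Note also that $\ell_2^{\mathrm{str}}$ has coefficients in $\frac{1}{2}\Z$, which is why the construction requires characteristic $\neq 2$; this is not merely sign bookkeeping but a structural feature you would not discover from your outline.
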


\noindent
The operation $\ell_k$ counts holomorphic disks in $(\R\times\R^3,\R\times\Lambda)$ with $k$ positive ends, as we discuss further in Section~\ref{ssec:motivation}.
As an immediate consequence of Theorem~\ref{thm:main-intro}, $\ell_2$ induces a Poisson bracket on the commutative LCH of $\Lambda$:

\begin{corollary}[see Corollary~\ref{cor:Poisson}]
For any pointed Legendrian link $\Lambda$, $(LCH\tocomm_*(\Lambda),\ell_2)$ is a Poisson algebra.
\end{corollary}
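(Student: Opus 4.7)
The plan is to deduce this corollary from Theorem~\ref{thm:main-intro} by the standard procedure of transferring the first two operations of a homotopy Poisson algebra to homology. First I would recall that the $n=2$ $L_\infty$ identity in $(\A\tocomm,\{\ell_k\})$ reads, up to signs,
\[
\partial\bigl(\ell_2(a,b)\bigr) = \ell_2(\partial a,b) \pm \ell_2(a,\partial b),
\]
which says exactly that $\ell_2$ is a chain map from $(\A\tocomm)^{\otimes 2}$ to $\A\tocomm$. Combined with the graded (anti)symmetry of $\ell_2$ asserted as part of the $L_\infty$ data, this shows that $\ell_2$ descends to a well-defined bilinear operation
\[
\{-,-\} \co LCH\tocomm_*(\Lambda) \otimes LCH\tocomm_*(\Lambda) \to LCH\tocomm_*(\Lambda)
\]
with the appropriate graded symmetry. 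Similarly, because $\ell_1 = \partial$ is a derivation of the commutative product on $\A\tocomm$, that product descends to a graded commutative associative product on $LCH\tocomm_*(\Lambda)$.

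Next I would verify the graded Jacobi identity for $\{-,-\}$ using the $n=3$ $L_\infty$ identity, which schematically reads
\[
\sum_{\text{cyclic}} \pm\, \ell_2\bigl(\ell_2(a,b),c\bigr) = \pm\, \partial\bigl(\ell_3(a,b,c)\bigr) \pm \ell_3(\partial a,b,c) \pm \ell_3(a,\partial b,c) \pm \ell_3(a,b,\partial c).
\]
When $a$, $b$, $c$ are cycles in $\A\tocomm$, the right-hand side is a $\partial$-boundary, so the left-hand side vanishes in $LCH\tocomm_*(\Lambda)$; this is the graded Jacobi identity for the induced bracket. Graded Leibniz for $\{-,-\}$ with respect to the product is then inherited directly from the chain-level Leibniz property of $\ell_2$ asserted in Theorem~\ref{thm:main-intro}: on cycle representatives $a,b,c$,
\[
\ell_2(a,bc) = \ell_2(a,b)\,c \pm b\,\ell_2(a,c),
\]
which passes to a Leibniz identity for $\{-,-\}$ on $LCH\tocomm_*(\Lambda)$ once the product is known to descend.

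The main obstacle is not conceptual but book-keeping: matching the sign conventions of Definition~\ref{def:main} to the standard definition of a graded Poisson algebra. One must confirm that with the specific degree shifts and Koszul signs adopted in the paper, the $n=2$ and $n=3$ $L_\infty$ identities specialize to the chain-map, graded-(anti)symmetry, and graded-Jacobi statements in the correct form, and similarly that the chain-level Leibniz rule descends to the Leibniz axiom for a Poisson bracket of the appropriate degree. Once these signs are verified, the result is a purely formal consequence of the homotopy Poisson axioms provided by Theorem~\ref{thm:main-intro}.
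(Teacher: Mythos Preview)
Your proposal is correct and follows exactly the approach the paper takes: the paper states this as an immediate consequence of Proposition~\ref{prop:htpyPoisson}, relying on the general fact (recorded in Section~\ref{ssec:l-infty}) that for any homotopy Poisson algebra the $\ell_2$ operation descends to a Poisson bracket on $\ell_1$-homology. You have simply spelled out that general fact in detail, which is precisely what is needed.
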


In this paper, we will also study, but not completely resolve, the question of invariance of the $L_\infty$ algebra under Legendrian isotopy. We conjecture that our construction is invariant:

\begin{conjecture}[see Conjecture~\ref{conj:invariance}] 
The homotopy Poisson algebra $(\A\tocomm,\{\ell_k\})$ of a pointed Legendrian link $\Lambda$ is invariant under Legendrian isotopy of $\Lambda$, up to $L_\infty$ equivalence.
\end{conjecture}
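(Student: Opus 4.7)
The plan is to prove invariance by reducing Legendrian isotopy to a finite list of elementary moves on Lagrangian diagrams — Reidemeister II and III moves, base-point motion, and stabilization/destabilization — and to construct, for each such move, an explicit $L_\infty$ morphism $\{f_k\}_{k\geq 1}$ between the two homotopy Poisson algebras whose linear term $f_1$ is a quasi-isomorphism of the underlying commutative DGAs. Since composition of $L_\infty$ morphisms is again an $L_\infty$ morphism, chaining these local equivalences would yield the global $L_\infty$ equivalence associated to any Legendrian isotopy, and the standard Chekanov-style reduction to local moves in the Lagrangian projection lets one assume the moves occur in arbitrarily small neighborhoods.

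For $k=1$, one would take $f_1$ to be a Chekanov-style DGA chain map associated to each move, which is already known to exist and to be a quasi-isomorphism after tensoring with $\kk$. For $k \geq 2$, the natural candidate is to count holomorphic disks with $k$ positive punctures and one negative puncture in a one-parameter family of Lagrangians that interpolates between $\R \times \Lambda_1$ and $\R \times \Lambda_2$ near the move; this should admit a combinatorial translation as a count of admissible disks on a diagram bridging the two sides of the move. A cleaner geometric alternative is to realize the isotopy $\{\Lambda_t\}$ as an exact Lagrangian cylinder $L \subset \R \times \R^3$ and to define $f_k$ by counting holomorphic disks in $(\R \times \R^3, L)$ with $k$ positive ends at $+\infty$ and one negative end at $-\infty$. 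In either setup the $L_\infty$ morphism equations arise from analyzing the boundary of the relevant one-dimensional moduli spaces: codimension-one breakings distribute the positive ends between lower-level $\ell_k$ operations on one side and the $f_j$ values on the other, yielding precisely the alternating sum required by the $L_\infty$ morphism axioms.

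The principal obstacle is the Reidemeister III (triple point) move, where $f_1$ is already non-trivial and disks with several positive punctures genuinely appear or disappear across the move, forcing the higher $f_k$ to be non-zero. Giving a closed-form combinatorial description of these $f_k$ in a neighborhood of the triple point and then verifying the full tower of $L_\infty$ morphism identities at all orders simultaneously — rather than one order at a time — is the core difficulty, since obstruction-theoretic arguments that extend $f_1$ to an $L_\infty$ morphism are not available in the non-free commutative setting at the chain level. The cobordism approach trades this combinatorial bookkeeping for the analytical machinery of SFT — transversality for disks with many positive punctures and compactness/gluing control over multi-level degenerations into buildings — which is itself the unresolved technical input for the $L_\infty$ structure and is presumably why the author leaves the statement as a conjecture. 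A complete proof will likely combine an explicit combinatorial description of the $f_k$ near each elementary move with an algebraic verification that exploits the Leibniz property of the $\ell_k$ to restrict the identities to be checked to disk configurations local to the move.
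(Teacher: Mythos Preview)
The statement is a \emph{conjecture}, not a theorem: the paper explicitly does not prove it, proving only the weaker Proposition~\ref{prop:invariance} ($L_2$ equivalence, i.e.\ invariance up through $\ell_2$). So there is no paper proof to compare against; your proposal is a sketch of how one might attack the conjecture, and you yourself correctly note at the end that this is why it remains open.

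Your overall strategy---decompose the isotopy into elementary moves in the $xy$ projection and build an $L_\infty$ morphism for each---matches the paper's approach to the partial result. Two corrections to your analysis, however. First, stabilization/destabilization should not appear in your list: those change the Legendrian isotopy class. The relevant moves are base-point motion, Reidemeister~II, and Reidemeister~III (no Reidemeister~I, since the $xy$ projection of a Legendrian isotopy is a regular homotopy). Second, and more substantively, you have misidentified the principal obstacle. In the paper's framework, Reidemeister~III is the \emph{easier} move: there is an algebra map $\phi$ on $\sft\tocomm$ that commutes with both the SFT bracket and the differential $d$ (Lemma~\ref{lem:phiRIII}), and $f_1,f_2$ are built directly from $\phi$; this structure plausibly extends to higher $f_k$. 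The genuinely hard move is Reidemeister~II, where the analogous $\phi$ map \emph{fails} to commute with the bracket (see the discussion opening Section~\ref{ssec:RII}), forcing the author to construct $f_1$ and $f_2$ by hand via an explicit disk-pinching argument. Extending that ad hoc construction to all $f_k$ is where the combinatorics become unmanageable, and this---not the triple-point move---is the likely bottleneck for the full conjecture.
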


\noindent Here an $L_\infty$ equivalence is a $L_\infty$ morphism between $L_\infty$ algebras, given by a collection of maps $\{f_k\}_{k=1}^\infty$, inducing an isomorphism on $\ell_1$ homology; see Section~\ref{ssec:invariance-statement} for details. We will partially resolve this conjecture, constructing the maps $f_1,f_2$ and thus establishing invariance up through $\ell_2$.

\begin{proposition}[see Proposition~\ref{prop:invariance}]
The homotopy Poisson algebra $(\A\tocomm,\{\ell_k\})$ of $\Lambda$ is invariant under Legendrian isotopy of $\Lambda$, up through the $\ell_2$ operation.
\end{proposition}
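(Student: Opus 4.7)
The plan is to decompose any Legendrian isotopy into a finite sequence of elementary Legendrian Reidemeister and base-point moves, construct the pair $(f_1,f_2)$ across each elementary move, and then assemble the full pair by the standard level-$2$ composition rule for $L_\infty$ morphisms. At level~$1$, the map $f_1$ is supplied by the classical proof of DGA invariance: Chekanov's stable tame isomorphism is defined on the tensor algebra and descends to a chain map
\[
f_1 :\thinspace (\A\tocomm, \d) \to ((\A')\tocomm, \d')
\]
inducing an isomorphism on commutative LCH. What must be constructed is a graded-symmetric map $f_2 :\thinspace (\A\tocomm)^{\otimes 2} \to (\A')\tocomm$ witnessing the level-$2$ $L_\infty$ morphism relation
\[
\d' f_2(a,b) \pm f_2(\d a, b) \pm f_2(a, \d b) \;=\; f_1\bigl(\ell_2(a,b)\bigr) - \ell'_2\bigl(f_1(a), f_1(b)\bigr);
\]
that is, $f_2$ is a chain homotopy in $\Hom((\A\tocomm)^{\otimes 2},(\A')\tocomm)$ between $f_1\circ \ell_2$ and $\ell'_2\circ (f_1\otimes f_1)$.

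Two complementary routes are available for producing $f_2$. The geometric route mirrors the motivation for the $\ell_k$ operations and defines $f_2$ as a count of rigid holomorphic disks with two positive punctures in a Lagrangian cobordism $\R\times\Lambda_t$ induced by a one-parameter family of Legendrians realizing the move. The combinatorial route is cleaner for present purposes: one first observes that the right-hand side of the displayed identity is automatically a cocycle in $\Hom((\A\tocomm)^{\otimes 2},(\A')\tocomm)$, a purely algebraic consequence of $\d' f_1 = f_1 \d$ together with the Jacobi-type level-$2$ $L_\infty$ relations satisfied by $\{\ell_k\}$ and $\{\ell'_k\}$, and then one writes down an explicit primitive $f_2$ move by move. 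For triple-point moves, $f_2$ should be concentrated on the three Reeb chords involved and can be produced by inspection; for stabilization or cusp moves, the canonical cancellation of a pair of generators dictates $f_2$ on those generators, and graded symmetry together with the Leibniz rule with respect to the commutative product then fixes its value on the remaining input.

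The main obstacle is verifying, for each elementary move, that the $f_2$ one writes down actually bounds the required cocycle. This is a bookkeeping exercise in signs and disk counts, but a delicate one: $\ell_2$ is defined by a more intricate disk count than $\d$, and its image under an isomorphism of DGAs typically differs from the target $\ell'_2$ by more than a $\d'$-trivial amount, with the discrepancy absorbed precisely into $f_2$. I would expect the triple-point case to be the most computational and the stabilization case to involve the subtlest cancellations between new generators and the two-positive-puncture disks contributing to $\ell_2$. Once the per-move construction is in hand, composing the resulting pairs by the level-$\le 2$ truncation of the $L_\infty$-morphism composition formula produces $(f_1,f_2)$ for the full isotopy; this induces a well-defined Lie bracket on $LCH\tocomm_*(\Lambda)$ that is an isotopy invariant, matching the corollary on commutative LCH.
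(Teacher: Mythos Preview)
Your overall architecture is exactly the paper's: decompose the isotopy into elementary moves in the $xy$ projection (base point moves, Reidemeister III, Reidemeister II), take $f_1$ to be Chekanov's stable tame isomorphism, produce a companion $f_2$ witnessing the level-$2$ morphism relation for each move, and compose. So the strategy is sound.

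The gap is that you do not actually construct $f_2$. You observe that
\[
f_1(\ell_2(a,b)) - \ell_2'(f_1 a, f_1 b)
\]
is a cocycle in the Hom complex---this is true, and the paper records the analogous identity as Lemma~\ref{lem:F}---but a cocycle is not automatically a coboundary, and you give no argument for why a primitive exists or what it is. Saying $f_2$ ``can be produced by inspection'' for the triple-point move and is ``dictated'' by the cancelling pair for Reidemeister~II is not a construction; the entire content of the paper's Section~\ref{sec:invariance} is writing down those formulas and verifying them.

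Two points of comparison that may help you see what is missing. For Reidemeister~III the paper does not proceed ``by inspection'' on the three chords; it lifts $f_1$ to an algebra map $\phi$ on the full SFT algebra $\sft\tocomm$ (including the $p$ variables) satisfying $\phi\{x,y\}=\{\phi x,\phi y\}$ and $\phi\circ d^+=d^-\circ\phi$, and then \emph{derives} $f_2(x_1,x_2)=(-1)^{|x_1|+1}\Pi\{\Pi\phi(x_1),\phi(x_2)\}$ from the $p$-level-one part of $\phi$. The existence of such a $\phi$ is imported from \cite{SFT} and is the nontrivial input. For Reidemeister~II no analogous $\phi$ commuting with the bracket exists, so the paper defines $f_2$ by explicit ad~hoc formulas on generators (involving $\ell_2^+(q_1,q_1)$, $\ell_2^+(q_1,q_j)$, and $\{p_2,\ell_1^+(q_j)\}$) and checks the relation by analyzing how two-positive-puncture disks for $\Lambda^-$ pinch into chains of disks for $\Lambda^+$, classified into four combinatorial types. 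This is the ``subtle'' case you anticipate, but your proposal contains none of the mechanism needed to carry it out.

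Minor remark: you refer to ``stabilization or cusp moves'', which suggests the front projection. The paper works in the $xy$ projection, where the moves are base point moves, RIII, and RII (no RI, since Legendrian isotopy is a regular homotopy there). This does not affect your outline but would affect any attempt to execute it.
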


\begin{corollary}[see Proposition~\ref{prop:invariance}]
The Poisson algebra $(LCH\tocomm_*(\Lambda),\ell_2)$ is invariant under Legendrian isotopy of $\Lambda$.
\end{corollary}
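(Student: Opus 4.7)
The plan is to derive the corollary directly from Proposition~\ref{prop:invariance}. A Legendrian isotopy $\Lambda_0\rightsquigarrow\Lambda_1$ supplies maps $f_1\co\A_0\tocomm\to\A_1\tocomm$ and $f_2\co(\A_0\tocomm)^{\otimes 2}\to\A_1\tocomm$ satisfying the first two $L_\infty$-morphism relations, with $f_1$ a chain-level quasi-isomorphism after tensoring with $\kk$; hence $f_1$ descends to a $\kk$-linear isomorphism $[f_1]\co LCH\tocomm_*(\Lambda_0)\to LCH\tocomm_*(\Lambda_1)$. The goal is to promote this to an isomorphism of Poisson algebras.

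Two compatibilities are needed. First, $[f_1]$ should be a ring homomorphism with respect to the commutative product: I expect this to be built into the construction, since the $f_1$ produced in Proposition~\ref{prop:invariance} is modelled on the classical Chekanov--Eliashberg invariance maps, which are by design graded algebra maps, and this property should survive the passage to the commutative quotient. Second, $[f_1]$ must intertwine $\ell_2$; here I would expand the level-$2$ $L_\infty$-morphism identity, which schematically reads
\[
f_1\bigl(\ell_2(a,b)\bigr)-\ell_2\bigl(f_1(a),f_1(b)\bigr)\;=\;\d f_2(a,b)\pm f_2(\d a,b)\pm f_2(a,\d b).
\]
For $\ell_1$-cycles $a,b$ the right-hand side collapses to $\d f_2(a,b)$, a $\d$-boundary, so $[f_1](\ell_2([a],[b]))=\ell_2([f_1][a],[f_1][b])$ on homology. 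Combining the two compatibilities, $[f_1]$ is a Poisson isomorphism; chaining such isomorphisms along a general isotopy finishes the argument.

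The main obstacle is bookkeeping: one must pin down the precise signs in the $L_\infty$-morphism relations for graded-symmetric operations, and confirm from the explicit construction of Proposition~\ref{prop:invariance} that $f_1$ is a strict algebra map rather than merely a chain map. Should $f_1$ fail to be strictly multiplicative, the backup plan is to treat $(f_1,f_2)$ as the first two terms of a homotopy Poisson morphism and extract a Leibniz-type compatibility from $f_2$ that forces $[f_1]$ to respect the product on homology; this is messier but reaches the same conclusion.
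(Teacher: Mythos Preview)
Your proposal is correct and matches the paper's approach. The paper packages exactly your argument into a general statement: it defines an $L_2$ morphism (Definition~\ref{def:L2}) to be a pair $(f_1,f_2)$ where $f_1$ is required to be a graded \emph{algebra} map satisfying $f_1\circ\ell_1^+=\ell_1^-\circ f_1$, and $f_2$ satisfies the level-$2$ relation you wrote; it then observes (immediately preceding Proposition~\ref{prop:invariance}) that an $L_2$ equivalence induces a Poisson isomorphism on $\ell_1$-homology, which is precisely your computation with $\ell_1$-cycles. Your main worry is therefore moot: $f_1$ being a strict algebra map is part of the paper's definition of $L_2$ morphism, and this is verified directly in each elementary-move construction (Sections~\ref{ssec:move}--\ref{ssec:RII}), so the backup plan is unnecessary.
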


Our construction of the $L_\infty$ structure, like Legendrian contact homology itself in $\R^3$, can be described in completely combinatorial (non-analytic) terms. As a consequence, one can readily calculate the $L_\infty$ structure for any given Legendrian link.
One particularly interesting family of Legendrian links is the collection of $(-1)$-closures of admissible positive braids (see Section~\ref{ssec:closures}), including rainbow closures of positive braids. For this class of Legendrians, the $L_\infty$ algebra is strict in the sense that $\ell_k=0$ for $k \geq 3$, and it follows that the commutative Chekanov--Eliashberg DGA is a DG Poisson algebra. In particular, for an admissible positive braid with $N$ strands and $k$ crossings, the $\ell_2$ operation on $\A\tocomm$ induces a Poisson bracket on the polynomial ring $\C[q_1,\ldots,q_k,t_1^{\pm 1},\ldots,t_N^{\pm 1}]$. In the specific case where $\Lambda$ is a Legendrian $(2,2n)$ torus link given by the $(-1)$-closure of the $2$-braid $\sigma_1^{2n+2}$, this Poisson bracket agrees with the Flaschka--Newell bracket; see Proposition~\ref{prop:FN}. We believe that the general family of Poisson brackets associated to arbitrary admissible positive braids may be new.

In upcoming work \cite{CGNSW}, the $L_\infty$ structure constructed in this paper will be used to construct a symplectic form on the augmentation variety of a large family of Legendrian links. Here the augmentation variety is a Legendrian-isotopy invariant constructed from the Chekanov--Eliashberg DGA---roughly speaking, it is given by $\Spec LCH\tocomm_0(\Lambda)$---and the $\ell_2$ operation dualizes to a closed differential $2$-form on the augmentation variety. The fact that this $2$-form is nondegenerate follows from Sabloff duality \cite{EESab}; conversely, it will follow that the $L_\infty$ structures in this paper are nondegenerate in a suitable sense.

In another direction, we conjecture that the $L_\infty$ structure is functorial under exact Lagrangian cobordism:

\begin{conjecture}
Let $\Lambda^\pm$ be pointed Legendrian links, and
suppose that there is an exact Lagrangian cobordism from $\Lambda^-$ to $\Lambda^+$. Then there is an $L_\infty$ morphism from the $L_\infty$ algebra $((\alg^+)\tocomm,\{\ell_k^+\})$ of $\Lambda^+$ to the $L_\infty$ algebra $((\alg^-)\tocomm,\{\ell_k^-\})$ of $\Lambda^-$ extending the usual cobordism map of Chekanov--Eliashberg DGAs $(\alg^+)\tocomm \to (\alg^-)\tocomm$.
\end{conjecture}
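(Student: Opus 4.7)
The plan is to mirror the construction of the $\ell_k$ operations themselves, replacing the holomorphic disks in $(\R\times\R^3,\R\times\Lambda)$ used to define $\ell_k$ by holomorphic disks with boundary on the exact Lagrangian cobordism $L$, where $L$ asymptotes to $\R\times\Lambda^-$ at $-\infty$ and to $\R\times\Lambda^+$ at $+\infty$. For each $k\geq 1$, I would define $f_k :\thinspace ((\alg^+)\tocomm)^{\otimes k}\to(\alg^-)\tocomm$ on Reeb chord generators by a weighted count of rigid holomorphic disks on $L$ having $k$ positive ends asymptotic to the input Reeb chords of $\Lambda^+$ and an arbitrary (possibly empty) collection of negative ends asymptotic to Reeb chords of $\Lambda^-$ that yield the output monomial, with base points contributing $t_i^{\pm 1}$ factors as usual. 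Graded symmetry in the $k$ inputs would be enforced by unshuffling over all cyclic orderings of the positive ends, exactly as in the construction of the $\ell_k$. The maps $f_k$ would then be extended to all of $(\alg^+)\tocomm$ by $\Z$-linearity and the Leibniz rule, producing a homotopy Poisson morphism.

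First I would check that $f_1$ reproduces the usual cobordism map of commutative Chekanov--Eliashberg DGAs, so that $\{f_k\}$ genuinely extends that map. Next I would verify the $L_\infty$ morphism relations by inspecting the codimension-$1$ boundary of the $1$-dimensional components of the relevant moduli spaces. A standard SFT-type compactness argument decomposes the boundary strata into three families: (i) a single disk in the symplectization over $\Lambda^+$ breaks off at one of the positive ends, contributing terms of the form $f_m(\ldots,\ell_j^+(\ldots),\ldots)$; (ii) several disks on $L$ feed into a disk in the symplectization over $\Lambda^-$ at the bottom, contributing terms of the form $\ell_n^-(f_{k_1}\otimes\cdots\otimes f_{k_n})$ with $k_1+\cdots+k_n=k$; and (iii) interior nodal degenerations, which as in the construction of $\ell_k$ do not produce new terms once the unshuffling is taken into account. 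After accounting for signs coming from coherent orientations, the sum of all contributions vanishes and one recovers exactly the $L_\infty$ morphism relations.

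The main obstacle is analytic: unlike the symplectization case, where Lagrangian projections give a fully combinatorial description of the relevant moduli spaces, no such description is currently available for holomorphic disks with several positive ends in a general exact Lagrangian cobordism, and the full conjecture therefore depends on the analytic foundations of rational SFT in the cobordism setting. A realistic intermediate step, which already covers many examples of interest, would be to prove the conjecture for \emph{decomposable} exact Lagrangian cobordisms---those built from elementary Legendrian isotopy, minimum, and saddle moves---for which the DGA cobordism map is known combinatorially. On each elementary piece one would hope to write down the higher $f_k$ directly from the combinatorics and verify the $L_\infty$ morphism relations locally, then compose. Since Legendrian isotopies produce decomposable invertible cobordisms, such an intermediate result would also imply the full $L_\infty$ isotopy invariance conjectured in Conjecture~\ref{conj:invariance}.
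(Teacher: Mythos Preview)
The statement is a conjecture, and the paper does not attempt a proof; it explicitly says ``We hope to return to this conjecture in future work.'' So there is no proof in the paper to compare against. I can, however, evaluate your sketch on its own merits.

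Your outline has a genuine gap that the paper's own construction exposes. You propose defining $f_k$ as a raw count of rigid disks on $L$ with $k$ positive ends, and you dismiss boundary-nodal degenerations in item (iii) as producing ``no new terms once the unshuffling is taken into account.'' But this is exactly the point at which the relative theory diverges from the closed one: in the paper, the operations $\ell_k$ are \emph{not} simply disk counts for all $k$. The operation $\ell_2$ carries an explicit string-topology correction $\ell_2^{\str}$ (Definition~\ref{def:main}, Section~\ref{ssec:l2str}), and this correction is forced precisely because boundary bubbling \emph{does} contribute---it is the $\delta h$ term in the quantum master equation $\frac{1}{2}\{h,h\}=\delta h$. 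Without $\ell_2^{\str}$, the $L_\infty$ relations fail. So your claim that boundary-nodal degenerations cancel after unshuffling contradicts the central technical theme of the paper.

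The same issue will arise for the cobordism maps. When you examine the boundary of a $1$-dimensional moduli space of disks on $L$, in addition to the two-level breakings in your (i) and (ii) you will see boundary bubbling on $L$ itself, and there is no reason to expect these to cancel on the nose. A correct construction of $\{f_k\}$ will almost certainly require string-type corrections to $f_2$ (and compatibility with the $\ell_2^{\str}$ terms on both ends), analogous to what the paper does for $\ell_2$. Your decomposable-cobordism strategy is reasonable as a route to a proof, but note that even for the elementary isotopy pieces the paper only constructs $f_1,f_2$ and leaves the higher $f_k$ open (Conjecture~\ref{conj:invariance}, Proposition~\ref{prop:invariance}); so ``write down the higher $f_k$ directly from the combinatorics'' is not yet known even in that special case.
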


\noindent
We hope to return to this conjecture in future work.

\subsection{Geometric motivation}
\label{ssec:motivation}

We now discuss the construction of the $L_\infty$ structure in slightly more detail. 
The $\ell_k$ operations that constitute our $L_\infty$ structure are defined using rational Symplectic Field Theory (SFT) \cite{EGH}. Where LCH counts rigid holomorphic disks in the symplectization $\R\times\R^3$ with boundary on $\R\times\Lambda$ with a single positive end at a Reeb chord (and any number of negative ends), rational SFT counts rigid holomorphic disks with any number of positive ends; the holomorphic disks with $k$ positive ends contribute to $\ell_k$ for each $k\geq 1$. See Figure~\ref{fig:disks} for an illustration of terms in $\ell_1=\d$ and $\ell_2$.

\begin{figure}
\labellist
\small\hair 2pt
\pinlabel ${\color{blue} \R\times\Lambda}$ at 81 57
\pinlabel $\R\times \R^3$ at 104 31
\pinlabel $q_i$ at 45 85
\pinlabel $q_{j_1}$ at 39 16
\pinlabel $q_{j_2}$ at 57 16
\pinlabel $q_{j_3}$ at 68 18
\pinlabel {$\partial(q_i) = q_{j_1}q_{j_2}q_{j_3} + \cdots$} at 49 2
\pinlabel $\R$ at 120 84
\pinlabel $q_{i_1}$ at 182 82
\pinlabel $q_{i_2}$ at 204 82
\pinlabel $q_{j_1}$ at 182 16
\pinlabel $q_{j_2}$ at 205 16
\pinlabel {$\ell_2(q_{i_1},q_{i_2}) = q_{j_1}q_{j_2}+\cdots$} at 192 2
\endlabellist
\centering
\includegraphics[width=\textwidth]{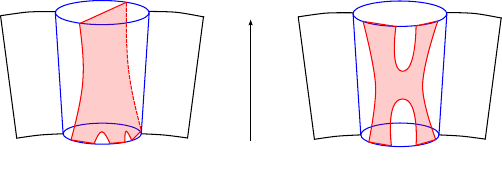}
\caption{
Left: a holomorphic disk contributing to the differential $\d$ in the (commutative) Chekanov--Eliashberg DGA $(\A\tocomm,\d)$ of a Legendrian $\Lambda$; here $q_i,q_{j_1},q_{j_2},q_{j_3}$ are Reeb chords of $\Lambda$. Right: a holomorphic disk contributing to the $\ell_2$ operation on $\A\tocomm$.}
\label{fig:disks}
\end{figure}

The version of rational SFT that we consider in this paper is a relative version of rational SFT for closed contact manifolds $V$, which counts punctured holomorphic spheres in the symplectization $\R\times V$ whose punctures are mapped to Reeb orbits at $\pm\infty$ in the $\R$ direction. (In string-theoretic language, the closed setting considers closed strings in $V$, while the relative setting considers open strings in $V$ with boundary on $\Lambda$.) In the closed case, it is well-known that one can use rational SFT to produce an $L_\infty$ structure on the polynomial algebra generated by Reeb orbits: see \cite{Siegel,MZ}. More concretely, $\ell_k(q_{j_1},\ldots,q_{j_k})$ is a sum, over all rigid punctured holomorphic spheres with positive ends at the Reeb orbits $q_{j_1},\ldots,q_{j_k}$, of the product of the negative ends of each sphere.

In the closed case, the claim that the $\ell_k$ operations satisfy the $L_\infty$ relations is derived from the fact that $1$-dimensional moduli spaces of punctured holomorphic spheres can be compactified by adding two-story buildings consisting of two rigid punctured holomorphic spheres with a positive end of one sphere glued to a negative end of the other. It then follows, as in standard Floer-theoretic $\d^2=0$ arguments, that the endpoints of any $1$-dimensional moduli space contribute canceling terms to the $L_\infty$ relations. The standard way to express the fact about two-story buildings is an equation of the form
\[
\{h,h\} = 0.
\]
Here $h$ is a Hamiltonian in the variables $q_j$ and dual variables $p_j$ given by summing the product of the positive ends (recorded as $p_j$'s) and the negative ends (recorded as $q_j$'s) of all rigid punctured holomorphic spheres, and the SFT bracket $\{\cdot,\cdot\}$ is defined by $\{p_{j_1},q_{j_2}\} = -\{q_{j_2},p_{j_1}\} = \delta_{j_1j_2}$: geometrically this glues a punctured sphere with a negative end at some Reeb orbit to a punctured sphere with a positive end at the same Reeb orbit. See \cite{Lat} for further discussion of this perspective on the $L_\infty$ structure in the closed setting.

\begin{figure}
\centering
\includegraphics[width=0.8\textwidth]{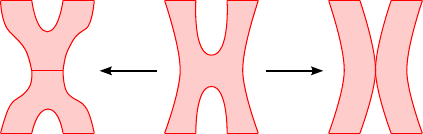}
\caption{
Possible degenerations of a $1$-dimensional moduli space of holomorphic disks (center). Left: a two-story building given by gluing together a positive end of one rigid disk to a negative end of another rigid disk. Right: two rigid disks joined at a point on their boundaries.
}
\label{fig:degenerations}
\end{figure}

Compared to the closed case, the relative case of rational SFT has a significant complication: $1$-dimensional moduli spaces of punctured holomorphic disks can degenerate to two-story buildings, but there are also ``boundary degenerations'' (essentially boundary bubbling) consisting of two holomorphic disks joined by a common point on their boundary. See Figure~\ref{fig:degenerations} for an illustration of each type of degeneration. This is a well-known issue, and for Legendrians in $\R^3$ there are at least two (related) approaches to work around this difficulty. One approach, due to Ekholm \cite{EkholmSFT}, is to restrict to multi-component Legendrian links and count only holomorphic disks whose boundary components lie on all different components of the Legendrian; this eliminates boundary bubbling. 

Another approach, and the one that underlies the present paper, is given in previous work of the author \cite{SFT}. This approach fleshes out a strategy laid out by Cieliebak, Latschev, and Mohnke \cite{CL}, using ideas from string topology to treat the boundary-degeneration problem. As in the closed case, one can define a Hamiltonian $h$ associated to a Legendrian $\Lambda$, but now the equation $\{h,h\}=0$ is replaced by
\[
\frac{1}{2}\{h,h\} = \delta h,
\]
where $\delta$ is an operation that we call the string coproduct. In \cite{SFT}, this equation (the ``quantum master equation'') is used to produce a curved DGA associated to any Legendrian knot (the ``LSFT algebra''), which is invariant in a suitable sense under Legendrian isotopy.

Analogously to the closed case, one can try to define an $L_\infty$ structure on the commutative Chekanov--Eliashberg DGA $\A\tocomm$ of a Legendrian $\Lambda$ by defining $\ell_k(q_{i_1},\ldots,q_{i_k})$ to be a sum over all rigid holomorphic disks with positive ends at the Reeb chords $q_{i_1},\ldots,q_{i_k}$ of the product of the negative ends of the disk; see Figure~\ref{fig:disks} for illustrations for $k=1$ and $k=2$. For $k=1$ this recovers the usual differential $\d$ on $\A\tocomm$. However, the presence of $\delta h$ in the quantum master equation means that these $\ell_k$'s do not satisfy the $L_\infty$ relations, unlike in the closed case.

We will solve this problem in this paper by defining $\ell_k$ as described above for all $k\neq 2$, but adding a correction term $\ell_2^\str$ to $\ell_2$. This correction term, which is of the form $\ell_2^\str(q_{j_1},q_{j_2}) = \alpha_{{j_1},j_2} q_{j_1}q_{j_2}$ for some $\alpha_{j_1j_2} \in \frac{1}{2}\Z$, is a string-topology contribution to $\ell_2$ and depends on the placement of base points on $\Lambda$. By choosing these coefficients $\alpha_{j_1j_2}$ appropriately, it is possible to cancel out the effect of the $\delta h$ term in the quantum master equation and thus produce operations $\{\ell_k\}_{k=1}^\infty$ satisfying the $L_\infty$ relations. As an added benefit, even though rational SFT was only defined for Legendrian knots and not multi-component Legendrian links in \cite{SFT}, we will show that our construction of $\{\ell_k\}$ extends to links as well as knots, and that the $L_\infty$ relations are satisfied in all cases.

We remark that even though defining the $L_\infty$ structure combinatorially is relatively straightforward---see Section~\ref{sec:l-infty}---verifying the $L_\infty$ relations is technically rather intricate, and this verification forms the heart of this paper. One specific notable technical issue is that because $\ell_2$ needs to be graded symmetric, (anti)symmetrization forces us to define string contributions $\ell_2^\str$ to $\ell_2$ that have coefficients in $\frac{1}{2}\Z$ rather than in $\Z$. This means that our $L_\infty$ algebras are forced to be defined in fields of characteristic $\neq 2$. In turn, this forces us to keep careful track of signs (or, geometrically, orientations of moduli spaces) at all times. A fair amount of the difficulty in verifying the $L_\infty$ relations comes from working through the resulting profusion of signs.

In the special case where $\Lambda$ is a single-component Legendrian knot, the string corrections to $\ell_2$ simplify significantly, and the full $\ell_2$ operation can be expressed in terms of the differential on the LSFT algebra, which was constructed in \cite{SFT} to encode rational SFT. Indeed, for Legendrian knots, the $L_\infty$ structure defined in this paper is equivalent to a version of the LSFT algebra, the \textit{commutative complex} defined in \cite[section 2.4]{SFT}, in the sense that one can read the commutative complex from the $L_\infty$ structure and vice versa; see Proposition~\ref{prop:commsft}. Thus we can view the $L_\infty$ structure in this case as a repackaging of a previously-defined Legendrian invariant. However, we note two points: first, it is unclear that invariance of the commutative complex as shown in \cite{SFT} implies invariance of the $L_\infty$ structure, or vice versa; second, our construction of the $L_\infty$ structure is entirely new for Legendrian links, or indeed for Legendrian knots with multiple base points.

\subsection{Outline of paper}

In Section~\ref{sec:sft}, we describe the ingredients that go into defining the $L_\infty$ operations, including broken closed strings, the Hamiltonian, the string coproduct, and the quantum master equation, and establish some of their fundamental properties. This is an upgrade of material from \cite{SFT} from the setting of Legendrian knots to the setting of pointed Legendrian links. In particular, we include a discussion of grading for the Chekanov--Eliashberg DGA in the presence of multiple base points that we believe is new and may be of independent interest.

We present the construction of the $L_\infty$ structure, with examples, in Section~\ref{sec:l-infty}. We then establish the $L_\infty$ relations in Section~\ref{sec:l-infty-proof} and prove invariance under Legendrian isotopy in Section~\ref{sec:invariance}.

\subsection*{Acknowledgments}
The immediate motivation for this work was to provide some technical background for future collaborative work on augmentations and clusters stemming from a SQuaRE project sponsored by the American Institute of Mathematics. I am deeply indebted to my collaborators on this project, Roger Casals, Honghao Gao, Linhui Shen, Daping Weng, and Eric Zaslow, for many insightful discussions that led to the present paper, and to AIM for providing a perfect setting for this group to work. I also thank Tobias Ekholm, Kenji Fukaya, Augustin Moreno, and Vivek Shende for illuminating conversations related to $L_\infty$ structures in symplectic topology, and an anonymous referee for many helpful comments on an earlier version of this paper. This work was partially supported by NSF grant DMS-2003404.

%**************************************************
%**************************************************
\section{Rational Symplectic Field Theory for Legendrian links}
\label{sec:sft}

In this section, we present background material that goes into defining rational SFT for $1$-dimensional Legendrians, following \cite{SFT}; the constructions described in this section will then be used to define the $L_\infty$ structure in Section~\ref{sec:l-infty}. We begin in Sections~\ref{ssec:algebras} and~\ref{ssec:hopf-ex} with a description of the algebraic framework; this is followed in Sections~\ref{ssec:capping} through \ref{ssec:bcs} by a discussion of capping paths and the more general notion of broken closed strings. We introduce the Hamiltonian for rational SFT in Section~\ref{ssec:h} and the algebraic operations of the SFT bracket and string coproduct in Sections~\ref{ssec:sft} through~\ref{ssec:derivation}. We conclude by discussing the quantum master equation in Section~\ref{ssec:qme}.

Many of the constructions in this section were originally presented in \cite{SFT}, and we will make frequent reference to that paper, but we have tried to keep our exposition as self-contained as possible. We will also extend the constructions from \cite{SFT}, which were defined specifically for only Legendrian knots with a single base point, to the more general case of Legendrian links with arbitrarily many base points. This adds a significant level of complication to the arguments as compared to \cite{SFT}: for instance, the string coproduct is no longer a derivation with respect to the SFT bracket, cf.\ Proposition~\ref{prop:delta}.

Throughout this section and the paper, we will be considering $\R^3$ with standard coordinates $x,y,z$ and the standard contact structure $\ker(dz-y\,dx)$. Smoothly embedded links in $\R^3$ are Legendrian if they are everywhere tangent to the plane field given by the contact structure. One reference for relevant background material about Legendrian links in standard contact $\R^3$ is the survey \cite{ENsurvey}.

%**************************************************
\subsection{Algebras and modules associated to Legendrian SFT}
\label{ssec:algebras}

Here we review the basic algebraic structures from rational Legendrian Symplectic Field Theory \cite{SFT}, with some modifications for our purposes. These are the building blocks that we will use to define the $L_\infty$ structure.

\begin{definition}
A \textit{pointed Legendrian link} in $\R^3$ is an oriented Legendrian link equipped with a collection of base points, with each connected component of $\Lambda$ containing at least one base point. A pointed Legendrian link is \textit{diagram-generic} if the only singularities of the immersed curve $\Pi_{xy}(\Lambda)$ are transverse double points and no base point is mapped by $\Pi_{xy}$ to one of these double points.
\end{definition}

Let $\Lambda$ be a pointed Legendrian link in $\R^3$. We label the base points on $\Lambda$ by $\bullet_1,\ldots,\bullet_s$, and the crossings of $\Pi_{xy}(\Lambda)$, which are the Reeb chords of $\Lambda$, by $a_1,\ldots,a_n$. %Throughout this paper, we will assume that $\Lambda$ is equipped with a collection of base points $\bullet_1,\ldots,\bullet_s$, in such a way that each connected component of $\Lambda$ contains at least one base point. 
For bookkeeping of paths on $\Lambda$, we introduce a partner to each base point $\bullet_i$: a point $\star_i$ immediately preceding $\bullet_i$ with respect to the orientation of $\Lambda$. To distinguish $\star_i$ from $\bullet_i$, we will refer to it as a ``marked point'' rather than a base point.

We associate variables to the base points and Reeb chords as follows: 
\begin{itemize}
\item
to each marked point $\star_i$, an invertible variable $t_i$; 
\item
to each Reeb chord $a_j$, two variables $q_j$ and $p_j$.
\end{itemize}
Throughout the paper we will write
\[
\SS := \{q_1,p_1,\ldots,q_n,p_n,t_1^{\pm 1},\ldots,t_s^{\pm 1}\}
\]
for the collection of these variables.
We will define a $\Z$-valued grading on each variable in $\SS$ in Section~\ref{ssec:gradings}; for now we assume these gradings are given, and proceed to construct some algebras and $\Z$-modules out of these variables.

Let
\[
\A = \Z\langle q_1,\ldots,q_n,t_1^{\pm},\ldots,t_s^{\pm}\rangle
\]
be the noncommutative graded algebra generated by the $q$'s and $t$'s: as a $\Z$-module, this is freely generated by words in the $q$'s and $t$'s (i.e., words in the elements of $\SS$ besides $p_1,\ldots,p_n$). This is the algebra underlying the usual Chekanov--Eliashberg DGA of $\Lambda$ (more precisely, the ``fully noncommutative'' version of the C-E DGA).

Next define
\[
\sftold = \Z\langle q_1,\ldots,q_n,p_1,\ldots,p_n,t_1^{\pm},\ldots,t_s^{\pm}\rangle:
\]
as a $\Z$-module, this is freely generated by words in the elements of $\SS$. This has a filtration $\sftold = \F^0\sftold \supset \F^1\sftold \supset \F^2\sftold \supset \cdots$, where $\F^k\sftold$ is the $\Z$-submodule freely generated by words with at least $k$ $p$'s. We can use this filtration to define the completion
\[
\sft = \widehat{\sftold}
\]
whose elements are formal sums of the form $\sum_{k=0}^\infty z_k$ with $z_k \in \F^k\sftold$. The algebra $\sft$ is the ``LSFT algebra'' from \cite{SFT}.  Note that $\sft$ inherits the filtration from $\sftold$, and that $\A \cong \sft/\F^1\sft$.

We can take two successive quotients of each of $\A$ and $\sft$. The first is the cyclic quotient, obtained by quotienting by the $\Z$-submodule generated by commutators $[x,y] = xy-(-1)^{|x||y|}yx$. This produces the graded $\Z$-modules (not $\Z$-algebras)
\[
\A\tocyc, \sft\tocyc:
\]
each of these is generated as a $\Z$-module by cyclic words in the $q$'s and $t$'s (and $p$'s for $\sft\tocyc$). 
Given a word $v_1\cdots v_k \in \sft$ with $v_i \in \SS$ for all $i$, any cyclic permutation $v_{j+1}\cdots v_k v_1 \cdots v_j$ of the word represents the same element in $\sft\tocyc$, up to a sign:
\[
v_{j+1}\cdots v_k v_1 \cdots v_j = (-1)^{(|v_1|+\cdots+|v_j|)(|v_{j+1}|+\cdots+|v_k|)} v_1\cdots v_k \in \sft\tocyc.
\]

The second quotient of $\A$ and $\sft$ is the commutative quotient, obtained by quotienting by the subalgebra generated by commutators. This produces the graded $\Z$-algebras
\begin{align*}
\A\tocomm &= \Z[q_1,\ldots,q_n,t_1^{\pm 1},\ldots,t_s^{\pm 1}] \\
\sft\tocomm &= \Z[q_1,\ldots,q_n,p_1,\ldots,p_n,t_1^{\pm 1},\ldots,t_s^{\pm 1}].
\end{align*}
These are graded polynomial rings with the usual commutation relations $yx=(-1)^{|x||y|}xy$.

\subsection{Running example}
\label{ssec:hopf-ex}

For the constructions in the remainder of this section, it may help to have an illustrative running example of a pointed Legendrian link. We will use the Legendrian positive Hopf link shown in Figure~\ref{fig:hopf-ex}. This is a two-component link such that each component has rotation number $0$, with three base points distributed between the two components. There are four Reeb chords $a_1,a_2,a_3,a_4$. The Chekanov--Eliashberg algebra of this link is
\[
\A = \Z\langle q_1,q_2,q_3,q_4,t_1^{\pm 1},t_2^{\pm 1},t_3^{\pm 1}\rangle
\]
and the LSFT algebra $\sft$ is the completion of
\[
\Z\langle q_1,q_2,q_3,q_4,p_1,p_2,p_3,p_4,t_1^{\pm 1},t_2^{\pm 1},t_3^{\pm 1}\rangle.
\]

\begin{figure}
\labellist
\small\hair 2pt
\pinlabel $\bullet_1$ at 90 147 
\pinlabel $\star_1$ at 79 147
\pinlabel $\bullet_2$ at 59 56
\pinlabel $\star_2$ at 48 57
\pinlabel $\bullet_3$ at 59 103
\pinlabel $\star_3$ at 48 102
\pinlabel $a_1$ at 16 83
\pinlabel $a_2$ at 106 84
\pinlabel $a_3$ at 132 113
\pinlabel $a_4$ at 137 54
\endlabellist
\centering
\includegraphics[height=3in]{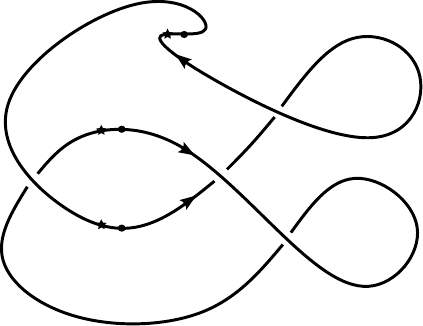}
\caption{
Hopf link in the $xy$ projection. Reeb chords are labeled $a_1,a_2,a_3,a_4$. The three base points $\bullet_i$ and three marked points $\star_i$ are as marked. The zigzag near $\bullet_1$ ensures that the tangent vectors at all of the base points are parallel, for grading purposes; see Remark~\ref{rmk:Maslov}.
}
\label{fig:hopf-ex}
\end{figure}

%**************************************************
\subsection{Capping paths}
\label{ssec:capping}

We now again consider a general pointed Legendrian link $\Lambda$. Our next step is to
construct a collection of capping paths, which are paths on $\Lambda$ associated to each of the elements of $\SS$. These paths are closely related to broken closed strings, to be discussed in Section~\ref{ssec:bcs}, and will also be used to define gradings in Section~\ref{ssec:gradings}.

The capping paths 
are defined using the positions of the base points $\bullet_1,\ldots,\bullet_s$. Recall that we have a marked point $\star_i$ immediately preceding each base point $\bullet_i$ as we follow the orientation of $\Lambda$. Removing $\{\star_1,\ldots,\star_s\}$ from $\Lambda$ disconnects $\Lambda$ into a disjoint union of $s$ oriented segments, each containing one of the marked points $\bullet_i$ (at the beginning of the segment). We define two functions
\[
e_+,e_- :\thinspace \SS \to \{1,\ldots,s\}
\]
as follows:
\begin{itemize}
\item
$e_+(t_i) = e_-(t_i^{-1}) = i$;
\item
$e_-(t_i) = e_+(t_i^{-1})$ is the number of the marked point immediately preceding $\bullet_i$ as we follow the orientation of $\Lambda$: that is, if we traverse $\Lambda$ beginning at $\bullet_i$ and traveling against the orientation of $\Lambda$, then $\bullet_{e_-(t_i)}$ is the first base point we encounter after $\bullet_i$;
\item
$e_+(p_j) = e_-(q_j)$ is the number of the marked point on the same segment as the positive endpoint $a_j^+$ of the Reeb chord $a_j$, when the points $\star_1,\ldots,\star_s$ are removed from $\Lambda$;
\item
$e_-(p_j) = e_+(q_j)$ is the number of the marked point on the same segment as the negative endpoint $a_j^-$ of the Reeb chord $a_j$, when the points $\star_1,\ldots,\star_s$ are removed from $\Lambda$.
\end{itemize}

We can now define capping paths $\gamma_v$ for each $v \in \{q_1,p_1,\ldots,q_n,p_n,t_1^{\pm},\ldots,t_s^{\pm}\}$. In all cases, $\gamma_v$ will be a path beginning at $\bullet_{e_-(v)}$ and ending at $\bullet_{e_+(v)}$. For $v=t_i$, we define $\gamma_{t_i}$ to be the oriented segment of $\Lambda$ beginning at $\bullet_{e_-(t_i)}$ and ending at $\bullet_{e_+(t_i)} = \bullet_i$; for $v=t_i^{-1}$, we define $\gamma_{t_i^{-1}}$ to be the reverse of $\gamma_{t_i}$.

For $v=q_j$ and $v=p_j$, the capping path $\gamma_v$ has a discontinuity at the Reeb chord $a_j$ and is constructed from two ``half capping paths''. Specifically, define $\gamma_j^{\pm}$ to be the embedded paths in $\Lambda$, following the orientation of $\Lambda$, from $\bullet_{e_\pm(p_j)} = \bullet_{e_\mp(q_j)}$ to $a_j^\pm$. We then set
\begin{align*}
\gamma_{p_j} &= \gamma_j^- \cdot (-\gamma_j^+) \\
\gamma_{q_j} &= \gamma_j^+ \cdot (-\gamma_j^-),
\end{align*}
where $\cdot$ denotes path concatenation. That is, $\gamma_{p_j}$ is the path $\gamma_j^-$ followed by the reverse of the path $\gamma_j^+$, and similarly for $\gamma_{q_j}$.
We note that the path $\gamma_{p_j}$ has been constructed to begin at $\bullet_{e_-(p_j)}$, have a discontinuity from $a_j^-$ to $a_j^+$, and end at $\bullet_{e_+(p_j)}$. Similarly, $\gamma_{q_j}$, which is the same path in reverse, begins at $\bullet_{e_-(q_j)}$, has a discontinuity from $a_j^+$ to $a_j^-$, and ends at $\bullet_{e_+(q_j)}$.

\begin{example}
For the Hopf link from Figure~\ref{fig:hopf-ex}, the capping paths for $q_1$ and $t_1$ are shown in Figure~\ref{fig:capping-paths}. We have $e_-(q_1)=1$, $e_+(q_1)=3$, and $\gamma_{q_1} = \gamma_1^+ \cdot (-\gamma_1^-)$ begins at $\bullet_1$ and ends at $\bullet_3$, with a discontinuity going down the Reeb chord at $a_1$; and $e_-(t_1)=2$, $e_+(t_1)=1$, and $\gamma_{t_1}$ begins at $\bullet_2$ and ends at $\bullet_1$.

\begin{figure}
\labellist
\small\hair 2pt
\pinlabel $\bullet_1$ at 88 148
\pinlabel ${\color{blue} \gamma_1^+}$ at 12 141
\pinlabel ${\color{blue} -\gamma_1^-}$ at 66 15
\pinlabel $\bullet_3$ at 59 105
\pinlabel $\bullet_2$ at 308 57
\pinlabel $\bullet_1$ at 337 148
\pinlabel ${\color{blue} \gamma_{t_2}}$ at 260 141 
\endlabellist
\centering
\includegraphics[width=\textwidth]{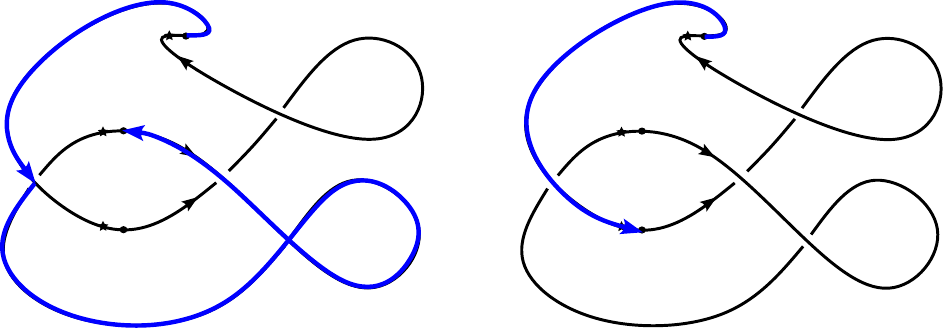}
\caption{
Capping paths $\gamma_{q_1}$ (the concatenation of $\gamma_1^+$ and $-\gamma_1^-$) and $\gamma_{t_2}$, for the Hopf link from Figure~\ref{fig:hopf-ex}.
}
\label{fig:capping-paths}
\end{figure}
\end{example}

%**************************************************
\subsection{Gradings}
\label{ssec:gradings}

Here we discuss how to use the capping paths defined in Section~\ref{ssec:capping} to give $\Z$ gradings to $t_i^{\pm 1},q_j,p_j$, the elements of $\SS$. This will induce gradings on the various algebraic structures $\A,\sft,\sft\tocyc,\sft\tocomm$ associated to a Legendrian link $\Lambda$ in Section~\ref{ssec:algebras}.

In combinatorial form, this story is well-known in the case where $\Lambda$ has a single component with a single base point: see e.g.\ the papers \cite{ENS,SFT}, which build on the original construction of a $\Z/(2\rot(\Lambda)\Z)$ grading by Chekanov \cite{Che}. When $\Lambda$ has multiple components with a single base point on each, the grading is often defined in terms of the front projection of $\Lambda$, see e.g.\ \cite{CLI,NRSSZ}, implicitly using a correspondence between Reeb chords of $\Lambda$ and crossings and right cusps of the front of $\Lambda$. 

Here we will present the grading in terms of the $xy$ projection rather than the front projection, which is more natural geometrically. This may be of independent interest since to this author's knowledge, a combinatorial description of gradings in the multi-component case in the $xy$ projection has not previously appeared in the literature, even though it is surely well known to experts. Furthermore, we extend the construction of gradings to the case where there are multiple base points on single components of $\Lambda$; this is a setting that naturally arises in considering DGA maps induced by exact Lagrangian cobordisms, see e.g.\ \cite{Pan,CN}.

Let $\Lambda$ be a pointed Legendrian link with base points $\bullet_1,\ldots,\bullet_s$. In any case when $\Lambda$ has more than one base point, the grading depends on an auxiliary piece of information, a Maslov potential. The oriented unit tangent vector in $\R^2$ to $\Pi_{xy}(\Lambda)$ at each base point is an element of $S^1$, which we identify with $\R/\Z$ in the usual way: to be concrete, the vector obtained from the positive $x$ direction $\partial/\partial x$ by a counterclockwise $\theta$ rotation is associated with $\frac{\theta}{2\pi} \in \R/\Z$.

\begin{definition}
A \textit{Maslov potential} on $\Lambda$ is a map $m :\thinspace \{\bullet_1,\ldots,\bullet_s\} \to \R$ such that $m(\bullet_i)$ is a lift of the oriented unit tangent vector to $\Pi_{xy}(\Lambda)$ at $\Pi_{xy}(\bullet_i)$ from $\R/\Z$ to $\R$.
\end{definition}

\noindent
The gradings will be defined in terms of differences of Maslov potentials, and so the collection of gradings for $\Lambda$ will be an affine space based on $\Z^{s-1}$.

\begin{remark}
In practice, a convenient way to encode a Maslov potential in a link diagram is to perturb the $xy$ projection of $\Lambda$ by a planar isotopy in a neighborhood of each base point $\bullet_i$, so that the tangent vector at $\bullet_i$ rotates in $\R$ from $m(\bullet_i)$ at the beginning of the isotopy to $0$ at the end. The end result is a link diagram for $\Pi_{xy}(\Lambda)$ where the tangent direction at each base point is $\partial/\partial x$. In this new link diagram, there is an obvious Maslov potential given by $m(\bullet_i) = 0$ for all $i$. All of the gradings that we will define below are unchanged by the isotopy. Consequently, in all examples in this paper, we will choose link diagrams where the tangent direction at each base point is $\partial/\partial x$, where the Maslov potential is understood to be identically $0$. See Figure~\ref{fig:hopf-ex} for an illustration.
\label{rmk:Maslov}
\end{remark}

For an immersed oriented curve $\gamma$ in $\R^2$, define $r(\gamma) \in \R$ to be the number of counterclockwise revolutions in $S^1$ made by the unit tangent vector to $\gamma$ as we traverse $\gamma$; note that in general this will not be an integer. Recall that in Section~\ref{ssec:capping}, we constructed capping paths $\gamma_{t_i}$ associated to $\bullet_i$ and half capping paths $\gamma_j^\pm$ associated to the Reeb chord $a_j$; each of these, when composed with the projection $\Pi_{xy} :\thinspace \R^3\to\R^2$, yields an immersed oriented curve in $\R^2$. We will abuse notation and write $r(\gamma_{t_i})$ and $r(\gamma_j^{\pm})$ for $r(\Pi_{xy}\circ\gamma_{t_i})$ and $r(\Pi_{xy}\circ\gamma_j^{\pm})$.

\begin{definition}
For $v\in\SS$, we define:
\[
|v| = \left\lfloor 2\left(-r(\gamma_v) + m(\bullet_{e_+(v)}) - m(\bullet_{e_-(v)})\right) \right\rfloor.
\]
\label{def:gradings}
\end{definition}
Here if $v=p_j$ or $v=q_j$, then the full capping path $\gamma_v$ has a discontinuity, and by $r(\gamma_v)$ we mean the sum of $r$ over each of the two half capping paths:
\begin{align*}
r(\gamma_{p_j}) &= r(\gamma_j^-) - r(\gamma_j^+) \\
r(\gamma_{q_j}) &= r(\gamma_j^+) - r(\gamma_j^-).
\end{align*}

\begin{example}
For the running example from Section~\ref{ssec:hopf-ex}, we see by inspection of Figure~\ref{fig:capping-paths} that $r(\gamma_1^+) \approx \frac{7}{8}$ and $-r(\gamma_1^-)=r(-\gamma_1^-) \approx -\frac{1}{8}$, and so $|q_1| = -2$. The full grading in this example is:
\begin{align*}
|q_1|&=-2 & |p_1| &= 1 & |q_2| &= 0 & |p_2| &= -1 \\
|q_3| &= 1 & |p_3| &= -2 & |q_4| &= 1 & |p_4| &= -2 \\
|t_1| &=2 & |t_2| &= -2 & |t_3| &= 0.
\end{align*}

\end{example}

\begin{proposition}
\begin{enumerate}
\item
For any base point $\bullet_i$, $|t_i|$ is even and $|t_i^{-1}|=-|t_i|$. If $\Lambda_0$ is a component of $\Lambda$ and $\bullet_{i_1},\ldots,\bullet_{i_m}$ are the base points on component $\Lambda_0$, then
\[
|t_{i_1}|+\cdots+|t_{i_m}| = -2\rot(\Lambda_0).
\]
\item
For any Reeb chord $a_j$, we have 
\[
|q_j|+|p_j|=-1.
\]
Furthermore, $|q_j|$ is even (resp.\ odd), and $|p_j|$ is odd (resp.\ even), if the crossing corresponding to $a_j$ in the oriented link diagram given by the $xy$ projection of $\Lambda$ is positive (resp.\ negative).
\item
If we change the Maslov potential $m$ by adding $(n_1,\ldots,n_s) \in \Z^s$ to $m$, then the gradings change as follows:
\begin{align*}
\Delta |t_i| &= 2\left(n_{e_+(t_i)}-n_{e_-(t_i)}\right) \\
\Delta |q_j| &= -\Delta |p_j| = 2\left(n_{e_+(q_j)}-n_{e_-(q_j)}\right).
\end{align*}
\end{enumerate}
\end{proposition}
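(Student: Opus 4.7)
The plan for part (1) is to exploit the fact that $\gamma_{t_i}$ is a continuous oriented arc of $\Lambda$ between two base points. Along such an arc, the unit tangent vector's lift to $\R$ rotates by exactly $r(\gamma_{t_i})$, and the lift at $\bullet_{e_-(t_i)}$ is by definition $m(\bullet_{e_-(t_i)})$. Since $m(\bullet_{e_+(t_i)})$ is also a lift of the tangent at the endpoint, we get $m(\bullet_{e_-(t_i)}) + r(\gamma_{t_i}) \equiv m(\bullet_{e_+(t_i)}) \pmod{1}$, so the expression $-r(\gamma_{t_i}) + m(\bullet_{e_+(t_i)}) - m(\bullet_{e_-(t_i)})$ already lies in $\Z$. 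Doubling and applying $\lfloor\cdot\rfloor$ leaves it unchanged and yields an even integer, giving the parity claim. The identity $|t_i^{-1}| = -|t_i|$ then follows because $\gamma_{t_i^{-1}}$ is $\gamma_{t_i}$ reversed and $e_\pm(t_i^{-1}) = e_\mp(t_i)$. For the rotation-number formula, concatenating $\gamma_{t_{i_1}},\ldots,\gamma_{t_{i_m}}$ in cyclic order around a component $\Lambda_0$ traces $\Lambda_0$ once, so the rotations sum to $\rot(\Lambda_0)$ by definition, while the Maslov potential differences telescope to zero around the cycle.

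For part (2), the path $\gamma_{q_j}$ consists of two continuous arcs $\gamma_j^+$ and $-\gamma_j^-$ joined by a jump at the Reeb chord $a_j$. Tracking the tangent direction continuously along each arc, a short computation gives
\[
-r(\gamma_{q_j}) + m(\bullet_{e_+(q_j)}) - m(\bullet_{e_-(q_j)}) = \theta_-^* - \theta_+^*,
\]
where $\theta_\pm^*$ denotes the lift of the tangent direction at $a_j^\pm$ obtained by continuous rotation along $\gamma_j^\pm$ starting from $m(\bullet_{e_\mp(q_j)})$. Since $\gamma_{p_j}$ reverses $\gamma_{q_j}$ and $e_\pm(p_j) = e_\mp(q_j)$, the corresponding expression for $|p_j|$ is $\theta_+^* - \theta_-^*$. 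Because $a_j$ is a transverse double point of $\Pi_{xy}(\Lambda)$, the two tangent directions at $a_j^\pm$ are distinct in $\mathbb{RP}^1$, so $\theta_-^* - \theta_+^* \notin \tfrac{1}{2}\Z$; hence $x := 2(\theta_-^* - \theta_+^*) \notin \Z$, and the standard identity $\lfloor x\rfloor + \lfloor -x\rfloor = -1$ for non-integer $x$ yields $|q_j| + |p_j| = -1$. The parity of $|q_j|$ depends on whether the fractional part of $\theta_-^* - \theta_+^*$ lies in $(0,\tfrac12)$ or $(\tfrac12,1)$, which is precisely the dichotomy that determines whether the crossing at $a_j$ is positive or negative in the oriented $xy$ diagram.

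Part (3) reduces to the observation that replacing $m$ by $m + (n_1, \ldots, n_s)$ shifts each expression $-r(\gamma_v) + m(\bullet_{e_+(v)}) - m(\bullet_{e_-(v)})$ by the integer $n_{e_+(v)} - n_{e_-(v)}$. Because $\lfloor x + k\rfloor = \lfloor x\rfloor + k$ for $k \in \Z$, doubling yields the claimed formulas for $\Delta|t_i|$ and $\Delta|q_j|$, and the sign for $\Delta|p_j|$ follows from $e_\pm(p_j) = e_\mp(q_j)$. The main obstacle I anticipate is the parity identification in part (2): matching the interval dichotomy for $\theta_-^* - \theta_+^*$ with the standard sign convention for an oriented crossing requires carefully tracking which strand at the Reeb chord lies above in $z$ (equivalently, which has smaller slope $y = dz/dx$) and comparing with the positive-crossing convention. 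Once this geometric translation is settled, the remainder of the argument is a direct manipulation of the floor function.
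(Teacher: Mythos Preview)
Your proposal is correct and follows essentially the same approach as the paper's own proof. The only cosmetic difference is that you make the lifts $\theta_\pm^*$ explicit where the paper works directly with the tangent-direction difference modulo $\tfrac12\Z$; both arguments hinge on the same facts (the expression for $t_i$ is integral by continuity of the tangent lift, the expressions for $q_j$ and $p_j$ are negatives of each other and avoid $\tfrac12\Z$ by transversality, and the Maslov shift passes through the floor linearly), and the paper handles the parity/crossing-sign identification at the same level of detail you flag as the remaining check.
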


\begin{proof}
We first consider item (1).
As we traverse $\gamma_{t_i}$, the tangent direction in $\R^2$ changes in $\R/\Z=S^1$, though not necessarily in a way that lifts continuously to $\R$, from $m(\bullet_{e_-(t_i)})$ to $m(\bullet_{e_+(t_i)})$. Thus
\[
r(\gamma_{t_i}) \equiv m(\bullet_{e_+(t_i)}) - m(\bullet_{e_-(t_i)}) \pmod{\Z}
\]
and the expression inside the floor function in Definition~\ref{def:gradings} is an even integer; it follows that $|t_i|$ is an even integer. Since $\gamma_{t_i^{-1}}$ is the reverse of $\gamma_{t_i}$, we also deduce from Definition~\ref{def:gradings} that $|t_i^{-1}| = -|t_i|$.

For the statement about the gradings of the base points on a component $\Lambda_0$, suppose that we encounter $\bullet_{i_1},\ldots,\bullet_{i_m}$ in order as we traverse $\Lambda_0$. Then $e_+(t_{i_j}) = i_j$, $e_-(t_{i_j}) = i_{j-1}$ (where $j-1$ is taken mod $m$), and
\[
\sum_{j=1}^m |t_{i_j}| = \sum_j 2(-r(\gamma_{t_{i_j}})+m(\bullet_{i_j})-m(\bullet_{i_{j-1}}))
= -2 \sum_j r(\gamma_{t_{i_j}}).
\]
Since $\Lambda_0$ is the union of $\gamma_{t_{i_j}}$ over $j$, this last sum is $\rot(\Lambda_0)$, as desired.

We next consider item (2). For $v=p_j$ and $v=q_j$, the expressions inside the parentheses in Definition~\ref{def:gradings} satisfy
\[
-r(\gamma_{p_j}) + m(\bullet_{e_+(p_j)}) - m(\bullet_{e_-(p_j)})
= 
-\left(
-r(\gamma_{q_j}) + m(\bullet_{e_+(q_j)}) - m(\bullet_{e_-(q_j)})
\right)
\]
and the two sides of this equality are both equal in $\R/\Z$ to the difference between the tangent direction at $a_j^-$ and the tangent direction at $a_j^+$. Since we are assuming that all crossings of $\Pi_{xy}(\Lambda)$ are transverse, this quantity is neither an integer nor a half-integer. It follows that $|q_j|+|p_j| = -1$. To show that the parity of $|q_j|$ agrees with the sign of the crossing for $a_j$, note that the difference between the tangent directions at $a_j^-$ and $a_j^+$ is between $0$ and $1/2$ or between $1/2$ and $1$ mod $\Z$ depending on the sign of the crossing.

Finally, item (3) follows directly from the definition of the gradings.
\end{proof}

\begin{remark}
When $\Lambda$ has a single connected component with a single base point, the dependence on the Maslov potential $m$ disappears and our gradings agree with the corresponding definitions in \cite[\S 2.3]{SFT}. In particular, the path $\gamma_j$ in \cite{SFT} is our $-\Pi_{xy} \circ \gamma_{q_j}$, and so our definition gives $|q_j| = \lfloor 2r(\gamma_j)\rfloor$, in agreement with \cite{SFT}.
Also, in the examples considered in this paper, the Maslov potential will be understood to be identically $0$ and so the contributions from $m(\cdot)$ disappear from the above formulas; see Remark~\ref{rmk:Maslov}.
\end{remark}

%**************************************************
\subsection{Broken closed strings}
\label{ssec:bcs}

In Section~\ref{ssec:capping}, we associated a path in $\Lambda$ to each of $t_i^{\pm},q_j,p_j$. This association can be extended to arbitrary words in these generators by concatenation, resulting in a geometric interpretation of generators of $\sft$ as a $\Z$-module. To make this precise, we next introduce the notion of a (based) broken closed string on $\Lambda$: the capping paths defined in Section~\ref{ssec:capping} are examples of based broken closed strings.

\begin{definition}
Let $\Lambda$ be a pointed Legendrian link. A \textit{based broken closed string} on $\Lambda$
is a path in the Legendrian link $\Lambda$ which begins and ends at (possibly distinct) base points on $\Lambda$, and which is continuous except for the following allowed types of discontinuities:
\begin{itemize}
\item
a discontinuity jumping from one endpoint of a Reeb chord of $\Lambda$ to the other;
\item
a discontinuity jumping from one of the base points $\bullet_1,\ldots,\bullet_s$ to another.
\end{itemize}
A \textit{broken closed string} is a map from $S^1$ to $\Lambda$ that is continuous except for the same types of discontinuities as a based broken closed string.
\end{definition}

Note that there is an obvious broken closed string associated to any based broken closed string, given by gluing together the two endpoints: if the two endpoints of the based broken closed string are distinct, then this produces one more discontinuity in the associated broken closed string.

There is a correspondence between based broken closed strings (resp.\ broken closed strings) on $\Lambda$ and words (resp.\ cyclic words) in the $q_j,p_j,t_i^{\pm}$. Define $\W$ to be the collection of words in $q_1,p_1,\ldots,q_n,p_n,t_1^{\pm},\ldots,t_s^{\pm}$, including the empty word. Also define $\W\tocyc$ to be the collection of cyclic words in the same generators; that is, words up to cyclic permutation. Note that $\W$ and $\W\tocyc$ generate $\sft$ and $\sft\tocyc$, respectively, as $\Z$-modules.

To any word $v_1\cdots v_m \in \W$, where $v_1,\ldots,v_m \in \SS$, we can associate the based broken closed string
\[
\gamma_{v_1} \cdot \gamma_{v_2} \cdot \cdots \cdot \gamma_{v_m}
\]
given by the concatenation of the capping paths of $v_1,\ldots,v_m$. This association is reversible. More precisely, there is a map
\[
w :\thinspace \{\text{based broken closed strings}\} \to \W
\]
defined as follows: traverse the string and successively record $t_i$ when the string crosses $\star_i$ following the orientation of $\Lambda$, $t_i^{-1}$ when the string crosses $\star_i$ against the orientation of $\Lambda$, $p_j$ when the string jumps from the bottom endpoint $a_j^-$ of the Reeb chord $a_j$ to the top endpoint $a_j^+$, and $q_j$ when the string jumps from $a_j^+$ to $a_j^-$. For example, if the based string passes through $\star_1$ positively, then jumps from $a_1^-$ to $a_1^+$, then passes through $\star_1$ negatively, then the associated word is $t_1p_1t_1^{-1}$. We now note that
\[
w(\gamma_{v_1} \cdot \gamma_{v_2} \cdot \cdots \cdot \gamma_{v_m}) = v_1v_2\cdots v_m.
\]

We claim that $w$ induces a bijection between $\W$ and homotopy classes of based broken closed strings. To define homotopy for broken closed strings, consider the quotient $\widetilde{\Lambda}$ of $\Lambda$ obtained by identifying all of the base points $\bullet_1,\ldots,\bullet_s$ with each other to produce a single base point $\bullet$ of $\widetilde{\Lambda}$. Then a broken closed string (respectively based broken closed string) is a map from $S^1$ (resp.\ $[0,1]$) to $\widetilde{\Lambda}$ (resp.\ $(\widetilde{\Lambda},\bullet)$)  whose only discontinuities are jumps from one endpoint of a Reeb chord to the other. We then define homotopy of (based) broken closed strings to mean a homotopy in $\widetilde{\Lambda}$ that preserves all discontinuities at Reeb chords.

Since $\widetilde{\Lambda}$ is a bouquet of circles and removing the marked points $\star_1,\ldots,\star_s$ results in a contractible space, the map $w$ is injective: if two based broken closed strings map under $w$ to the same word, then they are homotopic. We conclude the following.

\begin{proposition}
The map
\[
w :\thinspace \{\text{based broken closed strings up to homotopy}\} \to \W
\]
is a bijection, as is the induced map
\[
w :\thinspace \{\text{broken closed strings up to homotopy}\} \to \W\tocyc.
\]
\end{proposition}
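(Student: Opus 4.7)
The plan is to prove both bijections by combining the surjectivity of the concatenation-of-capping-paths construction with an injectivity argument based on contractibility. The key geometric fact to leverage is that $\widetilde\Lambda$ is a bouquet of circles at $\bullet$, so that $\widetilde\Lambda \setminus \{\star_1,\ldots,\star_s\}$ is a wedge of $2s$ open half-arcs meeting at $\bullet$, hence contractible. First I would verify surjectivity of the based version: for any word $v_1\cdots v_m \in \W$, the concatenation $\gamma_{v_1}\cdot\gamma_{v_2}\cdots\gamma_{v_m}$ is a based broken closed string whose word is $v_1\cdots v_m$ by the identity already noted in the text. Any mismatch $e_+(v_i)\neq e_-(v_{i+1})$ is absorbed as a jump between base points, which is an allowed discontinuity for a based broken closed string; in $\widetilde\Lambda$ such a jump is simply a continuous visit to $\bullet$.

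For injectivity, I would take $\sigma_1,\sigma_2$ based broken closed strings with $w(\sigma_1)=w(\sigma_2)=v_1\cdots v_m$. Equality of words forces both strings to undergo exactly the same ordered sequence of events: Reeb chord jumps (corresponding to the $q_j,p_j$ letters) and transverse passages through marked points $\star_i$ (corresponding to the $t_i^{\pm 1}$ letters). After reparameterizing both by $[0,1]$ so that the $k$-th events of the two strings occur at a common time $\tau_k$, each inter-event sub-path of $\sigma_i$ lies in the complement $\widetilde\Lambda\setminus\{\star_1,\ldots,\star_s\}$ and has endpoints determined by the prefix $v_1\cdots v_k$. Contractibility of this complement then provides a rel-endpoint homotopy inside it between corresponding sub-paths of $\sigma_1$ and $\sigma_2$, and concatenating these piecewise homotopies produces a homotopy $\sigma_1\simeq\sigma_2$ through based broken closed strings: Reeb chord jumps and marked-point crossings are preserved throughout because the sub-path homotopies take place inside $\widetilde\Lambda\setminus\{\star_1,\ldots,\star_s\}$.

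For the cyclic bijection, both surjectivity and injectivity reduce to the based case. Any broken closed string $\tau:S^1\to\widetilde\Lambda$ with only Reeb chord discontinuities reads off a word up to cyclic rotation, yielding a well-defined element of $\W\tocyc$; conversely, a based broken closed string built from a cyclic word closes up to a broken closed string. Given two broken closed strings with matching cyclic words, a preliminary small homotopy (if needed) ensures that each visits $\bullet$ at least once, and then cutting at appropriate preimages of $\bullet$ in $S^1$, together with a cyclic re-indexing of one of the strings so that the linear words match, reduces the problem to the based case; the resulting based homotopy descends to a free homotopy of the original broken closed strings.

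The main subtle point, which is also the conceptual heart of the argument, is the correct interpretation of homotopy. Read naively, a homotopy in $\widetilde\Lambda$ preserving Reeb chord discontinuities could sweep across a marked point $\star_i$ and insert a $t_i t_i^{-1}$ pair into the word, so $w$ is not a priori invariant. The argument above sidesteps this by constructing the homotopies segmentwise inside $\widetilde\Lambda\setminus\{\star_1,\ldots,\star_s\}$; the content of the bijection statement is precisely that any two representatives with the same word admit such a well-behaved homotopy, and the contractibility of the complement is exactly what supplies it.
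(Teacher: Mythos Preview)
Your approach is the same as the paper's: surjectivity via the concatenation $\gamma_{v_1}\cdots\gamma_{v_m}$, and injectivity via contractibility of $\widetilde\Lambda\setminus\{\star_1,\ldots,\star_s\}$. The paper's own argument is the single sentence preceding the proposition, so you have written out considerably more detail, including the reduction of the cyclic case to the based one.

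The subtlety you flag in your last paragraph is real, but your resolution points in the wrong direction. You observe that a homotopy in $\widetilde\Lambda$ could sweep across $\star_i$ and alter the word by a $t_i t_i^{-1}$ pair; this threatens the \emph{well-definedness} of $w$ on homotopy classes, not its injectivity. Your fix --- constructing segmentwise homotopies inside the complement of the $\star_i$ --- shows only that two strings with the same word are homotopic, which is injectivity again; it says nothing about why homotopic strings must have the same word. The actual resolution is that the $t_i$ are invertible in $\sft$, so $\W$ is implicitly read modulo $t_i t_i^{-1}=t_i^{-1}t_i=1$: each continuous sub-path between consecutive Reeb jumps contributes its homotopy class rel endpoints in the bouquet $\widetilde\Lambda$, hence a reduced word in the $t_i^{\pm 1}$, and a homotopy that sweeps across $\star_i$ and back inserts a cancelling pair. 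The paper leaves this implicit as well, so your argument is no less complete than the paper's --- but since you chose to raise the point, it is worth identifying correctly which half of the bijection is at stake.
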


\begin{remark}
\label{rmk:composable}
Consider a cyclic word $v_1\cdots v_m$ in $\W\tocyc$ with $v_i \in \SS$ for each $i$. We say that the cyclic word is \textit{cyclically composable} if
\[
e_+(v_i) = e_-(v_{i+1})
\]
for each $i=1,\ldots,m-1$, and $e_+(v_m) = e_-(v_1)$. Then a word in $\W\tocyc$ is cyclically composable if and only if the corresponding broken closed string has discontinuities only at Reeb chords and not at base points. One can form a graded $\Z$-module $\A^{\operatorname{cyc},\operatorname{comp}}$ generated by composable cyclic words, where we replace the empty word in the generating set by a collection of $r$ ``idempotent'' empty words. This composable module appears in the surgery formulas of \cite{BEE,BEE2}.
\end{remark}

%**************************************************
\subsection{The Hamiltonian}
\label{ssec:h}

Given a Legendrian link $\Lambda$, rational Legendrian Symplectic Field Theory counts rigid holomorphic disks in the symplectization $\R\times\R^3$ with boundary on the Lagrangian $\R\times\Lambda$. In a familiar manner dating back to the work of Chekanov \cite{Che}, these holomorphic disks are in one-to-one correspondence with certain immersed disks-with-corners in $(\R^2,\Pi_{xy}(\Lambda))$. We review how to use this correspondence to construct a Hamiltonian in $\sft\tocyc$, following \cite{SFT}.

\begin{definition}[cf.\ {\cite[Definition 2.18]{SFT}}]
Given a diagram-generic Legendrian link $\Lambda$, let $\Delta(\Lambda)$ denote the set of all maps $\Delta$ from a disk $D^2$ equipped with any number of boundary marked points on the boundary to $\R^2$, up to domain reparametrization, such that:
\begin{itemize}
\item
$\Delta$ is continuous;
\item
$\Delta$ is an orientation-preserving immersion away from the boundary marked points;
\item
$\Delta(\partial D^2) \subset \Pi_{xy}(\Lambda)$;
\item
$\Delta$ sends each boundary marked point to a crossing of $\Pi_{xy}(\Lambda)$, and a neighborhood of each boundary marked point injectively to one of the four quadrants at that crossing.
\end{itemize}
\end{definition}

Observe that the oriented boundary of any $\Delta \in \Delta(\Lambda)$ is (the projection to $\R^2$ of) a broken closed string of $\Lambda$, with discontinuities precisely at the images of the boundary marked points. Thus, following Section~\ref{ssec:bcs}, to any $\Delta\in\Delta(\Lambda)$ we can associate a cyclic word
\[
w(\Delta) \in \W\tocyc.
\]
Each of these cyclic words $w(\Delta)$ is cyclically composable in the terminology from Section~\ref{ssec:bcs}, since the corresponding broken closed string $\partial\Delta$ has discontinuities only at Reeb chords and not at marked points.

\begin{figure}
\labellist
\small\hair 2pt
\pinlabel $p_j$ at 5 15
\pinlabel $p_j$ at 27 15
\pinlabel $q_j$ at 15 5
\pinlabel $q_j$ at 15 27
\endlabellist
\centering
\includegraphics[height=0.5in]{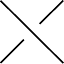}
\caption{Decorating the quadrants at a Reeb chord $a_j$.
}
\label{fig:Reebsigns}
\end{figure}

An easy way to read the cyclic word $w(\Delta)$ from $\Delta$ is as follows. Decorate the four quadrants at any Reeb chord $a_j$ by $p_j,q_j$ as shown in Figure~\ref{fig:Reebsigns}. Now traverse the boundary $\partial\Delta$, with the orientation induced by the orientation on $\Delta$, and successively read off: at the image $a_j$ of a boundary marked point, $p_j$ or $q_j$ depending on whether a neighborhood of this marked point is sent to a quadrant labeled by $p_j$ or $q_j$; and when $\partial\Delta$ passes through a marked point $\bullet_i$, $t_i$ or $t_i^{-1}$ depending on whether the orientations of $\partial\Delta$ and $\Lambda$ agree at this point. The concatenation of these letters is the cyclic word $w(\Delta)$.

Each $w(\Delta)$ inherits a grading from the gradings in Section~\ref{ssec:gradings}, and we have the following result, cf.\ \cite[Lemma~2.19]{SFT}.

\begin{proposition}
For any immersed disk $\Delta\in\Delta(\Lambda)$, we have
\label{prop:hamdeg}
\[
|w(\Delta)| = -2.
\]
\end{proposition}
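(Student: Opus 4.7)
The plan is to reduce the claim to the Gauss--Bonnet rotation formula for an immersed planar disk, together with a careful bookkeeping of fractional parts in the floor function of Definition~\ref{def:gradings}. By Remark~\ref{rmk:Maslov} I first perform small planar isotopies near each base point so that the tangent to $\Pi_{xy}(\Lambda)$ at every $\bullet_i$ is $\partial/\partial x$ and the Maslov potential is identically zero; this preserves $|w(\Delta)|$ and reduces the grading formula to $|v_i| = \lfloor -2\, r(\gamma_{v_i})\rfloor$. Since $\partial\Delta$ has discontinuities only at Reeb chord corners, the word $w(\Delta) = v_1\cdots v_m$ is cyclically composable (in the sense of Remark~\ref{rmk:composable}), and the concatenation $\gamma_{v_1}\cdots\gamma_{v_m}$ literally parametrizes $\partial\Delta$ with its prescribed tangent discontinuities.

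Set $\rho_i := -2\, r(\gamma_{v_i})$. The Whitney--Gauss--Bonnet rotation formula applied to the orientation-preserving immersion $\Delta\co D^2\to\R^2$ asserts that the total tangent rotation of $\partial\Delta$ equals $1$:
\begin{equation*}
\sum_i r(\gamma_{v_i}) + \sum_c \phi_c = 1,
\end{equation*}
where $c$ ranges over the Reeb chord corners of $\Delta$ and $\phi_c \in (0,\tfrac12)$ is the exterior angle at $c$, equal to $\tfrac12$ minus the angular width of the quadrant of the crossing occupied by $\Delta$. Equivalently, $\sum_i \rho_i = -2 + 2\sum_c \phi_c$.

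The technical heart of the argument is then the identity $\{\rho_i\} = 2\phi_{c_i}$ for the fractional part at each Reeb chord corner; the contribution from $v_i = t_j^{\pm 1}$ vanishes since the corresponding capping path begins and ends with tangent $\partial/\partial x$, forcing $r(\gamma_{t_j^{\pm 1}}) \in \Z$. Letting $\theta \in (0,1)\setminus\{\tfrac12\}$ denote the representative modulo $1$ of the angular difference between the tangents at $a_j^+$ and $a_j^-$, a short computation expresses $\{\rho_{q_j}\}$ and $\{\rho_{p_j}\}$ as piecewise linear functions of $\theta$ with the break at $\theta = \tfrac12$ corresponding to the sign of the crossing (which by item (2) of the proposition in Section~\ref{ssec:gradings} controls the parity of $|q_j|$). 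Matching these expressions against $2\phi_{c_i}$ requires a case split on the sign of the crossing and on whether the corner is $q$-type or $p$-type, using the geometric fact that opposite quadrants of a crossing have equal angular width and adjacent ones sum to $\tfrac12$. I expect this four-case verification, and in particular pinning down the convention of Figure~\ref{fig:Reebsigns} for which opposite pair of quadrants carries the label $q_j$ versus $p_j$, to be the main technical obstacle.

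Once $\{\rho_i\} = 2\phi_{c_i}$ is established, the final summation is immediate:
\begin{equation*}
|w(\Delta)| = \sum_i \lfloor\rho_i\rfloor = \sum_i \rho_i - \sum_c \{\rho_i\} = \bigl(-2 + 2\textstyle\sum_c \phi_c\bigr) - 2\textstyle\sum_c \phi_c = -2,
\end{equation*}
as claimed.
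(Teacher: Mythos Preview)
Your proposal is correct and follows essentially the same route as the paper: Gauss--Bonnet for the immersed disk plus the observation that the fractional part of $-2r(\gamma_{v_\ell})$ equals the (normalized) exterior angle at the corresponding corner, so that the floors sum to $-2$. The paper keeps the Maslov potential in play and telescopes it using cyclic composability rather than normalizing it away, but this is cosmetic.

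One simplification worth noting: you anticipate a four-case verification of $\{\rho_i\}=2\phi_{c_i}$ (sign of crossing times $q$- or $p$-type) as the main technical obstacle, but the paper avoids this case split entirely. The argument there is simply that, with $m\equiv 0$, the quantity $r(\gamma_{v_\ell})$ measures the tangent rotation along a path that jumps across the Reeb chord, and the tangent directions at the two endpoints of that jump differ by the corner angle $\theta_\ell$; hence $r(\gamma_{v_\ell})\equiv \theta_\ell/2\pi\equiv -\eta_\ell/2\pi\pmod{\tfrac12\Z}$, which immediately gives $-2r(\gamma_{v_\ell})\equiv \eta_\ell/\pi\pmod{\Z}$ with $\eta_\ell/\pi\in(0,1)$. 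This single geometric observation replaces your piecewise-linear analysis and works uniformly for all corners, so you can drop the case analysis from your plan.
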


\begin{proof}
In the case where $\Lambda$ has a single component and a single base point, this was proven in \cite[Lemma 2.19]{SFT}, and the proof here is very similar. Given $\Delta\in\Delta(\Lambda)$, write $w(\Delta) = v_1v_2\cdots v_k$, where each $v_\ell \in \SS$.  For each $v_\ell$ that is a $q_j$ or $p_j$, $\Delta$ has a corner at the Reeb chord $a_j$; let $\theta_\ell \in (0,\pi)$ denote the angle of the corner of $\Delta$ at $a_j$, and let $\eta_\ell = \pi-\theta_\ell \in (0,\pi)$ be the exterior angle of the curved polygon $\Delta$ at $a_j$. Then the angle formed by the tangent vectors to $\Pi_{xy}(\Lambda)$ at $a_j$ is
\[
r(\Pi_{xy}\circ\gamma_{v_\ell}) - m(\bullet_{e_+(v_\ell)}) + m(\bullet_{e_-(v_\ell)}) \equiv \frac{\theta_\ell}{2\pi} \equiv -\frac{\eta_\ell}{2\pi} \pmod{\frac{1}{2}\Z}
\]
and so
\begin{align*}
|v_\ell|+\frac{\eta_\ell}{\pi} & = \frac{\eta_\ell}{\pi}+\lfloor -2(r(\Pi_{xy}\circ\gamma_{v_\ell}) - m(\bullet_{e_+(v_\ell)}) + m(\bullet_{e_-(v_\ell)}) \rfloor \\
&= -2(r(\Pi_{xy}\circ\gamma_{v_\ell}) - m(\bullet_{e_+(v_\ell)}) + m(\bullet_{e_-(v_\ell)}).
\end{align*}
Note that this equation also holds if $v_\ell = t_i^{\pm 1}$ if we set $\eta_\ell = 0$ in this case.

Now sum over $\ell$: then the differences $-m(\bullet_{e_+(v_\ell)}) + m(\bullet_{e_-(v_\ell)})$ telescope since $v_1v_2\cdots v_k$ is cyclically composable, and we are left with
\[
\sum_{\ell=1}^k |v_\ell| + \frac{\sum_{\ell=1}^k \eta_\ell}{\pi} =
-2 \sum_{\ell=1}^k r(\Pi_{xy}\circ\gamma_{v_\ell}).
\]
Observe that the oriented boundary $\partial \Delta$ of $\Delta$ is homotopic as a broken closed string to $\cup_{\ell=1}^k \gamma_{v_\ell}$; thus $\sum_{\ell=1}^k r(\Pi_{xy}\circ\gamma_{v_\ell}) = r(\partial\Delta)$. But since $\Delta$ is an immersed disk, the sum of $2\pi r(\partial\Delta)$ and the external angles $\eta_\ell$ is equal to $2\pi$. Consequently, we have
\[
|w(\Delta)| = \sum_{\ell=1}^k |v_\ell| = -2r(\partial\Delta)-\sum_{\ell=1}^k \frac{\eta_\ell}{\pi} = -2,
\]
as desired.
\end{proof}

We construct the Hamiltonian for $\Lambda$ by summing the cyclic words $w(\Delta)$ over all immersed disks $\Delta\in\Delta(\Lambda)$ to obtain an element of $\sft\tocyc$. To do this precisely, we need to associate signs to each of these cyclic words. 

To this end, consider an immersed disk $\Delta\in\Delta(\Lambda)$. The disk $\Delta$ must have a corner at some Reeb chord of $\Lambda$ since otherwise it has area $0$ because each component of $\Pi_{xy}(\Lambda)$ bounds a region of signed area $0$; see also Definition/Proposition~\ref{defprop:h} below. Thus we can write the cyclic word $w(\Delta)$ as $v_1v_2\cdots v_k$ with $v_i \in \SS$ for all $i$ and $v_k \in \{q_1,p_1,\ldots,q_n,p_n\}$.
For $i=1,\ldots,k$, define
\[
\epsilon_i(\Delta) = \begin{cases}
+1 & v_i = t_j^{\pm 1} \text{ for some } j \\
+1 & v_i = q_j \text{ or } v_i=p_j \text{ and the corner of $\Delta$ at $v_i$ is unshaded} \\
-1 & v_i = q_j \text{ or } v_i=p_j \text{ and the corner of $\Delta$ at $v_i$ is shaded},
\end{cases}
\]
where ``shaded'' and ``unshaded'' refer to the shading of quadrants at the crossing $a_j$ shown in Figure~\ref{fig:signs}. We also define another sign $\epsilon'(\Delta;v_k)$ as follows: if the orientation of $\partial\Delta$ immediately after the corner $v_k$ agrees with the orientation of $\Lambda$, then $\epsilon'(\Delta;v_k) = +1$; if it disagrees, then $\epsilon'(\Delta;v_k) = -1$. Finally, we define
\[
\tilde{w}(\Delta) = \left(\epsilon_1(\Delta)\cdots \epsilon_k(\Delta)\epsilon'(\Delta;v_k)\right) v_1v_2\cdots v_k \in \sft\tocyc.
\]

\begin{figure}
\labellist
\small\hair 2pt
\pinlabel $p$ at 25 15
\pinlabel $p$ at 5 15
\pinlabel $q$ at 15 25
\pinlabel $q$ at 15 5
\pinlabel $p$ at 101 15
\pinlabel $p$ at 81 15
\pinlabel $q$ at 91 25
\pinlabel $q$ at 91 5
\endlabellist
\centering
\includegraphics[height=0.75in]{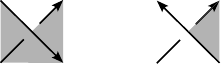}
\caption{Orientation signs. On the left, a positive crossing ($|q|$ even, $|p|$ odd); on the right, a negative crossing ($|q|$ odd, $|p|$ even). Each corner of an immersed disk contributes a $+$ sign if it occupies one of the unshaded quadrants or a $-$ sign if it occupies one of the shaded quadrants.
}
\label{fig:signs}
\end{figure}

\begin{lemma}
For any $\Delta\in\Delta(\Lambda)$, the element $\tilde{w}(\Delta) \in \sft\tocyc$ is independent of the choice of the representative word $v_1v_2\cdots v_k$ for $w(\Delta)$.
\end{lemma}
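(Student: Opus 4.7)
The plan is to reduce the invariance claim to a local telescoping identity at each letter of $w(\Delta)$, and then verify it by a finite case check.

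\textbf{Reduction via cyclic rotation.} Any two representatives of $w(\Delta)$ ending in a Reeb variable are related by a cyclic rotation $v_1\cdots v_k\mapsto v_{j+1}\cdots v_k v_1\cdots v_j$ where $v_j$ is also a Reeb variable. In $\sft\tocyc$ the rotation introduces the sign $(-1)^{AB}$ with $A=|v_1|+\cdots+|v_j|$ and $B=|v_{j+1}|+\cdots+|v_k|$; since $A+B=-2$ by Proposition~\ref{prop:hamdeg} and $n^2\equiv n\pmod 2$, the exponent $AB$ reduces mod~$2$ to $A$. The prefactor $\epsilon_1(\Delta)\cdots\epsilon_k(\Delta)$ is a product over the unordered set of corners of $\Delta$ and hence invariant under the rotation, so the lemma reduces to proving
\[
\epsilon'(\Delta;v_j)=(-1)^{|v_1|+\cdots+|v_j|}\,\epsilon'(\Delta;v_k)
\]
for every pair of Reeb indices $j,k$.

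\textbf{Telescoping via an orientation function.} Let $\sigma$ be the piecewise-constant $\{\pm 1\}$-valued function on the regular part of $\partial\Delta$ that records whether $\partial\Delta$ agrees with the orientation of $\Lambda$. By definition, $\epsilon'(\Delta;v_i)$ is the value of $\sigma$ just after the position of $v_i$. Since $\partial\Delta$ traces $\Lambda$ continuously except at Reeb corners---in particular passage through a marked point $\star_r$ (contributing a letter $t_r^{\pm 1}$) leaves $\sigma$ unchanged---the only possible jumps of $\sigma$ are at Reeb corners. Writing $\delta_i\in\{\pm 1\}$ for the multiplicative jump in $\sigma$ at position $i$, the displayed identity telescopes, and it is enough to prove
\[
\delta_i=(-1)^{|v_i|}
\]
at every position $i$.

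\textbf{Local check.} For $v_i=t_r^{\pm 1}$ this is immediate: $\delta_i=+1$, and $|t_r^{\pm 1}|$ is even by item (1) of the grading proposition in Section~\ref{ssec:gradings}. For Reeb corners, the identity is a finite geometric check over the four possible quadrants at each of the two crossing signs. Using the convention that the disk lies to the left of $\partial\Delta$, I would trace the tangent vectors entering and exiting the corner, compare them with the orientations of the two strands of $\Lambda$ meeting at the crossing, and match $\delta_i=-1$ with the configurations having odd $|v_i|$---namely $p$-corners at positive crossings and $q$-corners at negative crossings, as prescribed by item (2) of the grading proposition.

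\textbf{Main obstacle.} The last step, the geometric case analysis at Reeb corners, is the crux: one has to verify that the orientation flip $\delta_i$ computed locally at a corner matches the parity of $|v_i|$ for each of the eight corner-sign configurations. Once this table is established, the telescoping from the first two steps delivers the lemma immediately.
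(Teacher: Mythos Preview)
Your approach is correct and essentially the same as the paper's: both reduce to showing that the Koszul sign $(-1)^{AB}$ from a cyclic rotation is cancelled by $\epsilon'(\Delta;v_j)\epsilon'(\Delta;v_k)$, and both use the evenness of $|w(\Delta)|=-2$ (Proposition~\ref{prop:hamdeg}) to simplify $AB\equiv A\pmod 2$. The paper's proof is terser and simply cites \cite[Lemma~3.10]{SFT} for the remaining local verification, whereas you have unpacked the argument further by telescoping to the per-letter identity $\delta_i=(-1)^{|v_i|}$.

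Your ``main obstacle''---the eight-case check that the orientation flip at a Reeb corner matches the parity of $|v_i|$---is genuine but routine. For instance, at a positive crossing with the upper strand pointing east and lower strand pointing north, a $p$-corner in the NE quadrant has $\partial\Delta$ entering southward (against $\Lambda$) and exiting eastward (with $\Lambda$), so $\delta_i=-1=(-1)^{|p_j|}$ since $|p_j|$ is odd; a $q$-corner in the NW quadrant has $\partial\Delta$ entering eastward and exiting northward (both with $\Lambda$), so $\delta_i=+1=(-1)^{|q_j|}$. The remaining cases follow the same pattern. Once you record this table, your argument is complete and in fact more self-contained than the paper's.
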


\begin{proof}
We need to check that if we replace $v_1v_2\cdots v_k$ by a cyclic permutation $v_{i+1}\cdots v_kv_1\cdots v_i$, then the sign $(-1)^{(|v_1|+\cdots+|v_i|)(|v_{i+1}|+\cdots+|v_k|)}$ associated to this cyclic permutation is canceled by the product of the signs $\epsilon'(\Delta;v_k)\epsilon'(\Delta;v_i)$. This follows essentially immediately from the fact that $w(\Delta)$ has even degree from Proposition~\ref{prop:hamdeg}; see \cite[Lemma 3.10]{SFT} for details.
\end{proof}

\begin{defprop}
The \textit{Hamiltonian} $h \in \sft\tocyc$ is defined by
\[
h = \sum_{\Delta\in\Delta(\Lambda)} \tilde{w}(\Delta).
\]
This is a well-defined element and satisfies $|h|=-2$ and $h \in \F^1\sft\tocyc$.
\label{defprop:h}
\end{defprop}

\begin{proof}
The fact that $|h|=-2$ follows from Proposition~\ref{prop:hamdeg}. To see that $h\in\F^1\sft\tocyc$, we note that for any $\Delta\in\Delta(\Lambda)$, the standard Stokes' Theorem argument (cf.\ \cite[Lemma~2.22]{SFT}) shows that the area of $\Delta$ is equal to the sum of the heights of every Reeb chord appearing as a $p$ in $w(\Delta)$, minus the sum of the heights of every Reeb chord appearing as a $q$ in $w(\Delta)$. It follows that for each $\Delta\in\Delta(\Lambda)$, $w(\Delta)$ contains at least one $p$: $w(\Delta) \in \F^1\sft\tocyc$. Furthermore, for any $k \geq 1$, only finitely many $\Delta\in\Delta(\Lambda)$ are such that $w(\Delta)$ has at most $k$ $p$'s; it follows that $h \in \F^1\sft\tocyc$.
\end{proof}

\begin{example}
For the example from Section~\ref{ssec:hopf-ex}, we have \label{ex:h}
\[
h = p_3+t_1t_2p_3+t_1q_1t_3q_2p_3+p_4+t_3^{-1}p_4+q_2t_2^{-1}q_1p_4+t_2p_2t_3^{-1}p_1.
\]
\end{example}

When we define the $L_\infty$ structure in Section~\ref{sec:l-infty}, it will be helpful to subdivide $h$ into parts according to the number of positive punctures of the immersed disks. For $k \geq 1$, define
\[
\Delta_k(\Lambda) = \{\Delta \in \Delta(\Lambda) \,|\, w(\Delta) \text{ contains exactly } k~p\text{'s}\}
\]
and
\[
h_k = \sum_{\Delta\in\Delta_k(\Lambda)} \tilde{w}(\Delta).
\]
Then $h_k$ counts disks with $k$ positive punctures, and $h=\sum_{k\geq 1} h_k$.

\begin{remark}[sign conventions]
The sign convention that we use in this paper for the Hamiltonian $h$, determined by the shading of corners in Figure~\ref{fig:signs}, is different from the sign convention in \cite{SFT}, which is given in \cite[Figure 4]{SFT}. The difference comes from our desire in this paper to have $h_1$ induce the LCH differential $\partial$ on the Chekanov--Eliashberg DGA with its standard sign convention as described in Section~\ref{ssec:sft} below; see also Remark~\ref{rmk:d-partial}. By contrast, the sign convention for $h$ used in \cite{SFT} corresponds to an unusual choice of signs for the LCH differential: see \cite[Appendix~A]{SFT}.
\label{rmk:sign-conventions}
\end{remark}

%**************************************************
\subsection{SFT bracket}
\label{ssec:sft}

We next introduce another ingredient necessary to define the $L_\infty$ structure, the SFT bracket
\[
\{\cdot,\cdot\} :\thinspace \sft\tocyc \otimes \sft\tocyc \to \sft\tocyc.
\]
Geometrically, the SFT bracket is defined as follows. Consider two cyclic words in $\sft\tocyc$, represented as broken closed strings. If one of the cyclic words involes $q_j$ and the other involves $p_j$ for some $j$, then the corresponding broken closed strings both have a discontinuity at the Reeb chord $a_j$, traversed in opposite directions. We can glue together the broken closed strings at $a_j$ to obtain another broken closed string that no longer has the discontinuity at $a_j$. Summing over all possible ways to glue the two broken closed strings at a Reeb chord yields the SFT bracket of the original cyclic words.

We now give the precise definition of the SFT bracket, with signs. First note that given two cyclic words $v_1\cdots v_k$ and $u_1\cdots u_\ell$ where $v_1,\ldots,v_k,u_1,\ldots,u_\ell \in \SS$, we have that for each $i=0,\ldots,k-1$ and each $j=1,\ldots,\ell$,
$v_1\cdots v_k = \sigma_{v,i}  v_{i+1}\cdots v_kv_1\cdots v_i$ and $u_1\cdots u_\ell = \sigma_{u,j-1} u_{j}\cdots u_\ell u_1\cdots u_{j-1}$, where
\begin{align*}
\sigma_{v,i} &:= (-1)^{(|v_1|+\cdots+|v_i|)(|v_{i+1}|+\cdots+|v_k|)} \\
\sigma_{u,j-1} &:= (-1)^{(|u_1|+\cdots+|u_{j-1}|)(|u_{j}|+\cdots+|u_\ell|)}.
\end{align*}
Also, for $v,u \in \SS$, define
\[
\{v,u\} := \begin{cases} 
1 & \text{if $v=p_j$ and $u=q_j$ for some $j$} \\
-1 & \text{if $v=q_j$ and $u=p_j$ for some $j$} \\
0 & \text{otherwise}.
\end{cases}
\]

\begin{definition}
The \textit{SFT bracket} $\{\cdot,\cdot\} :\thinspace \sft\tocyc \otimes \sft\tocyc \to \sft\tocyc$ is the $\Z$-bilinear map defined on generators $v_1\cdots v_k,u_1\cdots u_\ell$ of $\sft\tocyc$ as follows: \label{def:SFTbracket}
\begin{align*}
\{v_1\cdots v_k,&u_1\cdots u_\ell\} \\
&= \sum_{i=0}^{k-1} \sum_{j=1}^{\ell}
\sigma_{v,i} \sigma_{u,j-1} \{v_i,u_{j}\}
v_{i+1}\cdots v_kv_1\cdots v_{i-1}u_{j+1}\cdots u_\ell u_1 \cdots u_{j-1}.
\end{align*}
\end{definition}

\noindent
Note that, because of the presence of the signs $\sigma_{v,i}$ and $\sigma_{u,j-1}$, the SFT bracket is well-defined independent of the choice of representative for the cyclic words $v_1\cdots v_k$ and $u_1\cdots u_\ell$.

\begin{example}
For the example from Section~\ref{ssec:hopf-ex}, we have \label{ex:bracket}
\[
\{t_1q_1t_3q_2p_3,t_2p_2t_3^{-1}p_1\} =t_3q_2p_3t_1t_2p_2t_3^{-1}- p_3t_1q_1p_1t_2  = 
t_1t_2p_2 q_2p_3-t_1q_1p_1t_2p_3.
\]
Note that $t_1q_1t_3q_2p_3$ and $t_2p_2t_3^{-1}p_1$ are both terms in the Hamiltonian $h$ for this link, and the SFT bracket of these two terms involves the string cobracket $\delta h$ of $h$; cf.\ Example~\ref{ex:qme} below.
\end{example}

The SFT bracket that we have presented here is nearly identical to the construction in \cite[\S 3]{SFT}, with the small difference that we are allowing for more than one $t$ variable. As in \cite{SFT}, we have the following.

\begin{proposition}[{\cite[Proposition 3.4]{SFT}}]
The SFT bracket on $\sft\tocyc$ has degree $+1$ 
\label{prop:sft-bracket}
and satisfies the following properties: 
\begin{itemize}
\item
symmetry:
\[
\{y,x\} = (-1)^{|x||y|+|x|+|y|}\{x,y\};
\]
\item
Leibniz: 
\begin{align*}
\{x,yz\} &= \{x,y\}z+(-1)^{(|x|+1)|y|}y\{x,z\} \\
\{xy,z\} &= x\{y,z\}+(-1)^{|y|(|z|+1)}\{x,z\}y;
\end{align*}
\item
Jacobi:
\[
\{x,\{y,z\}\} + (-1)^{(|x|+1)(|y|+|z|)}\{y,\{z,x\}\} + (-1)^{(|z|+1)(|x|+|y|)}\{z,\{x,y\}\} = 0.
\]
\end{itemize}
\end{proposition}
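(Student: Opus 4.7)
The plan is to follow the same outline as \cite[Proposition 3.4]{SFT}, where these identities are verified in the setting of Legendrian knots with a single base point. The only modifications needed for our multi-base-point setting are cosmetic: since $\{t_i^{\pm 1}, v\} = \{v, t_i^{\pm 1}\} = 0$ for all $v\in \SS$, the $t$-variables pass inertly through the bracket computations and only contribute to the ambient Koszul signs, exactly as the single $t$ variable did in \cite{SFT}.

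First I would handle the degree and symmetry statements. The degree claim is immediate from Definition~\ref{def:SFTbracket}: a term $\{v_i,u_j\}$ contributes only when $(v_i,u_j) \in \{(p_\ell,q_\ell),(q_\ell,p_\ell)\}$, and in that case $|v_i|+|u_j|=-1$, so the surviving cyclic word has degree $|v_1\cdots v_k|+|u_1\cdots u_\ell|+1$. For symmetry, at the atomic level one checks $\{v,u\} = -(-1)^{|v||u|}\{u,v\}$ from the defining table, using that $|p_\ell|$ and $|q_\ell|$ have opposite parities so $|p_\ell||q_\ell|$ is even. Reindexing the double sum in Definition~\ref{def:SFTbracket} and absorbing the cyclic-rearrangement signs $\sigma_{v,i},\sigma_{u,j-1}$ then yields the claimed identity $\{y,x\}=(-1)^{|x||y|+|x|+|y|}\{x,y\}$, the extra $|x|+|y|$ in the exponent being the standard Koszul correction for a degree $+1$ bracket.

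For Leibniz, consider $\{x,yz\}$ with $yz$ the concatenation of cyclic words. Every term in $\{x,yz\}$ comes from a contraction of one atom of $x$ with one atom of $yz$, and the latter atom lies either in $y$ or in $z$. The $y$-contractions reassemble, after cyclic rotation, into $\{x,y\}z$; the $z$-contractions reassemble into $y\{x,z\}$, acquiring the Koszul sign $(-1)^{(|x|+1)|y|}$ from moving the shifted-degree bracket past $y$. The identity $\{xy,z\}=x\{y,z\}+(-1)^{|y|(|z|+1)}\{x,z\}y$ is established symmetrically, or deduced from the first using the symmetry property already proved.

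The main obstacle is the Jacobi identity. I would expand each of the three nested brackets as a sum indexed by an ordered pair of contractions: one contraction internal to the inner bracket and a second connecting that to the outer argument. Every resulting term is specified by choosing two disjoint Reeb chords $a_\ell,a_{\ell'}$ together with positions in $x,y,z$ at which their paired letters appear; each such choice contributes to exactly two of the three Jacobi summands, and one must verify that the two contributions carry opposite signs. The cancellation reduces to the statement that the nested signs $\sigma_{v,i}\sigma_{u,j-1}$, combined over the two orderings, differ precisely by the outer Koszul factor $(-1)^{(|x|+1)(|y|+|z|)}$ (and its cyclic variants) appearing in the Jacobi identity. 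This atomic sign identity is independent of the number of base points and is precisely the check carried out in \cite[Proposition 3.4]{SFT}; the present case adds no new difficulty beyond carrying the passive $t_i$ letters through the same bookkeeping.
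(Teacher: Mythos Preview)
Your proposal is correct and matches the paper's approach: the paper does not give a proof of this proposition at all, but simply cites \cite[Proposition~3.4]{SFT} and remarks that the only change from that reference is the presence of multiple $t$ variables, which is exactly the point you make. Your sketch of the degree, symmetry, Leibniz, and Jacobi verifications is an accurate summary of how the argument in \cite{SFT} proceeds, and your observation that the $t_i^{\pm 1}$ variables are inert under the bracket is precisely why the extension to multiple base points requires no new ideas.
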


The SFT bracket in Definition~\ref{def:SFTbracket}, which is defined on $\sft\tocyc$, descends to a map on $\sft\tocomm$, which we also call the SFT bracket. On $\sft\tocomm$, there is a simpler way to define the SFT bracket than on $\sft\tocyc$. For clarity, we separate out this ``definition'', even though in fact it follows from Definition~\ref{def:SFTbracket}.

\begin{definition}
The \textit{SFT bracket} $\{\cdot,\cdot\} :\thinspace \sft\tocomm \otimes \sft\tocomm \to \sft\tocomm$ is the Poisson bracket defined on generators by:
\begin{gather*}
\{p_i,q_j\} = -\{q_j,p_i\} = \delta_{ij} \\
\{p_i,p_j\} = \{q_i,q_j\} = \{t_i^{\pm 1},p_j\} = \{t_i^{\pm 1},q_j\} = \{t_i^{\pm 1},t_j^{\pm 1}\} = 0.
\end{gather*}
That is, $\{\cdot,\cdot\}$ is uniquely determined by its action on generators, along with symmetry and the Leibniz rule.
\end{definition}

We conclude this subsection by reviewing the differential $\partial$ in the (commutative) Chekanov--Eliashberg DGA $(\A\tocomm,\partial)$ and its relation to the Hamiltonian and SFT bracket. Recall that $\Delta_1(\Lambda)$ is the collection of immersed disks with boundary on $\Pi_{xy}(\Lambda)$ and convex corners, exactly one of which is labeled by a $p$ (a ``positive corner''). For a given $p_j \in \SS$, let $\Delta_1(\Lambda;p_j)$ denote the subset of $\Delta_1(\Lambda)$ consisting of disks whose positive corner is at $p_j$.

\begin{figure}
\labellist
\small\hair 2pt
\pinlabel $p$ at 25 15
\pinlabel $p$ at 5 15
\pinlabel $q$ at 15 25
\pinlabel $q$ at 15 5
\pinlabel $p$ at 101 15
\pinlabel $p$ at 81 15
\pinlabel $q$ at 91 25
\pinlabel $q$ at 91 5
\endlabellist
\centering
\includegraphics[height=0.75in]{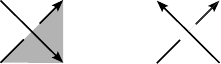}
\caption{Orientation signs for the differential in the Chekanov--Eliashberg DGA. On the left, a positive crossing ($|q|$ even, $|p|$ odd); on the right, a negative crossing ($|q|$ odd, $|p|$ even). Each corner of an immersed disk contributes a $+$ sign if it occupies one of the unshaded quadrants or a $-$ sign if it occupies one of the shaded quadrants.
}
\label{fig:signs-lch}
\end{figure}

Given $\Delta\in\Delta_1(\Lambda;p_j)$, we can associate a word $w'(\Delta)$ by taking $w(\Delta)$ and removing $p_j$: to be precise, $w'(\Delta) = v_1\cdots v_k$ where $v_1,\ldots,v_k \in \{q_1,\ldots,q_n,t_1^{\pm},\ldots,t_s^{\pm}\}$ are the labels of the corners and base points that are encountered when we traverse the boundary of $\Delta$ starting just after $p_j$ and ending just before $p_j$. We can also define a sign $\sgn(\Delta) \in \{\pm 1\}$ to be the product of the ``LCH orientation signs'' associated to the corners of $\Delta$, including $p_j$, as shown in Figure~\ref{fig:signs-lch}: that is, $\sgn(\Delta)$ is $(-1)$ raised to the number of corners of $\Delta$ that are shaded. 

The differential $\partial$ on $\A\tocomm$ is now defined as follows:
\[
\partial(q_j) = \sum_{\Delta\in\Delta_1(\Lambda;p_j)} \sgn(\Delta) w'(\Delta).
\]
Extend $\partial$ to all of $\A\tocomm$ by setting $\partial(t_i^{\pm 1})=\partial(1)=0$ and using the Leibniz rule.

Note that the orientation signs for LCH, as determined by the shadings in Figure~\ref{fig:signs-lch}, are different from the orientation signs for the Hamiltonian $h$, as shown in Figure~\ref{fig:signs}. However, they have been chosen to be compatible in the following sense; see also Remark~\ref{rmk:sign-conventions}.

\begin{proposition}
For any $x\in\A\tocomm$, we have \label{prop:sft-lch}
\[
\partial(x) = \{h_1,x\}.
\]
\end{proposition}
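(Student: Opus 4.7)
The plan is to reduce this identity to a check on algebra generators and then finish with a sign comparison. Both $\d$ and $\{h_1,\cdot\}$ are graded derivations on $\A\tocomm$ of degree $-1$: $\d$ is a derivation by definition, and $\{h_1,\cdot\}$ is a graded derivation of degree $|h_1|+1=-1$ by the Leibniz identity in Proposition~\ref{prop:sft-bracket} together with $|h_1|=-2$ from Definition/Proposition~\ref{defprop:h}. Since $\A\tocomm$ is generated as a graded-commutative algebra by $q_1,\dots,q_n,t_1^{\pm1},\dots,t_s^{\pm1}$, it suffices to verify the identity on these generators.

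For $x=t_i^{\pm1}$, both sides vanish trivially: $\d(t_i^{\pm1})=0$ by definition, and $\{h_1,t_i^{\pm1}\}=0$ because the commutative SFT bracket vanishes on every pair of generators in which a $t$-variable appears.

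The substantive case is $x=q_j$. Using that on $\sft\tocomm$ the SFT bracket is the Poisson bracket with $\{p_k,q_\ell\}=\delta_{k\ell}$, I would apply the Leibniz rule to the monomial expansion of $h_1=\sum_{\Delta\in\Delta_1(\Lambda)}\tilde w(\Delta)$. Since each $\tilde w(\Delta)$ contains exactly one $p$-variable, only disks $\Delta\in\Delta_1(\Lambda;p_j)$ contribute to $\{h_1,q_j\}$, and each such disk yields the monomial obtained from $\tilde w(\Delta)$ by deleting its unique factor of $p_j$, with some sign; this deleted monomial is exactly $w'(\Delta)$. The remaining task is to show, disk by disk, that this sign equals $\sgn(\Delta)$, after which summing over $\Delta$ gives $\{h_1,q_j\}=\sum_{\Delta\in\Delta_1(\Lambda;p_j)}\sgn(\Delta)w'(\Delta)=\d(q_j)$.

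The main obstacle is exactly this sign verification, which mixes three separate contributions: (i) the product of corner-shading signs $\epsilon_i(\Delta)$ and the cyclic boundary correction $\epsilon'(\Delta;v_k)$ appearing in $\tilde w(\Delta)$ via the shading of Figure~\ref{fig:signs}; (ii) the graded commutation signs produced by the Leibniz rule when $p_j$ is moved past the other letters so as to bracket with $q_j$; and (iii) the LCH orientation sign of Figure~\ref{fig:signs-lch} defining $\sgn(\Delta)$. The cleanest route is to pick a cyclic representative of $w(\Delta)$ whose final letter $v_k$ is the $p_j$: this makes $\epsilon'(\Delta;v_k)$ easy to read off and eliminates the commutation signs from moving $p_j$. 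The identification then reduces to a purely local comparison at each corner of $\Delta$ between the two shading conventions, and the conventions in Figures~\ref{fig:signs} and~\ref{fig:signs-lch} are designed (see Remark~\ref{rmk:sign-conventions}) so that the product of signs in (i) and (ii) precisely recovers $\sgn(\Delta)$.
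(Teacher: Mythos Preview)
Your proposal is correct and follows essentially the same approach as the paper's proof: reduce to generators via Leibniz, dispose of $t_i^{\pm1}$ trivially, and for $q_j$ choose the cyclic representative of $\tilde w(\Delta)$ ending in $p_j$ so that the bracket kills $p_j$ with no commutation sign and leaves $w'(\Delta)$. The paper's version is only slightly more explicit on the final sign check: it observes that the shadings of Figures~\ref{fig:signs} and~\ref{fig:signs-lch} differ \emph{only} at the two $p$-quadrants where the post-corner boundary orientation disagrees with $\Lambda$, and that this discrepancy at the unique $p_j$-corner is exactly $\epsilon'(\Delta;p_j)$, whence $\epsilon_1(\Delta)\cdots\epsilon_k(\Delta)\epsilon'(\Delta;p_j)=\sgn(\Delta)$; your sketch asserts this local comparison works but stops short of naming it.
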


\begin{proof}
Since both $\partial$ and $\{h_1,\cdot\}$ satisfy the Leibniz rule, it suffices to check $\partial(x) = \{h_1,x\}$ for $x\in \{q_1,\ldots,q_n,t_1^{\pm},\ldots,t_s^{\pm}\}$. For $x=t_i^{\pm}$, both sides are $0$. Now suppose that $x=q_j$. Then for each $\Delta\in\Delta_1(\Lambda;p_j)$, we can write $\tilde{w}(\Delta) = \sgn'(\Delta) v_1\cdots v_{k-1}p_j$, where $\sgn'(\Delta)$ is the sign defined in Section~\ref{ssec:h}: in the notation from that subsection, we have $\sgn'(\Delta) = \epsilon_1(\Delta) \cdots \epsilon_k(\Delta) \epsilon'(\Delta;p_j)$ and $v_k=p_j$. Now note that the shadings in Figures~\ref{fig:signs} and~\ref{fig:signs-lch} differ in two quadrants, namely the $p$ quadrants where the orientation of the quadrant's boundary (induced from the standard orientation on the quadrant) disagrees with the orientation of $\Lambda$ after the corner. That is, the difference between the sign $\epsilon_k(\Delta)$ associated to the corner at $p_j$ from Figure~\ref{fig:signs} and the corresponding sign from Figure~\ref{fig:signs-lch} is precisely $\epsilon'(\Delta;p_j)$. It follows that $\sgn'(\Delta) = \sgn(\Delta)$ and
\[
\{\sgn'(\Delta) v_1\cdots v_{k-1}p_j,q_j\} = \sgn(\Delta) v_1\cdots v_{k-1}.
\]
Summing over all $\Delta\in\Delta_1(\Lambda;p_j)$ yields $\partial(q_j) = \{h_1,q_j\}$, as desired.
\end{proof}

%**************************************************
\subsection{String coproduct}
\label{ssec:delta}

The final ingredient we need in order to define the $L_\infty$ structure is the string coproduct. This is a map
\[
\delta :\thinspace \sft \to \sft,
\]
descending to $\delta :\thinspace \sft\tocyc \to \sft\tocyc$ and $\delta :\thinspace \sft\tocomm \to \sft\tocomm$, that inserts a $(p,q)$ pair somewhere in the middle of any (based) broken closed string. Here we define $\delta$ and present some of its basic properties, following \cite[\S 3.2]{SFT}. The term ``string coproduct'' is borrowed from string topology \cite{CS}; see also \cite{CL} for a discussion of the relevance of this operation to Symplectic Field Theory. We note that our use of the term is somewhat imprecise: the $\delta$ operation can be viewed as dividing a broken closed string into itself along with a trivial broken string at a Reeb chord, and this resembles but is not identical to the usual coproduct in string topology as introduced by Chas--Sullivan \cite{CS}.

\begin{definition}
A (based) broken closed string $\gamma$ on $\Lambda$ is \textit{generic} if whenever $\gamma(t)$ is at the endpoint of some Reeb chord, $\gamma'(t) \neq 0$; in particular, if $\gamma$ has a Reeb chord discontinuity at $t$, then $\gamma'(t^-) := \lim_{\tau\to t^-}\gamma'(\tau)$ and $\gamma'(t^+) :=\lim_{\tau\to t^+} \gamma'(\tau)$ are nonzero.

A generic (based) broken closed string $\gamma$ \textit{has holomorphic corners} if wherever $\gamma$ has a discontinuity at a Reeb chord, the $xy$ projection of $\gamma$ turns left at the crossing corresponding to the Reeb chord.
\end{definition}

Any (based) broken closed string is homotopic to a generic (based) broken closed string that has holomorphic corners: see Figure~\ref{fig:hol-corners} for an illustration.

\begin{figure}
\labellist
\small\hair 2pt
\pinlabel{$a_1$} at 4 70
\pinlabel{$a_1$} at 271 54
\endlabellist
\centering
\includegraphics[width=\textwidth]{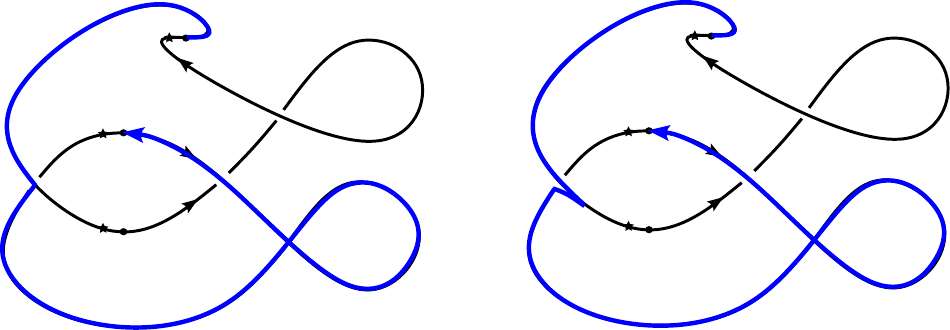}
\caption{
The capping path $\gamma_{q_1}$ from Figure~\ref{fig:capping-paths} (left) does not have a holomorphic corner at $a_1$. It can be perturbed in a neighborhood of $a_1$ to produce a generic based broken closed string that does have a holomorphic corner at $a_1$ (right).
}
\label{fig:hol-corners}
\end{figure}

Now suppose that $\gamma :\thinspace [0,1] \to \Lambda$ is a generic based broken closed string with holomorphic corners. We say that $\tau \in (0,1)$ is \textit{interior Reeb} for $\gamma$ if $\gamma$ does not have a Reeb chord discontinuity at $\tau$ but $\gamma(\tau)$ lies at the endpoint of some Reeb chord of $\Lambda$. In this case, we can define a new based broken closed string $\delta(\gamma;\tau)$ that coincides with $\gamma$ except that at time $\tau$ we insert a jump from $\gamma(\tau)$ to the other endpoint of the Reeb chord and back again. 

As in Section~\ref{ssec:bcs}, the based broken closed string $\gamma$ corresponds to a word $w(\gamma) = v_1\cdots v_m$. Suppose that $\tau$ is interior Reeb for $\gamma$ and $\gamma(\tau)$ lies on the segment of $\gamma$ between $v_i$ and $v_{i+1}$. Then
\[
w(\delta(\gamma;\tau)) = \begin{cases}
v_1\cdots v_i p_jq_j v_{i+1} \cdots v_m & \text{if } \gamma(\tau) = a_j^-, \text{ the negative endpoint of Reeb chord $a_j$} \\
v_1\cdots v_i q_jp_j v_{i+1}\cdots v_m &  \text{if } \gamma(\tau) = a_j^+, \text{ the positive endpoint of Reeb chord $a_j$}.
\end{cases}
\]
Furthermore, we can associate a sign to $\delta(\gamma;\tau)$ as follows. Let $\epsilon_1 = +1$ if $\gamma(\tau) = a_j^-$ and $\epsilon_1 = -1$ if $\gamma(\tau) = a_j^+$. Let $\epsilon_2 = \pm 1$ depending on whether the orientation of $\gamma$ at $\tau$ agrees or disagrees with the orientation of $\Lambda$. Finally, define
\[
\epsilon(\gamma;\tau) = (-1)^{|v_1|+\cdots+|v_i|}\epsilon_1 \epsilon_2.
\]

\begin{definition}
The \textit{string coproduct} $\delta :\thinspace \sft \to \sft$ is defined as follows. Suppose that $w\in\W$ is a word in $\SS$ and let $\gamma$ be any generic based broken closed string with holomorphic corners for which $w(\gamma) = w$. Then
\label{def:delta}
\[
\delta(w) = \sum_{\text{$\tau$ interior Reeb for $\gamma$}} \epsilon(\gamma;\tau) w(\delta(\gamma;\tau)).
\]
\end{definition}

The definition of $\delta(w)$ is independent of the choice of $\gamma$; see \cite[\S 3.2]{SFT}. 

\begin{example}
Here we calculate $\delta(t_2)$, $\delta(q_1)$, and $\delta(p_1)$ for the example from Section~\ref{ssec:hopf-ex}. The path $\gamma_{t_2}$, as shown in Figure~\ref{fig:capping-paths}, passes through the positive endpoint $a_1^+$ of the Reeb chord $a_1$; it follows that $\delta(t_2) = -q_1p_1t_2$. To compute $\delta(q_1)$, note that the path $\gamma_{q_1}$ in Figure~\ref{fig:capping-paths} does not have holomorphic corners. Once we perturb it to have holomorphic corners as shown in the right hand diagram in Figure~\ref{fig:hol-corners}, we see that it passes through $a_1^+$, $a_4^-$, $a_4^+$, and $a_2^+$ in succession. This yields the four terms in
\[
\delta(q_1) = -q_1p_1q_1-q_1p_4q_4+q_1q_4p_4+q_1q_2p_2.
\]
By contrast, to compute $\delta(p_1)$, we note that the path $\gamma_{p_1}$ given by the reverse of $\gamma_{q_1}$ from Figure~\ref{fig:capping-paths} does have holomorphic corners and does not need to be perturbed. This leads to one fewer term in $\delta(p_1)$ than in $\delta(q_1)$:
\[
\delta(p_1) = -q_2p_2p_1-q_4p_4p_1+p_4q_4p_1.
\]
\end{example}

\begin{proposition}[{\cite[Proposition 3.8]{SFT}}]
The map $\delta :\thinspace \sft \to \sft$ has degree $-1$ and satisfies the following properties: \label{prop:delta-properties}
\begin{itemize}
\item
$\delta(xy) = (\delta x)y + (-1)^{|x|}x(\delta y)$;
\item
$\delta^2=0$.
\end{itemize}
\end{proposition}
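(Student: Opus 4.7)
The plan is to verify the three claims in order: first degree, then Leibniz, then $\delta^2 = 0$, since each subsequent claim can use the geometric picture set up for the previous ones. Throughout, I would represent every word $w \in \W$ by a fixed choice of generic based broken closed string $\gamma$ with holomorphic corners, and phrase all computations in terms of $\gamma$.

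\textbf{Degree.} The degree is essentially an accounting computation. Each term in $\delta(w)$ is obtained from $w$ by inserting either $p_jq_j$ or $q_jp_j$ at some position; since $|p_j|+|q_j|=-1$, inserting such a pair lowers the total degree by $1$, independent of the ordering of $p_j$ and $q_j$ or the sign $\epsilon(\gamma;\tau)$. So $\delta$ has degree $-1$.

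\textbf{Leibniz.} Given words $x$ and $y$ represented by generic broken closed strings $\gamma_x,\gamma_y$ with holomorphic corners, the concatenation $\gamma_x\cdot\gamma_y$ represents $xy$, and its interior Reeb points partition into those coming from $\gamma_x$ and those coming from $\gamma_y$ (since the gluing point is a base point, not a Reeb chord endpoint). The interior Reeb contributions from $\gamma_x$ assemble into $(\delta x)\,y$ on the nose. For an interior Reeb point $\tau$ lying in the $\gamma_y$ portion, the prefix of letters appearing in the sign $(-1)^{|v_1|+\cdots+|v_i|}$ now includes all letters of $x$ followed by a prefix of $y$, producing an extra factor of $(-1)^{|x|}$ compared to the same point viewed inside $\gamma_y$ alone; this gives the $(-1)^{|x|}x(\delta y)$ piece, and one also checks that $\epsilon_1,\epsilon_2$ only depend on local data at $\tau$, so they are unaffected by the concatenation.

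\textbf{The relation $\delta^2=0$.} This is the technical heart of the proof and the main obstacle; everything comes down to a sign-matching argument. Given $w$ represented by $\gamma$, I would first observe that the insertion at an interior Reeb point $\tau$ turns $\tau$ into a Reeb chord discontinuity and visits the opposite endpoint of the Reeb chord only at that same instant; since genericity implies each point of $\Lambda$ lies at the endpoint of at most one Reeb chord, the insertion creates no new interior Reeb points and destroys only $\tau$ itself. Consequently, $\delta^2(w)$ is a sum over ordered pairs $(\tau_1,\tau_2)$ of distinct interior Reeb points of $\gamma$, and the strategy is to show that the two orderings of any unordered pair contribute with opposite signs, so all terms cancel in pairs. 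Concretely, for $\tau_1$ lying strictly before $\tau_2$ in $\gamma$, the two iterated applications produce the same underlying word, but the sign in the ``$\tau_1$ then $\tau_2$'' order picks up a prefix contribution that includes the inserted pair $p_{j_1}q_{j_1}$ (or $q_{j_1}p_{j_1}$) of total degree $-1$, while the ``$\tau_2$ then $\tau_1$'' order does not. A short computation, carried out at the level of the exponents of the prefix sign $(-1)^{|v_1|+\cdots+|v_i|}$ in Definition~\ref{def:delta}, shows that after cancellation of the common $(-1)^{|v_1|+\cdots+|v_i|+|v_{i+1}|+\cdots+|v_k|}$ factors and of $\epsilon_1^{(a)}\epsilon_2^{(a)}$ for $a=1,2$, the two signs differ precisely by $(-1)^{|p_{j_1}|+|q_{j_1}|}=(-1)^{-1}=-1$. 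The hard part, and the place where I would expect to need care, is confirming that the representing string chosen for $\delta(\gamma;\tau_1)$ can be perturbed so that the interior Reeb point $\tau_2$ remains interior Reeb with holomorphic corners, and that $\epsilon_1,\epsilon_2$ at $\tau_2$ are genuinely local data unchanged by the insertion at $\tau_1$; this is the content of independence of the choice of $\gamma$ in Definition~\ref{def:delta}, and in fact once this local-independence is in hand, the sign-pairing argument above gives $\delta^2=0$.
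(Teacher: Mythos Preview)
The paper does not supply a proof of this proposition; it is imported verbatim from \cite[Proposition~3.8]{SFT}. So there is no in-paper argument to compare against, and your write-up is effectively a reconstruction of the referenced proof.

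Your approach is sound and matches the strategy one would expect from \cite{SFT}. The degree and Leibniz computations are routine and correct as you describe them: inserting a $p_jq_j$ or $q_jp_j$ pair shifts degree by $|p_j|+|q_j|=-1$, and the partition of interior Reeb points under concatenation at a base point gives the derivation property with the Koszul sign coming from the enlarged prefix. For $\delta^2=0$, the pairing $(\tau_1,\tau_2)\leftrightarrow(\tau_2,\tau_1)$ and the sign discrepancy $(-1)^{|p_{j_1}|+|q_{j_1}|}=-1$ is exactly the right mechanism.

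The one place to be more careful is your claim that the insertion at $\tau_1$ creates no new interior Reeb points. This is true for a \emph{suitable} representative of $w(\delta(\gamma;\tau_1))$, but not automatically for the canonical capping-path representative: the concatenation $\gamma_{p_{j_1}}\cdot\gamma_{q_{j_1}}$ traverses the half capping path $\gamma_{j_1}^+$ forwards and backwards, passing through every Reeb endpoint on it twice. What you want is the representative obtained by perturbing the inserted spike at $a_{j_1}^{\pm}$ by an arbitrarily short arc (chosen to make both new corners holomorphic), short enough not to meet any Reeb endpoint. That such a perturbation exists, and that the resulting value of $\delta$ agrees with any other choice, is precisely the independence-of-representative statement you invoke; once that is in hand your cancellation argument goes through.
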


The map $\delta :\thinspace \sft \to \sft$ descends to maps $\sft\tocyc \to \sft\tocyc$ and $\sft\tocomm \to \sft\tocomm$, which we will also write as $\delta$. Note that $\delta^2=0$ also holds in these quotients, and that $\delta$ is also a derivation in $\sft\tocomm$ with respect to associative multiplication.

%**************************************************
\subsection{Derivation property for $\delta$}
\label{ssec:derivation}

When we construct the $L_\infty$ structure on $\A\tocomm$ in Section~\ref{sec:l-infty}, a key ingredient in the proof of the $L_\infty$ relations is that the string coproduct $\delta$ behaves nicely with respect to the SFT bracket. Specifically, when $\Lambda$ has a single base point, $\delta$ is a derivation with respect to $\{\cdot,\cdot\}$:

\begin{proposition}[{\cite[Proposition 3.8]{SFT}}]
If $\Lambda$ is a Legendrian knot with a single base point, then for any $x,y\in\sft\tocyc$,
\label{prop:delta-knot}
\[
\delta\{x,y\} = \{\delta x,y\}-(-1)^{|x|}\{x,\delta y\}.
\]
\end{proposition}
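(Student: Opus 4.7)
The plan is a direct geometric verification. Geometrically, the SFT bracket $\{\cdot,\cdot\}$ glues two based broken closed strings at a shared Reeb chord, removing a matching $p_m$/$q_m$ pair of discontinuities, while the string coproduct $\delta$ inserts a new pair of Reeb-chord discontinuities at an interior Reeb point of a generic broken closed string with holomorphic corners. I would compare the two sides of the desired identity by enumerating the broken closed strings that contribute to each.

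By $\Z$-bilinearity, I would first reduce to the case where $x = v_1 \cdots v_k$ and $y = u_1 \cdots u_\ell$ are single cyclic words, and choose generic representatives $\gamma_x, \gamma_y$ with holomorphic corners. For each pair $(i,j)$ with $\{v_i, u_j\} \neq 0$ (so that $v_i$ and $u_j$ form a matching $p_m$/$q_m$ pair), the SFT bracket produces a glued broken closed string $\gamma_{ij}$. Its interior Reeb points partition into three classes: (i) those inherited from $\gamma_x$ away from the gluing region; (ii) those inherited from $\gamma_y$; and (iii) two newly created interior Reeb points at $a_m^-$ and $a_m^+$, where the gluing erased the previous Reeb-chord discontinuities.

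I would then verify that, summed over all matching pairs $(i,j)$, the type (i) contributions to $\delta\{x,y\}$ reassemble precisely into $\{\delta x, y\}$, the type (ii) contributions into $-(-1)^{|x|}\{x,\delta y\}$ (the sign encoding the transposition of $\delta$ past $x$ together with the orientation convention for $\epsilon(\gamma;\tau)$ in Definition~\ref{def:delta}), and the type (iii) contributions against the ``cross terms'' on the right-hand side. By the latter I mean those terms of $\{\delta x, y\}$ and $\{x,\delta y\}$ in which $\delta$ first inserts a new $p_m q_m$ or $q_m p_m$ pair and the bracket subsequently pairs one of the fresh letters with a letter of the other word; these cross terms have no counterpart in types (i) or (ii) and must be absorbed by the type (iii) terms of $\delta\{x,y\}$.

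The main obstacle is the final sign bookkeeping. One must simultaneously track the cyclic-permutation signs $\sigma_{v,i}, \sigma_{u,j-1}$ from Definition~\ref{def:SFTbracket}, the insertion signs $\epsilon(\gamma;\tau)$ from Definition~\ref{def:delta}, and the global sign $(-1)^{|x|}$, and show that the type (iii) contributions to $\delta\{x,y\}$ exactly cancel the cross terms in $\{\delta x, y\} - (-1)^{|x|}\{x, \delta y\}$. The single-base-point hypothesis enters here essentially: with only one base point, consecutive capping paths concatenate continuously, so the two broken closed strings being compared across the cancellation represent genuinely the same cyclic word up to a computable sign. With several base points the intervening arcs in the two strings would traverse different sets of marked points and pick up mismatched $t_i$ factors, which is precisely the obstruction that will force the weaker identity in the general link case alluded to in the opening remarks of Section~\ref{sec:sft}.
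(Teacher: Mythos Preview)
The paper does not actually prove this proposition; it simply cites \cite[Proposition 3.8]{SFT}. What the paper \emph{does} prove is the generalization, Proposition~\ref{prop:delta}, for arbitrary pointed Legendrian links, and the method there is quite different from yours. Rather than a global geometric comparison of broken closed strings, the paper defines $f(x,y) = \delta\{x,y\} - \{\delta x,y\} + (-1)^{|x|}\{x,\delta y\}$, observes that $f$ inherits the same symmetry and Leibniz properties as the correction $g(x,y) = -(-1)^{|x|}\tb(x,y) - (-1)^{|x|(|y|+1)}\tb(y,x)$, and thereby reduces to checking $f=g$ on pairs of generators in $\SS$. The verification is then a finite case analysis (four cases depending on whether each input is a $q/p$ or a $t^{\pm 1}$), carried out by tracking which capping paths pass through which Reeb-chord endpoints. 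In the single-base-point situation $g\equiv 0$, so this specializes to your statement.

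Your geometric approach is sound in outline and is essentially the picture underlying the original argument in \cite{SFT}: classify interior Reeb points of the glued string as inherited from $x$, inherited from $y$, or created at the gluing site, and match the last class against the ``cross terms'' on the right-hand side. The trade-off is that the paper's algebraic reduction localizes all sign checking to a handful of explicit cases on generators, whereas your approach requires tracking the cyclic-permutation, insertion, and Koszul signs simultaneously across arbitrarily long words. Either route works, but the reduction-to-generators strategy is what allowed the author to extend the result to the multi-base-point case and identify the precise correction $\tb$; your global geometric argument would make that extension harder to see.
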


In the general case where $\Lambda$ has multiple components and base points, $\delta$ is not a derivation with respect to $\{\cdot,\cdot\}$, even if we pass to the commutative quotient $\sft\tocomm$. However, $\delta$ does still satisfy a modified version of the derivation property on $\sft\tocomm$. This subsection is devoted to stating and proving this property. To state it, we introduce some more notation that will also play a key role in our construction of the $L_\infty$ structure.
% on $\A\tocomm$ in Section~\ref{sec:l-infty}.

We first note that while $\sft$ is generated as a $\Z$ module by elements of $\W$, which are words in the alphabet $\SS=\{q_1,\ldots,q_n,p_1,\ldots,p_n,t_1^{\pm},\ldots,t_s^{\pm}\}$, $\sft\tocomm$ is instead generated as a $\Z$-module by ``commutative words'', words in the alphabet $\SS$ up to arbitrary permutation. (Note that two words that are related by permutation are only equal up to the usual sign, e.g., $x_2x_1 = (-1)^{|x_1||x_2|}x_1x_2$.)
Write $\W\tocomm$ as the collection of commutative words in $\SS$, including the empty word.

\begin{definition}
Given a pointed Legendrian link $\Lambda$ with $s$ base points, define maps \label{def:vw}
\[
\vv,\ww \co \W\tocomm \to \Z^s
\]
as follows. Let $\mathbf{e}_1,\ldots,\mathbf{e}_s$ be the standard basis of $\Z^s$. For any $v \in \SS = \{q_1,\ldots,q_n,p_1,\ldots,p_n,t_1^{\pm},\ldots,t_s^{\pm}\}$, set
\[
\vv(v) = \ee_{e_+(v)}-\ee_{e_-(v)}
\qquad 
\ww(v) = \begin{cases} \ee_i & v = t_i \\
-\ee_i & v = t_i^{-1} \\
0 & v\in\{q_1,\ldots,q_n,p_1,\ldots,p_n\},
\end{cases}
\]
where $e_\pm(v)$ are defined in Section~\ref{ssec:capping}.
Now extend $\vv,\ww$ multiplicatively to $\W\tocomm$:
\[
\vv(xy) = \vv(x)+\vv(y) \qquad \ww(xy) = \ww(x)+\ww(y).
\]
\end{definition}

The maps $\vv,\ww$ can be interpreted geometrically as follows. Represent a commutative word $x\in\W\tocomm$ by a word $\tilde{x}\in\W$ (in any way) and let $\gamma$ be the based broken closed string associated to $\tilde{x}$ as in Section~\ref{ssec:bcs}. Then the $j$-th entry in $\vv(x)$ is the number of end points of $\gamma$ at $\bullet_j$ minus the number of beginning points of $\gamma$ at $\bullet_j$, and the $j$-th entry in $\ww(x)$ is the number of times $\gamma$ passes through $\star_j$, counted with sign.

\begin{definition}
Define a map $\beta \co \W\tocomm \times \W\tocomm \to \textstyle{\frac{1}{2}}\Z$
by
\[
\beta(x,y) = \left( -\textstyle{\frac{1}{2}}\vv(x)+\ww(x)\right) \cdot \vv(y),
\]
and define a map $\tb \co \sft\tocomm \otimes \sft\tocomm \to \sft\tocomm$ to be the bilinear map determined by \label{def:beta}
\[
\tb(x,y) = \beta(x,y) xy
\]
for $x,y\in\W\tocomm$.
\end{definition}

We now have the following derivation property for the string cobracket $\delta$.

\begin{proposition}
For any $x,y\in\sft\tocomm$,
\label{prop:delta}
\[
\delta \{x,y\} = \{\delta x,y\} - (-1)^{|x|}\{x,\delta y\} - (-1)^{|x|}\tb(x,y) - (-1)^{|x|(|y|+1)}\tb(y,x).
\]
\end{proposition}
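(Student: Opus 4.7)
The plan is a two-step reduction: first use Leibniz-type rules to reduce to the case where $x,y\in\SS$ are single generators, and then verify the base case directly by computation from the capping paths.

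For the reduction step, recall that $\delta$ is a graded derivation with respect to associative multiplication (Proposition~\ref{prop:delta-properties}), and $\{\cdot,\cdot\}$ obeys the graded Leibniz rule (Proposition~\ref{prop:sft-bracket}). Since $\vv$ and $\ww$ are additive on products by Definition~\ref{def:vw}, $\beta$ is additive in each of its arguments, and a short calculation in $\sft\tocomm$ gives the identities
\begin{align*}
\tb(xy,z) &= (-1)^{|y||z|}\,\tb(x,z)\,y + x\,\tb(y,z), \\
\tb(x,yz) &= \tb(x,y)\,z + (-1)^{|y||z|}\,\tb(x,z)\,y.
\end{align*}
Combining these with the Leibniz rules for $\delta$ and $\{\cdot,\cdot\}$, one shows that if the desired identity holds for the pairs $(x,z)$ and $(y,z)$, then it holds for $(xy,z)$, and symmetrically for the second argument. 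By induction on the total length of the commutative words involved, it suffices to verify the identity when both $x$ and $y$ are generators in $\SS$.

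For the base case, note that for $x,y \in \SS$ the bracket $\{x,y\}$ is either zero or equal to $\pm 1$, and in either case $\delta\{x,y\} = 0$. The identity then reduces to
\[
\{\delta x, y\} - (-1)^{|x|}\{x,\delta y\} = (-1)^{|x|}\tb(x,y) + (-1)^{|x|(|y|+1)}\tb(y,x).
\]
Here the left-hand side is computed by reading the interior-Reeb points off the capping paths $\gamma_x$ and $\gamma_y$ per Definition~\ref{def:delta}, then pairing off a single $p_j$ or $q_j$ from $\delta x$ with $y$ (only nonzero when $y \in \{p_j,q_j\}$) and symmetrically for $\delta y$. The right-hand side is read off from the combinatorial data $\vv(x),\vv(y),\ww(x),\ww(y)$. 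A direct case analysis, organized by whether each of $x,y$ is one of $t_i^{\pm 1}$, $q_j$, or $p_j$, compares the two sides; the geometric content is that the interior-Reeb insertions on the left, after pairing, count precisely the endpoint–$\star$ crossings that $\beta$ records.

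The principal obstacle will be sign management. The factor of $\frac{1}{2}$ in the definition of $\beta$ never appears in an individual $\tb(x,y)$ coefficient in isolation: only the symmetrized combination $(-1)^{|x|}\tb(x,y) + (-1)^{|x|(|y|+1)}\tb(y,x)$ has $\Z$-coefficients, and matching these against the orientation signs $\epsilon_1,\epsilon_2,(-1)^{|v_1|+\cdots+|v_i|}$ in $\delta$ together with the cyclic reorientation signs $\sigma_{v,i},\sigma_{u,j-1}$ appearing in the SFT bracket requires patient bookkeeping. A useful geometric picture to guide this is that $\tb(x,y)$ accounts for the ``missed'' interior-Reeb insertions along the based broken closed string $\gamma_y$ attached to $\gamma_x$ at a base point: the base-point discontinuities in the concatenation $\gamma_x \cdot \gamma_y$ are exactly the places where $\delta$ acting before versus after bracketing disagree, and $\beta$ records this mismatch up to the symmetrization factor of $\frac{1}{2}$.
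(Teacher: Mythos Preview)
Your approach is essentially the same as the paper's: both reduce via Leibniz-type rules to generators in $\SS$ (the paper packages this by defining $f(x,y)=\delta\{x,y\}-\{\delta x,y\}+(-1)^{|x|}\{x,\delta y\}$ and $g(x,y)=-(-1)^{|x|}\tb(x,y)-(-1)^{|x|(|y|+1)}\tb(y,x)$ and checking they obey identical symmetry and Leibniz rules), and then verify the base case by a direct case check on the capping paths. One subtlety you do not mention but the paper treats carefully: when $x,y\in\{q_i,p_i\}$ lie over the \emph{same} Reeb chord, the capping path may fail to have holomorphic corners, and the perturbation required to fix this produces an extra term in $\delta$ (e.g.\ $-q_ip_iq_i$ in $\delta q_i$ when $|q_i|$ is even) that is precisely what matches the $\tb$ contribution in that case.
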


Note that the correction to the derivation property for $\delta$, $- (-1)^{|x|}\tb(x,y) - (-1)^{|x|(|y|+1)}\tb(y,x)$, is a half-integer multiple of $xy$. In the special case where $\Lambda$ is a knot with a single base point, $\vv \equiv 0$ and thus this correction disappears, in agreement with Proposition~\ref{prop:delta-knot} above.

\begin{proof}[Proof of Proposition~\ref{prop:delta}]
For $x,y\in\W\tocomm$, define
\begin{align*}
f(x,y) &:= \delta\{x,y\} - \{\delta x,y\}+(-1)^{|x|}\{x,\delta y\} \\
g(x,y) &:= - (-1)^{|x|}\tb(x,y) - (-1)^{|x|(|y|+1)}\tb(y,x).
\end{align*}
We wish to show that $f(x,y) = g(x,y)$ for all $x,y\in\sft\tocomm$.
Note that $g(y,x) = (-1)^{|x||y|+|x|+|y|}g(x,y)$, while by the definition of $\tb$, $g(x,yz) = g(x,y)z+(-1)^{|x||y|}yg(x,z)$ for any $x,y,z\in\W\tocomm$. But $f$ satisfies the same properties:
$f(y,x) = (-1)^{|x||y|+|x|+|y|}f(x,y)$ from the symmetry property of $\{\cdot,\cdot\}$ in Proposition~\ref{prop:sft-bracket}, while $f(x,yz) = f(x,y)z+(-1)^{|x||y|}yf(x,z)$ by Propositions~\ref{prop:sft-bracket} and~\ref{prop:delta-properties}. To show $f(x,y)= g(x,y)$ for all $x,y\in\sft\tocomm$, it thus suffices to check that $f(x,y) = g(x,y)$ for all $x,y\in\SS$; the general result will follow for $x,y\in\W\tocomm$ by induction on wordlength, and then for $x,y\in\sft\tocomm$ by linearity.

We show that $f(x,y) = g(x,y)$ for $x,y\in\SS$ by breaking into cases. Note that for $x,y\in\SS$, $\delta\{x,y\} = 0$ and so $f(x,y) =  - \{\delta x,y\}+(-1)^{|x|}\{x,\delta y\}$.

\vspace{11pt}
\noindent
\textsc{Case 1:} $x\in\{q_i,p_i\}$ and $y\in\{q_j,p_j\}$, where $i\neq j$. 

In this case we have $\beta(x,y) = \beta(y,x) = -\frac{1}{2} \vv(x) \cdot \vv(y)$ and thus $g(x,y) = (-1)^{|x|} \vv(x)\cdot \vv(y) \,xy$.

Write $x^\pm$ for the endpoints of the Reeb chord $a_i$, where $x$ travels along the Reeb chord from $x^-$ to $x^+$: that is, $x^{\pm} = a_i^\pm$ if $x=p_i$ and $x^{\pm} = a_i^{\mp}$ if $x=q_i$. Similarly define $y^{\pm}$. Write the capping path $\gamma_x$ as $\gamma_x^- \cdot \gamma_x^+$, where $\gamma_x^-$ is the portion of $\gamma_x$ up to $x^-$ (i.e., $\gamma_i^-$ if $x=p_i$, $\gamma_i^+$ if $x=q_i$) and $\gamma_x^+$ is the portion after $x^+$ ($-\gamma_i^+$ if $x=p_i$, $-\gamma_i^-$ if $x=q_i$). Similarly write $\gamma_y = \gamma_y^- \cdot \gamma_y^+$. Finally, write $x^*,y^*$ for the ``signed duals'' to $x,y$: $x^*=-p_i$ if $x=q_i$ and $x^*=q_i$ if $x=p_i$, and similarly for $y$. Note that $\{x,x^*\} = -\{x^*,x\} = \{y,y^*\} = -\{y^*,y\}=1$.

Now $\{\delta x,y\} = 0$ unless at least one of $\gamma_x^\pm$ passes through an endpoint $a_j^\pm$ of $y$. By the definition of $\delta$, if $\gamma_x^-$ passes through $a_j^-$ (resp.\ $a_j^+$), then $\delta x$ contains the term $+p_jq_jx$ (resp.\ $-q_jp_jx$). Stated another way, if $\gamma_x^-$ passes through $y^+$ (resp.\ $y^-$), then $\delta x$ contains the term $-y^*yx$ (resp.\ $yy^*x$). The resulting contribution to $-\{\delta x,y\}$ is
$-\{-y^*yx,y\} = (-1)^{|x|}\{xyy^*,y\}=-(-1)^{|x|}xy$ (resp.\
$-\{yy^*x,y\} = -(-1)^{|x|}\{xyy^*,y\} = (-1)^{|x|}xy$). But in both cases this exactly equals the contribution to $(-1)^{|x|}\vv(x)\cdot\vv(y)\,xy$ arising from the fact that $\gamma_x^-$ shares an endpoint with $\gamma_y^\pm$.

In a similar way, we can show that in each instance where $\gamma_x$ shares an endpoint with $\gamma_y$ at a base point, we get equal contributions to $f(x,y)$ and $g(x,y)$:
\[
\begin{array}{|c||c|c|c|}
\hline
& \multicolumn{2}{c|}{f(x,y)} 
& g(x,y) \\ \cline{2-4}
& -\{\delta x,y\} & (-1)^{|x|}\{x,\delta y\} & (-1)^{|x|} \vv(x)\cdot \vv(y)\,xy \\ \hline\hline
% y+x-
\gamma_x^- \text{ contains } y^+
& -\{-y^*yx,y\} & & - (-1)^{|x|} xy \\ \hline
% y-x-
\gamma_x^- \text{ contains } y^-
& -\{yy^*x,y\} & & (-1)^{|x|} xy \\ \hline 
% y+x+
\gamma_x^+ \text{ contains } y^+
& -\{-(-1)^{|x|}xy^*y,y\} & & (-1)^{|x|} xy \\ \hline 
% y-x-
\gamma_x^+ \text{ contains } y^-
& -\{(-1)^{|x|}xyy^*,y\} & & - (-1)^{|x|} xy \\ \hline
% x+y-
\gamma_y^- \text{ contains } x^+
& & (-1)^{|x|}\{x,-x^*xy\} &  - (-1)^{|x|} xy \\ \hline
% x-y-
\gamma_y^- \text{ contains } x^-
& & (-1)^{|x|}\{x,xx^*y\} &  (-1)^{|x|} xy \\ \hline
% x+y+
\gamma_y^+ \text{ contains } x^+
& & (-1)^{|x|}\{x,-(-1)^{|y|}yx^*x\} &  (-1)^{|x|} xy \\ \hline
% x-y+
\gamma_y^+ \text{ contains } x^-
& & (-1)^{|x|}\{x,(-1)^{|y|}yxx^*\} &  - (-1)^{|x|} xy \\ \hline
\end{array}
\]
In each case the expression in the $f(x,y)$ columns equals the expression in the $g(x,y)$ column. This shows that $f(x,y)=g(x,y)$.

\vspace{11pt}
\noindent
\textsc{Case 2:} $x,y\in\{q_i,p_i\}$. 

This is similar to Case 1 but with additional contributions to $f(x,y)$ and $g(x,y)$. Specifically, if $|q_i|$ is even, then the capping path $\gamma_{q_i}$ does not have holomorphic corners and must be perturbed, leading to an additional term $-q_ip_iq_i$ in $\delta q_i$. This adds $2q_i^2,-2q_ip_i,-2p_iq_i,0$ to $f(q_i,q_i),f(q_i,p_i),f(p_i,q_i),f(p_i,p_i)$ respectively, in each case exactly matching the contribution to $g(x,y) = (-1)^{|x|}\vv(x)\cdot \vv(y)\,xy$ coming from the fact that $\gamma_x$ and $\gamma_y$ have the same endpoints, either in the same order if $x=y$ or in opposite order otherwise. (Note for $x=y=p_i$ that $p_i^2=0$.) Similarly, if $|q_i|$ is odd, then $\gamma_{p_i}$ does not have holomorphic corners, leading to an additional term $+p_iq_ip_i$ in $\delta p_i$. This adds $0,2q_ip_i,-2p_iq_i,2p_i^2$ to $f(q_i,q_i),f(q_i,p_i),f(p_i,q_i),f(p_i,p_i)$ respectively, again matching the corresponding contribution to $g(x,y)$ in each case.

\vspace{11pt}
\noindent
\textsc{Case 3:} $x\in\{t_i,t_i^{-1}\},y\in\{q_j,p_j\}$. By symmetry this also covers the case when $x$ and $y$ are swapped.

For $x = t_i^{\pm 1}$, we have
$f(x,y) = -\{\delta t_i^{\pm 1},y\}$. This is nonzero if either endpoint $y^\pm$ of $y$ lies on the capping path $\gamma_{t_i}$ between $t_{e_-(t_i)}$ and $t_i$. If $y^+$ (resp.\ $y^-$) lies on $\gamma_{t_i}$, then $\delta t_i$ contains a term $-y^*yt_i$ (resp.\ $yy^*t_i$), and this contributes $-t_iy$ (resp.\ $t_iy$) to $f(t_i,y)$ and $t_i^{-1}y$ (resp.\ $-t_i^{-1}y$) to $f(t_i^{-1},y)$. On the other hand, we compute that
\[
g(t_i^{\pm 1},y) = \left((\vv(t_i^{\pm 1})-\ww(t_i^{\pm 1}))\cdot \vv(y)\right)\,t_i^{\pm 1}y = \mp (\ee_{e_-(t_i)} \cdot \vv(y))\, yt_i^{\pm 1}
\]
and the presence of $y^\pm$ on $\gamma_{t_i}$ contributes $\pm 1$ to $\ee_{e_-(t)} \cdot \vv(y)$. We conclude that $f(x,y)=g(x,y)$.

\vspace{11pt}
\noindent
\textsc{Case 4:} $x\in\{t_i,t_i^{-1}\},y\in\{t_j,t_j^{-1}\}$.

In this case we have $f(x,y)=0$ and $g(x,y) = -(\beta(x,y)+\beta(y,x))xy$. Define signs $\pm_1,\pm_2 \in\{\pm 1\}$ by $x = t_i^{\pm_1}$, $y=t_j^{\pm_2}$. Then
\begin{align*}
\beta(x,y)+\beta(y,x) &= \left(\ww(t_i^{\pm_1})-\vv(t_i^{\pm_1})\right)\cdot \left(\ww(t_j^{\pm_2})-\vv(t_j^{\pm_2})\right) - \ww(t_i^{\pm_1})\cdot \ww(t_j^{\pm_2}) \\
&= \pm_1 \pm_2 \left( \ee_{e_-(t_i)}\cdot\ee_{e_-(t_j)} - \ee_i \cdot \ee_j \right).
\end{align*}
But both $\ee_i \cdot \ee_j$ and $\ee_{e_-(t_i)}\cdot\ee_{e_-(t_j)}$ are $0$ unless $i=j$, in which case they are both $1$. Thus $g(x,y)=0$ and we are done.
\end{proof}

To conclude this subsection, we collect some easy properties of $\tb$ that will be useful later. To set this up, we extend the definitions of $\vv,\ww,\beta$ slightly, as follows. If $x\in\sft\tocomm$ is a linear combination of commutative words $w_i$ such that $\vv(w_i) \in \Z^s$ is the same for all $i$, then we define $\vv(x)$ to be equal to this same element of $\Z^s$. Similarly we can define $\ww(x) \in \Z^s$ if $\ww(w_i)$ is the same for all $i$, and $\beta(x,y) \in \frac{1}{2}\Z$ is defined as in Definition~\ref{def:beta} whenever $\vv(x),\ww(x),\vv(y)$ are well-defined.

\begin{proposition}
\begin{enumerate}
\item
The Hamiltonian $h$ satisfies $\vv(h) = 0$. \label{it:beta4}
\item 
If $y\in\W\tocomm$ satisfies $\vv(y) = 0$, then
$\tb(x,y) = 0$ for any $x\in\sft\tocomm$. It follows that the Hamiltonian $h$ satisfies
$\tb(x,h) = 0$ for any $x\in\sft\tocomm$. \label{it:beta1}
\item
For any $x_1,x_2 \in \sft\tocomm$ for which $\vv(x_1),\vv(x_2)$ are defined, $\vv(\{x_1,x_2\}) = \vv(x_1x_2) = \vv(x_1)+\vv(x_2)$, and similarly for $\ww$. \label{it:beta5}
\item
If $\vv(x)$ is defined for $x\in\sft\tocomm$, then $\vv(\delta x) = \vv(x)$. \label{it:beta6}
\item
For any $x_1,x_2,x_3 \in \W\tocomm$, we have \label{it:beta3}
\begin{align*}
\beta(\{x_1,x_2\},x_3) &= \beta(x_1x_2,x_3) = \beta(x_1,x_3) + \beta(x_2,x_3) \\
\beta(x_1,\{x_2,x_3\}) &= \beta(x_1,x_2x_3) = \beta(x_1,x_2)+\beta(x_1,x_3).
\end{align*}
\item
For any $x_1,x_2 \in \A\tocomm$ and $y\in\sft\tocomm$, we have  \label{it:beta2}
\[
\{y,\tb(x_2,x_1)\}+(-1)^{|x_1|(|x_2|+1)}\{\tb(y,x_1),x_2\} = 
\tb(\{y,x_2\},x_1) + (-1)^{|x_2|(|y|+1)} \beta(x_2,x_1) x_2 \{y,x_1\}.
\]
\end{enumerate}
\label{prop:beta}
\end{proposition}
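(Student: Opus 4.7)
My plan is to dispatch items (1)--(5) as short consequences of the definitions, and then reduce item (6) to a Leibniz computation that collapses because $\{x_1,x_2\}$ vanishes on $\A\tocomm$.

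For (1), each $\Delta\in\Delta(\Lambda)$ contributes a cyclically composable word $w(\Delta)=v_1\cdots v_k$, as already observed in Section~\ref{ssec:h}; the sum $\vv(w(\Delta))=\sum_i(\ee_{e_+(v_i)}-\ee_{e_-(v_i)})$ then telescopes to zero because $e_+(v_i)=e_-(v_{i+1})$ cyclically. Summing over disks gives $\vv(h)=0$, and (2) is immediate from the formula $\beta(x,y)=(-\tfrac{1}{2}\vv(x)+\ww(x))\cdot\vv(y)$. For (3) and (4), the key observation is that $\vv(q_j)+\vv(p_j)=0$ and $\ww(q_j)=\ww(p_j)=0$. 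In $\sft\tocomm$, the SFT bracket $\{x_1,x_2\}$ is, by Leibniz and the generator formulas, a sum of terms obtained by deleting a matched $(p_j,q_j)$ pair between $x_1$ and $x_2$, while $\delta$ inserts a $p_jq_j$ or $q_jp_j$ pair. In both cases the deleted or inserted pair contributes zero to $\vv$ and to $\ww$, so the claimed equalities hold. Item (5) then follows directly: $\beta(-,x_3)$ is linear in $\vv(-)$ and $\ww(-)$, and these are additive under both multiplication and bracket by (3).

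For (6) I would expand $\tb(a,b)=\beta(a,b)\cdot ab$, pull the scalar $\beta$ factors out of brackets, and apply the two Leibniz identities from Proposition~\ref{prop:sft-bracket}: $\{y,x_2x_1\}=\{y,x_2\}x_1+(-1)^{(|y|+1)|x_2|}x_2\{y,x_1\}$ and $\{yx_1,x_2\}=y\{x_1,x_2\}+(-1)^{|x_1|(|x_2|+1)}\{y,x_2\}x_1$. The crucial simplification is that $x_1,x_2\in\A\tocomm$ contain no $p$-variables, so $\{x_1,x_2\}=0$ and the $y\{x_1,x_2\}$ term vanishes outright. What remains on the left-hand side collects into $(\beta(x_2,x_1)+\beta(y,x_1))\,\{y,x_2\}x_1+(-1)^{|x_2|(|y|+1)}\beta(x_2,x_1)\,x_2\{y,x_1\}$. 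On the right, $\tb(\{y,x_2\},x_1)=\beta(\{y,x_2\},x_1)\{y,x_2\}x_1$, and (5) gives $\beta(\{y,x_2\},x_1)=\beta(y,x_1)+\beta(x_2,x_1)$, so both sides agree. The main obstacle I anticipate is sign bookkeeping---in particular verifying that the prefactor $(-1)^{|x_1|(|x_2|+1)}$ in the statement is exactly what is needed to absorb the Leibniz sign in $\{yx_1,x_2\}$ and produce a clean $\beta(y,x_1)\{y,x_2\}x_1$ contribution. It suffices to check the identity on monomials $y,x_1,x_2\in\W\tocomm$ and extend by bilinearity.
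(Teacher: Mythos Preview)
Your proposal is correct and follows essentially the same route as the paper: items (1)--(5) are dispatched as immediate consequences of the definitions (the paper is even terser here than you are), and item (6) is reduced to words, expanded via the Leibniz rules for $\{\cdot,\cdot\}$ together with $\{x_1,x_2\}=0$, and then matched using the additivity in (5). Your anticipated sign check works out exactly as you describe.
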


\begin{proof}
\eqref{it:beta4} holds because $h$ is a linear combination of cyclically composable words, each of which satisfies $\vv=0$; \eqref{it:beta1} is an immediate consequence.
\eqref{it:beta5} is clear by inspection of the definitions of $\vv,\ww$ in Definition~\ref{def:vw} and the SFT bracket in Definition~\ref{def:SFTbracket}, and \eqref{it:beta6} similarly follows from the definition of $\delta$ in Definition~\ref{def:delta}; \eqref{it:beta3} follows from \eqref{it:beta5}.

To establish \eqref{it:beta2}, we may assume that $x_1,x_2,y$ are all words in $\W\tocomm$. Then by applying Proposition~\ref{prop:sft-bracket} and using the fact that $\{x_1,x_2\}=0$ since $x_1,x_2\in\A\tocomm$, we compute that
\begin{align*}
\{y,\tb(x_2,x_1)\} &= \beta(x_2,x_1) \{y,x_2x_1\} = \beta(x_2,x_1)(\{y,x_2\}x_1+(-1)^{(|y|+1)|x_2|}x_2\{y,x_1\}) \\
\{\tb(y,x_1),x_2\} &= \beta(y,x_1)\{yx_1,x_2\} = \beta(y,x_1)(-1)^{|x_1|(|x_2|+1)}\{y,x_2\}x_1 \\
\{\tb(y,x_2),x_1\} &= \beta(y,x_2)\{yx_2,x_1\} = \beta(y,x_2)(-1)^{|x_2|(|x_1|+1)}\{y,x_1\}x_2;
\end{align*}
combining these and using \eqref{it:beta3} yields \eqref{it:beta2}.
\end{proof}

%**************************************************
\subsection{The quantum master equation and differential}
\label{ssec:qme}

We conclude this section by discussing the relation between the Hamiltonian $h$ and the SFT bracket and string cobracket. This allows us to define a ``differential'' $d$ on $\sft\tocomm$ that extends the standard differential $\partial$ in the (commutative) Chekanov--Eliashberg DGA.

The key relation is a formula called the ``quantum master equation'' in \cite{SFT}. We omit the proof here since it is identical to the proof given there: at heart, a standard Floer-theoretic argument decomposing the boundary of a $1$-dimensional moduli space into products of two $0$-dimensional moduli spaces.

\begin{proposition}[{\cite[Proposition 3.13]{SFT}}]
If we view the Hamiltonian $h$ as an element of either $\sft\tocyc$ or $\sft\tocomm$, then we have:
 \label{prop:qme}
\[
\delta h = \frac{1}{2} \{h,h\}.
\]
\end{proposition}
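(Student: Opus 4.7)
The plan is to mimic the standard Floer-theoretic $\partial^2=0$ argument, but applied to the one-parameter moduli space of almost-rigid immersed disks in $(\R^2,\Pi_{xy}(\Lambda))$. Concretely, I would introduce the space $\widetilde{\Delta}(\Lambda)$ of continuous maps $D^2\to\R^2$ satisfying the same axioms as $\Delta(\Lambda)$ except that one boundary marked point is allowed to be a non-convex ``virtual'' corner, i.e., a point on $\partial D^2$ whose image lies at a Reeb chord endpoint but where the map is locally an immersion rather than having an honest corner. The index count shows $\widetilde{\Delta}(\Lambda)$ is one-dimensional, and its boundary has exactly two strata, corresponding to the two degeneration pictures in Figure~\ref{fig:degenerations}.

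The first boundary stratum arises as the virtual corner becomes an honest corner, making the disk break into two rigid disks $\Delta_1,\Delta_2 \in \Delta(\Lambda)$ glued along a Reeb chord $a_j$: the corner of $\Delta_1$ at $a_j$ is a $p_j$ and the corner of $\Delta_2$ is a $q_j$, or vice versa. Summing $\tilde w(\Delta_1)$ and $\tilde w(\Delta_2)$ with the appropriate gluing signs reproduces exactly one term in the SFT bracket $\{w(\Delta_1),w(\Delta_2)\}$; summing over both $(\Delta_1,\Delta_2)$ orderings accounts for the factor $\tfrac12$ in $\tfrac12\{h,h\}$ (each unordered two-story configuration is counted twice by $\{h,h\}$, once with $p$ on $\Delta_1$ and once with $p$ on $\Delta_2$). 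Carefully, I would verify that the orientation sign from Figure~\ref{fig:signs} together with the sign $\epsilon'(\Delta;v_k)$ in $\tilde w$ exactly match the bilinear signs $\sigma_{v,i},\sigma_{u,j-1},\{v,u\}$ in Definition~\ref{def:SFTbracket}.

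The second boundary stratum arises when the virtual corner approaches the Reeb chord endpoint along the existing boundary of a single rigid disk $\Delta\in\Delta(\Lambda)$. In the limit, $\Delta$ itself is unchanged but its boundary now touches a Reeb chord endpoint in its interior, and the resulting broken closed string is precisely $\delta(w(\Delta);\tau)$ for the interior-Reeb time $\tau$ where the contact occurs. Summing over all $\Delta$ and all interior-Reeb $\tau$ yields $\delta h$ exactly (with the sign $\epsilon(\gamma;\tau) = (-1)^{|v_1|+\cdots+|v_i|}\epsilon_1\epsilon_2$ from Definition~\ref{def:delta} matching the sign picked up by transporting the virtual corner past earlier letters of $w(\Delta)$ and by comparing the local orientation of $\partial\Delta$ with $\Lambda$).

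The fact that signed endpoints of a compact oriented $1$-manifold sum to zero then gives $-\tfrac12\{h,h\} + \delta h = 0$, i.e., the quantum master equation. The main obstacle is purely bookkeeping: showing that the orientation signs on the two boundary strata exactly match, under the conventions of Figures~\ref{fig:signs} and the capping-path sign rules for $\delta$. In the knot case with one base point, this is carried out in \cite[Proposition 3.13]{SFT}, and the argument transports verbatim once one checks that the additional $t_i^{\pm 1}$ letters in $w(\Delta)$ contribute trivially to all relevant signs (since they are assigned $\epsilon_i(\Delta)=+1$ and $|t_i|$ is even), and that the convergence $h\in\F^1\sft\tocyc$ from Definition/Proposition~\ref{defprop:h} makes $\{h,h\}$ and $\delta h$ well-defined in the completed algebra. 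No new analytic content is needed; the key is that neither the bracket nor the coproduct require a single base point, and the combinatorial degeneration analysis is unchanged by passing to links.
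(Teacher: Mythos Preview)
Your proposal is correct and matches the paper's approach: the paper omits the proof entirely, citing \cite[Proposition~3.13]{SFT} and noting only that it is ``at heart, a standard Floer-theoretic argument decomposing the boundary of a $1$-dimensional moduli space into products of two $0$-dimensional moduli spaces,'' which is exactly what you outline. One terminological point: the $1$-dimensional moduli space in \cite{SFT} is usually described as disks with one \emph{obtuse} (three-quadrant) corner rather than a ``virtual'' smooth corner, but the two descriptions are equivalent---your virtual-corner limit is the $\delta h$ end of that family and the honest-corner limit is the $\{h,h\}$ end.
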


\begin{remark}
In \cite[Proposition 3.13]{SFT}, the quantum master equation is presented as $\delta h + \frac{1}{2}\{h,h\}=0$. However, our choice of sign convention for $h$ in Section~\ref{ssec:h} differs from the one in \cite{SFT}, and consequently the sign in Proposition~\ref{prop:qme} differs as well. See \cite[Appendix~A]{SFT} for a full discussion of this sign difference.
\end{remark}

\begin{example}
Using our calculation of $h$ in Example~\ref{ex:h}, we can verify Proposition~\ref{prop:qme} for the example from Section~\ref{ssec:hopf-ex}. For instance, the bracket $\{t_1q_1t_3q_2p_3,t_2p_2t_3^{-1}p_1\}  =t_1t_2p_2 q_2p_3-t_1q_1p_1t_2p_3$
%t_1t_2p_2 q_2p_3-t_1q_1p_1t_2p_3$ 
computed in Example~\ref{ex:bracket} contributes to $\frac{1}{2}\{h,h\}$ and corresponds to the terms 
$t_1t_2(p_2q_2p_3)$ and $t_1(-q_1p_1t_2)p_3$ from $t_1t_2(\delta p_3)$ and $t_1(\delta t_2)p_3$, which appear in $\delta h$. \label{ex:qme}
\end{example}

The quantum master equation in Proposition~\ref{prop:qme} resembles the Maurer--Cartan equation, and indeed the Hamiltonian $h$ plays the role of a Maurer--Cartan element in SFT. Specifically, we can deform the map $\delta$ using $h$ to construct a differential.

\begin{definition}
The \textit{SFT differential} is the map 
$d \co \sft\tocomm \to \sft\tocomm$ defined by: \label{def:sftdiff}
\[
d(x) = \{h,x\} - \delta x.
\]
\end{definition}

The map $d$ on $\sft\tocomm$ is induced from a map $d \co \sft\tocyc \to \sft\tocyc$ with the same definition. In the single-component case, $d$ is a differential:

\begin{proposition}[{\cite[Proposition 3.15]{SFT}}]
If $\Lambda$ is a Legendrian knot with a single base point, then the map $d \co \sft\tocyc \to \sft\tocyc$ satisfies $d^2=0$.
\end{proposition}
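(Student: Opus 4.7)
The plan is to expand $d^2(x)$ using the definition $d = \{h,\cdot\} - \delta$ and then kill the four resulting terms in pairs, using the three key structural facts already available: the Jacobi identity for the SFT bracket (Proposition~\ref{prop:sft-bracket}), the identity $\delta^2 = 0$ together with the derivation property $\delta\{x,y\} = \{\delta x,y\} - (-1)^{|x|}\{x,\delta y\}$ for the single-base-point case (Propositions~\ref{prop:delta-properties} and~\ref{prop:delta-knot}), and the quantum master equation $\delta h = \tfrac{1}{2}\{h,h\}$ (Proposition~\ref{prop:qme}).

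Concretely, I would first write
\[
d^2(x) = \{h,\{h,x\}\} - \delta\{h,x\} - \{h,\delta x\} + \delta^2 x,
\]
and immediately drop the last term. Next I would rewrite $\{h,\{h,x\}\}$ using the Jacobi identity with $y = z = h$: since $|h| = -2$ is even, the two ``non-trivial'' terms of the Jacobi relation collapse (after applying graded symmetry to convert $\{h,\{x,h\}\}$ into a multiple of $\{h,\{h,x\}\}$) into a single equation of the shape
\[
\{h,\{h,x\}\} = c\,\{\{h,h\},x\}
\]
for some rational constant $c$; tracking the signs carefully using $|h|=-2$ and $|\{h,h\}|=-3$ gives $c = \tfrac{1}{2}$. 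Invoking QME then yields $\{h,\{h,x\}\} = \{\delta h, x\}$. Finally, applying the derivation property to $\delta\{h,x\}$ (again using that $|h|$ is even) gives $\delta\{h,x\} = \{\delta h,x\} - \{h,\delta x\}$. Substituting both identities back into the expansion of $d^2(x)$ produces
\[
d^2(x) = \{\delta h,x\} - \{\delta h,x\} + \{h,\delta x\} - \{h,\delta x\} = 0.
\]

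The only real work, and the place I expect to stumble, is the sign-bookkeeping in the Jacobi step: one has to verify that with $|h|=-2$ and general $|x|$, the two terms $(-1)^{(|x|+1)(|y|+|z|)}\{y,\{z,x\}\}$ and $(-1)^{(|z|+1)(|x|+|y|)}\{z,\{x,y\}\}$ of the Jacobi identity really do combine (after using graded symmetry $\{x,h\} = (-1)^{|x|}\{h,x\}$) into $2\{h,\{h,x\}\}$, and that the residual sign on $\{\{h,h\},x\}$ is precisely $+1$ (i.e.\ that the overall coefficient is $1/2$, not $-1/2$). Once that arithmetic is in hand, no further input is needed, since the single-base-point hypothesis is used only to invoke Proposition~\ref{prop:delta-knot}; the multi-base-point correction $\tb$ in Proposition~\ref{prop:delta} is absent here, which is exactly why the simple cancellation works in this case. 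It is the failure of this clean cancellation in the multi-component setting that forces the more elaborate $L_\infty$ construction of Section~\ref{sec:l-infty}.
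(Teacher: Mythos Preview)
Your proposal is correct and is essentially the same argument the paper uses. The paper does not spell out a separate proof for this statement (it is cited from \cite{SFT}), but the proof of the immediately following Proposition~\ref{prop:differential} is exactly your computation with the $\tb$ correction terms carried along: it derives $2\{h,\{h,x\}\} = -\{x,\{h,h\}\} = 2\{\delta h,x\}$ from Jacobi plus QME, applies the derivation formula for $\delta\{h,x\}$, and obtains $d^2 x = \tb(h,x)+\tb(x,h) = \tb(h,x)$, which vanishes in the single-base-point case.
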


In general, however, $d$ does not satisfy $d^2=0$, even if we pass from $\sft\tocyc$ to the quotient $\sft\tocomm$. (We  abuse language in referring to $d$ as the ``SFT differential''.)  Instead, we have the following result, where $\tb$ was defined in Definition~\ref{def:beta}:

\begin{proposition}
Let $\Lambda$ be a pointed Legendrian link. Then the following equality holds for all $x\in\sft\tocomm$:
\label{prop:differential}
\[
d^2 x = \tb(h,x).
\]
\end{proposition}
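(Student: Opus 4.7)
The plan is to expand $d^2 x$ directly from the definition $d(x) = \{h, x\} - \delta x$ and then invoke the quantum master equation (Proposition~\ref{prop:qme}), the modified derivation property for $\delta$ (Proposition~\ref{prop:delta}), the Jacobi identity (Proposition~\ref{prop:sft-bracket}), and the vanishing $\tb(\,\cdot\,,h) = 0$ (Proposition~\ref{prop:beta}\eqref{it:beta1}). All of this is purely formal once signs are tracked; the only nontrivial input is that the various corrections combine to produce exactly $\tb(h,x)$.

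First I would write
\[
d^2 x = \{h,\{h,x\}\} - \{h,\delta x\} - \delta\{h,x\} + \delta^2 x,
\]
and drop the last term using $\delta^2 = 0$. Next I would apply Proposition~\ref{prop:delta} with arguments $h$ and $x$; since $|h| = -2$ is even, the factors $(-1)^{|h|}$ and $(-1)^{|h|(|x|+1)}$ are both $+1$, and $\tb(x,h)$ vanishes by Proposition~\ref{prop:beta}\eqref{it:beta1} because $\vv(h) = 0$. This yields
\[
\delta\{h,x\} = \{\delta h, x\} - \{h,\delta x\} - \tb(h,x).
\]
Substituting, the two $\{h,\delta x\}$ contributions cancel, leaving
\[
d^2 x = \{h,\{h,x\}\} - \{\delta h, x\} + \tb(h,x).
\]

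Next I would use Jacobi to show $\{h,\{h,x\}\} = \tfrac{1}{2}\{\{h,h\},x\}$. Plugging $(h,h,x)$ into the Jacobi identity from Proposition~\ref{prop:sft-bracket} and simplifying signs using $|h| = -2$ gives
\[
\{h,\{h,x\}\} + (-1)^{|x|}\{h,\{x,h\}\} + \{x,\{h,h\}\} = 0;
\]
the symmetry $\{x,h\} = (-1)^{|x|}\{h,x\}$ (again using $|h| = -2$) collapses the first two terms, and the remaining symmetry with the odd-degree element $\{h,h\}$ gives $\{x,\{h,h\}\} = -\{\{h,h\},x\}$, yielding $\{h,\{h,x\}\} = \tfrac{1}{2}\{\{h,h\},x\}$. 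Substituting and factoring,
\[
d^2 x = \bigl\{\tfrac{1}{2}\{h,h\} - \delta h,\ x\bigr\} + \tb(h,x).
\]
The quantum master equation (Proposition~\ref{prop:qme}) makes the bracketed expression vanish, so $d^2 x = \tb(h,x)$, as desired.

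The only step that requires care is the sign bookkeeping in the Jacobi computation and in the application of Proposition~\ref{prop:delta}; the even grading of $h$ makes every sign collapse to $+1$, so in practice this reduces to a short symbolic calculation. The conceptual content of the proof is that the two obstructions to $d^2 = 0$, namely the quadratic term $\tfrac{1}{2}\{h,h\}$ and the failure of $\delta$ to be a derivation of $\{\cdot,\cdot\}$, are exactly reconciled by the QME and by the vanishing $\tb(x,h) = 0$, leaving the single correction $\tb(h,x)$ coming from the non-derivation property.
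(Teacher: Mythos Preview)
Your proof is correct and follows essentially the same approach as the paper's own proof: expand $d^2x$, use Proposition~\ref{prop:delta} for $\delta\{h,x\}$, Jacobi for $\{h,\{h,x\}\}$, the quantum master equation, and the vanishing $\tb(x,h)=0$. The only cosmetic difference is the order in which you apply these ingredients (the paper first combines Jacobi and the QME to replace $\{h,\{h,x\}\}$ by $\{\delta h,x\}$, then invokes Proposition~\ref{prop:delta}), but the logical content is identical.
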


\begin{proof}
By Propositions~\ref{prop:sft-bracket} and~\ref{prop:qme}, we have
\[
2 \{h,\{h,x\}\} = - \{x,\{h,h\}\} = 2 \{\delta h,x\}.
\]
It follows that
\begin{align*}
d^2 x &= \{h,\{h,x\}\}-\{h,\delta x\}-\delta\{h,x\}+\delta^2 x \\
&= -\delta\{h,x\} + \{\delta h,x\} - \{h,\delta x\} \\
&= \tb(h,x)+\tb(x,h) \\
&= \tb(h,x)
\end{align*}
where we have used Propositions~\ref{prop:delta-properties}, \ref{prop:delta}, and \ref{prop:beta} \eqref{it:beta1} in the second, third, and fourth lines, respectively.
\end{proof}

For future use, we note that $d$ satisfies essentially the same modified derivation property with respect to $\{\cdot,\cdot\}$ as does $\delta$:

\begin{proposition}
For any $x,y\in\sft\tocomm$,
\label{prop:d}
\[
d \{x,y\} = \{d x,y\} - (-1)^{|x|}\{x,d y\} + (-1)^{|x|}\tb(x,y) + (-1)^{|x|(|y|+1)}\tb(y,x).
\]
\end{proposition}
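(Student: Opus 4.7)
The plan is to unfold both sides using the definition $d = \{h,\cdot\} - \delta$ and then match the two halves separately: the $\{h,\cdot\}$ half will be handled by graded Jacobi, and the $\delta$ half will be handled directly by Proposition~\ref{prop:delta}, which is where the $\tb$ correction terms in the statement will originate.

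More concretely, first I would expand
\[
d\{x,y\} = \{h,\{x,y\}\} - \delta\{x,y\}
\]
and, on the other side,
\[
\{dx,y\} - (-1)^{|x|}\{x,dy\} = \{\{h,x\},y\} - \{\delta x,y\} - (-1)^{|x|}\{x,\{h,y\}\} + (-1)^{|x|}\{x,\delta y\}.
\]
Thus the claimed identity is equivalent to the combination of two separate statements:
\[
\{h,\{x,y\}\} = \{\{h,x\},y\} - (-1)^{|x|}\{x,\{h,y\}\},
\]
and
\[
-\delta\{x,y\} = -\{\delta x,y\} + (-1)^{|x|}\{x,\delta y\} + (-1)^{|x|}\tb(x,y) + (-1)^{|x|(|y|+1)}\tb(y,x).
\]
The second line is just Proposition~\ref{prop:delta} multiplied by $-1$, so there is nothing more to check there; note that this is exactly the mechanism by which the $\tb$ correction terms appear.

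For the first line, I would apply the graded Jacobi identity from Proposition~\ref{prop:sft-bracket} to the triple $(h,x,y)$:
\[
\{h,\{x,y\}\} + (-1)^{(|h|+1)(|x|+|y|)}\{x,\{y,h\}\} + (-1)^{(|y|+1)(|h|+|x|)}\{y,\{h,x\}\} = 0.
\]
Here I use that $|h| = -2$ is even (Definition/Proposition~\ref{defprop:h}), so all occurrences of $|h|$ in signs drop out: $(-1)^{(|h|+1)(|x|+|y|)} = (-1)^{|x|+|y|}$ and $(-1)^{(|y|+1)(|h|+|x|)} = (-1)^{|x||y|+|x|}$. Rewriting $\{y,h\} = (-1)^{|y|}\{h,y\}$ and $\{y,\{h,x\}\} = -(-1)^{|x||y|+|x|}\{\{h,x\},y\}$ via the symmetry of the bracket (again using $|h|=-2$ and the fact that $\{h,x\}$ has degree $|x|-1$), Jacobi becomes
\[
\{h,\{x,y\}\} + (-1)^{|x|}\{x,\{h,y\}\} - \{\{h,x\},y\} = 0,
\]
which rearranges to exactly the first line above.

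The only real obstacle is bookkeeping the signs in the Jacobi manipulation, but the evenness of $|h|$ makes this much simpler than a generic application of Jacobi would be, and the computation is parallel to the standard fact that bracketing with an element of even degree behaves as a derivation of the bracket. Recombining the two matched halves gives the claimed identity, finishing the proof.
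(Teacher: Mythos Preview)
Your proof is correct and follows exactly the paper's own approach: split $d = \{h,\cdot\} - \delta$, handle the $\{h,\cdot\}$ part via the Jacobi identity (using $|h|$ even), and handle the $\delta$ part by invoking Proposition~\ref{prop:delta}. The paper's proof is terser but identical in substance.
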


\begin{proof}
Immediate from Proposition~\ref{prop:delta} and the following consequence of the Jacobi identity:
\[
\{h,\{x,y\}\} = \{\{h,x\},y\}-(-1)^{|x|}\{x,\{h,y\}\}.
\]
\end{proof}

\begin{remark}
It is clear from the construction of the differential $d$ on $\sft\tocomm$ that it extends the differential $\partial$ on the commutative Chekanov--Eliashberg DGA $(\A\tocomm,\d)$. 
\label{rmk:d-partial}
To be precise,
$\A\tocomm$ is precisely the quotient $\sft\tocomm/\F^1\sft\tocomm$. Since the Hamiltonian $h$ lies in $\F^1\sft\tocomm$ by Definition/Proposition~\ref{defprop:h}, it follows from Proposition~\ref{prop:differential} that the image of $d^2$ in $\sft\tocomm$ lies in $\F^1\sft\tocomm$. Thus $d$ induces an honest differential on $\A\tocomm \cong \sft\tocomm/\F^1\sft\tocomm$. The only part of $dx = \{h,x\}+\delta x$ that does not raise filtration level is $\{h_1,x\}$, and we conclude from Proposition~\ref{prop:sft-lch} that $d$ induces the map $\partial$ on $\A\tocomm$.
\end{remark}

%**************************************************
%**************************************************
\section{The $L_\infty$ structure}
\label{sec:l-infty}

In this section, we construct the $L_\infty$ structure associated to a pointed Legendrian link and present some examples. The proof that the $L_\infty$ relations are satisfied, and thus that our construction actually constitutes an $L_\infty$ algebra, is deferred until Section~\ref{sec:l-infty-proof}.

Throughout this section and for the rest of the paper, we fix a field $\kk$ of characteristic $0$. Rather than using the $\Z$-algebra $\A\tocomm$, we will tensor with $\kk$ and use the $\kk$-algebra $\A\tocomm\otimes\kk$. However, to simplify notation, we will abuse notation and henceforth use $\A\tocomm$ to mean the $\kk$-algebra $\A\tocomm\otimes\kk$. We note that to define the $L_\infty$ structure on $\A\tocomm$, as we will do, $\kk$ could also be any field of characteristic $\neq 2$; however, the invariance result in Section~\ref{sec:invariance} relies on $\kk$ having characteristic $0$.

%**************************************************
\subsection{$L_\infty$ algebras}
\label{ssec:l-infty}

We begin by reviewing the definition of an $L_\infty$ algebra, partly to state the sign conventions that we use in this paper. 

\begin{definition}
An \textit{$L_\infty$ algebra} (or \textit{strong homotopy Lie algebra}) is a $\Z$-graded vector space $A$ over the field $\kk$ equipped with operations $\ell_k :\thinspace A^{\otimes k} \to A$ of degree $k-2$ for each $k \geq 1$, satisfying:
\begin{itemize}
\item
graded symmetry: for any permutation $\sigma \in S_k$,
\[
\ell_k(x_{\sigma(1)},\ldots,x_{\sigma(k)}) = \chi(\sigma,x_1,\ldots,x_k) \,\ell_k(x_1,\ldots,x_k);
\]
\item
strong homotopy Jacobi identities: for any $k \geq 1$,
\[
\sum_{i=1}^k (-1)^{i(k-i)} \sum_{\sigma \in \Unshuff(i,k-i)} \chi(\sigma,x_1,\ldots,x_k)  \, \ell_{k-i+1}\left(
\ell_i(x_{\sigma(1)},\ldots,x_{\sigma(i)}),x_{\sigma(i+1)},\ldots,x_{\sigma(k)} \right) = 0.
\]
\end{itemize}
Here $\chi(\sigma,x_1,\ldots,x_k)$ is the alternating Kozsul sign, defined to be $-(-1)^{|x_i||x_{i+1}|}$ for $\sigma = (i~~i+1)$ and extended multiplicatively on $S_k$, and 
$\Unshuff(i,k-i)$ is the subset of $S_k$ consisting of permutations $\sigma$ with $\sigma(1)<\sigma(2)<\cdots<\sigma(i)$ and $\sigma(i+1)<\cdots<\sigma(k)$.
\label{def:Linfalg}
\end{definition}

For concreteness, the first three Jacobi identities are:
\begin{align*}
0 &= \ell_1(\ell_1(x_1)) \\
0 &= \ell_1(\ell_2(x_1,x_2)) - \ell_2(\ell_1(x_1),x_2) - (-1)^{|x_1|} \ell_2(x_1,\ell_1(x_2)) \\
0 &= \ell_2(\ell_2(x_1,x_2),x_3)) + (-1)^{|x_1|(|x_2|+|x_3|)} \ell_2(\ell_2(x_2,x_3),x_1)
+(-1)^{|x_3|(|x_1|+|x_2|)} \ell_2(\ell_2(x_3,x_1),x_2) \\
& \quad
+ \ell_3(\ell_1(x_1),x_2,x_3) + (-1)^{|x_1|} \ell_3(x_1,\ell_1(x_2),x_3)+
(-1)^{|x_1|+|x_2|}\ell_3(x_1,x_2,\ell_1(x_3)) \\
&\quad +\ell_1(\ell_3(x_1,x_2,x_3)).
\end{align*}

If $(A,\{\ell_k\})$ is an $L_\infty$ algebra, then $(A,\ell_1)$ is a complex, with $\ell_1$ being a differential of degree $-1$,  and the homology $H_*(A,\ell_1)$ is a Lie algebra with Lie bracket induced by $\ell_2$.

We say that an $L_\infty$ algebra $(A,\{\ell_k\})$ is \textit{strict} if $\ell_k = 0$ for $k \geq 3$. In this case, the usual Jacobi identity for $\ell_2$ holds on the nose, and $(A,\ell_1,\ell_2)$ is a DG Lie algebra.

\begin{definition}
A \textit{homotopy Poisson algebra} is an $L_\infty$ algebra $A$ that is also an associative algebra $A$, such that the $L_\infty$ operations $\ell_k$ satisfy the Leibniz rule for all $k \geq 1$:
\label{def:htpyPoisson}
\[
\ell_k(x_1,\ldots,x_kx_k') = \ell_k(x_1,\ldots,x_k)x_k' + (-1)^{(|x_1|+\cdots+|x_{k-1}|+k)|x_k|} x_k \ell_k(x_1,\ldots,x_k').
\]
\end{definition}

If $A$ is a homotopy Poisson algebra, then $(A,\ell_1)$ is a DG algebra, and the homology $H_*(A,\ell_1)$ is an associative algebra. Furthermore, $\ell_2$ induces a Poisson bracket on $H_*(A,\ell_1)$, and so $(H_*(A,\ell_1),\ell_2)$ is a Poisson algebra. If in addition $A$ is strict, then $A$ itself is also a Poisson algebra with Poisson bracket $\ell_2$.

%**************************************************
\subsection{Definition of the $L_\infty$ structure for Legendrian links}
\label{ssec:linfty-def}

We now construct the $L_\infty$ algebra associated to a Legendrian link. Let $\Lambda$ be a pointed Legendrian link equipped with a Maslov potential, and let $\kk$ be a field of characteristic $0$. The vector space underlying the $L_\infty$ algebra ($A$, in the notation of Section~\ref{ssec:l-infty}) is $\alg\tocomm$, which we recall from the introduction to Section~\ref{sec:l-infty} is now the graded polynomial ring over $\kk$ generated by $q_1,\ldots,q_n,t_1^{\pm 1},\ldots,t_s^{\pm 1}$ (but not $p_1,\ldots,p_n$) in the notation of Section~\ref{ssec:algebras}. 

To define the $L_\infty$ operations on $\alg\tocomm$, we use the machinery built up in Section~\ref{sec:sft}. Let $h\in\sft\tocyc$ be the Hamiltonian for $\Lambda$. Recall from Definition/Proposition~\ref{defprop:h} that $h \in \F^1\sft\tocyc$: that is, every term in $h$ contains at least one $p$. We can further write
\[
h = h_1 + h_2 + h_3 + \cdots
\]
where $h_k$ consists of all terms in $h$ containing exactly $k$ $p$'s; in the language of SFT, $h_k$ counts holomorphic disks with exactly $k$ positive punctures.

\begin{definition}
For $k \geq 1$, define $\ell_k :\thinspace (\alg\tocomm)^{\otimes k} \to \alg\tocomm$ as follows: \label{def:main}
\begin{align*}
\ell_1(x_1) &= \{h_1,x_1\} \\
\ell_2(x_1,x_2) &= (-1)^{|x_1|} \{\{h_2,x_1\},x_2\} - \frac{1}{2} (-1)^{|x_1|}\{\delta x_1,x_2\} - \frac{1}{2} \{x_1,\delta x_2\} \\
&\quad - \frac{1}{2} \tb(x_1,x_2) + \frac{1}{2}(-1)^{|x_1||x_2|} \tb(x_2,x_1)
\end{align*}
and for $k \geq 3$,
\[
\ell_k(x_1,\ldots,x_k) = (-1)^{|x_{k-1}|+|x_{k-3}|+\cdots} \{ \cdots \{\{\{h_k,x_1\},x_2\},x_3\},\ldots,x_k\}.
\]
\end{definition}

\noindent
See Definition~\ref{def:beta} for the definition of $\tb$ appearing in $\ell_2$; for computational purposes, it may be helpful to note that when $x_1,x_2 \in \W\tocomm$, 
\[
 -\frac{1}{2} \tb(x_1,x_2)+\frac{1}{2}(-1)^{|x_1||x_2|} \tb(x_2,x_1) = \frac{1}{2} (\vv(x_1)\cdot \ww(x_2)-\ww(x_1)\cdot\vv(x_2)) x_1x_2.
\]
Also note that each $\ell_k$ does indeed map into $\alg\tocomm$ and not just into the larger $\sft\tocomm$: this is because the SFT bracket reduces the number of $p$'s by $1$, $\delta$ increases the number of $p$'s by $1$, and each $x_i$ contains no $p$'s.

The first operation $\ell_1$ is precisely the Chekanov--Eliashberg differential $\partial$ on $\alg\tocomm$; see Section~\ref{ssec:qme}. Thus the first $L_\infty$ relation simply states the well-known fact that $\partial^2=0$.

\begin{proposition}
The operations $\ell_k$ constructed in Definition~\ref{def:main} satisfy graded symmetry and the Leibniz rule.
\label{prop:skewsymmetry}
\end{proposition}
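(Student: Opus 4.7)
The proposition has two parts: graded symmetry of each $\ell_k$, and the Leibniz rule. Both follow in a routine manner from the corresponding properties of the three ingredients used to build the $\ell_k$: the SFT bracket (symmetry, Jacobi, biderivation, from Proposition~\ref{prop:sft-bracket}), the string coproduct $\delta$ (derivation, from Proposition~\ref{prop:delta-properties}), and the auxiliary map $\tb$ (additive in each argument, by Proposition~\ref{prop:beta}\eqref{it:beta3}). The one additional input that streamlines everything is the observation that $\{x_i,x_j\}=0$ whenever $x_i,x_j\in\A\tocomm$, since neither contains any $p$-variables. Throughout, I will use $|h_k|=-2$, established in Proposition~\ref{prop:hamdeg}.

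For graded symmetry when $k\geq 3$, it suffices to verify the relation for adjacent transpositions $\sigma=(i~~i{+}1)$. Setting $X=\{\cdots\{h_k,x_1\},\ldots,x_{i-1}\}$, the graded Jacobi identity of Proposition~\ref{prop:sft-bracket} applied to $X,x_i,x_{i+1}$, combined with $\{x_i,x_{i+1}\}=0$ and the symmetry $\{y,x\}=(-1)^{|x||y|+|x|+|y|}\{x,y\}$, yields
\[
\{\{X,x_i\},x_{i+1}\} \;=\; -(-1)^{|x_i||x_{i+1}|}\{\{X,x_{i+1}\},x_i\}
\]
up to a Koszul adjustment coming from shifted degrees. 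The prefactor $(-1)^{|x_{k-1}|+|x_{k-3}|+\cdots}$ has been chosen so that, under the swap $x_i\leftrightarrow x_{i+1}$, its change exactly accounts for this adjustment, leaving the alternating Koszul sign $\chi(\sigma,x_1,\ldots,x_k)=-(-1)^{|x_i||x_{i+1}|}$. For $\ell_2$, symmetry is checked summand by summand: the expression $-\tfrac12\tb(x_1,x_2)+\tfrac12(-1)^{|x_1||x_2|}\tb(x_2,x_1)$ is antisymmetric by inspection; the two terms $\{\delta x_1,x_2\}$ and $\{x_1,\delta x_2\}$ swap into each other via bracket symmetry together with $|\delta x_i|=|x_i|-1$; and the double-bracket term $\{\{h_2,x_1\},x_2\}$ transforms correctly by the same Jacobi argument applied to $h_2,x_1,x_2$, using that $|h_2|$ is even.

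For the Leibniz rule, each summand in each $\ell_k$ is built from operators which individually satisfy a graded Leibniz rule with respect to associative multiplication in $\sft\tocomm$: the biderivation property from Proposition~\ref{prop:sft-bracket}, the derivation property of $\delta$ from Proposition~\ref{prop:delta-properties}, and the identity
\[
\tb(x,yz)=\tb(x,y)\,z+(-1)^{|x||y|}\,y\,\tb(x,z),
\]
deduced from $\beta(x,yz)=\beta(x,y)+\beta(x,z)$ (Proposition~\ref{prop:beta}\eqref{it:beta3}) together with commutativity in $\sft\tocomm$. Applying Leibniz in the last slot of each ingredient and using $\{x_i,x_k'\}=0$ in $\A\tocomm$ to kill spurious cross-terms, the degree of the intermediate expression $\{\cdots\{h_k,x_1\},\ldots,x_{k-1}\}$ is $|x_1|+\cdots+|x_{k-1}|-k-1$, so the Leibniz sign $(-1)^{(|A|+1)|x_k|}$ from the SFT bracket produces exactly $(-1)^{(|x_1|+\cdots+|x_{k-1}|+k)|x_k|}$, matching Definition~\ref{def:htpyPoisson}.

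The main obstacle is purely sign bookkeeping; no new geometric input or moduli space argument enters here. The trickiest point is verifying that the prefactor $(-1)^{|x_{k-1}|+|x_{k-3}|+\cdots}$ in the definition of $\ell_k$ conspires correctly with both the Jacobi-induced Koszul sign in the symmetry check and the Leibniz sign in the derivation check. Parity of $|h_k|$ simplifies many intermediate expressions, and the $k=2$ case, where the $\tb$ and $\delta$ corrections appear explicitly, is the one that most warrants a line-by-line verification.
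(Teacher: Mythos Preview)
Your approach is essentially identical to the paper's: both derive the swap identity $\{\{X,x_i\},x_{i+1}\} = (-1)^{(|x_i|+1)(|x_{i+1}|+1)}\{\{X,x_{i+1}\},x_i\}$ from Jacobi together with $\{x_i,x_{i+1}\}=0$ on $\A\tocomm$, then combine with the change in the alternating prefactor to obtain graded symmetry, and both reduce Leibniz to the biderivation property of $\{\cdot,\cdot\}$ plus the derivation properties of $\delta$ and the additivity of $\beta$. One small slip: the degree of $\{\cdots\{h_k,x_1\},\ldots,x_{k-1}\}$ is $|x_1|+\cdots+|x_{k-1}|+k-3$, not $|x_1|+\cdots+|x_{k-1}|-k-1$; the two agree mod $2$, so your Leibniz sign conclusion is unaffected, but you should correct the stated degree.
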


\begin{proof}
Note that if $x \in \sft\tocomm$ and $y,z \in \alg\tocomm$, then by properties of $\{\cdot,\cdot\}$ (Proposition~\ref{prop:sft-bracket}),
\begin{align*}
\{\{x,y\},z\} &= (-1)^{(|x|+|y|)(|z|+1)+1}\{z,\{x,y\}\} \\
&= \{x,\{y,z\}\} + (-1)^{(|x|+1)(|y|+|z|)}\{y,\{z,x\}\} \\
&= (-1)^{(|x|+1)(|y|+|z|)}\{y,\{z,x\}\} \\
&= (-1)^{(|x|+1)(|y|+1)}\{\{x,z\},y\},
\end{align*}
where the third equality follows from the fact that $\{y,z\}=0$ since neither $y$ nor $z$ contains any $p$'s. Thus we have
\[
(-1)^{|x_2|}\{\{h_2,x_2\},x_1\} = (-1)^{|x_1||x_2|+|x_1|+1} \{\{h_2,x_1\},x_2\}.
\]
The fact that $\ell_2(x_2,x_1) = (-1)^{|x_1||x_2|+1}\ell_2(x_1,x_2)$ is now easy to check from this.

The proof that $\ell_k$ is graded-symmetric for $k\geq 3$ is similar. For $j=1,\ldots,k-1$, transposing $x_j$ and $x_{j+1}$ in the definition of $\ell_k$ has the effect of replacing $\{\{y,x_j\},x_{j+1}\}$ by $(-1)^{|x_j|+|x_{j+1}|} \{\{y,x_{j+1}\},x_j\}$, where $y = \{\cdots \{h_k,x_1\},\ldots,x_{j-1}\}$. Now
$ \{\{y,x_{j+1}\},x_j\} = (-1)^{(|x_j|+1)(|x_{j+1}|+1)} \{\{y,x_j\},x_{j+1}\}$ and thus transposing $x_j$ and $x_{j+1}$ changes $\ell_k(x_1,\ldots,x_k)$ by multiplication by
$(-1)^{|x_j||x_{j+1}|+1}$, as desired.

It remains to show that for $k\geq 1$, $\ell_k$ satisfies the Leibniz rule as stated in Definition~\ref{def:htpyPoisson}. For $k \neq 2$, this is immediate from the fact that $\{\cdot,\cdot\}$ satisfies the Leibniz rule (Proposition~\ref{prop:sft-bracket}). For $k=2$ we additionally compute that
\[
\{x_1,\delta(x_2x_2')\} = \{x_1,(\delta x_2)x_2'\} + (-1)^{|x_2|} \{x_1,x_2(\delta x_2')\} = \{x_1,\delta x_2\}x_2' + (-1)^{|x_1||x_2|}x_2\{x_1,\delta x_2'\}
\]
by Propositions~\ref{prop:sft-bracket} and \ref{prop:delta-properties} and the fact that $\{x_1,x_2\} = \{x_1,x_2'\} = 0$, and that
\begin{align*}
\tb(x_1,x_2x_2') &= \tb(x_1,x_2)x_2' + (-1)^{|x_1||x_2|} x_2 \tb(x_1,x_2') \\
\tb(x_2x_2',x_1) &= (-1)^{|x_1||x_2'|}\tb(x_2,x_1)x_2' + x_2 \tb(x_2',x_1)
\end{align*}
from Proposition~\ref{prop:beta} \eqref{it:beta3}. Combining these, we conclude that $\ell_2$ satisfies the Leibniz rule.
\end{proof}

\begin{remark}
The need for $\ell_2$ to be graded-symmetric is what forces the presence of the $\frac{1}{2}$ fractions in Definition~\ref{def:main}. Alternatively, one could multiply every $\ell_k$ by $2$ to obtain an $L_\infty$ structure whose operations have integer coefficients, for instance if one wished to work over a field of characteristic $2$. 
However, $\ell_1$ would no longer coincide with the usual Chekanov--Eliashberg differential $\partial$ but would instead equal $2\partial$. 

As an additional reason not to work over a field of characteristic $2$, we note that the quasi-isomorphism type of $(\A\tocomm,\partial)$ is only guaranteed to be invariant under Legendrian isotopy if we work over a field of characteristic $0$: see \cite[section~3.4]{ENS}.
\end{remark}

We can now state our main result.

\begin{proposition}
For any pointed Legendrian link, $(\A\tocomm,\{\ell_k\}_{k=1}^\infty)$ is a homotopy Poisson algebra.
\label{prop:htpyPoisson}
\end{proposition}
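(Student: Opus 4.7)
The graded symmetry and Leibniz rule are already established in Proposition~\ref{prop:skewsymmetry}, so the task reduces to verifying the strong-homotopy Jacobi identities. My overall strategy is to extract these identities from the quantum master equation $\frac{1}{2}\{h, h\} = \delta h$ of Proposition~\ref{prop:qme} by bracketing with argument tuples and sorting by filtration level.

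First I would decompose the quantum master equation according to $p$-count: grouping terms in $\{h, h\}$ by the number of remaining $p$'s yields, for each $k \geq 2$,
\[
\tfrac{1}{2}\sum_{i+j=k,\ i,j\geq 1} \{h_i, h_j\} = \delta h_{k-2},
\]
with the convention $h_0 = 0$. The case $k=2$ reads $\{h_1,h_1\}=0$, and combined with the Jacobi identity for $\{\cdot,\cdot\}$ this already gives $\ell_1^2=0$, the first $L_\infty$ relation. For the general $m$-th relation I would iteratively bracket this identity against $x_1,\ldots,x_m$ and antisymmetrize over unshuffles. Using the Jacobi identity and Leibniz rule for $\{\cdot,\cdot\}$ (Proposition~\ref{prop:sft-bracket}), the left-hand side rearranges into a sum of nested brackets of the shape
\[
\pm\{\cdots\{\{\cdots\{h_i, x_{\sigma(1)}\}, \ldots, x_{\sigma(i-1)}\},\, \{\cdots\{h_{j}, x_{\sigma(i)}\}, \ldots\}\}, \ldots, x_{\sigma(m)}\},
\]
which collect into the ``raw'' Jacobi expression $\sum \ell_a^{\mathrm{raw}}(\ell_b^{\mathrm{raw}}(\cdots),\cdots)$, where $\ell_k^{\mathrm{raw}}(x_1,\ldots,x_k) := (\pm)\{\cdots\{h_k,x_1\},\ldots,x_k\}$ agrees with $\ell_k$ from Definition~\ref{def:main} except when $k=2$.

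For the right-hand side, the term $\delta h_{k-2}$ bracketed with $x_1,\ldots,x_m$ and antisymmetrized must be rewritten so that $\delta$ ends up acting on one of the $x_i$'s. This is exactly what Proposition~\ref{prop:delta} is for: iteratively applying
\[
\delta\{x, y\} = \{\delta x, y\} - (-1)^{|x|}\{x, \delta y\} - (-1)^{|x|}\tb(x, y) - (-1)^{|x|(|y|+1)}\tb(y, x)
\]
converts $\delta h_{k-2}$ bracketed with the $x_i$'s into a combination of $\{\cdots,\delta x_i,\cdots\}$-type terms and $\tb$-corrections. The former are precisely the $-\tfrac{1}{2}(-1)^{|x_1|}\{\delta x_1,x_2\} - \tfrac{1}{2}\{x_1,\delta x_2\}$ terms that distinguish $\ell_2$ from $\ell_2^{\mathrm{raw}}$, and the $\tb$-corrections are precisely absorbed by the final two terms of $\ell_2$. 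In other words, the string-topology corrections in $\ell_2$ are engineered so that the net sum of all contributions to the $L_\infty$ relation vanishes.

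The main obstacle will be sign-bookkeeping: the alternating Koszul signs in the $L_\infty$ relations, the signs inside $\{\cdot,\cdot\}$ and $\delta$, the explicit signs in $\ell_k$, and the signs in Proposition~\ref{prop:delta} must all conspire correctly. Proposition~\ref{prop:beta}~(5) and~(7) supply the algebraic identities needed to collapse the various $\beta(\cdot,\cdot)$-prefactors produced by repeated application of the modified derivation formula into the single $\tb$-correction appearing in $\ell_2$. I expect the bulk of the work to be an explicit sign-and-combinatorics computation, organized as an induction or generating-function argument that matches, term by term within each filtration level, the contributions coming from $\{h,h\}$-brackets against those coming from $\delta h$-brackets; the underlying algebraic mechanism is entirely the quantum master equation together with the modified derivation relations for $\delta$ and $d$.
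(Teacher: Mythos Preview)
Your approach is correct in spirit and would ultimately work, but it is organized differently from the paper's proof. You propose to decompose the quantum master equation by $p$-count and then bracket directly against $x_1,\ldots,x_m$, pushing $\delta$ through via Proposition~\ref{prop:delta} and collecting the resulting $\tb$-debris. The paper instead introduces auxiliary operations $m_k\colon (\A\tocomm)^{\otimes k}\to\sft\tocomm$ built from the full operator $d=\{h,\cdot\}-\delta$ (so that $\ell_k=\Pi\circ m_k$), and proves a single inductive master identity (Lemma~\ref{lem:dm}) of the form
\[
(-1)^k\, d\,m(x_1,\ldots,x_k)=\sum_{[k]=A\sqcup B}\pm\{m(x_A),m(x_B)\}-\sum_i\pm\,\tb(m(x_{[k]\setminus\{i\}}),x_i),
\]
using only $d^2x=\tb(h,x)$ (Proposition~\ref{prop:differential}) and the modified derivation property of $d$ (Proposition~\ref{prop:d}); the Jacobi identities then drop out after applying $\Pi$ and observing that the residual $\tb$-terms cancel against the $\tb$-correction hidden in $\ell_2$. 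The advantage of the paper's packaging is that the $\delta h$ contribution is never isolated: it is absorbed into $d$ from the start, so one never has to iterate Proposition~\ref{prop:delta} against $\{\delta h_{k-2},x_1,\ldots\}$ and track the accumulating $\tb(h_{k-2},\cdot)$ terms separately (note in particular that $\ww(h)$ need not vanish, so these terms are genuinely present in your scheme). Your route is closer to the closed-string argument with corrections grafted on, and is viable, but the bookkeeping you anticipate is heavier than in the paper's formulation. A minor point: Proposition~\ref{prop:beta} has parts (1)--(6), not (7); presumably you meant~(6).
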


To prove Proposition~\ref{prop:htpyPoisson}, it remains to check that the $\ell_k$ operations satisfy the $L_\infty$ relations. We defer this to Section~\ref{sec:l-infty-proof}.

An immediate corollary of Proposition~\ref{prop:htpyPoisson} is that the commutative Legendrian contact homology of a Legendrian link inherits a Poisson bracket from $\ell_2$.

\begin{corollary}
For any pointed Legendrian link, $(H_*(\A\tocomm,\partial),\ell_2)$ is a Poisson algebra.
\label{cor:Poisson}
\end{corollary}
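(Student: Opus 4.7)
The corollary is a formal consequence of Proposition~\ref{prop:htpyPoisson}: the claim is that a homotopy Poisson algebra, on passage to $\ell_1$-homology, becomes an honest Poisson algebra. So the plan is to verify each piece of the Poisson algebra structure on $H_*(\A\tocomm,\partial)$ as follows.

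First I would show that $\ell_2$ descends to a well-defined graded skew-symmetric $\kk$-bilinear operation on $H_*(\A\tocomm,\partial)$. The third Jacobi identity listed after Definition~\ref{def:Linfalg}, applied with $k=2$, gives
\[
\ell_1(\ell_2(x_1,x_2)) = \ell_2(\ell_1 x_1,x_2) + (-1)^{|x_1|}\ell_2(x_1,\ell_1 x_2),
\]
so $\ell_1$ is a derivation with respect to $\ell_2$. Consequently $\ell_2$ sends cycles to cycles and sends a pair where one entry is a boundary to a boundary, hence induces a well-defined bracket $[\cdot,\cdot]$ on $H_*(\A\tocomm,\partial)$. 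Graded skew-symmetry of this induced bracket is immediate from the graded symmetry of $\ell_2$ in the $L_\infty$ sense (Proposition~\ref{prop:skewsymmetry}), which at the level of $\ell_2$ reads $\ell_2(x_2,x_1)=(-1)^{|x_1||x_2|+1}\ell_2(x_1,x_2)$.

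Next I would verify the Jacobi identity on homology using the $k=3$ $L_\infty$ relation, which (as spelled out in Section~\ref{ssec:l-infty}) expresses the graded Jacobiator of $\ell_2$ as
\[
\ell_1(\ell_3(x_1,x_2,x_3)) + \sum_{i=1}^{3} \pm\,\ell_3(\ldots,\ell_1 x_i,\ldots).
\]
If $x_1,x_2,x_3$ are $\ell_1$-cycles, the three correction terms involving $\ell_3(\ldots,\ell_1 x_i,\ldots)$ vanish, while $\ell_1(\ell_3(x_1,x_2,x_3))$ is a $\ell_1$-boundary. Thus the Jacobiator of $\ell_2$ on cycles lies in the image of $\ell_1$ and so vanishes in homology.

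For the multiplicative structure, I would observe that the Leibniz rule for $\ell_1=\partial$ with respect to the associative product on $\A\tocomm$ (which is the $k=1$ case of Definition~\ref{def:htpyPoisson}) makes $(\A\tocomm,\partial)$ into a commutative DGA, so its homology inherits a graded commutative associative product. Finally, the $k=2$ case of Definition~\ref{def:htpyPoisson},
\[
\ell_2(x_1,x_2 x_2') = \ell_2(x_1,x_2) x_2' + (-1)^{(|x_1|+2)|x_2|} x_2 \,\ell_2(x_1,x_2'),
\]
holds on the nose in $\A\tocomm$ and descends directly to the analogous Leibniz rule for $[\cdot,\cdot]$ on homology, giving the Poisson compatibility between $[\cdot,\cdot]$ and the induced product. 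The main (and really only) subtlety is the Jacobi step, since that is where the full $L_\infty$ machinery rather than a strict DG Lie structure is needed; all other properties are direct transcriptions of the strict identities already established in Proposition~\ref{prop:skewsymmetry} and Definition~\ref{def:htpyPoisson}.
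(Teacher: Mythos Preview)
Your proposal is correct and follows exactly the approach the paper takes: the corollary is stated as an immediate consequence of Proposition~\ref{prop:htpyPoisson}, relying on the general fact (recorded right after Definition~\ref{def:htpyPoisson}) that the $\ell_1$-homology of any homotopy Poisson algebra is a Poisson algebra. You have simply unpacked that general fact in detail; the only slip is the phrase ``third Jacobi identity \ldots applied with $k=2$'', which should read ``second'', but the equation you write is the correct $k=2$ relation.
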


%**************************************************
\subsection{A formula for the string portion of $\ell_2$}
\label{ssec:l2str}

The definition of the $\ell_k$ operations in Definition~\ref{def:main} is relatively straightforward except for $\ell_2$. Here we take a brief digression and present an equivalent formula for $\ell_2$ that is convenient for computations. Write
\[
\ell_2(x_1,x_2) = (-1)^{|x_1|}\{\{h_2,x_1\},x_2\} + \ell_2^\str(x_1,x_2)
\]
where we define
\[
\ell_2^\str(x_1,x_2) := \frac{1}{2}\left((-1)^{|x_1|+1}\{\delta x_1,x_2\} - \{x_1,\delta x_2\} -\tb(x_1,x_2) +(-1)^{|x_1||x_2|} \tb(x_2,x_1)\right).
\]
Note that $\ell_2^\str(x_1,x_2)$ is the part of $\ell_2(x_1,x_2)$ that does not involve any disks in the Hamiltonian but does involve the string coproduct $\delta$; we call $\ell_2^\str$ the \textit{string portion} of $\ell_2$. For any $x_1,x_2$, $\ell_2^\str(x_1,x_2)$ is some half-integer multiple of $x_1x_2$. Here we present a simple formula for this half-integer.

 To that end, we set some notation. Given two distinct points $z_1,z_2$ on a pointed Lagrangian link $\Lambda$, a \textit{segment} between $z_1$ and $z_2$ is an embedded path in $\Lambda$ whose endpoints are $z_1$ and $z_2$; there are exactly $2$ or $0$ segments between $z_1$ and $z_2$ depending on whether the points lie on the same component of $\Lambda$ or not. A segment between $z_1$ and $z_2$ is \textit{unbroken} if its interior does not contain any base points of $\Lambda$.

Write $\SS_0 = \{q_1,\ldots,q_n,t_1,\ldots,t_s\}$ for the collection of $q$-variables associated to the Reeb chords of $\Lambda$, along with the base points of $\Lambda$, and note that the elements of $\SS_0$ along with the inverses $t_1^{-1},\ldots,t_s^{-1}$ generate $\A\tocomm$. We associate \textit{endpoints} to each $x\in\SS_0$ as follows: for $x=q_j$, the endpoints of $x$ are $a_j^+$ and $a_j^-$, the top and bottom endpoints of the Reeb chord $a_j$; for $x=t_i$, we say that the point $\bullet_i$ itself is the unique endpoint of $t_i$.

For distinct $x_1,x_2 \in \SS_0$, write $\mathcal{P}(x_1,x_2)$ for the collection of unbroken segments between an endpoint of $x_1$ and an endpoint of $x_2$; there are at most $4$ elements in $\mathcal{P}(x_1,x_2)$. Given a path $\gamma\in\mathcal{P}(x_1,x_2)$, orient $\gamma$ following the orientation of $\Lambda$, and define $\sigma(\gamma)$ to be $+1$ if $\gamma$ is oriented from $x_1$ to $x_2$ and $-1$ if it is oriented from $x_2$ to $x_1$. Now define $n(\gamma)\in\{\pm 1\}$ as follows:
\[
n(\gamma) = \begin{cases}
-\sigma_1\sigma_2\sigma(\gamma) & \text{if } (x_1,x_2) = (q_{j_1},q_{j_2}) \text{ and the endpoints of } \gamma \text{ are } a_{j_1}^{\sigma_1},a_{j_2}^{\sigma_2} \\
-\sigma & \text{if } (x_1,x_2) = (q_j,t_i) \text{ and the endpoints of } \gamma \text{ are } a_j^{\sigma},\bullet_i \\
\sigma & \text{if } (x_1,x_2) = (t_i,q_j) \text{ and the endpoints of } \gamma \text{ are } \bullet_i,a_j^{\sigma} \\
\sigma(\gamma) & \text{if } (x_1,x_2)=(t_{i_1},t_{i_2}).
\end{cases}
\]

We now have the following result, which completely determines $\ell_2^\str$ since it is easy to check that $\ell_2^\str(t_i^{-1},\cdot) = -t_i^{-2} \ell_2^\str(t_i,\cdot)$.

\begin{proposition}
Let $x_1,x_2\in\SS_0$. If $x_1=x_2$, then $\ell_2^\str(x_1,x_2) = 0$; if $x_1\neq x_2$, then
\label{prop:l2str}
\[
\ell_2^\str(x_1,x_2) = \frac{1}{2} \left(\sum_{\gamma\in\mathcal{P}(x_1,x_2)} n(\gamma)\right) x_1x_2.
\]
\end{proposition}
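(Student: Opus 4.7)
The proof is a direct computation by case analysis on the types of $x_1,x_2\in\SS_0$. The key preliminary observation is that each of $\{\delta x_1,x_2\}$, $\{x_1,\delta x_2\}$, $\tb(x_1,x_2)$, and $\tb(x_2,x_1)$ is manifestly a scalar multiple of the commutative monomial $x_1x_2$: the $\tb$ terms by Definition~\ref{def:beta}, and the bracket terms because $\delta x_i\in\F^1\sft\tocomm$ and bracketing with a $p$-free letter either substitutes a single $p$ by $\pm 1$ or vanishes. Thus $\ell_2^\str(x_1,x_2) = \alpha\,x_1x_2$ for some $\alpha\in\tfrac{1}{2}\Z$, and the task reduces to identifying $\alpha$. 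The diagonal case $x_1=x_2$ is immediate: if $|x_1|$ is odd, then $x_1^2=0$ in $\A\tocomm$; if $|x_1|$ is even, the graded symmetry of $\ell_2^\str$ (inherited from Proposition~\ref{prop:skewsymmetry} together with the manifest graded symmetry of the $(-1)^{|x_1|}\{\{h_2,x_1\},x_2\}$ piece of $\ell_2$) forces $\ell_2^\str(x_1,x_1) = -\ell_2^\str(x_1,x_1) = 0$.

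For the off-diagonal case I would split into three subcases. The cleanest is $(t_{i_1},t_{i_2})$ with $i_1\neq i_2$: both bracket terms vanish, since $\delta t_i$ is a linear combination of $\pm q_kp_kt_i$ and $\mp p_kq_kt_i$, all of which bracket trivially with a $t$-variable. Only the $\tb$ contributions survive, and expanding via Definition~\ref{def:vw} shows $\alpha = \tfrac{1}{2}(\delta_{i_1,e_-(t_{i_2})} - \delta_{e_-(t_{i_1}),i_2})$, which is exactly $\tfrac{1}{2}\sum_\gamma \sigma(\gamma)$: there is an unbroken segment from $\bullet_{i_1}$ to $\bullet_{i_2}$ with $\sigma(\gamma)=+1$ iff $\bullet_{i_1}$ immediately precedes $\bullet_{i_2}$ (i.e.\ $e_-(t_{i_2})=i_1$), and one with $\sigma(\gamma)=-1$ iff $\bullet_{i_2}$ immediately precedes $\bullet_{i_1}$. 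In the remaining subcases $(q_{j_1},q_{j_2})$ and $(q_j,t_i)/(t_i,q_j)$, both the bracket and $\tb$ pieces contribute. For the bracket piece I would expand $\delta x_1$ via Definition~\ref{def:delta} along a generic representative of $\gamma_{x_1}$ with holomorphic corners; bracketing with $x_2$ keeps only those interior Reeb points where $\gamma_{x_1}$ crosses an endpoint of a Reeb chord $a_{j_2}$ (the case $x_2=t_i$ contributes nothing). Each surviving interior Reeb crossing is naturally the far endpoint of a sub-arc of $\gamma_{x_1}$ starting at the preceding base point (or, accounting for the discontinuity at $a_{j_1}$, at the preceding Reeb endpoint of $x_1$), and this sub-arc is precisely one of the unbroken segments $\gamma\in\mathcal{P}(x_1,x_2)$.

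The main obstacle is sign bookkeeping. The sign $n(\gamma)$ is assembled from $\sigma_1,\sigma_2=\pm$ (recording which endpoint of each Reeb chord the segment meets) and $\sigma(\gamma)=\pm$ (orientation agreement with $\Lambda$). These must be matched against the factors $\epsilon_1,\epsilon_2$ in $\epsilon(\gamma_{x_1};\tau)$ from Definition~\ref{def:delta}, the sign $(-1)^{|v_1|+\cdots+|v_i|}$ recording the insertion position, the signs $\sigma_{v,i}\sigma_{u,j-1}\{v,u\}$ from the SFT bracket (Definition~\ref{def:SFTbracket}), and the overall sign $(-1)^{|x_1|+1}$ in front of $\{\delta x_1,x_2\}$. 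Complicating the accounting, an unbroken segment $\gamma$ whose two endpoints are both Reeb chord endpoints contributes to both $\{\delta x_1,x_2\}$ (via an interior Reeb crossing on $\gamma_{x_1}$) and $\{x_1,\delta x_2\}$ (via the mirror-image crossing on $\gamma_{x_2}$), and the prefactor $\tfrac{1}{2}$ is exactly what absorbs this doubling; segments having a base-point endpoint are seen by only one of the two bracket terms, and it is the $\tb$ pieces that supply the missing $\tfrac{1}{2}n(\gamma)$. Carrying out this matching in each subcase---verifying that the signs assemble to exactly $\tfrac{1}{2}n(\gamma)$ per segment with no overcounting or dropped contributions---is the central technical step, and summing over all $\gamma\in\mathcal{P}(x_1,x_2)$ then yields the proposition.
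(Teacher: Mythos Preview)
Your overall strategy---case analysis on generator types, with the diagonal case handled by graded symmetry and the $(t_{i_1},t_{i_2})$ case by direct expansion of the $\tb$ terms---matches the paper's. Your diagonal and $(t,t)$ computations are correct. But your description of the mechanism in the $(q_{j_1},q_{j_2})$ and $(q_j,t_i)$ cases is structurally wrong, not merely a sign-bookkeeping issue to be sorted out later.

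First, in the $(q_{j_1},q_{j_2})$ case the $\tb$ contributions do \emph{not} appear: the combination $-\tfrac12\tb(x_1,x_2)+\tfrac12(-1)^{|x_1||x_2|}\tb(x_2,x_1)$ equals $\tfrac12(\vv(x_1)\cdot\ww(x_2)-\ww(x_1)\cdot\vv(x_2))x_1x_2$, and $\ww(q_j)=0$. So only the two bracket terms survive. Second, your ``doubling'' picture is false. An unbroken segment oriented (along $\Lambda$) from $a_{j_1}^{\sigma_1}$ to $a_{j_2}^{\sigma_2}$ produces an interior Reeb crossing of $\gamma_{q_{j_2}}$ at $a_{j_1}^{\sigma_1}$---because the half $\gamma_{j_2}^{\sigma_2}$ of that capping path runs forward from the preceding base point to $a_{j_2}^{\sigma_2}$ and hence passes through $a_{j_1}^{\sigma_1}$---but it produces \emph{no} crossing of $\gamma_{q_{j_1}}$ at an endpoint of $a_{j_2}$, since both halves of $\gamma_{q_{j_1}}$ terminate at $a_{j_1}^\pm$ and do not continue forward past it. Thus each segment contributes exactly once to $2\ell_2^\str(q_{j_1},q_{j_2})=(-1)^{|q_{j_1}|+1}\{\delta q_{j_1},q_{j_2}\}-\{q_{j_1},\delta q_{j_2}\}$, with the orientation of the segment dictating \emph{which} of the two bracket terms it hits. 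The $\tfrac12$ in the statement is the $\tfrac12$ already present in the definition of $\ell_2^\str$, not an averaging of a double count. The same correction applies to $(q_j,t_i)$: there $\{\delta q_j,t_i\}=0$, and the remaining bracket $-\{q_j,\delta t_i\}$ sees only segments oriented from $a_j^\sigma$ toward $\bullet_i$, while segments oriented from $\bullet_i$ toward $a_j^\sigma$ are captured entirely by the $\tb$ piece $\vv(q_j)\cdot\ww(t_i)$. It is not that each segment is seen half by the bracket and half by $\tb$; rather, the two classes of segments are partitioned between the two sources. If you rerun your sign check with this corrected dichotomy, the computation goes through.
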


\begin{proof}
From the definition of $\ell_2^\str$, we have $\ell_2^\str(x_2,x_1) = (-1)^{|x_1||x_2|+1}\ell_2^\str(x_1,x_2)$. Thus if $x_1=x_2$, then $\ell_2^\str(x_1,x_1) = 0$: either $|x_1|$ is even, in which case $\ell_2^\str(x_1,x_1)=0$ from the graded symmetry of $\ell_2^\str$, or $|x_1|$ is odd, in which case $\ell_2^\str(x_1,x_1)$ is a multiple of $x_1^2$, which is $0$ in $\A\tocomm$.

Now suppose $x_1\neq x_2$. By the symmetry of $\ell_2^\str$, it suffices to consider $3$ cases: $(x_1,x_2) = (q_{j_1},q_{j_2})$; $(x_1,x_2) = (q_j,t_i)$; $(x_1,x_2) = (t_{i_1},t_{i_2})$. 

First suppose $(x_1,x_2) = (q_{j_1},q_{j_2})$. Then
\begin{equation}
2\ell_2^\str(q_{j_1},q_{j_2}) = (-1)^{|q_{j_1}|+1}\{\delta q_{j_1},q_{j_2}\} - \{q_{j_1},\delta q_{j_2}\}.
\label{eq:l2str}
\end{equation}
Contributions to the right hand side of \eqref{eq:l2str} correspond exactly to instances in which the capping path for one of $q_{j_1},q_{j_2}$ passes through an endpoint of the other Reeb chord. By the construction of our capping paths, these instances are in precise correspondence to unbroken segments between $q_{j_1}^{\pm}$ and $q_{j_2}^{\pm}$. 

It now suffices to check that for such a segment $\gamma$, the contribution to the right hand side of \eqref{eq:l2str} is exactly $n(\gamma) x_1x_2$. 
This is a straightforward case-by-case check, which we will illustrate in two cases. If $\gamma$ is a path from $a_{j_1}^+$ to $a_{j_2}^+$, then the front half $\gamma_{j_2}^+$ of the capping path $\gamma_{q_{j_2}} = \gamma_{j_2}^+ \cdot (-\gamma_{j_2}^-)$ passes through $a_{j_1}^+$, yielding a term $-q_{j_1}p_{j_1}q_{j_2}$ in $\delta q_{j_2}$, and the contribution to \eqref{eq:l2str} is $-\{q_{j_1},-q_{j_1}p_{j_1}q_{j_2}\} = -q_{j_1}q_{j_2} = n(\gamma)q_{j_1}q_{j_2}$. If $\gamma$ is a path from $a_{j_1}^+$ to $a_{j_2}^-$, then the back half $-\gamma_{j_2}^-$ of $\gamma_{q_{j_2}}$ passes through $a_{j_1}^+$, yielding a term $(-1)^{|q_{j_2}|}q_{j_2}q_{j_1}p_{j_1}=q_{j_1}p_{j_1}q_{j_2}$ in $\delta q_{j_2}$, and the contribution to \eqref{eq:l2str} is $+q_{j_1}q_{j_2} = n(\gamma)q_{j_1}q_{j_2}$. We leave the other $6$ cases for the possible endpoints of $\gamma$ to the reader.

Next suppose $(x_1,x_2) = (q_j,t_i)$, in which case
\[
2\ell_2^\str(q_j,t_i) = -\{q_j,\delta t_i\} + (\vv(q_j)\cdot \ww(t_i)) q_jt_i.
\]
Now unbroken segments between $a_j^\pm$ and $\bullet_i$ come in four types. A segment $\gamma$ from $a_j^+$ to $\bullet_i$ corresponds to $\gamma_{t_i}$ passing through $a_j^+$; this gives a term $-q_jp_jt_i$ in $\delta t_i$ and thus contributes $-q_jt_i = n(\gamma)q_jt_i$ to $2\ell_2^\str(q_j,t_i)$. A segment $\gamma$ from $\bullet_i$ to $a_j^+$, on the other hand, does not contribute to $-\{q_j,\delta t_i\}$ but does contribute $-1$ to $\vv(q_j) \cdot \ww(t_i)$, resulting in a contribution of $-q_jt_i = n(\gamma)q_jt_i$. We leave the other $2$ cases to the reader.

Finally, suppose $(x_1,x_2) = (t_{i_1},t_{i_2})$, in which case
\[
2\ell_2^\str(t_{i_1},t_{i_2}) = (\vv(t_{i_1})\cdot \ww(t_{i_2})-\ww(t_{i_1})\cdot\vv(t_{i_2}))t_{i_1}t_{i_2}.
\]
An unbroken segment $\gamma$ from $\bullet_{i_1}$ to $\bullet_{i_2}$ contributes $-1$ to $\ww(t_{i_1})\cdot\vv(t_{i_2})$ and thus $+t_{i_1}t_{i_2} = n(\gamma)t_{i_1}t_{i_2}$ to $2\ell_2^\str(t_{i_1},t_{i_2})$, while a segment $\gamma$ from $\bullet_{i_2}$ to $\bullet_{i_1}$ contributes $-1$ to $\vv(t_{i_1})\cdot\ww(t_{i_2})$ and thus $-t_{i_1}t_{i_2} = n(\gamma)t_{i_1}t_{i_2}$ to $2\ell_2^\str(t_{i_1},t_{i_2})$.
\end{proof}

%**************************************************
\subsection{$(-1)$-closures of admissible positive braids}
\label{ssec:closures}

To illustrate the construction of the $L_\infty$ algebra from Section~\ref{ssec:linfty-def}, here we consider a family of Legendrian links, $(-1)$-closures of admissible positive braids. These lie in a neighborhood of a standard Legendrian unknot in $\R^3$ and are braided arond this unknot. This family includes ``rainbow closures'' of positive braids.

We first review some notation from \cite[\S 2.2]{CN}. A positive braid is \textit{admissible} if drawing the braid from left to right results in a diagram in $[0,1] \times \R$ that represents the $xy$ projection of a Legendrian link in $(J^1(S^1) = S^1\times\R\times\R,~\ker(dz-y\,dx))$, where we view $S^1 = [0,1]/(0\sim 1)$. Not all positive braids are admissible; for our purposes, we note that by \cite[Proposition~2.7]{CN}, any positive braid containing a positive half-twist $\Delta$ is admissible.

It is shown in \cite[Proposition~2.6]{CN} that given any admissible positive braid $\beta$, there is a well-defined Legendrian link $\Lambda(\beta)$ in $\R^3$ which is topologically isotopic to the closure of $\beta\Delta^{-2}$, where $\Delta$ represents a half-twist in the braid group. This link $\Lambda(\beta)$ is called the \textit{$(-1)$-closure} of $\beta$ and is illustrated in Figure~\ref{fig:closure}. In particular, the $(-1)$-closure of a braid of the form $\beta\Delta^2$, where $\beta$ is any positive braid, is a Legendrian link often called the rainbow closure of $\beta$.

Suppose that $\beta$ is an admissible positive braid with $N$ strands, and that the length of $\beta$ as a braid word is $k$. Then the $(-1)$-closure $\Lambda(\beta)$ as shown in Figure~\ref{fig:closure} has $k+N^2$ Reeb chords: the $k$ crossings in $\beta$, which we label $a_1,\ldots,a_n$, and the $N^2$ crossings where the link loops around itself, which we label $\tilde{a}_{ij}$ for $1\leq i,j\leq N$. We note that if we view $\Lambda(\beta)$ as a satellite of the standard Legendrian unknot, then the $\tilde{a}_{ij}$ crossings of $\Lambda(\beta)$ correspond to the single Reeb chord of the standard unknot.

If we place one base point on each strand of $\beta$ as shown in Figure~\ref{fig:closure}, then we can write a generating set for the commutative algebra $\A\tocomm$ as
\[
\SS = \{q_1,\ldots,q_k,\tilde{q}_{11},\ldots,\tilde{q}_{NN},p_1,\ldots,p_k,\tilde{p}_{11},\ldots,\tilde{p}_{NN},t_1^{\pm 1},\ldots,t_N^{\pm 1}\}.
\]
If we give $\Lambda(\beta)$ the Maslov potential determined by the given placement of base points, then it is straightforward to check that we have the following gradings:
\begin{align*}
|q_i| &= 0 & |p_i| &= -1 \\
|\tilde{q}_{ij}| &= 1 & |\tilde{p}_{ij}| &= -2 \\
|t_i^{\pm 1}| &= 0.
\end{align*}

\begin{figure}
\labellist
\small\hair 2pt
\pinlabel {\color{blue} $\beta$} at 52 36
\endlabellist
\centering
\includegraphics[width=\textwidth]{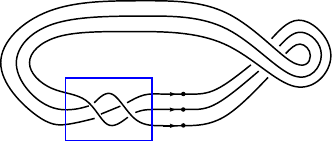}
\caption{The $(-1)$-closure $\Lambda(\beta)$ of an admissible positive braid $\beta$, in the $xy$ projection.
}
\label{fig:closure}
\end{figure}

\begin{proposition}
For $\Lambda(\beta)$, the homotopy Poisson algebra $(\A\tocomm,\{\ell_k\})$ is strict: $\ell_k = 0$ for $k \geq 3$.
\label{prop:closure-strict}
\end{proposition}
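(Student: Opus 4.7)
Since each operation $\ell_k$ for $k\geq 3$ is defined by the iterated SFT bracket
\[
\ell_k(x_1,\ldots,x_k) = \pm \{\cdots\{\{h_k,x_1\},x_2\},\ldots,x_k\}
\]
and each argument $x_i\in\A\tocomm$ contains no $p$ variables, the vanishing of $\ell_k$ for $k\geq 3$ will follow as soon as we prove $h_k=0$ for all $k\geq 3$. By the definition of $h_k$ in Section~\ref{ssec:h}, $h_k$ is a signed count of immersed disks $\Delta\in\Delta_k(\Lambda(\beta))$ in the $xy$-projection whose boundary has exactly $k$ convex corners occupying $p$-quadrants. Thus the proposition reduces to the combinatorial-geometric assertion that $\Delta_k(\Lambda(\beta))=\emptyset$ whenever $k\geq 3$.

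To establish this emptiness, I would first record the locations of the $p$-quadrants at each Reeb chord of $\Lambda(\beta)$, both at the braid crossings $a_i$ and at the closure crossings $\tilde a_{ij}$, by comparing the convention of Figure~\ref{fig:Reebsigns} with the local tangent directions visible in Figure~\ref{fig:closure}. The remaining task is then a concrete question about the link diagram: given the $p$-quadrants, is there an orientation-preserving immersion of a polygonal disk into $\R^2$ with boundary on $\Pi_{xy}(\Lambda(\beta))$ and three or more corners in prescribed $p$-quadrants? The natural strategy is to follow the oriented boundary of such a hypothetical disk, using the $x$-coordinate (the position in the braid word) to linearly order its positive corners, and then verifying that the admissibility of $\beta$ together with the rainbow structure of the closure obstructs any consistent patching of three or more positive corners into a single immersed disk.

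The main obstacle I anticipate is this final case analysis, in particular handling disks whose positive corners are mixed between the braid crossings $a_i$ and the closure crossings $\tilde a_{ij}$. As a structural input, one can use the grading condition $|w(\Delta)|=-2$ from Proposition~\ref{prop:hamdeg}: applied to the computed degrees $|p_{a_i}|=-1$, $|\tilde p_{ij}|=-2$, $|q_{a_i}|=0$, $|\tilde q_{ij}|=1$, this already forces at least one $\tilde q$-corner whenever $k\geq 3$, so the local analysis need only consider disks that pass through the closure region. Once these grading constraints are in place, I expect that direct inspection of the projection diagram, organized by the ordering of positive corners in the braid word and the dictated behavior of $\partial\Delta$ near each prescribed $p$-quadrant, will suffice to rule out the remaining configurations and thereby complete the proof.
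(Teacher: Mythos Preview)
Your reduction is correct and matches the paper: since $\ell_k$ for $k\geq 3$ is built from $h_k$ alone, it suffices to show that no immersed disk in $\Delta(\Lambda(\beta))$ has three or more positive corners. Your grading observation (that $|w(\Delta)|=-2$ forces at least one $\tilde q$-corner once $k\geq 3$) is also valid.

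However, the proposal as written is not yet a proof. You have set up a case analysis---ordering positive corners by $x$-coordinate, tracking the boundary through the braid region and the closure region---but you have not carried it out, and you yourself flag this as ``the main obstacle.'' For a general admissible $\beta$ the number of configurations to rule out is substantial, and nothing in the sketch explains why admissibility of $\beta$ is the right obstruction at the level of disks (as opposed to just giving a well-defined Legendrian).

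The paper's argument bypasses the case analysis entirely by exploiting the satellite structure. The link $\Lambda(\beta)$ lies in a tubular neighborhood of a standard Legendrian unknot $\Lambda_0$, and any disk $\Delta\in\Delta(\Lambda(\beta))$ is either \emph{thin} (its image stays in the tubular neighborhood of $\Pi_{xy}(\Lambda_0)$) or \emph{thick}. A thin disk is a strip whose boundary runs along $\Pi_{xy}(\Lambda_0)$ and reverses direction exactly twice, giving exactly two positive corners. A thick disk, in the limit where the tubular neighborhood shrinks, converges to an immersed disk in $\Delta(\Lambda_0)$; since the unknot has only two such disks, each with a single positive corner, the thick disk inherits exactly one positive corner. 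This thin/thick dichotomy handles all admissible $\beta$ uniformly in a few lines, whereas your approach would need to be redone (or at least carefully checked) for each combinatorial configuration of positive corners.
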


\begin{proof}
We claim that any immersed disk in $\Delta(\Lambda(\beta))$ has at most two positive corners, whence $h_k = 0$ for $k \geq 3$. Note that $\Lambda(\beta)$ lies in a tubular neighborhood of a Legendrian unknot $\Lambda_0$ whose $xy$ projection has a single crossing, and $\Pi_{xy}(\Lambda(\beta))$ lies in a tubular neighborhood of $\Pi_{xy}(\Lambda_0)$. Any disk $\Delta\in\Delta(\Lambda(\beta))$ either maps into this tubular neighborhood, in which case we call $\Delta$ ``thin'', or it does not, in which case we call it ``thick''. If $\Delta$ is thin, then by inspection, $\Delta$ has exactly two positive corners: these are the two points where $\partial\Delta$ changes direction along $\Pi_{xy}(\Lambda_0)$. If $\Delta$ is thick, then in fact $\Delta$ can only have one positive corner. Indeed, in the limit where we shrink the tubular neighborhood so that $\Pi_{xy}(\Lambda(\beta))$ approaches a multiple cover of $\Pi_{xy}(\Lambda_0)$, $\Delta$ approaches an (immersed) disk in $\Delta(\Lambda_0)$. But there are exactly two disks in $\Delta(\Lambda_0)$ and both have exactly one positive corner; thus $\Delta$ must have one positive corner as well.
\end{proof}

It follows from Proposition~\ref{prop:closure-strict} that for the Legendrian link $\Lambda(\beta)$, $\A\tocomm$ is a DG Poisson algebra with Poisson bracket $\ell_2$. In particular, the degree-$0$ subalgebra of $\A\tocomm$ generated by $q_1,\ldots,q_k,t_1^{\pm 1},\ldots,t_N^{\pm 1}$ is a Poisson algebra. We thus have the following corollary of Proposition~\ref{prop:closure-strict}.

\begin{corollary}
Given any admissible positive braid $\beta$ with $k$ crossings and $N$ strands, the $L_\infty$ structure for the Legendrian link $\Lambda(\beta)$ induces a Poisson bracket on \label{cor:closure}
\[
\kk[q_1,\ldots,q_k,t_1^{\pm 1},\ldots,t_N^{\pm 1}],
\]
the coordinate ring of $\kk^k \times (\kk^*)^N$.
\end{corollary}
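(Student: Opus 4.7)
The proof will be a straightforward deduction from Proposition~\ref{prop:closure-strict}, so the plan is to spell out the bookkeeping carefully rather than to introduce new ideas. First I would invoke Proposition~\ref{prop:closure-strict} to conclude that the $L_\infty$ algebra $(\A\tocomm,\{\ell_k\})$ associated to $\Lambda(\beta)$ is strict, i.e.\ $\ell_k=0$ for $k\geq 3$. Then, reading the third strong-homotopy Jacobi identity from Section~\ref{ssec:l-infty} with $\ell_3=0$ shows that $\ell_2$ satisfies the usual graded Jacobi identity \emph{on the nose}, not merely up to homotopy. Combined with the Leibniz property built into the homotopy Poisson algebra structure (Proposition~\ref{prop:htpyPoisson}), this makes $(\A\tocomm,\ell_2)$ a graded Poisson algebra (with $\ell_1=\partial$ acting as a Poisson derivation, though this last fact is not needed for the corollary).

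Next I would pin down the relevant subalgebra by inspecting gradings. Since $\ell_k$ has degree $k-2$, the operation $\ell_2$ has degree $0$ and therefore preserves the degree-$0$ subalgebra $(\A\tocomm)_0$. From the grading table in Section~\ref{ssec:closures} we have $|q_i|=0$, $|t_i^{\pm 1}|=0$, and $|\tilde q_{ij}|=1$, and no generators of $\A\tocomm$ have negative degree (the $\tilde p$'s and $p$'s do not live in $\A\tocomm$). Consequently every monomial of degree $0$ in $\A\tocomm$ is a product of the degree-$0$ generators, and graded commutativity rules out any contribution from the degree-$1$ generators $\tilde q_{ij}$ (any product of $m\geq 1$ such generators has degree $m\geq 1$). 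This identifies
\[
(\A\tocomm)_0 = \kk[q_1,\ldots,q_k,t_1^{\pm 1},\ldots,t_N^{\pm 1}],
\]
which is exactly the coordinate ring of $\kk^k \times (\kk^*)^N$.

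Finally I would note that restriction of $\ell_2$ to $(\A\tocomm)_0$ yields an ordinary (ungraded) skew-symmetric bilinear map satisfying both the Jacobi identity and Leibniz, hence a Poisson bracket on $\kk[q_1,\ldots,q_k,t_1^{\pm 1},\ldots,t_N^{\pm 1}]$, as claimed.

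The only non-routine ingredient was already established in Proposition~\ref{prop:closure-strict} (the combinatorial/geometric input that disks in $\Delta(\Lambda(\beta))$ carry at most two positive corners); the principal obstacle one might worry about is whether the degree-$0$ subalgebra could secretly be larger than the polynomial ring on the grading-$0$ generators, but the parity argument above rules this out, so the corollary follows with no further work.
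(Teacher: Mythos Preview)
Your proof is correct and follows exactly the approach the paper takes: the paper simply states that Proposition~\ref{prop:closure-strict} makes $\A\tocomm$ a DG Poisson algebra, so the degree-$0$ subalgebra generated by $q_1,\ldots,q_k,t_1^{\pm 1},\ldots,t_N^{\pm 1}$ is a Poisson algebra. Your write-up spells out the grading bookkeeping more carefully than the paper does, but the logic is identical.
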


As an example, we compute the Poisson bracket from Corollary~\ref{cor:closure} for the $2$-strand braid $\beta=\sigma_1^{2n+2}$, $n \geq 1$. The $(-1)$-closure of this braid is a $2$-component Legendrian link which is topologically a $(2,2n)$ torus link. What we will find is that this Poisson bracket is essentially the same as the Flaschka--Newell Poisson bracket \cite{FN} on $\kk[s_1,\ldots,s_{2n+2},\lambda^{\pm 1}]$, which is defined by
\begin{align*}
\{s_i,s_j\}_{\text{FN}} &= \delta_{i,j-1}-\frac{\delta_{i,1}\delta_{j,2n+2}}{\lambda^2}+(-1)^{i-j+1}s_is_j, \qquad i<j \\
\{s_i,\lambda\}_{\text{FN}} &= (-1)^i s_i \lambda.
\end{align*}

\begin{figure}
\labellist
\small\hair 2pt
\pinlabel $a_1$ at 19 11
\pinlabel $a_2$ at 30 11
%\pinlabel $a_3$ at 41 11
%\pinlabel $a_4$ at 52 11
\pinlabel $a_{2n+1}$ at 63 11
\pinlabel $a_{2n+2}$ at 75 11
\pinlabel $\star_1$ at 90 4
\pinlabel $\bullet_1$ at 95 4
\pinlabel $\star_2$ at 90 12
\pinlabel $\bullet_2$ at 95 12
\pinlabel $\tilde{a}_{11}$ at 125 24
\pinlabel $\tilde{a}_{12}$ at 114 30
\pinlabel $\tilde{a}_{22}$ at 105 20
\pinlabel $\tilde{a}_{21}$ at 119 12
\endlabellist
\centering
\includegraphics[width=\textwidth]{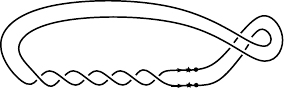}
\caption{The $(2,2n)$ torus link $\Lambda(\sigma_1^{2n+2})$, in the $xy$ projection.
}
\label{fig:torus}
\end{figure}

\begin{proposition}
The Poisson bracket on
\[
\kk[q_1,\ldots,q_{2n+2},t_1^{\pm 1},t_2^{\pm 1}]
\]
induced by the $L_\infty$ structure on the $(2,2n)$ torus link $\Lambda(\sigma_1^{2n+2})$ is: \label{prop:FN}
\begin{align*}
\{q_i,q_j\} &= (-1)^{i+j+1}q_iq_j && j \geq i+1 \\
\{q_i,q_{i+1}\} &= q_iq_{i+1}+1 && 1 \leq i \leq 2n+1 \\
\{q_1,q_{2n+2}\} &= q_1q_{2n+2}-t_1t_2^{-1} & \\
\{q_i,t_1\} &= (-1)^{i+1} q_i t_1 && 1\leq i\leq 2n+2 \\
\{q_i,t_2\} &= (-1)^{i} q_i t_2 && 1\leq i\leq 2n+2 \\
\{t_1,t_2\} &= 0.
\end{align*}
It follows that the map $q_i \mapsto s_i$, $t_1 \mapsto \lambda^{-1}$, $t_2 \mapsto \lambda$ sends our $\{\cdot,\cdot\}$ to the Flaschka--Newell bracket $\{\cdot,\cdot\}_{\text{FN}}$.
\end{proposition}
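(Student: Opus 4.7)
The plan is to compute $\ell_2$ directly on pairs of generators $q_i,q_j,t_1,t_2$ using Definition~\ref{def:main}. Since every such generator has degree zero, all signs in that definition collapse and
\[
\ell_2(x_1,x_2) = \{\{h_2,x_1\},x_2\} + \ell_2^\str(x_1,x_2).
\]
By Proposition~\ref{prop:closure-strict} the $L_\infty$ structure is strict, so this really is a Poisson bracket on the degree-zero subalgebra, and the proposition reduces to a case-by-case check on generators.

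The first step is to identify which thin bigons contribute. By the thin/thick analysis in the proof of Proposition~\ref{prop:closure-strict}, the disks counted by $h_2$ are thin bigons in $\Delta_2(\Lambda)$. For the bracket $\{\{h_2,q_i\},q_j\}$ with $q_i,q_j$ both from the braid part, only terms of $h_2$ containing a pair $p_ip_j$ (and no $\tilde p_{ij}$) can contribute. I would show that such thin bigons exist in exactly two families: a short bigon between each adjacent pair of braid crossings $a_i,a_{i+1}$, contributing $\pm p_ip_{i+1}$ to $h_2$; and a single ``wrap-around'' bigon whose boundary traverses the closure, with positive corners at $a_1$ and $a_{2n+2}$, contributing a term $\pm t_1t_2^{-1}p_1p_{2n+2}$. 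No other thin bigon has both of its positive corners among $a_1,\ldots,a_{2n+2}$. The signs and $t$-factors are read off via Section~\ref{ssec:h}: the shading of Figure~\ref{fig:signs} fixes the sign and the orientation along the boundary records which base points are crossed.

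Next I would apply the SFT bracket twice. Since $\{\cdot,\cdot\}$ on $\sft\tocomm$ is the usual Poisson bracket with $\{p_i,q_j\}=\delta_{ij}$ and $\{t_i^{\pm 1},\cdot\}=0$ on generators, $\{\{h_2,x_1\},x_2\}$ is obtained by differentiating $h_2$ with respect to $p_{x_1}$ and $p_{x_2}$. From the enumeration above this yields the constant $1$ for $(q_i,q_{i+1})$, the monomial $-t_1t_2^{-1}$ for $(q_1,q_{2n+2})$, and $0$ for all other pairs (in particular all brackets involving $t_k$). In parallel I would compute $\ell_2^\str$ using Proposition~\ref{prop:l2str}, by listing the unbroken segments of $\Lambda$ between the endpoints of each pair. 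From Figure~\ref{fig:torus} there are exactly two such unbroken segments between $a_i$ and $a_j$ for any $i<j$, and after applying the sign rule $n(\gamma)$ their contributions combine to $(-1)^{i+j+1}q_iq_j$; analogously, unbroken segments between $a_i$ and $\bullet_k$ give $(-1)^{i+k}q_it_k$, and the two segments between $\bullet_1$ and $\bullet_2$ cancel, producing $\{t_1,t_2\}=0$.

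Combining the two contributions produces exactly the formulas in the proposition, and the identification with $\{\cdot,\cdot\}_{\text{FN}}$ under $q_i\leftrightarrow s_i$, $t_1\leftrightarrow \lambda^{-1}$, $t_2\leftrightarrow \lambda$ is then term-by-term. The main obstacle will be sign bookkeeping: one must simultaneously track the shading signs of Figure~\ref{fig:signs}, the cyclic-permutation signs $\sigma_{v,i}$ in Definition~\ref{def:SFTbracket}, and the segment signs $n(\gamma)$ of Proposition~\ref{prop:l2str}, together with the powers of $t_1,t_2$ coming from which base points each boundary traverses. Grouping the bigons and segments by the parity of $i$ and $j$ (which controls which strand is on top between $a_i$ and $a_j$, and hence which base points are crossed) should make the alternation $(-1)^{i+j+1}$ transparent and reduce the entire verification to a small number of model pictures.
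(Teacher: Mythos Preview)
Your approach is essentially the same as the paper's: compute the relevant portion of $h_2$ (the adjacent bigons $-p_ip_{i+1}$ and the wrap-around bigon $-t_2^{-1}p_{2n+2}t_1p_1$), apply the SFT bracket twice, and add the string contribution via Proposition~\ref{prop:l2str}. The paper's proof is terser but follows exactly this route, so your plan is correct and matches it.
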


\begin{proof}
We need to compute $\ell_2$ on $q_1,\ldots,q_{2n+2},t_1,t_2$. To do this, we first compute the relevant portion of the Hamiltonian $h_2$, 
namely the terms that involve only $q_j,p_j,t_i$ and not $\tilde{q}_{ij},\tilde{p}_{ij}$. By inspection of Figure~\ref{fig:torus}, we have:
\[
h_2 = -p_1p_2-p_2p_3-\cdots-p_{2n+1}p_{2n+2}-t_2^{-1}p_{2n+2}t_1p_1+\cdots
\]
where the terms correspond to embedded bigons with corners at $a_j$ and $a_{j+1}$ for $j=1,\ldots,2n+1$, except for the final term, which corresponds to an immersed bigon with corners at $a_1$ and $a_{2n+2}$ and wraps around the loop region. The result now follows by using this formula for $h_2$ along with Proposition~\ref{prop:l2str} to compute $\ell_2(x,y) = \{\{h_2,x\},y\}+\ell_2^\str(x,y)$ for $x,y\in\{q_1,\ldots,q_{2n+2},t_1,t_2\}$.
\end{proof}

%**************************************************
\subsection{The $L_\infty$ structure for Legendrian knots, and a variant}
\label{ssec:knot}

Here we consider the case where $\Lambda$ is a single-component Legendrian knot with a single base point, rather than an arbitrary pointed Legendrian link. In this case the definition of the $L_\infty$ structure from Section~\ref{ssec:linfty-def} simplifies considerably.

\begin{definition}
Let $\Lambda$ be a Legendrian knot with a single base point. For $k \geq 1$, define $\ell_k :\thinspace (\alg\tocomm)^{\otimes k} \to \alg\tocomm$ as follows: \label{def:knot}
\begin{align*}
\ell_1(x_1) &= \{h_1,x_1\} \\
\ell_2(x_1,x_2) &= (-1)^{|x_1|} \{\{h_2,x_1\},x_2\} - (-1)^{|x_1|}\{\delta x_1,x_2\}
\end{align*}
and for $k \geq 3$,
\[
\ell_k(x_1,\ldots,x_k) = (-1)^{|x_{k-1}|+|x_{k-3}|+\cdots} \{ \cdots \{\{\{h_k,x_1\},x_2\},x_3\},\ldots,x_k\}.
\]
Equivalently, if we define $d :\thinspace \sft\tocomm \to \sft\tocomm$ by $d = \{h,\cdot\}-\delta$ as in Definition~\ref{def:sftdiff}, and write $\Pi :\thinspace \sft\tocomm \to \alg\tocomm$ for the projection map annihilating all words containing $p$'s, then we can define $\ell_k$ for all $k \geq 1$ by:
\[
\ell_k(x_1,\ldots,x_k) = (-1)^{|x_{k-1}|+|x_{k-3}|+\cdots} \Pi \{ \cdots \{\{dx_1,x_2\},x_3\},\ldots,x_k\}.
\]
\end{definition}

For Legendrian knots, this definition of the $L_\infty$ structure agrees with the more general Definition~\ref{def:main}; the simplification of $\ell_2$ in this case is due to the facts that the $\tilde{\beta}$ terms disappear when there is a single base point, and the string coproduct $\delta$ is now an honest derivation with respect to the SFT bracket $\{\cdot,\cdot\}$.

The unified definition of $\ell_k$ involving $d$ in Definition~\ref{def:knot} implies the following result.

\begin{proposition}
Let $\Lambda$ be a Legendrian knot with a single base point. The $L_\infty$ structure on $\alg\tocomm$ determines the differential $d :\thinspace \alg\tocomm \to \alg\tocomm$. \label{prop:commsft}
\end{proposition}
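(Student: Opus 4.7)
The plan is to invert the unified defining formula from Definition~\ref{def:knot},
\[
\ell_k(x_1,\ldots,x_k) \;=\; (-1)^{|x_{k-1}|+|x_{k-3}|+\cdots}\, \Pi\bigl\{\cdots\{\{d x_1, x_2\},x_3\},\ldots,x_k\bigr\},
\]
and thereby recover $dx_1 \in \sft\tocomm$ from the collection of values $\ell_k(x_1,x_2,\ldots,x_k)$ with $x_2,\ldots,x_k$ ranging over $\alg\tocomm$.

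The key observation is that for any $y \in \alg\tocomm$ (which contains no $p$-variables), the SFT bracket $\{\cdot,y\}\co\sft\tocomm\to\sft\tocomm$ acts by Poisson differentiation, because among the generators only $\{p_i,q_j\}=\delta_{ij}$ is nonzero. In particular, $\{\cdot,q_j\}$ is, up to an explicit graded sign, the partial derivative $\partial/\partial p_j$ on the super-polynomial algebra $\sft\tocomm$. Iterating,
\[
\bigl\{\cdots\{\{dx_1, q_{j_1}\}, q_{j_2}\},\ldots, q_{j_{k-1}}\bigr\} \;=\; \pm\, \frac{\partial^{k-1}(dx_1)}{\partial p_{j_1}\cdots \partial p_{j_{k-1}}},
\]
and applying $\Pi$ then extracts the $p$-independent part of this iterated derivative. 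Writing $dx_1 = \sum_J c_J\, p_J$ uniquely as a polynomial in the $p$-variables with coefficients $c_J \in \alg\tocomm$, the computation shows that $\ell_k(x_1, q_{j_1},\ldots, q_{j_{k-1}})$ equals $c_J$ (for the multi-index $J = (j_1,\ldots,j_{k-1})$) times an explicit sign and an invertible combinatorial multiplicity -- a product of factorials tracking the repetitions in $J$. Since $\kk$ has characteristic $0$, these factors are units, so every $c_J$ can be solved for; ranging over all multi-indices $J$ reconstructs $dx_1$ in full, for every $x_1 \in \alg\tocomm$.

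To extend the recovery to all of $\sft\tocomm$, we use that $d = \{h,\cdot\}-\delta$ is a graded derivation on $\sft\tocomm$ (since both $\{h,\cdot\}$ and $\delta$ are, by Propositions~\ref{prop:sft-bracket} and~\ref{prop:delta-properties}), and hence is determined by its action on generators. Knowing $dq_j$ together with $\delta q_j$, which is intrinsic to the capping-path combinatorics of $\Lambda$, recovers $\{h,q_j\} = -\partial h/\partial p_j$ for each $j$; since $h \in \F^1\sft\tocomm$ has no $p$-independent summand, these partial derivatives determine $h$ uniquely, and hence $dp_j = \{h,p_j\}-\delta p_j$ is determined as well. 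The only obstacle is careful bookkeeping of the graded Koszul signs in the iterated brackets and the factorial multiplicities coming from repeated indices; working in characteristic $0$ is precisely what allows us to divide these out, so none of this presents a genuine difficulty.
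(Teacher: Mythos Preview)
Your approach is essentially the same as the paper's: expand $dq_j$ as a formal series in the $p$-variables with coefficients in $\alg\tocomm$, and recover each coefficient from the value $\ell_{k+1}(q_j,q_{j_1},\ldots,q_{j_k})$, since the iterated SFT bracket with $q_{j_i}$'s followed by $\Pi$ picks off precisely those coefficients. The paper's proof is terser---it simply asserts that the coefficient $f_{j_1,\ldots,j_k}$ is ``determined by'' the iterated bracket---whereas you spell out the partial-derivative interpretation and the factorial multiplicities that arise from repeated indices. That extra care is fine and correct; the characteristic-$0$ hypothesis is indeed what makes those factorials invertible.

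Your final paragraph, however, overshoots the target. The statement (and the paper's proof) only asks to recover $d$ restricted to $\alg\tocomm$, i.e.\ the values $dq_j \in \sft\tocomm$, with $d(t_i)=0$ and the Leibniz rule handling the rest. When you go on to recover $d$ on the $p_j$'s by first extracting $h$ via $\{h,q_j\}=dq_j+\delta q_j$, you are invoking $\delta$, which is not part of the $L_\infty$ data on $\alg\tocomm$ but rather comes from the capping-path combinatorics of $\Lambda$ itself. That step would be illegitimate if the claim were that the $L_\infty$ structure \emph{alone} determines $d$ on all of $\sft\tocomm$; fortunately the statement does not require this, so you can simply drop that paragraph.
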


\begin{proof}
Write the generators of $\alg\tocomm$ as $q_1,\ldots,q_n,t_1^\pm,\ldots,t_s^\pm$ as usual. Since $d$ satisfies the Leibniz rule, and $d(t_i)=0$ for all $i$, it suffices to show that the $L_\infty$ structure determines $d(q_j)$ for all $j$. We can write $d(q_j)$ as a polynomial (or power series) in $p_1,\ldots,p_n$ with coefficients in $q_1,\ldots,q_n,t_1^\pm,\ldots,t_s^\pm$:
\[
d(q_j) = \sum_{k\geq 0} \sum_{j_1\leq\cdots\leq j_k} f_{j_1,\ldots,j_k} p_{j_1} \cdots p_{j_k}
\]
with $f_{j_1,\ldots,j_k} \in \alg\tocomm$. Then for any $j_1,\ldots,j_k$, $f_{j_1,\ldots,j_k}$ is determined by $\{\cdots\{dq_j,q_{j_1}\},\ldots,q_{j_k}\}$ and thus by $\ell_{k+1}(q_j,q_{j_1},\ldots,q_{j_k})$.
\end{proof}

In \cite{SFT}, the complex $(\alg\tocomm,d)$ is called the \textit{commutative complex} associated to the LSFT algebra of the Legendrian knot $\Lambda$; this is the commutative version of the rational SFT invariant that is the subject of that paper. Proposition~\ref{prop:commsft} states that the $L_\infty$ structure developed in the present paper encodes the commutative rational SFT invariant from \cite{SFT}.

\begin{remark}
It is shown in \cite{SFT} that the commutative complex, up to filtered chain homotopy equivalence, is an invariant of the Legendrian knot under Legendrian isotopy. In Section~\ref{sec:invariance}, we will prove a restricted version of an invariance result for the $L_\infty$ structure, and we conjecture that the $L_\infty$ structure is in fact invariant up to $L_\infty$ equivalence. However, it is not clear that this invariance conjecture would directly imply invariance for the commutative complex: the procedure described above to pass from the $L_\infty$ structure to the commutative complex involves information about the $L_\infty$ structure besides its equivalence class, namely the distinguished generators $q_1,\ldots,q_n$ of $\alg\tocomm$.
\end{remark}

We next note that when $\Lambda$ is a Legendrian knot, we can lift the above $L_\infty$ structure from $\alg\tocomm$ to $\alg\tocyc$: the definition of $\ell_k :\thinspace (\alg\tocyc)^{\otimes k} \to \alg\tocyc$ is exactly the same as in Definition~\ref{def:knot} above.
Note that $\A\tocyc$ is not an associative algebra, and so the $L_\infty$ structure in this case will not be a homotopy Poisson structure.

\begin{proposition}
For any Legendrian knot with a single base point, $(\A\tocyc,\{\ell_k\}_{k=1}^\infty)$ is an $L_\infty$ algebra.
\label{prop:knot}
\end{proposition}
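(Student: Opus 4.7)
The key initial observation is that for a knot with a single base point $\bullet_1$, every element of $\SS$ has $\vv=0$, since the only base point is $\bullet_1$. Consequently $\tb\equiv 0$ on $\sft\tocyc$ by Definition~\ref{def:beta}, so Proposition~\ref{prop:delta} reduces to Proposition~\ref{prop:delta-knot} (namely, $\delta$ is an honest graded derivation of $\{\cdot,\cdot\}$), Proposition~\ref{prop:differential} yields $d^2=0$ on $\sft\tocyc$, and Proposition~\ref{prop:d} yields the derivation property $d\{x,y\}=\{dx,y\}-(-1)^{|x|}\{x,dy\}$. All three identities hold at the level of $\sft\tocyc$ (not only $\sft\tocomm$). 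Writing $\Pi\co \sft\tocyc\to\A\tocyc$ for the $p$-free projection, the unified formula from Definition~\ref{def:knot} gives
\[
\ell_k(x_1,\ldots,x_k) \;=\; (-1)^{|x_{k-1}|+|x_{k-3}|+\cdots}\,\Pi\{\cdots\{dx_1,x_2\},\ldots,x_k\}.
\]

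For graded symmetry, the computation in the proof of Proposition~\ref{prop:skewsymmetry} uses only the Jacobi identity for $\{\cdot,\cdot\}$ and the vanishing $\{y,z\}=0$ for $y,z$ free of $p$'s; both statements hold in $\sft\tocyc$, so the identical computation shows that each $\ell_k$ on $\A\tocyc$ is graded symmetric. The Leibniz half of Proposition~\ref{prop:skewsymmetry} is not relevant here, since $\A\tocyc$ is not a multiplicative algebra.

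The $L_\infty$ relations are the heart of the matter, and the plan is to run the argument of Section~\ref{sec:l-infty-proof} (the proof of Proposition~\ref{prop:htpyPoisson}) verbatim in $\sft\tocyc$ in place of $\sft\tocomm$. The source identity is $d^2 = 0$ together with the derivation rule for $d$: since $d^2 x_1 = 0$, applying $d$ to $E_k(x_1,\ldots,x_k) := \{\cdots\{dx_1,x_2\},\ldots,x_k\}$ yields
\[
0 \;=\; dE_k(x_1,\ldots,x_k) \;=\; \sum_{i=2}^k \pm\,\{\cdots\{dx_1,x_2\},\ldots,dx_i,\ldots,x_k\}.
\]
Applying $\Pi$ and splitting $dx_i = \ell_1(x_i)+\rho(x_i)$, where $\ell_1(x_i)=\Pi dx_i\in\A\tocyc$ and $\rho(x_i)\in\F^1\sft\tocyc$, separates the sum into contributions of the form $\ell_k(x_1,\ldots,\ell_1(x_i),\ldots,x_k)$ (from $\ell_1(x_i)$) together with higher compositions $\ell_{k-j+1}(\ell_j(\ldots),\ldots)$ for $j\geq 2$ (arising by iteratively bracketing $\rho(x_i)$ with the remaining $x_\ell$'s before projection, which exactly reproduces the recursive definition of the higher $\ell_j$). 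The bracket commutation $\{\{z,y\},y'\}=(-1)^{|y||y'|}\{\{z,y'\},y\}$, valid whenever $y,y'$ are $p$-free (because then $\{y,y'\}=0$), reorganizes orderings of outer brackets into the unshuffle sums of Definition~\ref{def:Linfalg}. Every algebraic move used (quantum master equation, Jacobi, derivation of $d$, vanishing of $p$-free brackets) is an identity in $\sft\tocyc$, so the argument transplants unchanged.

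The main obstacle is sign bookkeeping: matching $(-1)^{|x_{k-1}|+|x_{k-3}|+\cdots}$ from Definition~\ref{def:knot} together with the Koszul signs accumulated by iterated Jacobi and derivation manipulations against $(-1)^{i(k-i)}\chi(\sigma,x_1,\ldots,x_k)$ in Definition~\ref{def:Linfalg}. This is the detailed combinatorial work carried out in Section~\ref{sec:l-infty-proof}, and it applies without modification on the cyclic quotient, establishing Proposition~\ref{prop:knot}.
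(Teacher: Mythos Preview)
Your overall strategy is correct and matches the paper's proof: observe that for a knot with a single base point the $\tb$ correction terms vanish, so $\delta$ is an honest derivation of $\{\cdot,\cdot\}$ on $\sft\tocyc$, $d^2=0$, and $d$ is a derivation of the bracket; then rerun the argument of Section~\ref{sec:l-infty-proof} on $\sft\tocyc$ rather than $\sft\tocomm$, dropping all $\beta$ and $\tb$ terms. The paper does exactly this, framing it via the auxiliary maps $m_k$ and invoking Lemma~\ref{lem:dm} with the $\tb$ terms removed.

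However, your intermediate sketch contains a genuine error. You write
\[
0 \;=\; dE_k(x_1,\ldots,x_k) \;=\; \sum_{i=2}^k \pm\,\{\cdots\{dx_1,x_2\},\ldots,dx_i,\ldots,x_k\},
\]
but $dE_k$ is \emph{not} zero: applying the derivation property and $d^2x_1=0$ gives only the right-hand sum, so the left equality ``$0=dE_k$'' is false. The content of the proof is precisely that $dm_k(x_1,\ldots,x_k)$ equals (up to sign) $\sum_{[k]=A\sqcup B}\pm\{m(x_A),m(x_B)\}$ (Lemma~\ref{lem:dm} with $\tb=0$), and this requires an induction, not simply iterating the derivation rule. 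Your subsequent description of ``iteratively bracketing $\rho(x_i)$ with the remaining $x_\ell$'s'' does not by itself produce the full unshuffle sum over partitions; that is exactly what Lemma~\ref{lem:dm} accomplishes. Since you ultimately defer to Section~\ref{sec:l-infty-proof}, the argument is salvageable, but the displayed equation should be replaced by the correct statement of Lemma~\ref{lem:dm} (specialized to $\tb=0$).

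A minor point: $\tb$ is only defined on $\sft\tocomm$ (Definition~\ref{def:beta}), so ``$\tb\equiv 0$ on $\sft\tocyc$'' is not literally meaningful. The paper instead appeals directly to Proposition~\ref{prop:delta-knot}, which is stated on $\sft\tocyc$ from the outset.
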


Proposition~\ref{prop:knot} will be proved in Section~\ref{sec:l-infty-proof}. 

The usual Chekanov--Eliashberg differential $\partial$ on the noncommutative algebra $\A$ descends to a differential on $\A\tocyc$. Proposition~\ref{prop:knot} immediately implies that the homology of $\A\tocyc$ with this differential inherits a Lie bracket from $\ell_2$.

\begin{corollary}
For any Legendrian knot with a single base point, $(H_*(\A\tocyc,\partial),\ell_2)$ is a Lie algebra.
\end{corollary}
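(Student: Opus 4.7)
The plan is to deduce the corollary as a purely formal consequence of the $L_\infty$ algebra structure supplied by Proposition~\ref{prop:knot}, together with the identification of $\ell_1$ with the Chekanov--Eliashberg differential $\partial$ on $\A\tocyc$. Thus the argument has two parts: (i) check that $\ell_1 = \partial$ on $\A\tocyc$, so that $H_*(\A\tocyc,\partial) = H_*(\A\tocyc,\ell_1)$ is really the homology of the $L_\infty$ algebra; (ii) invoke the standard fact that in any $L_\infty$ algebra $\ell_2$ descends to a graded Lie bracket on $\ell_1$-homology.

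For (i), recall from Definition~\ref{def:knot} that $\ell_1(x) = \{h_1,x\}$. Proposition~\ref{prop:sft-lch} shows that $\{h_1,\cdot\}$ coincides with $\partial$ on $\A\tocomm$; the same computation, done with the cyclic SFT bracket of Definition~\ref{def:SFTbracket} rather than its commutative quotient, gives the identification $\ell_1 = \partial$ on $\A\tocyc$. So the complex $(\A\tocyc,\ell_1)$ of Proposition~\ref{prop:knot} is precisely the Chekanov--Eliashberg complex whose homology is $H_*(\A\tocyc,\partial)$.

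For (ii), I would read off exactly what is needed from the first three $L_\infty$ relations displayed after Definition~\ref{def:Linfalg}. The $k=2$ relation
\[
\ell_1\ell_2(x_1,x_2) = \ell_2(\ell_1 x_1, x_2) + (-1)^{|x_1|}\ell_2(x_1,\ell_1 x_2)
\]
shows that $\ell_1 = \partial$ is a graded derivation for $\ell_2$; consequently $\ell_2$ sends $(\ker\partial)^{\otimes 2}$ to $\ker\partial$ and $(\mathrm{im}\,\partial)\otimes\ker\partial + \ker\partial\otimes\mathrm{im}\,\partial$ to $\mathrm{im}\,\partial$, so it induces a well-defined degree-$0$ binary operation $[\cdot,\cdot]$ on $H_*(\A\tocyc,\partial)$. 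Graded antisymmetry of $[\cdot,\cdot]$ is inherited directly from the graded symmetry of $\ell_2$ in Definition~\ref{def:Linfalg} (which for $k=2$ is precisely the graded antisymmetry $\ell_2(x_2,x_1) = -(-1)^{|x_1||x_2|}\ell_2(x_1,x_2)$ in the Lie convention).

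Finally, the $k=3$ $L_\infty$ relation reads
\begin{align*}
\ell_2(\ell_2(x_1,x_2),x_3) &+ (-1)^{|x_1|(|x_2|+|x_3|)}\ell_2(\ell_2(x_2,x_3),x_1)\\
&+ (-1)^{|x_3|(|x_1|+|x_2|)}\ell_2(\ell_2(x_3,x_1),x_2)\\
&= -\bigl(\ell_1\ell_3(x_1,x_2,x_3) + \ell_3(\ell_1 x_1,x_2,x_3) \\
&\quad + (-1)^{|x_1|}\ell_3(x_1,\ell_1 x_2,x_3) + (-1)^{|x_1|+|x_2|}\ell_3(x_1,x_2,\ell_1 x_3)\bigr).
\end{align*}
For $\partial$-cycles $x_1,x_2,x_3$, the three $\ell_3$ terms containing $\ell_1 x_i$ vanish, and $\ell_1\ell_3(x_1,x_2,x_3)$ is a $\partial$-boundary, hence trivial in homology. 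Therefore the left-hand side vanishes in $H_*(\A\tocyc,\partial)$, which is exactly the graded Jacobi identity for the induced bracket $[\cdot,\cdot]$. Combining graded antisymmetry with Jacobi gives the Lie algebra structure, completing the proof. No step here is genuinely hard once Proposition~\ref{prop:knot} is in hand; the only minor check is the verification that $\ell_1$ on $\A\tocyc$ agrees with the standard $\partial$, and this is immediate from the cyclic analogue of Proposition~\ref{prop:sft-lch}.
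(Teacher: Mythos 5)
Your proof is correct and follows exactly the route the paper intends: the paper treats this corollary as an immediate formal consequence of Proposition~\ref{prop:knot} together with the general fact (stated after Definition~\ref{def:Linfalg}) that in any $L_\infty$ algebra $\ell_2$ induces a Lie bracket on $\ell_1$-homology, and your write-up simply makes that standard argument explicit, including the identification $\ell_1=\partial$ on $\A\tocyc$.
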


\begin{remark}
For a Legendrian knot $\Lambda$, the homology $H_*(\A\tocyc,\partial)$ is the cylindrical contact homology of the contact manifold $\R^3(\Lambda)$ obtained from $\R^3$ by Legendrian surgery on $\Lambda$ \cite{BEE}. In this context, the $L_\infty$ operations appear to count rational holomorphic curves in the symplectization of $\R^3(\Lambda)$ with multiple positive punctures and a single negative puncture. In particular, the Lie bracket induced by $\ell_2$ counts curves with two positive ends and one negative end.

There is a version of this story for Legendrian links as well. In this case, one can consider $\A^{\operatorname{cyc},\operatorname{comp}}$, the module of composable cyclic words; see Remark~\ref{rmk:composable}. Then the homology $(\A^{\operatorname{cyc},\operatorname{comp}},d)$ is the cylindrical contact homology of the contact manifold $\R^3(\Lambda)$ obtained by Legendrian surgery on each component of $\Lambda$ \cite{BEE}. It appears to be the case that $(\A^{\operatorname{cyc},\operatorname{comp}},\{\ell_k\}_{k=1}^\infty)$ is again an $L_\infty$ algebra, and the $L_\infty$ operations again count rational curves with multiple positive and one negative puncture. 

Whether $\Lambda$ is a knot or link, let $X$ denote the Weinstein domain obtained by attaching a $2$-handle to $D^4$ along $\Lambda \subset \partial D^4$. Then the aforementioned cylindrical contact homology, associated to the contact $3$-manifold $\partial X$, can be interpreted as the positive $S^1$-equivariant symplectic homology of $X$. It is an interesting question to relate the invariants considered in this paper to known algebraic structures on symplectic invariants associated to Weinstein domains. 
However, we do not pursue this direction further in this paper.
\end{remark}

%**************************************************
%**************************************************
\section{Proof of the $L_\infty$ relations}
\label{sec:l-infty-proof}

This section is primarily devoted to proving Proposition~\ref{prop:htpyPoisson} by verifying that the $L_\infty$ operations that we defined for Legendrian links in Section~\ref{ssec:linfty-def} do indeed satisfy the $L_\infty$ relations. We begin by laying the groundwork for the proof of Proposition~\ref{prop:htpyPoisson}, and then present the proof itself. We end the section with the proof of Proposition~\ref{prop:knot}, which states that the variant $L_\infty$ structure for Legendrian knots as constructed in Section~\ref{ssec:knot} also satisfies the $L_\infty$ relations.

For $k \geq 1$, we extend the maps $\ell_k$ to maps $m_k :\thinspace (\A\tocomm)^{\otimes k} \to \sft\tocomm$ defined as follows. Recall that $d :\thinspace \sft\tocomm \to \sft\tocomm$ is defined by $d(x) = \{h,x\} + \delta x$. For any $k\geq 1$ with $k \neq 2$, define
\[
m_k(x_1,\ldots,x_k) = (-1)^{|x_{k-1}|+|x_{k-3}|+\cdots}\{\cdots \{\{\{dx_1,x_2\},x_3\},\ldots,x_k\};
\]
note in particular that $m_1(x_1) = dx_1 = \{h,x_1\}-\delta x_1$. For $k=2$, define
\[
m_2(x_1,x_2) = \frac{1}{2} \left((-1)^{|x_1|}\{dx_1,x_2\} + \{x_1,dx_2\}
- \tb(x_1,x_2) + (-1)^{|x_1||x_2|}\tb(x_2,x_1)\right).
\]
To shorten the equations in this section, we will henceforth suppress the $k$ index in $m_k$.

Let
\[
\Pi :\thinspace \sft\tocomm \to \A\tocomm
\]
denote the projection map, which annihilates all words that contain $p$'s and preserves all words that do not. Then $\ell_k$ and $m$ are related by this projection:
\[
\ell_k(x_1,\ldots,x_k) = \Pi m(x_1,\ldots,x_k)
\]
for all $k \geq 1$.

\begin{lemma}
The operation $m$ satisfies the following properties: \label{lem:m-properties}
\begin{enumerate}
\item \label{it:m1}
For any $k\geq 1$ and any permutation $\sigma\in S_k$, 
\[
m(x_{\sigma(1)},\ldots,x_{\sigma(k)}) = \chi(\sigma,x_1,\ldots,x_k) m(x_1,\ldots,x_k),
\]
where $\chi(\sigma,x_1,\ldots,x_k)$ is the Koszul sign from Definition~\ref{def:Linfalg}.
\item \label{it:m2}
We have
\[
m(x_1,x_2) = (-1)^{|x_1|}\{m(x_1),x_2\}+(-1)^{|x_1||x_2|}\tb(x_2,x_1)
= \{x_1,m(x_2)\}-\tb(x_1,x_2),
\]
while for any $k \geq 3$,
\[
m(x_1,\ldots,x_{k-1},x_k) = (-1)^{|x_1|+\cdots+|x_{k-1}|} \{m(x_1,\ldots,x_{k-1}),x_k\}.
\]
\item \label{it:m3}
For any $k \geq 1$, if $x_1,\ldots,x_{k+1}$ are words in $\W\tocomm$, then
\[
\tb(x_{k+1},m(x_1,\ldots,x_k)) = \sum_{i=1}^k \beta(x_{k+1},x_i)x_{k+1}m(x_1,\ldots,x_k).
\]
\end{enumerate}
\end{lemma}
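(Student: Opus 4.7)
The plan is to deduce all three parts from technology already developed in Section~\ref{sec:sft}, treating $m$ as an ``undecorated'' version of $\ell_k$ that lives in $\sft\tocomm$ before one applies the projection $\Pi$.

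For \eqref{it:m1}, I would run exactly the argument of Proposition~\ref{prop:skewsymmetry}. The key observation is that every $x_i$ lies in $\alg\tocomm$ and therefore contains no $p$'s, so $\{x_i,x_j\}=0$; the Jacobi identity of Proposition~\ref{prop:sft-bracket} then yields $\{\{y,x_i\},x_{i+1}\} = (-1)^{(|x_i|+1)(|x_{i+1}|+1)}\{\{y,x_{i+1}\},x_i\}$ for any $y$, which governs the nested bracket structure in $m$ for $k\geq 3$ and also for $k=2$ applied to $y=dx_1$ or $y=x_1$. For the special $k=2$ case one additionally verifies that the two extra terms $-\tb(x_1,x_2)+(-1)^{|x_1||x_2|}\tb(x_2,x_1)$ transform correctly under the swap of $x_1,x_2$, which is immediate from the definition of $\tb$.

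For \eqref{it:m2}, the $k\geq 3$ recursion is obtained by unfolding the definition of $m$ and comparing sign prefactors: $(-1)^{|x_{k-1}|+|x_{k-3}|+\cdots}$ times the sign in $m(x_1,\ldots,x_{k-1})$ combines to $(-1)^{|x_1|+\cdots+|x_{k-1}|}$. The $k=3$ subcase additionally needs $\{\tb(x_2,x_1),x_3\}=0$, which holds because $\tb(x_2,x_1)\in\alg\tocomm$. The substantive case is $k=2$, where I would invoke the modified derivation property for $d$ from Proposition~\ref{prop:d}; applied to $x_1,x_2\in\alg\tocomm$ it collapses, since $\{x_1,x_2\}=0$ forces $d\{x_1,x_2\}=0$, to an identity relating $\{dx_1,x_2\}$ and $\{x_1,dx_2\}$ up to $\tb$ corrections. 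Substituting this identity into the definition of $m_2$ to eliminate either $\{x_1,dx_2\}$ or $\{dx_1,x_2\}$ directly yields the two asserted forms.

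For \eqref{it:m3}, the first step is to show that $m(x_1,\ldots,x_k)$ is a linear combination of commutative words that all share the same $\vv$-value, namely $\vv(x_1)+\cdots+\vv(x_k)$. This follows by induction on $k$ from $\vv(\{a,b\})=\vv(a)+\vv(b)$ and $\vv(\delta a)=\vv(a)$ (Proposition~\ref{prop:beta}~\eqref{it:beta5}--\eqref{it:beta6}), combined with $\vv(h)=0$ (Proposition~\ref{prop:beta}~\eqref{it:beta4}), so that $\vv(dx_1)=\vv(x_1)$. Once $m(x_1,\ldots,x_k)$ is $\vv$-homogeneous, the bilinear extension of $\tb$ collapses $\tb(x_{k+1},m(x_1,\ldots,x_k))$ to $\beta\bigl(x_{k+1},\vv(x_1)+\cdots+\vv(x_k)\bigr)\,x_{k+1}m(x_1,\ldots,x_k)$, and additivity of $\beta$ in its second slot (Proposition~\ref{prop:beta}~\eqref{it:beta3}) converts this into the claimed sum. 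I do not expect a serious obstacle in any of the three parts; the only real care required is sign bookkeeping in the $k=2$ case of \eqref{it:m2} and confirming that the $\tb$ corrections in $m_2$ do not spoil the Koszul transformation law demanded by \eqref{it:m1}.
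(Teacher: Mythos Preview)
Your proposal is correct and follows essentially the same route as the paper: part~\eqref{it:m1} is reduced to the computation in Proposition~\ref{prop:skewsymmetry}, the $k=2$ case of part~\eqref{it:m2} is derived from Proposition~\ref{prop:d} applied to $\{x_1,x_2\}=0$, and part~\eqref{it:m3} comes from showing $\vv(m(x_1,\ldots,x_k))=\sum_i\vv(x_i)$ via Proposition~\ref{prop:beta}(\ref{it:beta4},\ref{it:beta5},\ref{it:beta6}). Your explicit remark that the $k=3$ subcase of the recursion in \eqref{it:m2} requires $\{\tb(x_2,x_1),x_3\}=0$ is a point the paper leaves implicit, so if anything you are slightly more careful there.
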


\begin{proof}
\eqref{it:m1} is proved in exactly the same way as the corresponding result for $\ell_k$ in Proposition~\ref{prop:skewsymmetry}. For \eqref{it:m2}, note that since $\{x_1,x_2\}=0$, it follows from Proposition~\ref{prop:d} that
\[
0 = d\{x_1,x_2\} = \{mx_1,x_2\}-(-1)^{|x_1|}\{x_1,mx_2\} + (-1)^{|x_1|}\tb(x_1,x_2) + (-1)^{|x_1|(|x_2|+1)}\tb(x_2,x_1).
\]
Combining this with the definition of $m(x_1,x_2)$ immediately gives \eqref{it:m2}.

For \eqref{it:m3}, we use Proposition~\ref{prop:beta}(\ref{it:beta4},\ref{it:beta5},\ref{it:beta6}) to see that $\vv(\{h,x_1\}) = \vv(\delta x_1) = \vv(x_1)$ and thus $\vv(dx_1) = \vv(x_1)$. By Proposition~\ref{prop:beta}\eqref{it:beta5} again, it follows that $\vv(m(x_1,\ldots,x_k)) = \sum_{i=1}^k \vv(x_i)$. Thus $\beta(x_{k+1},m(x_1,\ldots,x_k)) = \sum_{i=1}^k \beta(x_{k+1},x_i)$, and \eqref{it:m3} follows.
\end{proof}

To set up the next lemma, we introduce some notation. Fix $k \geq 2$ and $x_1,\ldots,x_k \in \A\tocomm$. For a nonempty subset $S \subset [k] = \{1,\ldots,k\}$, write $S = \{i_1,\ldots,i_\ell\}$ with $i_1<\cdots<i_{\ell}$, define
\begin{align*}
m(x_S) &= m(x_{i_1},\ldots,x_{i_\ell}) \\
|x_S| &= \sum_{j=1}^\ell |x_{i_j}| \\
v_S &= \sum_{j=1}^\ell v(x_{i_j}).
\end{align*}
For a partition of $[k] = \{1,\ldots,k\}$ into a union of two disjoint nonempty subsets $[k] = A \sqcup B$, let $\sigma_A$ denote the permutation of $[k]$ that sends $1,\ldots,k$ successively to the elements of $A$ in increasing order, followed by the elements of $B$ in increasing order; analogously let $\sigma_B$ denote the permutation sending $1,\ldots,k$ to the elements of $B$ in increasing order, followed by the elements of $A$ in increasing order.

\begin{lemma}
For any $k \geq 2$, we have:
\begin{align*}
(-1)^k dm(x_1,\ldots,x_k) &= \sum_{[k] = A \sqcup B} \chi(\sigma_A) (-1)^{(|x_A|+1)(k-|A|+1)}\{m(x_A),m(x_B)\} \\
&\qquad -\sum_{i=1}^k \chi(\sigma_{[k]\setminus\{i\}}) \tb(m(x_{[k]\setminus\{i\}}),x_i),
\end{align*}
where the first sum is over all unordered ways to partition $[k]$ into two disjoint subsets. \label{lem:dm}
\end{lemma}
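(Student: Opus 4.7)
The plan is to proceed by induction on $k \geq 2$. In both the base case and the inductive step, the essential ingredients are Proposition~\ref{prop:d} (the modified Leibniz rule for $d$ with respect to $\{\cdot,\cdot\}$), Proposition~\ref{prop:differential} ($d^2 x = \tb(h,x)$), the derivation property of $d$ (inherited from $\{h,\cdot\}$ and $\delta$ being derivations) applied to the $\tb$ terms, and the graded Jacobi identity for the SFT bracket.

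For the base case $k=2$, the plan is to start from the symmetric definition
\[
m(x_1,x_2) = \tfrac{1}{2}\left((-1)^{|x_1|}\{dx_1,x_2\}+\{x_1,dx_2\}-\tb(x_1,x_2)+(-1)^{|x_1||x_2|}\tb(x_2,x_1)\right),
\]
apply $d$ to each of the four summands, and invoke Proposition~\ref{prop:d} twice, Proposition~\ref{prop:differential} to rewrite the resulting $d^2 x_i$ as $\tb(h,x_i)$, and the fact that $d$ is a derivation to expand $d\tb(x_i,x_j)=\tb(m(x_i),x_j)+(-1)^{|x_i|}\tb(x_i,m(x_j))$. After collection, the terms of the form $\tb(x_j,m(x_i))$ cancel on account of the sign identity $a+(a-1)(b+1)\equiv ab+b+1\pmod 2$, and the remaining terms reduce to the desired formula; any residual $\{\tb(h,x_1),x_2\}$-type contributions are controlled by the identity $\{\tb(h,x_1),x_2\}+\{x_1,\tb(h,x_2)\}=0$ obtained from comparing the two equivalent forms of $m(x_1,x_2)$ in Lemma~\ref{lem:m-properties}\eqref{it:m2}.

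For the inductive step $k\geq 3$, I would use Lemma~\ref{lem:m-properties}\eqref{it:m2} to write
\[
m(x_1,\ldots,x_k) = (-1)^{|x_1|+\cdots+|x_{k-1}|}\{m(x_1,\ldots,x_{k-1}),x_k\},
\]
then apply $d$ and Proposition~\ref{prop:d} with $X=m(x_1,\ldots,x_{k-1})$, $Y=x_k$. This produces four pieces: $\{dm(x_{1..k-1}),x_k\}$, $\pm\{m(x_{1..k-1}),m(x_k)\}$, $\pm\tb(m(x_{1..k-1}),x_k)$, and $\pm\tb(x_k,m(x_{1..k-1}))$. The first is then expanded using the inductive hypothesis for $dm(x_1,\ldots,x_{k-1})$, after which the Jacobi identity for $\{\cdot,\cdot\}$ transforms every term $\{\{m(x_{A'}),m(x_{B'})\},x_k\}$ (with $[k-1]=A'\sqcup B'$) into a sum $\{\{m(x_{A'}),x_k\},m(x_{B'})\}\pm\{m(x_{A'}),\{m(x_{B'}),x_k\}\}$. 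Invoking Lemma~\ref{lem:m-properties}\eqref{it:m2} again (its $k\geq 3$ form when $|A'|,|B'|\geq 2$, and its $k=2$ form together with the symmetric definition of $m(\cdot,\cdot)$ when $|A'|=1$ or $|B'|=1$) absorbs $x_k$ to produce exactly the bracket terms $\{m(x_A),m(x_B)\}$ indexed by partitions $[k]=A\sqcup B$ with $k\in A$ or $k\in B$. The $\tb$-contributions from the inductive hypothesis, bracketed with $x_k$, are handled with Proposition~\ref{prop:beta}\eqref{it:beta5}, Lemma~\ref{lem:m-properties}\eqref{it:m3}, and the Leibniz rule for $\tb$; combined with the $\tb$ terms produced directly by Proposition~\ref{prop:d}, they reassemble into the full sum $-\sum_{i=1}^k\chi(\sigma_{[k]\setminus\{i\}})\tb(m(x_{[k]\setminus\{i\}}),x_i)$.

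The main obstacle is the combinatorial and sign bookkeeping. In the inductive step one must show both that the signs $\chi(\sigma_{A'})(-1)^{(|x_{A'}|+1)(k-|A'|)}$ from the inductive hypothesis, composed with the extra signs contributed by Jacobi and by Lemma~\ref{lem:m-properties}\eqref{it:m2}, reproduce the correct signs $\chi(\sigma_A)(-1)^{(|x_A|+1)(k-|A|+1)}$ for the partitions of $[k]$, and that the partition sum is correctly doubled (each unordered partition $\{A,B\}$ of $[k]$ is produced exactly once, accounting for both the $k\in A$ and $k\in B$ cases from the $A'\sqcup B'$ expansion). A parallel matching must be done on the $\tb$ side, using Lemma~\ref{lem:m-properties}\eqref{it:m3} to recognize terms of the form $\sum_{i=1}^{k-1}\beta(x_k,x_i)x_k m(x_{1..k-1})$ as contributions to $\tb(x_k,m(x_1,\ldots,x_{k-1}))$ in the inductive expansion.
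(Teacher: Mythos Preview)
Your overall inductive strategy is the same as the paper's: induct on $k$, use Proposition~\ref{prop:d} to expand $d\{m(x_{[k-1]}),x_k\}$, plug in the inductive hypothesis for $dm(x_{[k-1]})$, and use the Jacobi identity to redistribute $x_k$ among the partitions. That part is fine.

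The gap is in your treatment of the $\tb$ terms. The identity you invoke,
\[
d\,\tb(x_i,x_j)=\tb(m(x_i),x_j)+(-1)^{|x_i|}\tb(x_i,m(x_j)),
\]
does not hold. Recall that $\tb(y,x_j)=\beta(y,x_j)\,yx_j$ on words, extended linearly, and $\beta(y,x_j)=(-\tfrac12\vv(y)+\ww(y))\cdot\vv(x_j)$. Applying the derivation $d$ to $\beta(x_i,x_j)x_ix_j$ gives $\beta(x_i,x_j)(dx_i)x_j+(-1)^{|x_i|}\beta(x_i,x_j)x_i(dx_j)$. The second term equals $(-1)^{|x_i|}\tb(x_i,dx_j)$ because $\beta(x_i,\cdot)$ only sees $\vv$ of the second argument and $\vv(dx_j)=\vv(x_j)$. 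But the first term is \emph{not} $\tb(dx_i,x_j)$: the words appearing in $dx_i=\{h,x_i\}-\delta x_i$ have varying $\ww$-values (the terms of $h$ carry different $t$-content), so $\beta(\cdot,x_j)$ is not constant on them. For the same reason, your ``Leibniz rule for $\tb$'' in the inductive step is not available.

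What actually controls these terms is Proposition~\ref{prop:beta}\eqref{it:beta2}, which you do not invoke. In the base case the paper starts from the asymmetric form $m(x_1,x_2)=\{x_1,dx_2\}-\tb(x_1,x_2)$, applies $d$, and is left with proving
\[
-(-1)^{|x_1|}\{x_1,\tb(h,x_2)\}+(-1)^{|x_1|}\tb(x_1,dx_2)-d\,\tb(x_1,x_2)=-\tb(dx_1,x_2),
\]
which is exactly where Proposition~\ref{prop:beta}\eqref{it:beta2} (with $y=h$) is used to expand $\{x_1,\tb(h,x_2)\}$. In the inductive step the same proposition (equation~\eqref{eq:k3} in the paper) is what converts $\{m(x_{[k]\setminus\{i\}}),\tb(x_{k+1},x_i)\}+\cdots$ into the correct $\tb(m(x_{[k+1]\setminus\{i\}}),x_i)$ plus the $\beta(x_{k+1},x_i)x_{k+1}m(x_{[k]})$ terms that reassemble into $\tb(x_{k+1},m(x_{[k]}))$ via Lemma~\ref{lem:m-properties}\eqref{it:m3}. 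Without Proposition~\ref{prop:beta}\eqref{it:beta2}, the $\tb$ bookkeeping cannot be closed.
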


\noindent
We note that the summand in the first sum is well-defined because a straightforward computation shows that it is symmetric in $A$ and $B$:
\[
\chi(\sigma_A) (-1)^{(|x_A|+1)(k-|A|+1)}\{m(x_A),m(x_B)\} = \chi(\sigma_B) (-1)^{(|x_B|+1)(k-|B|+1)}\{m(x_B),m(x_A)\}.
\]

\begin{proof}[Proof of Lemma~\ref{lem:dm}]
We induct on $k$. For $k=2$, the desired identity is:
\begin{equation}
dm(x_1,x_2) = \{m(x_1),m(x_2)\} - \tb(m(x_1),x_2)+(-1)^{|x_1||x_2|} \tb(m(x_2),x_1).
\label{eq:dm2}
\end{equation}
Now applying $d$ to both sides of the equality $m(x_1,x_2) = \{x_1,m(x_2)\}-\tb(x_1,x_2)$
from Lemma~\ref{lem:m-properties} and using Propositions~\ref{prop:d} and~\ref{prop:differential} yields
\begin{align*}
dm(x_1,x_2) &= \{m(x_1),m(x_2)\}-(-1)^{|x_1|}\{x_1,d^2x_2\}+(-1)^{|x_1|}\tb(x_1,dx_2) \\
&\quad +(-1)^{|x_1||x_2|}\tb(dx_2,x_1)-d\tb(x_1,x_2);
\end{align*}
thus to prove \eqref{eq:dm2}, it suffices to show that
\begin{equation}
-(-1)^{|x_1|}\{x_1,d^2x_2\}+(-1)^{|x_1|}\tb(x_1,dx_2)-d\tb(x_1,x_2)=-\tb(dx_1,x_2).
\label{eq:dm22}
\end{equation}
But by Propositions~\ref{prop:differential} and~\ref{prop:beta}\eqref{it:beta2}, we have
\begin{align*}
-(-1)^{|x_1|}\{x_1,d^2x_2\} &= -(-1)^{|x_1|}\{x_1,\tb(h,x_2)\} \\
&= -(-1)^{|x_2|(|x_1|+1)} \{\tb(h,x_2),x_1\} \\
&= -\tb(\{h,x_1\},x_2) + \{h,\tb(x_1,x_2)\} - (-1)^{|x_1|}\beta(x_1,x_2)x_1\{h,x_2\} \\
&= -\tb(dx_1,x_2)+d\tb(x_1,x_2)-(-1)^{|x_1|}\beta(x_1,x_2)x_1dx_2 \\
&\quad +\left(\delta\tb(x_1,x_2)-\tb(\delta x_1,x_2)+
(-1)^{|x_1|}\beta(x_1,x_2)x_1\delta x_2\right) \\
&= -\tb(dx_1,x_2)+d\tb(x_1,x_2)- (-1)^{|x_1|}\tb(x_1,dx_2) \\
&\quad +\left(\delta\tb(x_1,x_2)-\tb(\delta x_1,x_2)+
(-1)^{|x_1|}\beta(x_1,x_2)x_1\delta x_2\right)
\end{align*}
where we have used Proposition~\ref{prop:differential} in the first equality, Proposition~\ref{prop:beta}\eqref{it:beta2} in the third equality, and Lemma~\ref{lem:m-properties}\eqref{it:m3} in the fifth equality.
Finally, since $\vv(\delta x_1) = \vv(x_1)$ and $\ww(\delta x_1)=\ww(x_1)$ by Proposition~\ref{prop:beta}\eqref{it:beta5}, we have $\beta(\delta x_1,x_2) = \beta(x_1,x_2)$, and so
\begin{align*}
\delta\tb(x_1,x_2)&-\tb(\delta x_1,x_2)-
(-1)^{|x_1|}\beta(x_1,x_2)x_1\delta x_2 \\
& = \beta(x_1,x_2) \left(\delta(x_1x_2)-(\delta x_1)x_2-(-1)^{|x_1|}x_1(\delta x_2)\right)\\
&= 0
\end{align*}
This proves \eqref{eq:dm22} and completes the proof of the lemma for the base case $k=2$.

For the inductive step, assume that the desired equality holds for $k \geq 2$; we establish it for $k+1$. Indeed, by Proposition~\ref{prop:d}, Lemma~\ref{lem:m-properties}, and the induction assumption, we have
\begin{align*}
(-1)^{k+1} &dm(x_1,\ldots,x_{k+1}) \\
&= (-1)^{k+1+|x_{[k]}|} d\{m(x_1,\ldots,x_k),x_{k+1}\} \\
&= 
(-1)^{k+1+|x_{[k]}|}\{dm(x_1,\ldots,x_k),x_{k+1}\} +\{m(x_{[k]}),dx_{k+1}\} \\
&\quad  - \tb(m(x_{[k]}),x_{k+1})- (-1)^{(k+|x_{[k]}|)|x_{k+1}|} \tb(x_{k+1},m(x_{[k]})) \\
&= C+D+\{m(x_{[k]}),dx_{k+1}\} \\
&\quad - \tb(m(x_{[k]}),x_{k+1})- (-1)^{(k+|x_{[k]}|)|x_{k+1}|} \tb(x_{k+1},m(x_{[k]}))
\end{align*}
where
\begin{align*}
C &=  \sum_{[k]=A\sqcup B} \chi(\sigma_A) (-1)^{1+|x_{[k]}|+(|x_A|+1)(k-|A|+1)}\{\{m(x_A),m(x_B)\},x_{k+1}\} \\
D &= (-1)^{|x_{[k]}|} \sum_{i=1}^k \chi(\sigma_{[k]\setminus\{i\}})  \{\tb(m(x_{[k]\setminus\{i\}}),x_i),x_{k+1}\}.
\end{align*}
Now by the Jacobi identity, we compute that the summand in $C$ is equal to
\begin{align*}
&\chi(\sigma_A) (-1)^{1+|x_{[k]}|+(|x_A|+1)(k-|A|+1)}\{m(x_A),\{m(x_B),x_{k+1}\}\}\\
&\qquad + 
 \chi(\sigma_B) (-1)^{1+|x_{[k]}|+(|x_B|+1)(k-|B|+1)}\{m(x_B),\{m(x_A),x_{k+1}\}\}.
\end{align*}
By Lemma~\ref{lem:m-properties}, these two terms are equal to $\chi(\sigma_A) (-1)^{(|x_A|+1)(k-|A|)}\{m(x_A),m(x_{B\cup\{k+1\}})\}$ and $\chi(\sigma_B) (-1)^{(|x_B|+1)(k-|B|)}\{m(x_B),m(x_{A\cup\{k+1\}})\}$ respectively, unless either of $A$ or $B$ (say $B$) is a singleton $\{i\}$, in which case we need to add
$\chi(\sigma_{[k]\setminus\{i\}})(-1)^{|x_{[k]\setminus\{i\}}|+|x_i||x_{k+1}|} 
\{m(x_{[k]\setminus\{i\}}),\tb(x_{k+1},x_i)\}$
to the first term. It follows that
\begin{align*}
C &=
E+\sum_{[k]=A\sqcup B} \chi(\sigma_A) (-1)^{(|x_A|+1)(k-|A|)}\{m(x_A),m(x_{B\cup\{k+1\}})\} \\
&\quad + \sum_{[k]=A\sqcup B} \chi(\sigma_B) (-1)^{(|x_B|+1)(k-|B|)}\{m(x_B),m(x_{A\cup\{k+1\}})\} \\
&= E + \sum_{[k+1]=A\sqcup B}  \chi(\sigma_A) (-1)^{(|x_A|+1)(k-|A|)}\{m(x_A),m(x_B)\} - \{m(x_{[k]}),dx_{k+1}\} \\
\end{align*}
where
\[
E =\sum_{i=1}^k  \chi(\sigma_{[k]\setminus\{i\}})(-1)^{|x_{[k]\setminus\{i\}}|+|x_i||x_{k+1}|} 
\{m(x_{[k]\setminus\{i\}}),\tb(x_{k+1},x_i)\}.
\]

From the above expression for $C$, to complete the induction step it suffices to prove:
\begin{equation}
D+E = (-1)^{(k+|x_{[k]}|)|x_{k+1}|} \tb(x_{k+1},m(x_{[k]}))-\sum_{i=1}^k 
\chi(\sigma_{[k+1]\setminus\{i\}}) \tb(m(x_{[k+1]\setminus\{i\}}),x_i).
\label{eq:DE}
\end{equation}

We will establish \eqref{eq:DE} first for $k \geq 3$, and then for $k=2$; the $k=2$ case involves some additional correction terms that we address at that point. For now, assume $k \geq 3$. By Proposition~\ref{prop:beta} \eqref{it:beta2}, we have
\begin{equation}
\begin{split}
\{&m(x_{[k]\setminus\{i\}}), \tb(x_{k+1},x_i)\} + (-1)^{|x_i|(|x_{k+1}|+1)}\{\tb(m(x_{[k]\setminus\{i\}}),x_i),x_{k+1}\} \\
& =  \tb(\{m(x_{[k]\setminus\{i\}}),x_{k+1}\},x_i) + (-1)^{|x_{k+1}|(|m(x_{[k]\setminus\{i\}})|+1)} \beta(x_{k+1},x_i) x_{k+1} \{m(x_{[k]\setminus\{i\}}),x_i\} \\
&= (-1)^{|x_{[k]\setminus\{i\}}|} \tb(m(x_{[k+1]\setminus\{i\}}),x_i) + \chi(\sigma_{[k]\setminus\{i\}})  
(-1)^{|x_{k+1}|(|x_{[k]\setminus\{i\}}|+k)+|x_{[k]\setminus\{i\}}|} \beta(x_{k+1},x_i) x_{k+1} m(x_{[k]});
\end{split}
\label{eq:k3}
\end{equation}
multiplying both sides by $\chi(\sigma_{[k]\setminus\{i\}})(-1)^{|x_{[k]\setminus\{i\}}|+|x_i||x_{k+1}|}$ and summing over $i$ gives
\begin{align*}
D+E &= \sum_{i=1}^k \left( \chi(\sigma_{[k]\setminus\{i\}})(-1)^{|x_i||x_{k+1}|} \tb(m(x_{[k+1]\setminus\{i\}}),x_i)
+ \beta(x_{k+1},x_i)m(x_{[k]})x_{k+1} \right) \\
&= (-1)^{(k+|x_{[k]}|)|x_{k+1}|} \tb(x_{k+1},m(x_{[k]}))
 - \sum_{i=1}^k \chi(\sigma_{[k+1]\setminus\{i\}})\tb(m(x_{[k+1]\setminus\{i\}}),x_i) \\
\end{align*}
where in the second step we have used Lemma~\ref{lem:m-properties} \eqref{it:m3}. This completes the proof of \eqref{eq:DE} when $k\geq 3$.

When $k=2$, since $\{m(x_1),x_2\}$ is equal not to $(-1)^{|x_1|}m(x_1,x_2)$ but rather to $(-1)^{|x_1|}m(x_1,x_2)$ plus the correction term $-(-1)^{|x_1|(|x_2|+1)} \tb(x_2,x_1)$, we need to add correction terms to the final expression in \eqref{eq:k3}. The total contribution of these correction terms summed over $i=1,2$ is:
\begin{align*}
F &:= -(-1)^{|x_1||x_2|+|x_1||x_3|+|x_2||x_3|}\tb(\tb(x_3,x_2),x_1)-(-1)^{(|x_1|+|x_2|)|x_3|}\beta(x_3,x_1)x_3\tb(x_1,x_2) \\
&\quad - (-1)^{(|x_1|+|x_2|)|x_3|+1}\tb(\tb(x_3,x_1),x_2) - (-1)^{|x_1||x_2|+|x_1||x_3|+|x_2||x_3|+1}\beta(x_3,x_2)x_3\tb(x_2,x_1).
\end{align*}
To establish \eqref{eq:DE} when $k=2$, it suffices to show $F=0$. 
Now since
\begin{align*}
\tb(\tb(x_3,x_2),x_1) &= \beta(\tb(x_3,x_2),x_1) \tb(x_3,x_2)x_1 \\
&= (\beta(x_3,x_1)+\beta(x_2,x_1))\beta(x_3,x_2)x_3x_2x_1 \\
&= (-1)^{|x_1||x_2|+|x_1||x_3|+|x_2||x_3|} (\beta(x_3,x_1)+\beta(x_2,x_1))\beta(x_3,x_2)x_1x_2x_3
\end{align*}
with a similar expression for $\tb(\tb(x_3,x_1),x_2)$, we find that
\begin{align*}
F &= 
-(\beta(x_3,x_1)+\beta(x_2,x_1))\beta(x_3,x_2)x_1x_2x_3-\beta(x_3,x_1)\beta(x_1,x_2)x_1x_2x_3 \\
&\quad + (\beta(x_3,x_2)+\beta(x_1,x_2))\beta(x_3,x_1)x_1x_2x_3 + \beta(x_3,x_2)\beta(x_2,x_1)x_1x_2x_3 \\
&=0,
\end{align*}
and we are done.
\end{proof}

\begin{proof}[Proof of Proposition~\ref{prop:htpyPoisson}]
Recall that $\ell_k$ and $m$ are related by $\ell_k = \Pi \circ m$ where $\Pi$ is the projection map $\sft\tocomm \to \alg\tocomm$. We note that $\Pi$ satisfies the following properties:
\begin{itemize}
\item
for any $x\in \sft\tocomm$, $\Pi d \Pi(x) = \Pi d(x)$;
\item
for any $x_1,x_2 \in \sft\tocomm$, $\Pi\{x_1,x_2\} = \Pi\{\Pi x_1,x_2\}+\Pi\{x_1,\Pi x_2\}$.
\end{itemize}

We now turn to the proof of the Jacobi identities. The first Jacob identity is $\ell_1^2 = 0$, which follows directly from $\partial^2=0$ in the usual Chekanov--Eliashberg DGA for $\Lambda$. The second Jacobi identity is
\[
\ell_1(\ell_2(x_1,x_2)) = \ell_2(\ell_1(x_1),x_2)+(-1)^{|x_1|}\ell_2(x_1,\ell_1(x_2)).
\]
To prove this, we apply $\Pi$ to both sides of the $k=2$ case of Lemma~\ref{lem:dm}, as presented in equation~\eqref{eq:dm2}:
\begin{align*}
\ell_1(\ell_2(x_1,x_2)) &= \Pi d \Pi m(x_1,x_2) \\
&=
\Pi dm(x_1,x_2) \\
&= \Pi \{\Pi m(x_1),m(x_2)\} + \Pi \{m(x_1),\Pi m(x_2)\} \\
& \quad - \Pi \tb(m(x_1),x_2)+(-1)^{|x_1||x_2|} \Pi \tb(m(x_2),x_1) \\
&= \Pi\left(\{\ell_1(x_1),m(x_2)\}- \tb(\ell_1(x_1),x_2) \right) \\
&\quad + \Pi\left(\{m(x_1),\ell_1(x_2)\}+(-1)^{|x_1||x_2|} \tb(\ell_1(x_2),x_1)\right) \\
&= \Pi m(\ell_1(x_1),x_2) + (-1)^{|x_1|} \Pi m(x_1,\ell_1(x_2)) \\
&= \ell_2(\ell_1(x_1),x_2) + (-1)^{|x_1|} \ell_2(x_1,\ell_1(x_2)).
\end{align*}

It remains to prove the $k$-th Jacobi identity for $k \geq 3$, which we rewrite as:
\begin{equation}
0 = \ell_1(\ell_k(x_1,\ldots,x_k)) + \sum_{[k]=A\sqcup B} (-1)^{|A||B|} \left( \chi(\sigma_A) \ell_{|B|+1}(\ell_{|A|}(x_A),x_B)
+ \chi(\sigma_B) \ell_{|A|+1}(\ell_{|B|}(x_B),x_A) \right),
\label{eq:kJacobi}
\end{equation}
with the sum being over unordered nonempty sets $A,B$ as before. Here we use analogous notation to above: if $A = \{i_1,\ldots,i_p\}$ and $B = \{j_1,\ldots,j_{k-p}\}$ where the elements of $A$ and $B$ are in increasing order, then $\ell_{p}(x_A) := \ell_p(x_{i_1},\ldots,x_{i_p})$, 
$\ell_{k-p+1}(\ell_p(x_A),x_B) := \ell_{k-p+1}(\ell_p(x_A),x_{j_1},\ldots,x_{j_{k-p}})$, and so forth.

Since $k\geq 3$, we can assume that in the unordered partition $[k]=A\sqcup B$, we always have $|B|>1$. Then we have
\begin{align*}
\Pi\{\Pi m(x_A),m(x_B)\} &= (-1)^{(|x_A|+|A|+1)(|x_B|+|B|+1)+1} \Pi\{m(x_B),\ell_{|A|}(x_A)\} \\
&= (-1)^{(|x_A|+|A|+1)(|x_B|+|B|+1)+1+|x_B|}\Pi m(x_B,\ell_{|A|}(x_A)) \\
&= (-1)^{(|x_A|+|A|)(|B|+1)} \ell_{|B|+1}(\ell_{|A|}(x_A),x_B).
\end{align*}
Similarly, if $|A|>1$ then
\begin{align*}
\Pi\{m(x_A),\Pi m(x_B)\} &= (-1)^{|x_A|} \Pi m(x_A,\ell_{|B|}(x_B)) \\
&= (-1)^{|x_A|(|x_B|+|B|+1)+|A|} \ell_{|A|+1}(\ell_{|B|}(x_B),x_A) \\
&= \chi(\sigma_A)\chi(\sigma_B)(-1)^{(|x_A|+|A|)(|B|+1)}\ell_{|A|+1}(\ell_{|B|}(x_B),x_A),
\end{align*}
while if $|A|=1$, say $A = \{i\}$, then there is a correction term:
\begin{align*}
\Pi\{m(x_A),\Pi m(x_B)\} &= \Pi\{m(x_i),\ell_{k-1}(x_B)\} \\
&= (-1)^{|x_i|}\Pi \left(m(x_i,\ell_{k-1}(x_B)) - (-1)^{|x_i||\ell_{k-1}(x_B)|} \tb(\ell_{k-1}(x_B),x_i) \right) \\
&= \chi(\sigma_A)\chi(\sigma_B)(-1)^{(|x_A|+|A|)(|B|+1)}\ell_{|A|+1}(\ell_{|B|}(x_B),x_A) \\
&\quad + \chi(\sigma_{\{i\}})\chi(\sigma_{[k]\setminus\{i\}})(-1)^{k|x_i|+k} \tb(\ell_{k-1}(x_{[k]\setminus\{i\}}),x_i).
\end{align*}

Now apply $\Pi$ to both sides of the equation in the statement of Lemma~\ref{lem:dm}. The left hand side is $(-1)^k \Pi dm(x_1,\ldots,x_k) = (-1)^k \ell_1(\ell_k(x_1,\ldots,x_k))$. Using the fact that $\Pi\{m(x_A),m(x_B)\} = \Pi\{\Pi m(x_A),x_B\}+\Pi\{m(x_A),\Pi x_B\}$, we find that the first sum on the right hand side is:
\begin{align*}
\sum_{[k]=A\sqcup B} &(-1)^{|A||B|+k+1}\left(\chi(\sigma_A) \ell_{|B|+1}(\ell_{|A|}(x_A),x_B) +
\chi(\sigma_B) \ell_{|A|+1}(\ell_{|B|}(x_B),x_A)\right) \\
& + \sum_{i=1}^k \chi(\sigma_{[k]\setminus\{i\}}) \tb(\ell_{k-1}(x_{[k]\setminus\{i\}}),x_i).
\end{align*}
This last sum exactly cancels the image under $\Pi$ of the second sum on the right hand side of the statement of Lemma~\ref{lem:dm}, and \eqref{eq:kJacobi} follows for $k\geq 3$.
\end{proof}

\begin{proof}[Proof of Proposition~\ref{prop:knot}]
A key to the above proof of the $L_\infty$ relations for links is the statement (Proposition~\ref{prop:delta}) that on $\sft\tocomm$, $\delta$ is a derivation with respect to $\{\cdot,\cdot\}$ when we include some correction terms involving $\tb$. Now suppose that $\Lambda$ is a knot with a single base point. Recall in this case that $\delta$ is actually a derivation with respect to $\{\cdot,\cdot\}$ on the module of cyclic words $\sft\tocyc$, not just the commutative quotient $\sft\tocomm$, and further that there are no $\tb$ correction terms: see Proposition~\ref{prop:delta-knot}.

This allows us to lift the argument in the above proof of Proposition~\ref{prop:htpyPoisson} from $\sft\tocomm$ to $\sft\tocyc$. We can define $m_k :\thinspace (\A\tocyc)^{\otimes k} \to \sft\tocyc$ for $k \geq 1$ as in the previous proof:
\[
m_k(x_1,\ldots,x_k) = (-1)^{|x_{k-1}|+|x_{k-3}|+\cdots} \{\cdots \{\{\{dx_1,x_2\},x_3\},\ldots,x_k\}.
\]
Now however we do not need a separate definition for $m_2$ since the above definition for $m_2(x_1,x_2)$ is already graded-symmetric in $x_1$ and $x_2$. We now follow the proof of Proposition~\ref{prop:htpyPoisson} word for word, but with the significant simplification that we omit all terms involving $\beta$ or $\tb$. The resulting proof verifies that the $L_\infty$ operations on $\A\tocyc$ satisfy the $L_\infty$ relations, as desired.
\end{proof}

%**************************************************
%**************************************************
\section{Invariance}
\label{sec:invariance}

In this section, we prove a version of invariance for the $L_\infty$ structure constructed in Section~\ref{sec:l-infty} under Legendrian isotopy. More precisely, we show that the $L_\infty$ algebra $(\A\tocomm,\{\ell_k\}_{k=1}^\infty)$ is invariant up through the $\ell_2$ operation. For $\ell_1$, this is contained in the usual invariance statement for the Chekanov--Eliashberg DGA (involving ``stable tame isomorphism''); the new content is the invariance of $\ell_2$ as well.

%**************************************************
\subsection{Statement of invariance}
\label{ssec:invariance-statement}

Given two $L_\infty$ algebras $(A^+,\{\ell_k^+\})$ and $(A^-,\{\ell_k^-\})$, there is a standard notion of an $L_\infty$ morphism from $A^+$ to $A^-$. This is a collection of graded-symmetric maps $\{f_k\}_{k=1}^\infty$ with $f_k :\thinspace (A^+)^{\otimes k} \to A^-$, satisfying a sequence of relations of the form:
\begin{equation}
\begin{split}
\sum_{i=1}^k &\sum_{\sigma\in\text{Unshuff}(i,k-i)} f_{k-i+1}(\ell_i^+(x_{\sigma(1)},\ldots,x_{\sigma(i)}),x_{\sigma(i+1)},\ldots,x_{\sigma(k)}) \\
& = \sum_{k_1+\cdots+k_j=k} \sum_{\sigma\in\text{Unshuff}(k_1,\ldots,k_j)}\ell_j^-(f_{k_1}(x_{\sigma(1)},\ldots,x_{\sigma(k_1)}),\ldots,
f_{k_j}(x_{\sigma(k-k_j+1)},\ldots,x_{\sigma(k)}))
\end{split}
\label{eq:morphism}
\end{equation}
for $k \geq 1$, where for readability we have omitted signs in the relations \eqref{eq:morphism}. (In our setting, where the $L_\infty$ algebras also have associative multiplication and are homotopy Poisson algebras, we would also require that each $f_k$ satisfies the Leibniz rule.) In particular, for $k=1$ we have $f_1\circ\ell_1^+ = \ell_1^-\circ f_1$, and so $f_1$ is a chain map $(A^+,\ell_1^+) \to (A^-,\ell_1^-)$.
An $L_\infty$ equivalence between $A^+$ and $A^-$ is then an $L_\infty$ morphism $\{f_k\}$ such that $f_1$ is a quasi-isomorphism, and two $L_\infty$ algebras are said to be equivalent if there is an $L_\infty$ equivalence between them.

We believe that the $L_\infty$ algebra defined in this paper is invariant under this notion of equivalence, but do not prove this full result here.

\begin{conjecture}
The homotopy Poisson algebra $(\A\tocomm,\{\ell_k\})$ associated to a pointed Legendrian link is an invariant of the Legendrian isotopy type of the pointed link, up to $L_\infty$ equivalence.
\label{conj:invariance}
\end{conjecture}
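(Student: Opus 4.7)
The natural approach is to promote the partial invariance proof of Proposition~\ref{prop:invariance}, which constructs $f_1$ and $f_2$, to a construction of the entire sequence $\{f_k\}_{k=1}^\infty$ satisfying the $L_\infty$ morphism relations. Mirroring the construction of the $\ell_k$ from the Hamiltonian $h$ via the quantum master equation $\delta h = \frac{1}{2}\{h,h\}$, the plan is to realize a Legendrian isotopy by an exact Lagrangian cobordism $L$ from $\Lambda^-$ to $\Lambda^+$ in the symplectization $\R \times \R^3$, assign to $L$ a ``cobordism Hamiltonian'' $H$ by counting rigid holomorphic disks in $L$ with positive ends at Reeb chords of $\Lambda^+$ and negative ends at Reeb chords of $\Lambda^-$, and derive an analogue of the quantum master equation that encodes the morphism relations.

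More concretely, one enlarges the algebraic framework of Section~\ref{sec:sft} to an algebra containing the Reeb chord and base point variables of both $\Lambda^+$ and $\Lambda^-$, equipped with a bracket and string coproduct that extend the individual structures. Write $H$ as a sum $H = \sum_k H_k$ where $H_k$ counts cobordism disks with $k$ positive ends on $\Lambda^+$. The expected master equation, obtained by analyzing degenerations of $1$-dimensional moduli spaces of cobordism disks into two-story buildings and boundary bubbles, will schematically equate $\{h^+,H\} - \{H,h^-\} + \delta H$ to string-topology correction terms analogous to those appearing in Proposition~\ref{prop:delta}. Taking iterated brackets with $H$ and projecting in parallel with Definition~\ref{def:main} then produces the $f_k$, with the cobordism master equation translating termwise into the morphism relations generalizing \eqref{eq:morphism}.

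To avoid the considerable analytic difficulties, the construction would be carried out combinatorially by decomposing a generic Legendrian isotopy into a sequence of elementary bifurcations of the $xy$ projection (triple-point moves, tangency moves, Legendrian Reidemeister moves, and base-point moves), writing down $H$ explicitly for each elementary step, and composing the resulting elementary $L_\infty$ morphisms. Independence of the decomposition should follow from a two-parameter version of the same construction producing $L_\infty$ homotopies between different compositions.

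The principal obstacle will be the sign and string-topology bookkeeping: the $\tb$ correction terms in Proposition~\ref{prop:delta} and Definition~\ref{def:main} already make the $\ell_k$ relations technically intricate, and the cobordism analogue introduces further correction terms from mixed string configurations traveling through $L$, which must be organized so that they cancel against the boundary-bubble contributions to the master equation for $H$. A secondary point to verify is that $f_1$ is a quasi-isomorphism; this should reduce to the known invariance of the Chekanov--Eliashberg DGA over $\kk$ via \cite[Theorem~3.14]{ENS} together with an identification of the cobordism-induced $f_1$ with the standard DGA equivalence. The success of Proposition~\ref{prop:invariance} in constructing $f_1$ and $f_2$ suggests that the conceptual picture is correct and that the remaining difficulty is primarily the combinatorial verification of the higher morphism relations.
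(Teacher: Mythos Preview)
The statement you are addressing is labeled as a \emph{conjecture} in the paper, and the paper explicitly does not prove it: the author writes ``We believe that the $L_\infty$ algebra defined in this paper is invariant under this notion of equivalence, but do not prove this full result here.'' There is therefore no proof in the paper to compare against. What the paper does prove is the weaker Proposition~\ref{prop:invariance} ($L_2$ equivalence), and it does so not via a cobordism Hamiltonian but by a direct combinatorial check: for each elementary move (base point move, Reidemeister III, Reidemeister II), it writes down $f_1$ and $f_2$ explicitly and verifies the $L_2$ morphism relation by hand.

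Your proposal is not a proof but a research outline, and you are candid about this in places (``the expected master equation'', ``should follow'', ``the principal obstacle will be''). As a strategy it is reasonable and is in the spirit of what the author suggests elsewhere in the paper regarding cobordism functoriality. However, there are concrete reasons the paper stops at $L_2$ rather than carrying out your program. First, even in the partial proof, the Reidemeister II case (Section~\ref{ssec:RII}) could not be handled by the $\phi$-intertwining argument that works for Reidemeister III, because the relevant map does not commute with the SFT bracket; the paper instead constructs $f_2$ ad hoc and verifies the relation by a delicate case analysis. Extending this to all $f_k$ would require either discovering the right cobordism Hamiltonian that produces these ad hoc maps systematically, or continuing the case analysis to arbitrary $k$, neither of which your outline addresses in enough detail to constitute a proof. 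Second, your master equation for $H$ with its ``mixed string configurations traveling through $L$'' is precisely the unresolved technical heart of the matter: the paper's own $\tb$ corrections were already nontrivial to pin down, and the cobordism analogue has not been formulated. Until that equation is stated and verified (combinatorially or otherwise), the proposal remains a plan rather than an argument.
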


Instead, in this paper we establish a more limited statement of invariance, up through the $\ell_2$ operation. That is, we construct maps $f_1,f_2$ and check the relation \eqref{eq:morphism} for $k=1,2$. More precisely, we define the following.

\begin{definition}
Let $(A^+,\{\ell_k^+\})$ and $(A^-,\{\ell_k^-\})$ be two homotopy Poisson algebras. An \textit{$L_2$ morphism} from $(A^+,\{\ell_k^+\})$ to $(A^-,\{\ell_k^-\})$ is a pair of maps $f_1 :\thinspace A^+ \to A^-$ and $f_2 :\thinspace A^+ \otimes A^+ \to A^-$ such that $f_1$ is an algebra map with $f_1 \circ \ell_1^+ = \ell_1^- \circ f_1$, and $f_2$ satisfies the following properties:
\begin{itemize}
\item
graded symmetry: $f_2(x_2,x_1) = (-1)^{|x_1||x_2|+1} f_2(x_1,x_2)$;
\item
Leibniz rule:
\[
f_2(x_1,x_2x_2') = f_2(x_1,x_2) f_1(x_2') + (-1)^{(|x_1|+1)|x_2|} f_1(x_2) f_2(x_1,x_2');
\]
\item
$L_2$ morphism property:
\[
f_1(\ell_2^+(x_1,x_2)) = \ell_2^-(f_1(x_1),f_1(x_2)) + f_2(\ell_1^+(x_1),x_2)+(-1)^{|x_1|}f_2(x_1,\ell_1^+(x_2))+\ell_1^-(f_2(x_1,x_2)).
\]
\end{itemize}
An $L_2$ morphism such that $f_1$ is a quasi-isomorphism is an \textit{$L_2$ equivalence}; 
\label{def:L2}
two homotopy Poisson algebras are \textit{$L_2$ equivalent} if there is an $L_2$ equivalence between them.
\end{definition}

\noindent
We note for future reference that graded symmetry and the Leibniz rule imply the following alternate version of Leibniz:
\[
f_2(x_1x_1',x_2) = (-1)^{|x_1|}f_1(x_1)f_2(x_1',x_2) + (-1)^{|x_1'||x_2|} f_2(x_1,x_2)f_1(x_1').
\]

Recall that the homology $H_*(A,\ell_1)$ of a homotopy Poisson algebra is a Poisson algebra, with multiplication induced from $A$ and Poisson bracket induced by $\ell_2$. The following result is immediate from Definition~\ref{def:L2}.

\begin{proposition}
If the homotopy Poisson algebras $(A^+,\{\ell_k^+\})$ and $(A^-,\{\ell_k^-\})$ are $L_2$ equivalent, then the Poisson algebras $(H_*(A^+,\ell_1^+),\ell_2^+)$ and $(H_*(A^-,\ell_1^-),\ell_2^-)$ are isomorphic.
\end{proposition}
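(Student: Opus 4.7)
The plan is to verify that the pair $(f_1, f_2)$ comprising an $L_2$ equivalence descends on $\ell_1$-homology to a Poisson algebra isomorphism, with $f_1$ providing the underlying map and $f_2$ serving only to witness that the obstruction to $f_1$ intertwining the brackets is a boundary.

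First, from the defining properties of an $L_2$ morphism, $f_1 \co A^+ \to A^-$ is an algebra map that commutes with $\ell_1^\pm$. Hence $f_1$ descends to a map of graded algebras $[f_1] \co H_*(A^+,\ell_1^+) \to H_*(A^-,\ell_1^-)$; since $f_1$ is a quasi-isomorphism by hypothesis, $[f_1]$ is an isomorphism of graded associative algebras. Recall from Section~\ref{ssec:l-infty} that $\ell_2^\pm$ descends to a well-defined Poisson bracket on $H_*(A^\pm,\ell_1^\pm)$; this uses the second and third $L_\infty$ relations together with the Leibniz rule for $\ell_2^\pm$, all of which are available since $A^\pm$ is a homotopy Poisson algebra.

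The crux is to check that $[f_1]$ intertwines the two Poisson brackets. Given classes $[x_1], [x_2] \in H_*(A^+,\ell_1^+)$, choose cycle representatives $x_1, x_2 \in A^+$ with $\ell_1^+(x_1)=\ell_1^+(x_2)=0$. The $L_2$ morphism property reads
\[
f_1(\ell_2^+(x_1,x_2)) = \ell_2^-(f_1(x_1),f_1(x_2)) + f_2(\ell_1^+(x_1),x_2) + (-1)^{|x_1|} f_2(x_1,\ell_1^+(x_2)) + \ell_1^-(f_2(x_1,x_2)).
\]
Since $x_1, x_2$ are cycles, the middle two correction terms vanish, giving
\[
f_1(\ell_2^+(x_1,x_2)) - \ell_2^-(f_1(x_1),f_1(x_2)) = \ell_1^-(f_2(x_1,x_2)),
\]
an $\ell_1^-$-boundary. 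Passing to homology yields $[f_1]([\ell_2^+(x_1,x_2)]) = [\ell_2^-(f_1(x_1),f_1(x_2))]$, i.e., $[f_1]\{[x_1],[x_2]\}^+ = \{[f_1][x_1], [f_1][x_2]\}^-$, so $[f_1]$ is a Poisson map.

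No step here is a serious obstacle; the only content is the one-line calculation above together with the standard observation that a quasi-isomorphism of DG algebras induces a ring isomorphism on homology. The role of the higher operations $\ell_{k \geq 3}$ and higher morphism components $f_{k \geq 3}$ is simply not needed at this level, which is precisely why the notion of $L_2$ equivalence was introduced in Definition~\ref{def:L2}: it isolates exactly the structure required to get an isomorphism of Poisson algebras on homology, independent of the full conjectural $L_\infty$ equivalence in Conjecture~\ref{conj:invariance}.
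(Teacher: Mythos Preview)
Your proof is correct and is precisely the unpacking of what the paper means by ``immediate from Definition~\ref{def:L2}.'' The paper gives no further argument, so your spelling out of the one-line calculation on cycles and the role of $f_2$ as a homotopy is exactly the intended content.
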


We can now state our invariance result; note in the following that ``Legendrian isotopic'' refers to an isotopy that sends base points to base points and Maslov potential to Maslov potential. 

\begin{proposition}
Suppose that $\Lambda^+$ and $\Lambda^-$ are pointed Legendrian links in $\R^3$ that are Legendrian isotopic.
Then the respective homotopy Poisson algebras $((\A^+)\tocomm,\{\ell_k^+\})$ and $((\A^-)\tocomm,\{\ell_k^-\})$ are $L_2$ equivalent. 
\label{prop:invariance}
In particular, the commutative Legendrian contact homologies $(LCH\tocomm_*(\Lambda^+),\ell_2^+)$ and $(LCH\tocomm_*(\Lambda^-),\ell_2^-)$ are isomorphic as Poisson algebras.
\end{proposition}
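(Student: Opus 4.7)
The plan is to reduce the invariance statement to a finite list of elementary Legendrian moves and verify $L_2$ equivalence for each. Any Legendrian isotopy in $\R^3$ can be decomposed into Legendrian Reidemeister II and III moves in the $xy$ projection, together with combinatorial moves that slide, add, or remove base points. A preliminary step is to check that $L_2$ morphisms compose: given $(f_1,f_2)$ and $(g_1,g_2)$, the composite $(g_1\circ f_1,\; g_1\circ f_2 + g_2\circ (f_1\otimes f_1))$ is again an $L_2$ morphism, and is an equivalence when both factors are, by a direct check of graded symmetry, Leibniz, and the $L_2$ identity. With composition in hand, it suffices to treat each elementary move separately.

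For each elementary move, $f_1$ will be the standard Chekanov--Eliashberg DGA map, which is known to be a quasi-isomorphism over a characteristic-zero field by \cite{ENS}. The new content is the construction of $f_2 \co (\A\tocomm)^{\otimes 2}\to \A\tocomm$ satisfying graded symmetry, the Leibniz rule, and the $L_2$ morphism identity. For Reidemeister II, I expect $f_2=0$ to suffice: the added canceling pair of Reeb chords contributes an acyclic subcomplex, and the surviving $\ell_2$ terms are preserved on the nose. For Reidemeister III and for base-point slides, $f_2$ should be constructed by a formula paralleling Definition~\ref{def:main}: one writes down a cobordism Hamiltonian $H$ counting rigid holomorphic disks in the trace of the isotopy with two positive punctures at $\Lambda^+$ and negative punctures at $\Lambda^-$, and sets $f_2(x_1,x_2) = (-1)^{|x_1|}\{\{H_2,x_1\},x_2\}$ plus string corrections built from cobordism analogues of $\delta$ and $\tb$.

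The main obstacle is verification of the $L_2$ morphism identity for the Reidemeister III and base-point moves, which parallels the verification of the $L_\infty$ relations in Section~\ref{sec:l-infty-proof} but in a relative setting. Specifically, one needs a cobordism analogue of the quantum master equation (Proposition~\ref{prop:qme}) relating $H$, the cobordism string operation, and the Hamiltonians $h^{\pm}$; the $L_2$ identity then follows by bracketing twice with $x_1,x_2$ and projecting to $\A\tocomm$. The combinatorial bookkeeping of half-integer string coefficients and signs will be intricate; one must in particular check that the $\tb$-correction terms interact correctly when pushed through $f_1$, analogously to the way correction terms were handled in the proof of Lemma~\ref{lem:dm}. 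Once $L_2$ equivalence has been established move by move, the Poisson algebra invariance of $LCH\tocomm_*$ is immediate: $f_1$ descends to an algebra isomorphism on $\ell_1$-homology, and reducing the $L_2$ morphism identity modulo $\ell_1$-boundaries shows this isomorphism intertwines the induced Poisson brackets.
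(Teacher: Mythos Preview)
Your overall architecture---decompose into elementary moves, construct $(f_1,f_2)$ for each, compose---matches the paper. However, you have misjudged which moves are easy and which are hard, and this leads to a genuine error.

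Your claim that $f_2=0$ suffices for Reidemeister II is incorrect. The standard $f_1$ for Reidemeister II (which kills $q_1$ and sends $q_2 \mapsto v$) is \emph{not} a strict homomorphism for $\ell_2$, even on generators $q_i,q_j$ with $i,j>2$ that are unaffected by $f_1$. The reason is geometric: a disk in $\Lambda^-$ with two positive punctures at $a_i,a_j$ may pass through the neck of the bigon region; when you undo the Reidemeister II move, such a disk pinches into a chain of disks in $\Lambda^+$ joined by $p_1q_2$ bigons. Depending on how the positive punctures are distributed among the pieces, the chain may consist of a single two-positive-puncture disk (contributing to $f_1(\ell_2^+(q_i,q_j))$), or a $p_ip_1$ disk glued to a $q_2$-corner of a $p_j$ disk, or more elaborate configurations with a $p_1p_1$ disk in the middle. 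The latter configurations are \emph{not} captured by $f_1\circ\ell_2^+$, and must be corrected by nonzero $f_2$ terms. The paper defines $f_2(q_2,q_j)$ explicitly in terms of $\ell_2^+(q_1,q_j)$ and $\ell_2^+(q_1,q_1)\{p_2,\ell_1^+(q_j)\}$, with all other $f_2$ values forced by symmetry and Leibniz; the verification that this works is the longest part of the invariance proof.

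Conversely, the moves you flag as hard are in fact easier. For the base-point slide, the paper shows that a simple rescaling $q_j\mapsto t_i^{\pm 1}q_j$, $p_j\mapsto t_i^{\mp 1}p_j$ gives a strict algebra isomorphism intertwining $h^+$ with $h^-$ and $\ell_2^{\str,+}$ with $\ell_2^{\str,-}$, so $f_1$ is already a full $L_\infty$ isomorphism and $f_2=0$. For Reidemeister III, the paper avoids any cobordism-Hamiltonian construction: there is an explicit algebra automorphism $\phi$ of $\sft\tocomm$ (sending each $s_j$ to $s_j - \epsilon\, s_{j+2}^* s_{j+1}^*$ on the three crossings) that commutes with both the SFT bracket and the differential $d$, and one sets $f_2(x_1,x_2) = (-1)^{|x_1|+1}\Pi\{\Pi\phi(x_1),\phi(x_2)\}$. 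The $L_2$ identity then falls out of a short filtration argument, with no need for a relative quantum master equation. Your proposed cobordism approach might be made to work and would be conceptually attractive, but it is substantially more machinery than the paper needs.
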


The remainder of Section~\ref{sec:invariance} is devoted to the proof of Proposition~\ref{prop:invariance}. In order to do this, we follow Chekanov's combinatorial proof that Legendrian contact homology is an isotopy invariant. In the $xy$ projection, any Legendrian isotopy can be decomposed into planar isotopies along with a succession of elementary moves:
\begin{itemize}
\item
moving a base point through the endpoint of a Reeb chord;
\item
a Reidemeister III move;
\item
a Reidemeister II move.
\end{itemize}
Recall that the $xy$ projection of a Legendrian isotopy is a regular homotopy, and thus we do not need to consider a Reidemeister I move.

In the following subsections, we will construct an $L_2$ equivalence associated to each of the elementary moves. For each move, Chekanov \cite{Che} (see also \cite{ENS}) constructs a chain map between Chekanov--Eliashberg DGAs (``stable tame isomorphism'') that induces an isomorphism on the homology of the noncommutative DGA. This chain map is our $f_1$; when $\A\tocomm$ is the commutative DGA over a field $\kk$ of characteristic $0$, $f_1$ induces an isomorphism on the homology of $(\A\tocomm,\d)$ as well, see \cite[Theorem~3.14]{ENS}. We then construct $f_2$ for each elementary move in such a way that $(f_1,f_2)$ is an $L_2$ morphism (and hence an $L_2$ equivalence) in the sense of Definition~\ref{def:L2}. Proposition~\ref{prop:invariance} then follows.

%**************************************************
\subsection{Invariance under base point move}
\label{ssec:move}

Suppose that the pointed Legendrian links $\Lambda^+$ and $\Lambda^-$ are identical except that the base point $\bullet_i$ is moved through an endpoint of the Reeb chord $a_j$: see Figure~\ref{fig:move}.

\begin{figure}
\labellist
\small\hair 2pt
\pinlabel $\bullet_i$ at 4 26
\pinlabel $\bullet_i$ at 63 12
\pinlabel $a_j$ at 16 31
\pinlabel $a_j$ at 61 31
\pinlabel $\bullet_i$ at 109 19
\pinlabel $\bullet_i$ at 168 34
\pinlabel $a_j$ at 121 15
\pinlabel $a_j$ at 166 15
\pinlabel $\Lambda^+$ at 16 2
\pinlabel $\Lambda^-$ at 61 2
\pinlabel $\Lambda^+$ at 121 2
\pinlabel $\Lambda^-$ at 166 2
\endlabellist
\centering
\includegraphics[width=0.8\textwidth]{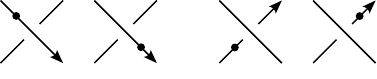}
\caption{
Moving a base point $\bullet_i$ through a Reeb chord $a_j$.
}
\label{fig:move}
\end{figure}

We first consider the case where $\bullet_i$ is moved through the positive endpoint $a_j^+$ of $a_j$, as shown in the left hand diagram in Figure~\ref{fig:move}. 
Define an algebra isomorphism
\[
f_1 :\thinspace (\A^+)\tocomm \to (\A^-)\tocomm
\]
by sending
\[
f_1(q_j) = t_i^{-1} q_j
\]
and letting $f_1$ act as the identity on all other generators of $(\A^+)\tocomm$ (including $t_i$). 

To set notation, let $h^\pm$ denote the Hamiltonians for $\Lambda^\pm$, and write $\SS_0 = \{q_1,\ldots,q_n,t_1,\ldots,t_s\}$ for the
the set of generators for both $(\A^+)\sfttocomm$ and $(\A^-)\sfttocomm$.
We now observe the following.

\begin{lemma}
\begin{enumerate}
\item \label{it:move2}
For any $x_1,x_2\in(\A^+)\sfttocomm$, we have $\{f_1(x_1),f_1(x_2)\} = f_1(\{x_1,x_2\})$.
\item \label{it:move3}
$f_1(h^+) = h^-$.
\item \label{it:move4}
For any $x_1,x_2\in\SS$, we have $(\ell_2^-)^\str(f_1(x_1),f_1(x_2)) = f_1((\ell_2^+)^\str(x_1,x_2))$, where $(\ell_2^\pm)^\str$ are the versions for $\Lambda^\pm$ of the operations $\ell_2^\str$ defined in Section~\ref{ssec:l2str}.
\end{enumerate}
\label{lem:move}
\end{lemma}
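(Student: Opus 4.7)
The plan is to extend $f_1$ from $(\A^+)\tocomm$ to a Poisson map of the full SFT algebras $(\A^+)\sfttocomm \to (\A^-)\sfttocomm$. Part (1) essentially forces the extension: for $\{f_1(p_j),f_1(q_j)\}=1$ to hold, one must set $f_1(p_j)=t_ip_j$, and I would extend $f_1$ as the identity on the remaining $p_k$. Once this choice is fixed, part (1) reduces to a check on generator pairs, since both the SFT bracket and its pullback under $f_1$ are Poisson. Every generator pair brackets to zero except $(p_j,q_j)$, and for that pair one verifies $\{t_ip_j,t_i^{-1}q_j\} = t_it_i^{-1}\{p_j,q_j\} = 1$ using $\{t_i^{\pm 1},\cdot\}=0$ on all $p,q$ generators.

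For part (2), I would start from the fact that $\Pi_{xy}(\Lambda^+)$ and $\Pi_{xy}(\Lambda^-)$ coincide as oriented immersed diagrams (only the label $\bullet_i$ moves), so there is a canonical bijection $\Delta(\Lambda^+) \leftrightarrow \Delta(\Lambda^-)$. The sign factors $\epsilon_\ell(\Delta)$ and $\epsilon'(\Delta;v_k)$ from Section~\ref{ssec:h} depend only on quadrant data and orientations at corners, so they agree on corresponding disks. Thus (2) reduces to showing $f_1(w^+(\Delta)) = w^-(\Delta)$ as cyclic words for each disk $\Delta$. The two words differ only in how many times $\partial\Delta$ is recorded as crossing the short arc $\iota$ between the old and new positions of $\bullet_i$. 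Because $a_j^+$ is an interior point of $\iota$, every crossing of $\iota$ by $\partial\Delta$ is locally tied to a corner of $\Delta$ at $a_j$. I would then run a quadrant-by-quadrant check at $a_j$ to confirm that a $q_j$-corner contributes an extra $t_i^{-1}$ going from $w^+$ to $w^-$, while a $p_j$-corner contributes an extra $t_i$, matching the substitution defining $f_1$ on the nose.

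For part (3), I interpret $\SS$ as the generators of $\A\tocomm$, namely $\SS_0 = \{q_1,\ldots,q_n,t_1,\ldots,t_s\}$ (together with inverses for the $t_i$). Since $\ell_2^\str$ inherits the Leibniz rule from its constituents $\{\cdot,\cdot\}$, $\delta$, and $\tb$, and $f_1$ is an algebra map, the identity extends from $\SS_0$ to all of $\A\tocomm$ by induction on word length. To verify it on $\SS_0$ I would apply Proposition~\ref{prop:l2str}, which expresses each side as a half-integer multiple of the product $x_1x_2$ built from unbroken segments in $\Lambda^\pm$ between endpoints of $x_1$ and $x_2$. Moving $\bullet_i$ past $a_j^+$ only affects segments with one endpoint at $a_j^+$; tracking which segments become broken or unbroken after the move, the change in $\sum n(\gamma)$ exactly accounts for the extra factor of $t_i^{-1}$ attached to each $q_j$ by $f_1$.

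The main obstacle will be the case analysis in part (2). It requires matching, at each Reeb corner at $a_j$, the quadrant label ($p_j$ versus $q_j$) and the local orientation of $\partial\Delta$ with the direction in which $\partial\Delta$ traverses $\iota$, in order to confirm that the algebraic substitution $q_j\mapsto t_i^{-1}q_j$, $p_j\mapsto t_ip_j$ reproduces the actual change in $w^\pm(\Delta)$; the analogous cases for a move through $a_j^-$ or for moves in the opposite direction across $a_j^+$ should follow by symmetry.
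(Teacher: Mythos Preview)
Your approach is essentially the same as the paper's, and parts (1) and (2) match the paper's proof closely: the paper also implicitly extends $f_1$ by $p_j\mapsto t_ip_j$ and checks the bracket on generator pairs, and for (2) it performs precisely the quadrant-by-quadrant check you describe (recording the substitutions $t_iq_j\to q_j$, $q_j\to t_i^{-1}q_j$, $p_jt_i^{-1}\to p_j$, $p_j\to t_ip_j$).

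For part (3) your strategy is right but one step is understated. When $x_1=q_j$, the left-hand side is $(\ell_2^-)^\str(t_i^{-1}q_j,x_2)$, and Proposition~\ref{prop:l2str} does \emph{not} apply to this directly since $t_i^{-1}q_j\notin\SS_0$; you must first Leibniz-expand to get $t_i^{-1}(\ell_2^-)^\str(q_j,x_2)-q_jt_i^{-2}(\ell_2^-)^\str(t_i,x_2)$. The identity then reduces to
\[
(\ell_2^-)^\str(q_j,x_2)-(\ell_2^+)^\str(q_j,x_2)=q_jt_i^{-1}(\ell_2^-)^\str(t_i,x_2),
\]
which is exactly what the paper proves by the segment-tracking you outline: the ``exceptional'' unbroken segments in $\Lambda^\pm$ ending at $a_j^+$ are in bijection with unbroken segments ending at $\bullet_i$. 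Your phrase ``the change in $\sum n(\gamma)$ accounts for the extra factor of $t_i^{-1}$'' obscures this, since the $t_i^{-1}$ already appears on both sides via $f_1$; what the segment count must actually match is the correction term coming from $(\ell_2^-)^\str(t_i,x_2)$ in the Leibniz expansion.
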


\begin{proof}
For \eqref{it:move2}, it suffices to check this when $x_1,x_2\in\SS$. In this case both sides are $0$ unless $\{x_1,x_2\} = \{p_\ell,q_\ell\}$ for some $\ell$. If $\ell\neq j$ then both sides are $\{x_1,x_2\}$, while if $\ell=j$ then the result follows from the fact that $\{f_1(p_j),f_1(q_j)\} = \{t_ip_j,t_i^{-1}q_j\} = \{p_j,q_j\}$.

To see \eqref{it:move3}, note that there is an obvious one-to-one correspondence between disks contributing to the Hamiltonians $h^\pm$. An inspection of the four quadrants at the Reeb chord $a_j$ shows that the boundaries of the disks that have a corner at $a_j$ change between $\Lambda^+$ and $\Lambda^-$ as follows: $t_i q_j \to q_j$; $q_j \to t_i^{-1} q_j$; $p_j t_i^{-1} \to p_j$; $p_j \to t_i p_j$. It follows immediately that $f_1(h^+) = h^-$.

For \eqref{it:move4}, recall from Proposition~\ref{prop:l2str} that $\ell_2^\str(x_1,x_2)$ is $x_1x_2$ times a signed count of the unbroken segments between $x_1$ and $x_2$. If neither $x_1$ nor $x_2$ is $q_j$, then this collection of unbroken segments is the same for $\Lambda^+$ and $\Lambda^-$, and so $(\ell_2^-)^\str(f_1(x_1),f_1(x_2)) = (\ell_2^-)^\str(x_1,x_2)= (\ell_2^+)^\str(x_1,x_2) = f_1((\ell_2^+)^\str(x_1,x_2))$. Since $(\ell_2^\pm)^\str(q_j,q_j) = 0$, it only remains to check \eqref{it:move4} when $x_1=q_j$ and $x_2\neq q_j$. 
In this case we have
$t_i f_1((\ell_2^+)^\str(q_j,x_2)) = (\ell_2^+)^\str(q_j,x_2)$ while
\[
t_i (\ell_2^-)^\str(f_1(q_j),f_1(x_2)) = t_i (\ell_2^-)^\str(t_i^{-1}q_j,x_2) = (\ell_2^-)^\str(q_j,x_2) -q_j t_i^{-1}(\ell_2^-)^\str(t_i,x_2).
\]
Thus we need to show that
\begin{equation}
(\ell_2^-)^\str(q_j,x_2) - (\ell_2^+)^\str(q_j,x_2) = q_j t_i^{-1}(\ell_2^-)^\str(t_i,x_2).
\label{eq:l2str-move}
\end{equation}
Now there is a one-to-one correspondence between unbroken segments between endpoints of $a_j$ and of $x_2$ in $\Lambda^+$ and in $\Lambda^-$, with two exceptions: 
\begin{itemize}
\item
(oriented) unbroken segments in $\Lambda^+$ from $a_j^+$ to an endpoint of $x_2$, and 
\item
unbroken segments in $\Lambda^-$ from an endpoint of $x_2$ to $a_j^+$;
\end{itemize}
these have no counterpart on the other side. Thus $(\ell_2^-)^\str(q_j,x_2)-(\ell_2^+)^\str(q_j,x_2)$ is given by a signed count of these exceptional segments. But in these exceptional cases there are also companion unbroken segments between $\bullet_i$ and the endpoint of $x_2$: an unbroken segment in $\Lambda^+$ from $\bullet_i$ to the endpoint of $x_2$ in the first case, and in $\Lambda^-$ from the endpoint of $x_2$ to $\bullet_i$ the latter. Thus each term on the left hand side of \eqref{eq:l2str-move} has a corresponding term on the right hand side. Completing the proof now reduces to a case-by-case verification that the terms agree with signs, using Proposition~\ref{prop:l2str}. Suppose for example that $x_2 = q_{j_2}$ and there is an unbroken segment in $\Lambda^+$ from $a_j^+$ to $a_{j_2}^{\sigma_2}$: this contributes $\frac{1}{2}\sigma_2q_jq_{j_2}$ to $-(\ell_2^+)^\str(q_j,x_2)$, while the companion segment in $\Lambda^+$ from $\bullet_i$ to $a_{j_2}^{\sigma_2}$ contributes $\frac{1}{2}\sigma_2q_jx_2$ to $q_j t_i^{-1}(\ell_2^-)^\str(t_i,x_2)$. We leave the other cases (a segment in $\Lambda^-$ from an endpoint of $x_2=q_{j_2}$ to $a_j^+$, and the cases where $x_2 = t_{i'}$ for some $i'$) to the reader.
\end{proof}

\begin{proposition}
Suppose that $\Lambda^\pm$ are related by moving a base point through the top endpoint of a Reeb chord. 
\label{prop:move}
For any $k\geq 1$ and $x_1,\ldots,x_k\in(\A^+)\tocomm$, we have
\[
f_1(\ell_k^+(x_1,\ldots,x_k)) = \ell_k^-(f_1(x_1),\ldots,f_1(x_k)).
\]
Thus $f_1$, as a map between the $L_\infty$ algebras $((\A^+)\tocomm,\{\ell_k^+\})$ and $((\A^-)\tocomm,\{\ell_k^-\})$, is an $L_\infty$ equivalence and in particular an $L_2$ equivalence.
\end{proposition}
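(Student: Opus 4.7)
The proof relies on extending $f_1$ to an algebra map of $\sft\tocomm$ (for instance by declaring $f_1(p_j) = t_i p_j$ and fixing the remaining generators), after which the three conclusions of Lemma~\ref{lem:move} let us push every defining ingredient of $\ell_k^\pm$ across $f_1$. The extended $f_1$ preserves the $\Z$-grading on $\sft\tocomm$ (the change of capping paths under a base-point move is exactly compensated by the $t_i^{\pm 1}$ factors) and also preserves the filtration by number of $p$'s, so $f_1(h_k^+) = h_k^-$ for every $k\geq 1$ follows from Lemma~\ref{lem:move}\eqref{it:move3}.

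For $k=1$ and all $k\geq 3$, the operation $\ell_k^\pm$ is a fixed iterated SFT bracket of $h_k^\pm$ against $x_1,\ldots,x_k$, with a sign depending only on the degrees of the $x_i$. Applying $f_1$ and using Lemma~\ref{lem:move}\eqref{it:move2} to push $f_1$ inside each bracket, combined with $f_1(h_k^+) = h_k^-$ and the fact that $f_1$ is grading-preserving, yields the claim immediately.

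For $k=2$, decompose $\ell_2(x_1,x_2) = (-1)^{|x_1|}\{\{h_2,x_1\},x_2\} + \ell_2^\str(x_1,x_2)$. The first summand transfers through $f_1$ exactly as in the previous paragraph. Lemma~\ref{lem:move}\eqref{it:move4} gives the desired identity for $\ell_2^\str$ on generators $x_1,x_2\in\SS_0$; to extend it to all of $\A\tocomm$, I would induct on word length using the Leibniz rule for $(\ell_2^\pm)^\str$ (which holds because each of $\{\delta(\cdot),\cdot\}$, $\{\cdot,\delta(\cdot)\}$, and $\tb$ is a derivation in each argument, compare Proposition~\ref{prop:skewsymmetry}) together with the fact that $f_1$ is an algebra homomorphism. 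The identities $\ell_2^\str(t_i, t_i^{-1}) = 0$ (from graded antisymmetry and $t_i t_i^{-1}=1$) and $\ell_2^\str(t_i^{-1}, x) = -t_i^{-2}\ell_2^\str(t_i, x)$ (from Leibniz) handle the passage from $t_i$ to $t_i^{-1}$. That $f_1$ is then an $L_\infty$ equivalence is automatic, since $f_1$ is already an algebra isomorphism and hence a quasi-isomorphism on $\ell_1$-homology.

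The only mildly delicate step is this $k=2$ extension argument: while Lemma~\ref{lem:move}\eqref{it:move4} treats pairs of single generators, the statement we need concerns pairs of arbitrary polynomial expressions, and the induction must track the Koszul signs appearing in both versions of the Leibniz rule for $\ell_2^\str$. This is a bookkeeping exercise rather than a conceptual obstacle, and the heavy lifting has already been done in Lemma~\ref{lem:move}.
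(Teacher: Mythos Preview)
Your proof is correct and follows essentially the same approach as the paper: use Lemma~\ref{lem:move}\eqref{it:move2} and \eqref{it:move3} to push $f_1$ through the iterated brackets for $k\neq 2$, and for $k=2$ split off $\ell_2^\str$ and invoke Lemma~\ref{lem:move}\eqref{it:move4}. Your extra care in extending \eqref{it:move4} from generators to arbitrary elements via the Leibniz rule is warranted---the paper's proof simply says ``this is precisely Lemma~\ref{lem:move}\eqref{it:move4}'' and leaves that extension implicit---and your bookkeeping sketch for that step is sound.
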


\begin{proof}
For $k\neq 2$, we have
\begin{align*}
f_1(\ell_k^+(x_1,\ldots,x_k)) &= (-1)^{|x_{k-1}|+|x_{k-3}|+\cdots} f_1(\{\cdots \{\{h_k^+,x_1\},x_2\},\ldots,x_k\}) \\
&= (-1)^{|x_{k-1}|+|x_{k-3}|+\cdots} \{\cdots \{\{f_1(h_k^+),f_1(x_1)\},f_1(x_2)\},\ldots,f_1(x_k)\} \\
&= (-1)^{|x_{k-1}|+|x_{k-3}|+\cdots} \{\cdots \{\{h_k^-,f_1(x_1)\},f_1(x_2)\},\ldots,f_1(x_k)\} \\
&= \ell_k^-(f_1(x_1),\ldots,f_1(x_k))
\end{align*}
by Lemma~\ref{lem:move}(\ref{it:move2},\ref{it:move3}). For $k=2$, since $\ell_2(x_1,x_2) = (-1)^{|x_1|}\{\{h_2,x_1\},x_2\}+\ell_2^\str(x_1,x_2)$, we further need that
$ f_1((\ell_2^+)^\str(x_1,x_2))=(\ell_2^-)^\str(f_1(x_1),f_1(x_2))$; but this is precisely Lemma~\ref{lem:move} \eqref{it:move4}.
\end{proof}

This completes the proof that the $L_\infty$ structure is invariant under the move in the left hand diagram of Figure~\ref{fig:move}. For the move in the right hand diagram of Figure~\ref{fig:move}, an entirely analogous argument shows that $f_1 :\thinspace (\A^+)\tocomm \to (\A^-)\tocomm$ defined by $f_1(q_j) = t_iq_j$ and $f_1(x)=x$ for all other generators is an $L_\infty$ equivalence between the $L_\infty$ algebras $((\A^+)\tocomm,\{\ell_k^+\})$ and $((\A^-)\tocomm,\{\ell_k^-\})$. We conclude that the $L_\infty$ structure is invariant under any base point move.

%**************************************************
\subsection{Invariance under Reidemeister III move}
\label{ssec:RIII}

Next we suppose that the pointed Legendrian links $\Lambda^+$ and $\Lambda^-$ are related by a Reidemeister III move, as shown in Figure~\ref{fig:RIII}. By moving base points if necessary and applying the invariance result from Section~\ref{ssec:move}, we can assume that the local Reidemeister III move does not involve any base points.

\begin{figure}
\labellist
\small\hair 2pt
\pinlabel $s_1$ at 34 42
\pinlabel $s_2$ at 23 24
\pinlabel $s_3$ at 45 24
\pinlabel $s_1$ at 124 29
\pinlabel $s_2$ at 134 48
\pinlabel $s_3$ at 113 48
\pinlabel $\Lambda^+$ at 34 1
\pinlabel $\Lambda^-$ at 124 1
%\pinlabel $\epsilon_1$ at 210 42
\pinlabel $\epsilon$ at 229 30
\endlabellist
\centering
\includegraphics[width=\textwidth]{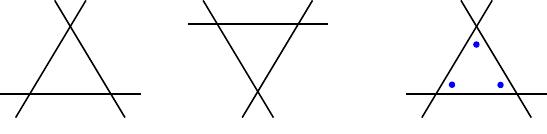}
\caption{
Reidemeister III move in the $xy$ projection, with any possible choice of crossing information at the three crossings. In each of $\Lambda^+$ and $\Lambda^-$, the three depicted quadrants are labeled $s_1,s_2,s_3$, where each $s_j$ is either a $p$ or $q$ as in Figure~\ref{fig:Reebsigns}. On the right, $\epsilon \in \{\pm 1\}$ is defined to be the product of the orientation signs of the three marked quadrants in $\Lambda^+$.
}
\label{fig:RIII}
\end{figure}

As usual, we append $\pm$ superscripts to label constructions in $\Lambda^\pm$: so for instance $\ell_k^\pm$ are the $L_\infty$ operations on $(\A^\pm)\tocomm$, and $h^\pm,\delta^\pm,d^\pm,\tb^\pm$ are the Hamiltonian, string coproduct, differential, and $\tb$ map (Definition~\ref{def:beta}) on $(\A^\pm)\sfttocomm$. There is an obvious one-to-one correspondence between base points and Reeb chords for $\Lambda^+$ and $\Lambda^-$, and we use the same labels for each, so that the $\pm$ algebras are the same: we can write
\begin{align*}
(\A^+)\tocomm = (\A^-)\tocomm &= \Z[q_1,\ldots,q_n,t_1^{\pm 1},\ldots,t_s^{\pm 1}] \\
(\A^+)\sfttocomm = (\A^-)\sfttocomm &= \Z[q_1,\ldots,q_n,p_1,\ldots,p_n,t_1^{\pm 1},\ldots,t_s^{\pm 1}].
\end{align*}
In particular, we label the Reeb chords for $\Lambda^\pm$ so that the three crossings involved in the Reidemeister III move correspond to Reeb chords $a_1,a_2,a_3$; we then define $s_1,s_2,s_3$ to be the associated $p$ or $q$ variables labeling the quadrants as shown in Figure~\ref{fig:RIII}, with $s_j \in \{p_j,q_j\}$.

To prove invariance for Reidemeister III, we will use the corresponding invariance proof for Legendrian SFT in \cite[\S 4.1]{SFT}. First we introduce a bit of notation.We use asterisks to denote signed duals: $p_j^* = q_j$ and $q_j^* = -p_j$. This follows the notation from the proof of Proposition~\ref{prop:delta} above; please note that this notation is slightly different from the notation used in \cite{SFT}, where $q_j^*$ is $+p_j$ rather than $-p_j$. Also, let $\epsilon,\epsilon_1 \in \{\pm 1\}$ denote the following signs: $\epsilon$ is the product of the orientation signs of the three shaded quadrants in the diagram for $\Lambda^+$ shown in the right-hand diagram in Figure~\ref{fig:RIII}, and $\epsilon_1$ records whether the orientation of $\Lambda^+$ agrees or disagrees with the depicted arrow.

We now define an algebra map $\phi :\thinspace (\A^+)\sfttocomm \to (\A^-)\sfttocomm$ by:
\begin{align*}
\phi(s_1) &= s_1 - \epsilon s_3^* s_2^* \\
\phi(s_2) &= s_2 - \epsilon s_1^* s_3^* \\
\phi(s_3) &= s_3 - \epsilon s_2^* s_1^*
\end{align*}
and $\phi$ acts as the identity on all other generators of $(\A^+)\sfttocomm = (\A^-)\sfttocomm$. Note that $\phi$ is a filtered map with respect to the $p$ filtrations on $(\A^\pm)\sfttocomm$ since not all of $s_1,s_2,s_3$ can be $p$'s (otherwise the move is not topologically allowed). The map $\phi$ was previously defined in \cite[\S 4.1]{SFT}, with slightly different signs because of the above-mentioned convention change for duals, and satisfies the following key properties.

\begin{lemma}
\begin{enumerate}
\item \label{it:phiRIII1}
For any $x_1,x_2\in(\A^+)\sfttocomm$, we have $\phi(\{x_1,x_2\}) = \{\phi(x_1),\phi(x_2)\}$.
\item \label{it:phiRIII2}
We have $\phi \circ d^+ = d^- \circ \phi$.
\item \label{it:phiRIII3}
For any $x_1,x_2\in(\A^+)\sfttocomm$, we have $\phi(\tb^+(x_1,x_2)) = \tb^-(\phi(x_1),\phi(x_2))$.
\end{enumerate}
\label{lem:phiRIII}
\end{lemma}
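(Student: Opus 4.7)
The overall strategy is that $\phi$ is a filtered algebra map acting as the identity on every generator of $(\A^+)\sfttocomm = (\A^-)\sfttocomm$ outside $\{s_1,s_2,s_3\}$; combined with the Leibniz rule for both $\{\cdot,\cdot\}$ and $\tb$, each of the three identities reduces to a finite case check on generators, and only cases involving at least one of $s_1,s_2,s_3$ are nontrivial.

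For part (1), if exactly one of $x_1,x_2$ lies in $\{s_1,s_2,s_3\}$, the correction term $-\epsilon s_k^* s_j^*$ in $\phi(s_i)$ brackets trivially against the other generator, whose index lies outside $\{1,2,3\}$, so both sides reduce to $\{x_1,x_2\}$. If both lie in $\{s_1,s_2,s_3\}$, a direct Leibniz expansion using $\{s_\ell,s_\ell^*\}=1$ and $\{s_\ell,s_m\}=0$ for $\ell\neq m$ shows that the correction terms cancel; the sign $\epsilon$ is designed precisely to produce this cancellation. This computation is carried out in \cite[\S 4.1]{SFT} for the knot case, and the minor sign convention change in this paper (namely $q_j^*=-p_j$ rather than $+p_j$) is absorbed in $\epsilon$ since it enters each relevant cancellation an even number of times.

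For part (3), since $\tb^\pm(x_1,x_2) = \beta^\pm(x_1,x_2)\,x_1x_2$ with $\beta^\pm$ depending only on $\vv$ and $\ww$, it suffices to show $\vv(\phi(s_i))=\vv(s_i)$ and $\ww(\phi(s_i))=\ww(s_i)$ for $i=1,2,3$. Both equalities follow because, after applying base-point invariance from Section~\ref{ssec:move}, we can assume the Reidemeister III move involves no base points: $\ww$ then vanishes on all relevant terms, and the three Reeb chords $a_1,a_2,a_3$ bound a small triangle, which forces the capping paths associated to $s_i$ and to $s_k^*s_j^*$ to have matching endpoints on $\Lambda$. Consequently $\beta^+(x,y)=\beta^-(\phi(x),\phi(y))$ on generator pairs, and Leibniz extends the identity to all of $(\A^+)\sfttocomm$.

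For part (2), using $d^\pm = \{h^\pm,\cdot\} - \delta^\pm$ and property (1), the chain-map identity reduces to showing (a) $\phi(h^+)=h^-$ and (b) $\phi\circ\delta^+ = \delta^- \circ\phi$. Both are established by explicit matching arguments: disks in $\Delta(\Lambda^\pm)$ whose boundaries avoid the Reidemeister III triangle correspond bijectively with identical cyclic words, while disks passing through the triangle pair up into classes that precisely witness the substitution $s_i\mapsto s_i-\epsilon s_k^*s_j^*$, the sign $\epsilon$ accounting for orientation differences at the three affected corners; an analogous matching for broken closed strings near the triangle establishes (b). The main obstacle, and the most computationally intricate step, is the disk enumeration for $\phi(h^+)=h^-$: one must list all local disk behaviors near the triangle, pair them across $\Lambda^+$ and $\Lambda^-$, and verify that cyclic words and orientation signs combine to reproduce the $\phi$ formula. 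This enumeration was carried out in \cite[\S 4.1]{SFT} for knots with a single base point, and extends to pointed links essentially formally since $\phi$ fixes every $t_i$ and the triangle region is free of base points.
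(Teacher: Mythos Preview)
Your proposal is correct and follows essentially the same approach as the paper. For parts~(1) and~(2) the paper simply cites \cite[Lemma~4.1]{SFT} and \cite[Proposition~4.5]{SFT}, while you sketch how those arguments go; for part~(3) both you and the paper reduce to showing $\vv$ and $\ww$ are preserved by $\phi$ on generators, using that the capping paths for $s_i$ and $s_k^*s_j^*$ agree outside the triangle. Two small remarks: first, your decomposition of~(2) into the separate claims $\phi(h^+)=h^-$ and $\phi\circ\delta^+=\delta^-\circ\phi$ is a stronger assertion than the chain-map identity itself, and while both pieces do hold, you should be aware that \cite{SFT} may establish~(2) without isolating them; second, in~(3) the vanishing of $\ww$ on $s_i$ and $s_k^*s_j^*$ follows directly from Definition~\ref{def:vw} (since $\ww$ is zero on all $p$'s and $q$'s), not from the absence of base points in the triangle---the base-point assumption is needed only for the $\vv$ part.
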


\begin{proof}
Properties \eqref{it:phiRIII1} and \eqref{it:phiRIII2} are proven in \cite[Lemma 4.1]{SFT} and \cite[Proposition 4.5]{SFT}, respectively; note as before that the sign conventions in \cite{SFT} are different from the ones here, but the proofs carry over to the new conventions. For \eqref{it:phiRIII3}, note that for any generator $x$ of $(\A^+)\sfttocomm$, we have $\vv^+(x) = \vv^-(\phi(x))$ and $\ww^+(x) = \ww^-(\phi(x))$: this is clear unless $x=s_j$ for some $j=1,2,3$, while for $x=s_j$, say $x=s_1$, it follows from the fact that $\vv^-(s_3^*s_2^*) = \vv^-(s_1)$ and $\ww^-(s_3^*s_2^*) = \ww^-(s_1)$. (The latter is trivial since both sides are $0$; for the former, note that the capping paths $\gamma_{s_1}$ and $\gamma_{s_3^*}\cdot\gamma_{s_2^*}$ agree outside of the triangle in the Reidemeister move.) Property \eqref{it:phiRIII3} now follows directly from the definition of $\tb$ in Definition~\ref{def:beta}.
\end{proof}

We now construct the maps $f_1,f_2$ that will constitute an $L_2$ equivalence between $((\A^+)\tocomm,\{\ell_k^+\})$ and $((\A^-)\tocomm,\{\ell_k^-\})$. Define $f_1 :\thinspace (\A^+)\tocomm \to (\A^-)\tocomm$ and $f_2 :\thinspace (\A^+)\tocomm \otimes (\A^+)\tocomm \to (\A^-)\tocomm$ as follows:
\begin{align*}
f_1(x_1) &= \Pi \phi(x_1) \\
f_2(x_1,x_2) &= (-1)^{|x_1|+1}\Pi\{\Pi \phi(x_1),\phi(x_2)\},
\end{align*}
where we recall that $\Pi :\thinspace (\A^\pm)\sfttocomm \to (\A^\pm)\tocomm$ is the projection map that sends any word containing $p$'s to $0$.

\begin{proposition}
Suppose that $\Lambda^\pm$ are related by a Reidemeister III move. Then the maps $(f_1,f_2)$ are an $L_2$ equivalence between $((\A^+)\tocomm,\{\ell_k^+\})$ and $((\A^-)\tocomm,\{\ell_k^-\})$.
\label{prop:RIII}
\end{proposition}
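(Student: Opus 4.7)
The plan is to verify the three conditions in Definition~\ref{def:L2}: that $f_1$ is an algebra quasi-isomorphism with $f_1 \circ \ell_1^+ = \ell_1^- \circ f_1$, that $f_2$ is graded symmetric and satisfies Leibniz, and that $(f_1, f_2)$ fulfills the $L_2$ morphism identity. Three ingredients will be used throughout: (a) the projection $\Pi\co\sft\tocomm\to\A\tocomm$ is an algebra homomorphism, since $\A\tocomm$ is the quotient of $\sft\tocomm$ by the ideal generated by $p$-variables; (b) a brief inspection of $\phi$ on generators (using the topological constraint that not all of $s_1,s_2,s_3$ are $p$-variables) shows that $\Pi\phi$ annihilates every word in $\sft\tocomm$ containing at least one $p$, so that $\Pi\phi = \Pi\phi\circ\Pi$; and (c) the SFT bracket decreases total $p$-degree by exactly one, so that $\Pi\{A,B\} = \{A_{(0)},B_{(1)}\} + \{A_{(1)},B_{(0)}\}$, where $(\,\cdot\,)_{(k)}$ denotes the component of $p$-degree $k$.

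Given these, the properties of $f_1$ are immediate: $f_1 = \Pi\phi|_{\A\tocomm}$ is a composite of two algebra maps, and the chain-map property follows by applying $\Pi$ to $\phi d^+ = d^-\phi$ from Lemma~\ref{lem:phiRIII}\eqref{it:phiRIII2} and using $\ell_1^\pm = \Pi d^\pm|_{\A\tocomm}$; the quasi-isomorphism statement is classical Chekanov invariance together with \cite[Theorem~3.14]{ENS}. For graded symmetry of $f_2$, the vanishing $\{\phi x_1,\phi x_2\} = \phi\{x_1,x_2\} = 0$ from Lemma~\ref{lem:phiRIII}\eqref{it:phiRIII1}, together with the observation that $\Pi\{y_1,y_2\}=0$ whenever both $y_i$ have positive $p$-degree, yields the key identity
\[
\Pi\{\Pi\phi x_1, \phi x_2\} = -\Pi\{\phi x_1, \Pi\phi x_2\};
\]
combined with antisymmetry of the SFT bracket (Proposition~\ref{prop:sft-bracket}), this produces $f_2(x_2,x_1) = (-1)^{|x_1||x_2|+1}f_2(x_1,x_2)$. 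The Leibniz rule for $f_2$ follows directly from Leibniz of $\{\cdot,\cdot\}$ and the fact that $\Pi$ and $\phi$ are algebra maps.

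The main calculation is the $L_2$ morphism identity. The strategy is to conjugate the identity $m^+(x_1,x_2) = (-1)^{|x_1|}\{d^+x_1, x_2\} + (-1)^{|x_1||x_2|}\tb^+(x_2, x_1)$ from Lemma~\ref{lem:m-properties}\eqref{it:m2} by $\phi$ using all parts of Lemma~\ref{lem:phiRIII} to obtain
\[
\phi m^+(x_1,x_2) = (-1)^{|x_1|}\{d^-\phi x_1,\phi x_2\} + (-1)^{|x_1||x_2|}\tb^-(\phi x_2, \phi x_1),
\]
and then apply $\Pi$ to both sides. The left-hand side equals $\Pi\phi\Pi m^+(x_1,x_2) = f_1\ell_2^+(x_1,x_2)$ by the $p$-annihilation property of $\Pi\phi$. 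To analyze the right-hand side, I would decompose $\phi x_i = \Pi\phi x_i + y_i$ with $y_i$ of positive $p$-degree and expand by $p$-bidegree using fact~(c). The $(0)$-$(0)$ contributions reproduce $\ell_2^-(f_1 x_1, f_1 x_2)$, with the $\tb^-$ term supplying the string portion $(\ell_2^-)^\str$ from Section~\ref{ssec:l2str}, while the mixed $(0)$-$(1)$ cross-terms assemble into $\ell_1^- f_2(x_1,x_2) + f_2(\ell_1^+ x_1, x_2) + (-1)^{|x_1|}f_2(x_1,\ell_1^+ x_2)$, using the identifications $(d^-\phi x_i)_{(0)} = \ell_1^- f_1(x_i)$ together with the symmetrization identity from the previous paragraph to rewrite each $f_2$-contribution in the asymmetric form appearing in its definition.

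The principal obstacle will be the sign- and bigrading-sensitive bookkeeping in this final step, and in particular the matching of the single $\tb^-$ correction on the right-hand side against both the $\tb^+$-correction hidden inside $\phi m^+$ and the $\delta^-$-contributions appearing inside $\ell_1^-\circ f_2$. The symmetrization identity $\Pi\{\Pi\phi x_1,\phi x_2\} = -\Pi\{\phi x_1,\Pi\phi x_2\}$ established in the $f_2$-symmetry step is what makes this tractable, since it allows the one-sided and two-sided versions of $f_2$-like expressions to be freely interchanged up to sign, so that terms can be brought into the asymmetric form hard-coded into the definition of $f_2$.
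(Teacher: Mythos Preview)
Your proposal is correct and follows essentially the same route as the paper's proof: both arguments use the filtered algebra map $\phi$, the identity $\Pi\phi=\Pi\phi\Pi$, all three parts of Lemma~\ref{lem:phiRIII}, and a $p$-degree expansion of $\Pi\{\phi x_1,d^-\phi x_2\}$ (or its mirror) to extract $\ell_2^-(f_1x_1,f_1x_2)$ and the $f_2$ cross-terms. The only cosmetic difference is that the paper starts from the other asymmetric form $m_2^+(x_1,x_2)=\{x_1,d^+x_2\}-\tb^+(x_1,x_2)$ of Lemma~\ref{lem:m-properties}\eqref{it:m2} and computes $\ell_1^- f_2$ separately via Proposition~\ref{prop:d}, whereas you fold everything into a single $p$-bidegree expansion; the symmetrization identity you isolate is exactly what the paper uses implicitly to pass between the two forms.
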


\begin{proof}
First note that by Lemma~\ref{lem:phiRIII} \eqref{it:phiRIII2} and the fact that $\phi$ and $d^-$ are filtered maps, we have
\[
f_1 \circ \ell_1^+ = \Pi \phi \Pi d^+ = \Pi \phi d^+ = \Pi d^- \phi = \Pi d^- \Pi \phi = \ell_1^- \circ f_1.
\]
It thus suffices to show that $f_2$ satisfies the properties listed in Definition~\ref{def:L2}.

It will be useful to write
\[
(\A^\pm)\sfttocomm = \A_0 \oplus \A_1 \oplus \A_2 \oplus \cdots
\]
where $\A_k$ is the $\Z$-module generated by words in $q_1,\ldots,q_n,p_1,\ldots,p_n,t_1^{\pm 1},\ldots,t_s^{\pm 1}$ containing exactly $k$ $p$'s; then $\A_0 = (\A^\pm)\tocomm$, the map $\Pi$ is simply projection to $\A_0$, and the $p$ filtration on $(\A^\pm)\sfttocomm$ is given by $\F^k (\A^\pm)\sfttocomm = \oplus_{m\geq k} \A_m$ for $k\geq 0$. Now decompose
\begin{align*}
\phi &= \phi_0 + \phi_1 + \cdots \\
d^+ &= d_0^+ + d_1^+ + \cdots \\
d^- &= d_0^- + d_1^- + \cdots
\end{align*}
where $\phi_m,d^+_m,d^-_m$ are the components of $\phi,d^+,d^-$ raising filtration level by $m$, i.e., mapping $\A_k$ to $\A_{m+k}$ for each $k$. Then we can write $\phi_0 = f_1$, $d^+_0 = \{h_1^+,\cdot\}$, $d^-_0 = \{h_1^-,\cdot\}$, and
\[
f_2(x_1,x_2) = (-1)^{|x_1|+1}\{\phi_0 x_1,\phi_1 x_2\}.
\]

Now suppose $x_1,x_2\in(\A^+)\tocomm$. Then $\{x_1,x_2\} = 0$, and thus by Lemma~\ref{lem:phiRIII} \eqref{it:phiRIII1} we have $\{\phi(x_1),\phi(x_2)\} = \phi\{x_1,x_2\} = 0$. In particular we have
\begin{align*}
0 &= \Pi \{\phi(x_1),\phi(x_2)\} = \{\phi_0 x_1,\phi_1 x_2\} + \{\phi_1 x_1,\phi_0 x_2\} \\
&=  \{\phi_0 x_1,\phi_1 x_2\} +(-1)^{|x_1||x_2|+|x_1|+|x_2|}\{\phi_0 x_2,\phi_1 x_1\} \\
&= (-1)^{|x_1|+1} f_2(x_1,x_2) + (-1)^{|x_1||x_2|+|x_1|+1}f_2(x_2,x_1),
\end{align*}
and we conclude that $f_2$ is graded-symmetric. The fact that $f_2$ satisfies the Leibniz rule from Definition~\ref{def:L2} follows from graded symmetry and the fact that $\phi_0 = f_1$ is an algebra map:
\begin{align*}
f_2(x_1,x_2x_2') &= (-1)^{|x_2x_2'|(|x_1|+1)}\{\phi_0(x_2)\phi_0(x_2'),\phi_1(x_1)\} \\
&= (-1)^{|x_2x_2'|(|x_1|+1)}\phi_0(x_2)\{\phi_0(x_2'),\phi_1(x_1)\}+(-1)^{|x_2|(|x_1|+1)}\{\phi_0(x_2),\phi_1(x_1)\}\phi_0(x_2') \\
&= (-1)^{|x_2|(|x_1|+1)}f_1(x_2)f_2(x_1,x_2')+f_2(x_1,x_2)f_1(x_2').
\end{align*}

Finally we establish the $L_2$ relation. For $x_1,x_2\in(\A^+)\tocomm$, we calculate
\begin{align*}
f_1(\ell_2^+(x_1,x_2)) &= \Pi \phi\Pi m_2^+(x_1,x_2) \\
&= \Pi \phi \{x_1,d^+x_2\} - \Pi \phi \tb^+(x_1,x_2) \\
&= \Pi \{\phi x_1,d^-\phi x_2\}-\Pi \tb^-(\phi x_1,\phi x_2) \\
&= \{\phi_0 x_1,(d^-_1 \phi_0+d_0^-\phi_1)x_2\}+\{\phi_1x_1,d_0^-\phi_0x_2\}-\tb^-(\Pi\phi x_1,\Pi\Phi x_2) \\
&= \Pi(\{\phi_0x_1,d^-\phi_0x_2\}-\tb^-(\phi_0x_1,\phi_0x_2))+\{\phi_0x_1,d_0^-\phi_1x_2\}+\{\phi_1x_1,d_0^-\phi_0x_2\} \\
&= \Pi m_2^-(f_1(x_1),f_1(x_2))+\{\phi_0x_1,d_0^-\phi_1x_2\}+\{\phi_1x_1,\phi_0d_0^+x_2\} \\
&= \ell_2^-(f_1(x_1),f_1(x_2))+\{\phi_0x_1,d_0^-\phi_1x_2\}+(-1)^{|x_1|}f_2(x_1,\ell_1^+(x_2)).
\end{align*}
Furthermore, by Proposition~\ref{prop:d} and the fact that $\Pi$ annihilates $\tb(\phi_0x_1,\phi_1x_2)$ and $\tb(\phi_1x_2,\phi_0x_1)$ since both of these are multiples of $(\phi_0x_1)(\phi_1x_2) \in \F^1$, we have
\begin{align*}
\ell_1^-(f_2(x_1,x_2)) &= (-1)^{|x_1|+1}\Pi d^-\{\phi_0x_1,\phi_1x_2\} \\
&= (-1)^{|x_1|+1}\Pi \{d^-\phi_0x_1,\phi_1x_2\} + \Pi\{\phi_0x_1,d^-\phi_1x_2\} \\
&= (-1)^{|x_1|+1}\{d^-_0\phi_0x_1,\phi_1x_2\} + \{\phi_0x_1,d^-_0\phi_1x_2\}\\
&= (-1)^{|x_1|+1}\{\phi_0d^+_0x_1,\phi_1x_2\} + \{\phi_0x_1,d^-_0\phi_1x_2\}\\
&= -f_2(\ell_1^+(x_1),x_2) + \{\phi_0x_1,d^-_0\phi_1x_2\}.
\end{align*}
Subtracting this from the formula for $f_1(\ell_2^+(x_1,x_2))$ gives the $L_2$ relation, as desired.
\end{proof}

\begin{remark}
The proof of Proposition~\ref{prop:RIII} works equally well to produce an $L_2$ morphism between the $L_\infty$ algebras $(\A^\pm)\tocomm$ for any Legendrian links $\Lambda_\pm$, given the existence of a map $\phi :\thinspace (\A^+)\sfttocomm \to (\A^-)\sfttocomm$ satisfying the properties in Lemma~\ref{lem:phiRIII}. 
\label{rmk:L2}
For the base point move from Section~\ref{ssec:move}, such a map also exists and is defined for the base point move in the left hand diagram of Figure~\ref{fig:move} by
$\phi(q_j) = t_i^{-1}q_j$, $\phi(p_j) = t_i p_j$, and $\phi$ acts as the identity on all other generators. (The same map with $t_i$ replaced by $t_i^{-1}$ works for the right hand diagram of Figure~\ref{fig:move}.) This gives an alternate proof of $L_2$ equivalence for the base point move. However, note that Proposition~\ref{prop:move} proves the stronger result that a base point move gives an $L_\infty$ equivalence. 
\end{remark}

%**************************************************
\subsection{Invariance under Reidemeister II move}
\label{ssec:RII}

We now suppose that $\Lambda^+$ and $\Lambda^-$ are related by a Reidemeister II move, with the $xy$ projection of $\Lambda^+$ having two more crossings $a_1,a_2$ as shown in Figure~\ref{fig:RII}. By moving base points as before, we can assume that the local Reidemeister II move does not involve any base points. As usual we use $\pm$ superscripts to denote operations on $\Lambda^\pm$.

\begin{figure}
\labellist
\small\hair 2pt
\pinlabel $p_1$ at 3 23
\pinlabel $q_1$ at 11 31
\pinlabel $p_1$ at 19 23
\pinlabel $q_1$ at 11 16
\pinlabel $q_2$ at 42 23
\pinlabel $p_2$ at 49 31
\pinlabel $q_2$ at 57 23
\pinlabel $p_2$ at 49 15
\pinlabel $\epsilon$ at 31 45
\pinlabel $\Lambda^+$ at 31 1
\pinlabel $\Lambda^-$ at 120 1
\endlabellist
\centering
\includegraphics[width=4in]{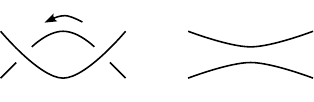}
\caption{
Reidemeister II move in the $xy$ projection. The sign $\epsilon$ is $\pm 1$ depending on whether the depicted arrow agrees or disagrees with the orientation of $\Lambda^+$.
}
\label{fig:RII}
\end{figure}

In \cite[\S 4.2]{SFT}, it is proved that the curved DGA associated to rational Legendrian SFT is invariant under a Reidemeister II move. In the language of the present paper, part of this proof involves the construction of an algebra map $\phi :\thinspace (\A^+)\sfttocomm \to (\A^-)\sfttocomm$ such that $\phi \circ d^+ = d^- \circ \phi$: in the language of \cite{SFT}, this is $\lim_{n\to\infty} \psi^n$, where $\psi^n$ is the $n$-th iterate of $\psi$. If this map satisfied $\phi\{x_1,x_2\} = \{\phi x_1,\phi x_2\}$ for $x_1,x_2\in(\A^+)\sfttocomm$, then we could use the same proof as for Reidemeister III moves in Section~\ref{ssec:RIII} to construct an $L_2$ equivalence between $(\A^+)\tocomm$ and $(\A^-)\tocomm$; see Remark~\ref{rmk:L2}. Unfortunately the $\phi$ map for Reidemeister II does not usually commute with $\{\cdot,\cdot\}$. Instead we will use a different strategy to prove $L_2$ equivalence under Reidemeister II, constructing $f_1$ and $f_2$ by hand.

To define these maps, note that the bigon in $\Lambda^+$ with corners at $p_1$ and $q_2$ contributes $\epsilon q_2p_1 = -(-1)^{|q_1|}\epsilon p_1q_2$ to the Hamiltonian $h^+$: see Figure~\ref{fig:RII-ori}. Then $\ell_1^+(q_1) = \{h_1^+,q_1\} = \epsilon(q_2-v)$ where we define
\[
v := -\epsilon\{h_1^+-q_2p_1,q_1\};
\]
geometrically $v$ counts disks with a single positive corner at the Reeb chord $a_1$ besides the bigon. A standard argument implies that $v$ does not involve either $q_1$ or $q_2$: by Stokes, the (positive, bounded below) area of any disk contributing to $v$ is the difference between the height of the Reeb chord $a_1$ and the sum of the heights of the Reeb chords for all negative corners of the disk, and the difference between the heights of the Reeb chords $a_1$ and $a_2$ can be arbitrarily small. Thus we have $v \in (\A^-)\tocomm$.

\begin{figure}
\labellist
\small\hair 2pt
\pinlabel $=-(-1)^{|q_1|}\epsilon$ at 93 13
\pinlabel $=-\epsilon$ at 250 13
\endlabellist
\centering
\includegraphics[width=\textwidth]{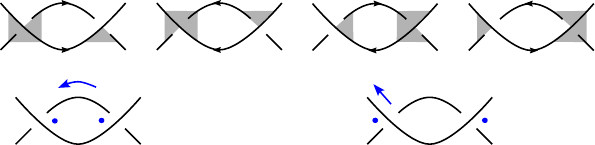}
\caption{
Top row: orientation signs for the four possible choices for strand orientations in $\Lambda^+$; shaded quadrants have $-$ orientation sign, unshaded quadrants $+$. For the first two diagrams, $|q_1|$ is even; for the last two, $|q_1|$ is odd. Bottom row: by inspection, in all four cases, the product of the orientation signs of the two marked quadrants, along with the sign of the arrow relative to the orientation of $\Lambda^+$, is as shown.
}
\label{fig:RII-ori}
\end{figure}

Define the algebra map $f_1 :\thinspace (\A^+)\tocomm \to (\A^-)\tocomm$ by
\[
f_1(q_1) = 0, \qquad f_1(q_2) = v,
\]
and $f_1$ is the identity on all other generators of $(\A^+)\tocomm$. As originally shown over $\Z/2$ by Chekanov \cite{Che} and extended to $\Z$ coefficients in \cite{ENS}, $f_1$ is a chain map between the commutative Chekanov--Eliashberg DGAs $((\A^+)\tocomm,\ell_1^+)$ and $((\A^-)\tocomm,\ell_1^-)$ inducing an isomorphism in homology. We record this fact in the following result.

\begin{proposition}[\cite{Che,ENS}]
The map $f_1$ is a chain map between the commutative Chekanov--Eliashberg DGAs $((\A^+)\tocomm,\ell_1^+)$ and $((\A^-)\tocomm,\ell_1^-)$: $f_1 \circ \ell_1^+ = \ell_1^- \circ f_1$. Furthermore, $f_1$ is a quasi-isomorphism.
\label{prop:RIIchain}
\end{proposition}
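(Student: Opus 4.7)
The plan is to invoke the standard Chekanov invariance argument \cite{Che,ENS}, verifying that it applies in the present setting of multiple base points and a chosen Maslov potential. Since the Reidemeister II move is assumed disjoint from all base points, the $t_i$ variables play no role near $a_1,a_2$, and the local model is identical to the one analyzed in \cite{ENS}; the only task is to adapt the bookkeeping to the commutative quotient over $\kk$.

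For the chain-map identity $f_1 \circ \ell_1^+ = \ell_1^- \circ f_1$, the strategy is to establish a disk-counting bijection between $\Delta_1(\Lambda^+)$ and $\Delta_1(\Lambda^-)$. A disk in $\Delta_1(\Lambda^+)$ either avoids the Reidemeister II region, in which case it corresponds to a disk in $\Delta_1(\Lambda^-)$; has a negative corner at $q_2$, which under $f_1$ is replaced by $v$ (a disk in $\Delta_1(\Lambda^-)$); or enters the bigon through a corner at $q_1$, which glues to a disk contributing to $v$, together forming a disk in $\Delta_1(\Lambda^-)$. In particular, for $q_1$ we have $f_1(\ell_1^+(q_1)) = \epsilon(v-v)=0 = \ell_1^-(f_1(q_1))$; for $q_2$ the identity is essentially the $(\ell_1^+)^2 q_1 = 0$ relation expanded out, because the $\epsilon q_2$ term in $\ell_1^+(q_1)$ yields $\ell_1^+(\epsilon q_2) = -\epsilon\ell_1^+(v)$ after contracting through the bigon; and for the remaining generators $q_j$ the bijection above matches terms directly. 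The base points and Maslov potential intervene only through the $t_i$-monomials that decorate each disk on each side, and these are matched by the disk bijection.

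For the quasi-isomorphism, filter $(\A^+)\tocomm$ by the number of occurrences of $q_1$. The differential $\ell_1^+$ preserves this filtration, and on the associated graded the leading term of $\ell_1^+(q_1)$ is $\epsilon q_2$. A standard acyclic-cone argument (as used in the proof of \cite[Theorem~3.14]{ENS}) shows that the subcomplex generated by $q_1$ and $q_2-v$ is acyclic over $\kk$, and that the quotient is $(\A^-)\tocomm$ with the quotient map agreeing with $f_1$. The hypothesis that $\kk$ has characteristic $0$ is used here exactly as in \cite[section~3.4]{ENS} to ensure that passing to the commutative algebra preserves quasi-isomorphism type.

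The main obstacle is the sign verification in the disk bijection, specifically that the sign $\epsilon$ defined via Figure~\ref{fig:RII-ori} is precisely the right one to cancel the orientation signs (from Figure~\ref{fig:signs-lch}) and Koszul signs (from commuting variables in $\A\tocomm$) that appear on each side of the bijection. This is purely bookkeeping, and the choice of $\epsilon$ was engineered in Figure~\ref{fig:RII-ori} to make the match automatic; the computation runs parallel to the corresponding verification in \cite{ENS}, with no new difficulty introduced by the additional base points or by the Maslov potential, both of which remain outside the local Reidemeister II region throughout.
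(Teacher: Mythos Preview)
Your approach matches the paper's: both invoke Chekanov's disk bijection for the chain-map identity and defer quasi-isomorphism to \cite{Che,ENS}. One point in your bijection needs correction. Your third case, about a disk in $\Delta_1(\Lambda^+)$ that ``enters the bigon through a corner at $q_1$, which glues to a disk contributing to $v$, together forming a disk in $\Delta_1(\Lambda^-)$'', is not part of the correct correspondence: any disk in $\Delta_{p_j}^+$ with a negative corner at $q_1$ is simply annihilated by $f_1(q_1)=0$ and plays no further role. The paper runs the bijection from the $\Lambda^-$ side instead: a disk for $\Lambda^-$ passing through the neck $m$ times pinches, in $\Lambda^+$, into one ``core'' disk with $m$ negative corners at $q_2$ (and none at $q_1$) together with $m$ ``branch'' disks in ${\Delta^+_{p_1}}'$, each with its positive corner at $p_1$; applying $f_1$ to the core replaces each $q_2$ by $v$ and algebraically reassembles the original $\Lambda^-$ disk. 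This is exactly what your second case already captures, so the third case is redundant at best and incorrect as written---you may be conflating the $p_1$ positive corner on a branch with a $q_1$ negative corner.

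Two minor items: your $q_2$ computation has a sign slip, since $(\ell_1^+)^2 q_1=0$ gives $\ell_1^+(q_2)=\ell_1^+(v)$ rather than its negative; and you should make explicit that $f_1(\ell_1^+(v))=\ell_1^-(v)$ follows by applying the already-established $q_j$ case to $v$, which is legitimate because $v$ involves neither $q_1$ nor $q_2$.
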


Proposition~\ref{prop:RIIchain} was originally shown over $\Z/2$ by Chekanov \cite{Che} in his proof that Legendrian contact homology is an isotopy invariant; the proof was extended to $\Z$ coefficients in \cite{ENS}. We will recall the proof that $f_1$ is a chain map because it will be useful in what follows. For the proof that $f_1$ is a quasi-isomorphism, see \cite{ENS}, specifically section 6.3 (the DGA is invariant up to stable tame isomorphism) and Theorem 3.14 (stable tame isomorphism is a quasi-isomorphism).

To set up the proof, we introduce some notation that we will use in the proof of the $L_2$ relation as well. For any $j$, write $\Delta_{p_j}^\pm$ for the subset of $\Delta(\Lambda^\pm)$ consisting of disks with a single positive corner at $a_j$ (i.e., a corner at $p_j$) and any number of negative corners, that is, the disks contributing to $\ell_1^\pm(q_j)$. For $j\neq 1,2$ and $m\geq 0$, write $\Delta_{p_j}^{+;m}$ for the subset of $\Delta_{p_j}^+$ consisting of disks with no negative corners at $a_1$ and $m$ negative corners at $a_2$ (i.e., $m$ corners at $q_2$ and no corners at $p_1$). Finally, the set $\Delta_{p_1}^+$ includes the bigon with corners at $p_1$ and $q_2$; we write ${\Delta^+_{p_1}}'$ for the complement of this bigon in $\Delta_{p_1}^+$. Note that no disk in ${\Delta^+_{p_1}}'$ has a corner at $q_2$, and that $v$ is the signed sum of terms corresponding to the disks in ${\Delta^+_{p_1}}'$.

\begin{figure}
\labellist
\small\hair 2pt
\pinlabel $\Lambda^-$ at 48 2
\pinlabel $\Lambda^+$ at 166 2
\pinlabel $\Delta^-_{p_j}$ at 15 27
\pinlabel $p_j$ at 91 31
\pinlabel ${\Delta^+_{p_1}}'$ at 134 20
\pinlabel $\Delta^+_{p_j}$ at 192 22
\pinlabel $p_j$ at 210 31
\pinlabel $p_1$ at 149 28
\pinlabel $q_2$ at 174 25
\endlabellist
\centering
\includegraphics[width=\textwidth]{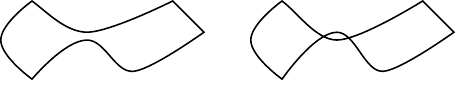}
\caption{
Following a disk in $\Delta_{p_j}^-$ from $\Lambda^-$ to $\Lambda^+$. On the right, the Reidemeister II move pinches the disk into two pieces connected by the $p_1q_2$ bigon.
}
\label{fig:RII-pinch}
\end{figure}

\begin{proof}[Proof of Proposition~\ref{prop:RIIchain}]
As mentioned above, we will show that $f_1$ is a chain map and refer the reader to \cite{Che,ENS} for the proof that it is a quasi-isomorphism.
It suffices to check $f_1\circ \ell_1^+(s) = \ell_1^- \circ f_1(s)$ on generators $s$ of $(\A^+)\tocomm$. For $s = t_i^{\pm 1}$, this is trivial since $\ell_1^\pm(t_i^{\pm 1}) = 0$; for $s=q_1$ and $s=q_2$, we have $f_1(\ell_1^+(q_1))=f_1(\epsilon(q_2-v))=0=\ell_1^-(f_1(q_1))$ and
\[
f_1(\ell_1^+(q_2))=f_1(\ell_1^+(\epsilon \ell_1^+(q_1)+v)) = f_1(\ell_1^+(v)) = 
\ell_1^-(f_1(v)) = \ell_1^-(f_1(q_2)).
\]

Now suppose that $s=q_j$ is a generator corresponding to a Reeb chord $a_j$ different from $a_1,a_2$; we need to check $f_1(\ell_1^+(q_j)) = \ell_1^-(q_j)$. Consider a disk $\Delta\in\Delta_{p_j}^-$. As we pass through the Reidemeister II move from $\Lambda^-$ to $\Lambda^+$, $\Delta$ may survive as an immersed disk, or it may pinch into a union of disks if some portion of $\Delta$ passes through the neck of $\Lambda^-$. Suppose that $\Delta$ passes through the neck of $\Lambda^-$ $m$ times. Then $\Delta$ will pinch into $m+1$ disks; see Figure~\ref{fig:RII-pinch} for an illustration for $m=1$. These $m+1$ disks in $\Lambda^+$ have corners collectively given by the corners of $\Delta$ along with $m$ $p_1,q_2$ pairs (i.e. a positive corner at $a_1$ and a negative corner at $a_2$). One of the disks, call it $\Delta_0$, contains the original positive corner at $a_j$. Since the rest of the disks must contain a positive corner by Stokes, they must each have a positive corner at $a_1$. It follows that each of these disks is in ${\Delta^+_{p_1}}'$, and that $\Delta_0$ is in $\Delta_{p_j}^{+;m}$. Conversely, we can glue together a disk in $\Delta_{p_j}^{+;m}$ and $m$ disks in ${\Delta^+_{p_1}}'$ to get a disk in $\Delta_{a_j}^-$. We conclude that there is a one-to-one correspondence
\[
\Delta_{p_j}^- \longleftrightarrow \bigcup_{m\geq 0} \left(\Delta_{p_j}^{+;m} \times ({\Delta^+_{p_1}}')^m\right).
\]

Suppose that $\Delta$ and $(\Delta_0,\Delta_1,\ldots,\Delta_m)$ are mapped to each other under this correspondence. Let $w$ denote the contribution of $\Delta$ to $\ell_1^-(q_j)$, and let $w_0$ and $w_1,\ldots,w_m$ denote the contributions of $\Delta_0$ and $\Delta_1,\ldots,\Delta_m$ to $\ell_1^+(q_j)$ and $v$, respectively. (Since $\ell_1^+(q_1) = \epsilon(q_2-v)$, $w_i$ is also $-\epsilon$ times the contribution of $\Delta_i$ to $\ell_1^+(q_1)$ for $i=1,\ldots,m$.) Then by the gluing process, $w$ is precisely the result of replacing the $m$ occurrences of $q_2$ in $w_0$ by $w_1,\ldots,w_m$. Indeed, this is true even if we account for the signs of $w,w_0,w_1,\ldots,w_m$: the cumulative difference in signs between the term in $\ell_1^-(q_j)$ and the terms in $\ell_1^+(q_j)$ and $\ell_1^+(q_1)$ is given by the product of the three signs in the bottom right diagram in Figure~\ref{fig:RII-ori} for each pinch. But from Figure~\ref{fig:RII-ori}, this product is equal to $-\epsilon$ and exactly cancels the difference between the sign of $w_i$ and the sign of its contribution to $\ell_1^+(q_1)$.

Restating the result of the previous paragraph in algebraic language, we have
$f_1(\ell_1^+(q_j)) = \ell_1^-(q_j)$, as desired.
\end{proof}

Having defined the map $f_1 :\thinspace (\A^+)\tocomm \to (\A^-)\tocomm$, we complete the construction of an $L_2$ morphism between $(\A^+)\tocomm$ and $(\A^-)\tocomm$ by defining a map $f_2 :\thinspace (\A^+)\tocomm \otimes (\A^+)\tocomm \to (\A^-)\tocomm$. Write the Reeb chords of $\Lambda^+$ as $a_1,a_2,\ldots,a_n$, where $a_1,a_2$ are the crossings involved in the Reidemeister II move. As usual, we will write $h_1^+$ and $h_2^+$ for the portions of the Hamiltonian of $\Lambda^+$ corresponding to disks with $1$ and $2$ positive punctures, respectively. 

Define $f_2$ on the generators of $(\A^+)\tocomm$ as follows: for $x_1,x_2 \in \{q_1,\ldots,q_n,t_1,\ldots,t_s\}$, $f_2(x_1,x_2) = 0$ unless at least one of $x_1,x_2$ is equal to $q_2$; and 
\begin{align*}
f_2(q_2,t_i) &= 0 \\
f_2(q_2,q_1) &= f_1\left( \frac{\epsilon}{2} \ell_2^+(q_1,q_1)\right) \\
f_2(q_2,q_2) &= \begin{cases} \epsilon f_1(\ell_2^+(q_1,q_2+v)) & |q_1| \text{ even} \\
0 & |q_1| \text{ odd} \end{cases} \\
f_2(q_2,q_j) &= f_1\left(
\epsilon \ell_2^+(q_1,q_j)
 - \frac{1}{2} \ell_2^+(q_1,q_1)\{p_2,\ell_1^+(q_j)\}\right).
\end{align*}
where the last equality is for all $j>2$. Extend $f_2$ to a bilinear form on all of $(\A^+)\tocomm$ by graded symmetry and the Leibniz rule.

For future use, we note the following.

\begin{lemma}
For any $x \in (\A^+)\tocomm$, we have
\label{lem:f2}
\[
f_2(q_1,x) = \frac{\epsilon(-1)^{|q_1|}}{2} f_1(\ell_2^+(q_1,q_1)\{p_2,x\}),
\]
and for any $x\in (\A^+)\tocomm$ that does not involve $q_2$, we have
\[
f_2(q_2,x) = f_1\left(\epsilon \ell_2^+(q_1,x)-\frac{1}{2}\ell_2^+(q_1,q_1)\{p_2,\ell_1^+(x)\}\right).
\]
\end{lemma}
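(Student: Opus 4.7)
The plan is to prove both identities by induction on the word length of $x$ when expressed as a product of generators of $(\A^+)\tocomm$, using the Leibniz rule to reduce to the base case of $x$ a single generator.

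For the base cases: formula~1 applied to a generator $s \in \SS_0 \setminus \{q_2\}$ gives zero on both sides (left by graded symmetry and the vanishing clause in the definition of $f_2$; right because $\{p_2, s\} = 0$). The one nontrivial base case is $x = q_2$, where graded symmetry gives $f_2(q_1, q_2) = -(-1)^{|q_1||q_2|} \frac{\epsilon}{2} f_1(\ell_2^+(q_1, q_1))$; this matches the right hand side because $\ell_2^+(q_1, q_1)$ vanishes when $|q_1|$ is even, while in the odd case $|q_2|$ has opposite parity (a general fact about the two Reidemeister~II crossings), so the sign collapses to $(-1)^{|q_1|}$ and $\{p_2, q_2\} = 1$ finishes the comparison. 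Formula~2 on generators $x \in \SS_0 \setminus \{q_2\}$ matches its three defining clauses verbatim after substituting $\ell_1^+(q_1) = \epsilon(q_2 - v)$, $\{p_2, q_2\} = 1$, $\{p_2, v\} = 0$, and observing (via Proposition~\ref{prop:l2str} together with $\{\cdot, t_i\} = 0$) that $\ell_2^+(q_1, t_i)$ is a scalar multiple of $q_1 t_i$, which lies in the kernel of $f_1$.

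For the inductive step we decompose $x = sy$ with $s$ a generator; since $f_2$ satisfies the Leibniz rule of Definition~\ref{def:L2} by construction, it suffices to show the right hand side of each formula satisfies the same rule. For formula~1 this uses the derivation property of $\{p_2, \cdot\}$ together with the fact that $\ell_2^+(q_1, q_1)$, of even degree $2|q_1|$, commutes past any element. For formula~2 the hypothesis that $x$ avoids $q_2$ propagates to its factors $s$ and $y$, so $\{p_2, s\} = \{p_2, y\} = 0$, and the expansion of $\{p_2, \ell_1^+(sy)\}$ via $\ell_1^+(sy) = \ell_1^+(s)y + (-1)^{|s|} s \ell_1^+(y)$ collapses to just two surviving terms, which combine with the Leibniz expansion of $\ell_2^+(q_1, sy)$ to reproduce the expected sum.

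The main obstacle is managing the profusion of Koszul signs arising from the combined Leibniz rules of $f_2$, $\ell_2^+$, $\ell_1^+$, and $\{p_2, \cdot\}$; all of these reconciliations ultimately reduce to the single parity identity $|q_1| \equiv |q_2| + 1 \equiv |p_2| \pmod 2$, forced by $|p_j| + |q_j| = -1$ and the opposite crossing signs of the two Reidemeister~II crossings. The restriction to $x$ avoiding $q_2$ in formula~2 is essential: without it, uncancelled cross terms from $\{p_2, q_2\} = 1$ would appear on the right during the inductive step and break the Leibniz propagation that is automatic on the left.
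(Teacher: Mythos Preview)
Your proof is correct and follows essentially the same approach as the paper: verify the identities on generators, then extend to all of $(\A^+)\tocomm$ by checking that both sides satisfy the same Leibniz rule in $x$. The paper's proof is terser, simply asserting that the right hand sides satisfy the corresponding product rules; your version fills in the sign bookkeeping (in particular the reduction to the parity identity $|p_2| \equiv |q_1| \pmod 2$) that the paper leaves implicit.
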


\begin{proof}
By the definition of $f_2$, the first equality holds for $x\in\{q_1,\ldots,q_n,t_1,\ldots,t_s\}$; note that both sides are trivially $0$ unless $x=q_2$. Similarly, the second equality holds for $x\in\{q_1,q_3,\ldots,q_n,t_1,\ldots,t_s\}$. Now use the fact that both sides of both equalities are derivations with respect to $x$: we have
\begin{align*}
f_2(q_1,x_1x_2) &= f_2(q_1,x_1)f_1(x_2) + (-1)^{(|q_1|+1)|x_1|}f_1(x_1)f_2(q_1,x_2) \\
f_2(q_2,x_1x_2) &= f_2(q_2,x_2)f_1(x_2) + (-1)^{|q_1||x_1|}f_1(x_1)f_2(q_2,x_2)
\end{align*}
and it is straightforward to check that the right hand sides of the two equalities satisfy corresponding product rules in $x$.
\end{proof}

We can now state the invariance result for the Reidemeister II move.

\begin{proposition}
The maps $(f_1,f_2)$ are an $L_2$ equivalence between $((\A^+)\tocomm,\{\ell_k^+\})$ and $((\A^-)\tocomm,\{\ell_k^-\})$.
\label{prop:RII}
\end{proposition}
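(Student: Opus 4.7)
The plan is to verify the three axioms of an $L_2$ morphism from Definition~\ref{def:L2}; the quasi-isomorphism statement needed for $L_2$ equivalence is already in hand from Proposition~\ref{prop:RIIchain}. The graded symmetry and Leibniz property hold by construction, since $f_2$ is \emph{defined} by specifying values on the generators $\{q_1,\ldots,q_n,t_1^{\pm1},\ldots,t_s^{\pm1}\}$ and then extending by graded symmetry and Leibniz; one only needs to check that the values on pairs of generators are compatible with the graded symmetry relation, which is an immediate inspection (note that $f_2(x_1,x_2)=0$ for $x_1,x_2 \notin \{q_1,q_2\}$, and that $f_2(q_j,q_j)=0$ unless $|q_j|$ is odd, in which case the value lies in the ideal generated by $q_j^2=0$).

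The main work is the $L_2$ morphism property
\[
f_1\bigl(\ell_2^+(x_1,x_2)\bigr) = \ell_2^-\bigl(f_1(x_1),f_1(x_2)\bigr) + f_2\bigl(\ell_1^+(x_1),x_2\bigr) + (-1)^{|x_1|}f_2\bigl(x_1,\ell_1^+(x_2)\bigr) + \ell_1^-\bigl(f_2(x_1,x_2)\bigr).
\]
I would first reduce this to a check on pairs of generators: both sides are bi-derivations in $(x_1,x_2)$ (using the Leibniz rules for $\ell_1^\pm, \ell_2^\pm, f_1, f_2$ verified in Propositions~\ref{prop:skewsymmetry} and~\ref{prop:RIIchain} and in the definition of $f_2$), so it suffices to take $x_1,x_2 \in \{q_1,\ldots,q_n,t_1^{\pm1},\ldots,t_s^{\pm1}\}$. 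I would then split into cases: (i) neither generator is $q_1$ nor $q_2$; (ii) exactly one is $q_1$; (iii) exactly one is $q_2$; (iv) both lie in $\{q_1,q_2\}$. Cases involving $t_i^{\pm1}$ are immediate since $\ell_1^\pm(t_i^{\pm1})=0$, $f_2(q_j,t_i)=0$, and the string contributions $\ell_2^\str(t_i,\cdot)$ from Proposition~\ref{prop:l2str} are supported on segments that are unaffected by the Reidemeister~II move (after the standard trick of moving base points out of the tangle).

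The heart of the argument, and the hardest step, is the case analysis for $x_1,x_2 \in \{q_1,\ldots,q_n\}$. I would follow the strategy of the proof of Proposition~\ref{prop:RIIchain}, extending it from disks with one positive corner to disks with two positive corners. Concretely: as one passes from $\Lambda^-$ to $\Lambda^+$, a disk $\Delta\in\Delta_2(\Lambda^-)$ contributing to $\ell_2^-(q_i,q_j)$ can pinch at the neck of the Reidemeister II move, producing one disk carrying the two original positive corners at $a_i,a_j$ together with some chain of disks in ${\Delta_{p_1}^+}'$ glued to it by $p_1 q_2$ bigons. The key combinatorial input is the resulting bijection
\[
\Delta_{p_i,p_j}^- \longleftrightarrow \bigsqcup_{m\geq 0}\bigl(\Delta_{p_i,p_j}^{+;m}\times ({\Delta_{p_1}^+}')^m\bigr),
\]
which, after bookkeeping of the orientation signs recorded in Figure~\ref{fig:RII-ori}, expresses $\ell_2^-(q_i,q_j)$ in terms of $\ell_2^+(q_i,q_j)$, $\ell_2^+(q_1,q_i)$, $\ell_2^+(q_1,q_j)$, and the chain-map correction terms. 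Combining this with Lemma~\ref{lem:f2} and Proposition~\ref{prop:l2str} (to control the string portion $\ell_2^\str$, whose difference between $\Lambda^+$ and $\Lambda^-$ localizes near the Reidemeister II neck) should match the two sides of the $L_2$ relation term by term. The main obstacle will be the sign bookkeeping and the contribution of $\ell_2^\str$: one must check that the $\tb^\pm$ corrections and the extra $\{p_2,\ell_1^+(q_j)\}$ term in the definition of $f_2(q_2,q_j)$ are exactly what is needed to absorb the discrepancy between naive pinching counts and the graded-symmetric operation $\ell_2$. Once the $L_2$ relation is verified on generators, $(f_1,f_2)$ is an $L_2$ morphism, and since $f_1$ is a quasi-isomorphism by Proposition~\ref{prop:RIIchain}, it is an $L_2$ equivalence.
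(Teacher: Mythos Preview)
Your overall strategy matches the paper's: reduce to generators, separate out the easy $t_i$ cases, and attack the hard case $x_1=q_i,\,x_2=q_j$ with $i,j>2$ by analyzing how disks in $\Delta_2(\Lambda^-)$ pinch when passing through the Reidemeister~II neck. But the bijection you write down is incomplete, and this is exactly the point where the $f_2$ correction lives.

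You assert that a disk $\Delta\in\Delta_{p_i,p_j}^-$ pinches into ``one disk carrying the two original positive corners at $a_i,a_j$'' together with branches in ${\Delta_{p_1}^+}'$, giving
\[
\Delta_{p_i,p_j}^- \longleftrightarrow \bigsqcup_{m\geq 0}\bigl(\Delta_{p_i,p_j}^{+;m}\times ({\Delta_{p_1}^+}')^m\bigr).
\]
This only accounts for the configurations where both positive corners $p_i,p_j$ land on the \emph{same} disk after pinching. In fact the spine can be a \emph{chain} of two or three disks joined by $p_1q_2$ bigons, with $p_i$ and $p_j$ on different disks of the chain. There are four types: (I) a single $p_ip_j$ disk; (II) a $p_iq_2$ disk glued to a $p_1p_j$ disk; (III) a $p_ip_1$ disk glued to a $q_2p_j$ disk; (IV) a $p_iq_2$ disk, a $p_1p_1$ disk, and a $q_2p_j$ disk in a chain. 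Your bijection is type~I only. Types II and III are what produce the $\ell_2^+(q_1,q_j)$ terms inside $f_2(\ell_1^+(q_i),q_j)$ and $f_2(q_i,\ell_1^+(q_j))$; type~IV is what produces the $\ell_2^+(q_1,q_1)\{p_2,\ell_1^+(q_j)\}$ term, and the factor of $\tfrac12$ in its definition is there precisely because a type~IV chain contributes once to each of $f_2(\ell_1^+(q_i),q_j)$ and $f_2(q_i,\ell_1^+(q_j))$. Without types II--IV your bijection would give $\ell_2^-(q_i,q_j)=f_1(\ell_2^+(q_i,q_j))$ and $f_2$ would be unnecessary.

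Two smaller points. First, for $i,j>2$ the string portions $(\ell_2^\pm)^\str(q_i,q_j)$ are simply equal (the base points and Reeb-chord endpoints away from the neck are unchanged), so there is no $\ell_2^\str$ discrepancy to absorb in this case; the $\tb$ corrections play no role here. Second, for the cases with $x_1\in\{q_1,q_2\}$ the paper does not run a separate disk-counting argument but instead bootstraps: one checks directly (using Lemma~\ref{lem:f2} and the defining formula for $f_2(q_2,q_j)$) that $F(q_1,\cdot)=0$, and then uses the identity $\ell_1^-(F(x_1,x_2)) = F(\ell_1^+(x_1),x_2)+(-1)^{|x_1|}F(x_1,\ell_1^+(x_2))$ together with $\ell_1^+(q_1)=\epsilon(q_2-v)$ to deduce $F(q_2,\cdot)=0$ from the already-established cases. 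You should plan on this reduction rather than a fresh geometric analysis for those cases.
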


The remainder of this subsection is devoted to proving Proposition~\ref{prop:RII}. In light of Proposition~\ref{prop:RIIchain}, it suffices to verify the $L_2$ morphism property. 
To this end, define the map
$F :\thinspace  (\A^+)\tocomm \otimes (\A^+)\tocomm \to (\A^-)\tocomm$ by:
\[
F(x_1,x_2) = f_1(\ell_2^+(x_1,x_2)) - \ell_2^-(f_1(x_1),f_1(x_2)) - f_2(\ell_1^+(x_1),x_2)-(-1)^{|x_1|}f_2(x_1,\ell_1^+(x_2))-\ell_1^- (f_2(x_1,x_2)).
\]
We want to show that $F \equiv 0$. 

Since $f_1$ is an algebra map and $\ell_2^\pm$ and $f_2$ satisfy the Leibniz rule, it suffices to show that $F(x_1,x_2) = 0$ for generators $x_1,x_2 \in \{q_1,\ldots,q_n,t_1,\ldots,t_s\}$.
We break the verification of $F(x_1,x_2)=0$ for generators $x_1,x_2$ into several cases.

\vspace{11pt}

\noindent \textsc{Case 1: $x_1=q_i$ and $x_2=q_j$ for some $i,j>2$.}

From the definition of $F,f_1,f_2$, to show that $F(q_i,q_j) = 0$ we need to prove:
\[
\ell_2^-(q_i,q_j) = f_1(\ell_2^+(q_i,q_j)) - f_2(\ell_1^+(q_i),q_j) - (-1)^{|q_i|} f_2(q_i,\ell_1^+(q_j)).
\]
Note that we can write $\ell_2^\pm(q_i,q_j) = (\ell_2^\pm)\tosft(q_i,q_j)+(\ell_2^\pm)^\str(q_i,q_j)$, where $(\ell_2^\pm)^\str(q_i,q_j)$ is the string contribution to $\ell_2^\pm$ and
\[
(\ell_2^\pm)\tosft(q_i,q_j) = (-1)^{|q_i|} \{\{h_2^\pm,q_i\},q_j\}
\]
is the SFT contribution (counting disks with positive punctures at $p_i$ and $p_j$) to $\ell_2^\pm$. The string contributions $(\ell_2^-)^\str(q_i,q_j)$ and $(\ell_2^+)^\str(q_i,q_j)$ are equal since the configuration of base points relative to the endpoints of the Reeb chords $a_i,a_j$ is the same for $\Lambda^+$ and $\Lambda^-$. Thus we wish to show:
\begin{equation}
(\ell_2^-)\tosft(q_i,q_j) = f_1((\ell_2^+)\tosft(q_i,q_j)) - f_2(\ell_1^+(q_i),q_j) - (-1)^{|q_i|} f_2(q_i,\ell_1^+(q_j)).
\label{eq:qiqj}
\end{equation}

\begin{figure}
\labellist
\small\hair 2pt
\pinlabel $p_i$ at 10 34
\pinlabel $p_j$ at 82 34
\pinlabel $p_i$ at 121 33
\pinlabel $p_j$ at 210 33
\pinlabel $q_2$ at 156 34
\pinlabel $p_1$ at 185 33
\pinlabel $p_1$ at 152 57
\pinlabel $q_2$ at 140 46
\pinlabel $p_1$ at 162 13
\pinlabel $q_2$ at 152 29
\pinlabel $p_1$ at 203 60
\pinlabel $q_2$ at 197 41
\pinlabel $\Lambda^-$ at 46 1
\pinlabel $\Lambda^+$ at 168 1
\endlabellist
\centering
\includegraphics[width=\textwidth]{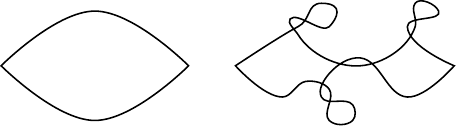}
\caption{
Following a disk in $\Delta_{p_i,p_j}^-$ from $\Lambda^-$ to $\Lambda^+$. On the right, a possibility for the corresponding picture in $\Lambda^+$. The core (``spine'') is a chain of two disks, one containing $p_i$ and one containing $p_j$, joined by a $p_1q_2$ bigon. There may also be some number of branches consisting of disks in ${\Delta^+_{p_1}}'$ joined to the core by a $p_1q_2$ bigon.
}
\label{fig:RII-pinch2}
\end{figure}

\begin{figure}
\labellist
\small\hair 2pt
\pinlabel $p_i$ at 17 86
\pinlabel $p_j$ at 81 86
\pinlabel $p_i$ at 140 86
\pinlabel $p_j$ at 222 86
\pinlabel $q_2$ at 166 86
\pinlabel $p_1$ at 195 86
\pinlabel $p_i$ at 8 30
\pinlabel $p_j$ at 90 30
\pinlabel $p_1$ at 35 30
\pinlabel $q_2$ at 64 30
\pinlabel $p_i$ at 113 30
\pinlabel $p_j$ at 248 30
\pinlabel $q_2$ at 140 30
\pinlabel $p_1$ at 169 30
\pinlabel $p_1$ at 192 30
\pinlabel $q_2$ at 221 30
\pinlabel I at 49 62
\pinlabel II at 180 62
\pinlabel III at 49 2
\pinlabel IV at 180 2
\endlabellist
\centering
\includegraphics[width=\textwidth]{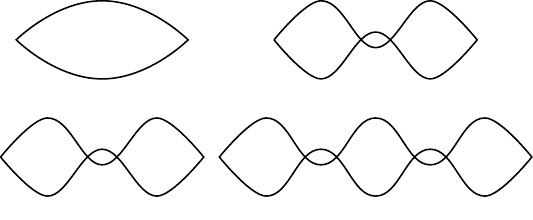}
\caption{
The four possibilities for the picture in $\Lambda^+$ corresponding to a disk in $\Lambda^-$ in $\Delta_{p_i,p_j}^-$. Each of these pictures may have branches (not depicted) given by disks in ${\Delta^+_{a_1}}'$ joined to the picture by a bigon.
}
\label{fig:RII-pinch3}
\end{figure}

To this end, consider a disk $\Delta$ contributing to $(\ell_2^-)\tosft(q_i,q_j)$: this is a disk in $\Delta(\Lambda^-)$ with positive punctures at $p_i$ and $p_j$.
As in the proof of Proposition~\ref{prop:RIIchain}, follow $\Delta$ from $\Lambda^-$ through the Reidemeister II move to $\Lambda^+$. The move pinches $\Delta$ into a union of disks in $\Lambda^+$ connected by $p_1q_2$ bigons; see Figure~\ref{fig:RII-pinch2} for an illustration. We call this chain of disks $C$. Up to sign, the contribution of $\Delta$ to $(\ell_2^-)\tosft(q_i,q_j)$ is the product of the negative corners of $\Delta$. This is equal to the product of the negative corners of $C$, not counting the $q_2$ corners where disks are linked by bigons. We will show that this last product is a term on the right hand side of \eqref{eq:qiqj}, and conversely that every right hand term in \eqref{eq:qiqj} comes from some such configuration $C$. It will follow that there is a correspondence between the terms on the two sides of \eqref{eq:qiqj}, not including signs. We will then check that the signs of corresponding terms match.

Consider a chain $C$ of disks in $\Lambda^+$ corresponding to $\Delta$ as above.
Some of these disks may be disks in ${\Delta^+_{p_1}}'$, with a single positive puncture at $p_1$; we call these disks ``branches''. If we remove these branches, then what remains is either a disk with positive punctures at $p_i$ and $p_j$, or a chain of disks connecting a disk with positive puncture at $p_i$ and a disk with positive puncture at $p_j$, with the chain linked by $p_1q_2$ bigons. Since $\Delta$ has only positive punctures at $p_i$ and $p_j$, each pinch through the Reidemeister move introduces a $p_1,q_2$ pair of punctures, and each disk in $C$ must have at least one positive puncture, $C$ must fall into one of the four categories shown in Figure~\ref{fig:RII-pinch3}.

Up to sign, the contribution of $\Delta$ to $(\ell_2^-)\tosft(q_i,q_j)$ is the product of the negative corners of $\Delta$. We claim that, first up to sign, there is a corresponding term on the right hand side of \eqref{eq:qiqj}. Which term depends on whether the configuration in $\Lambda^+$ corresponding to $\Delta$, which we denote by $C$, is of type I, II, III, or IV, in the terminology of Figure~\ref{fig:RII-pinch3}. In each case, the negative corners of $\Delta$ are the same as the negative corners of $C$, not including any $q_2$ corners.

If $C$ is of type I, then the product of the negative corners of $C$, without the branches but including any $q_2$ adjacent to a branch, is a term in $\ell_2^+(q_i,q_j)$; thus, with the branches, the product of the negative corners besides $q_2$'s is a term in $f_1(\ell_2^+(q_i,q_j))$. If $C$ is of type II, write $\Delta_1$ and $\Delta_2$ for the $p_iq_2$ disk and the $p_1p_j$ disk, respectively, where each disk may contain $q_2$ corners attached to branches. The product of the negative corners of $\Delta_2$ is a term in $\ell_2^+(q_1,q_j)$; attaching branches and taking the product of all negative corners besides $q_2$'s then gives a term in $f_1(\ell_2^+(q_1,q_j))$ and thus a term in $f_2(q_2,q_j)$ by the definition of $f_2$. On the other hand, if the product of the negative corners of $\Delta_1$ is $vq_2$ for some $v$, then attaching branches to $\Delta_1$ and taking the product of all negative corners besides $q_2$'s gives a term in $f_1(v)$. The total product of negative corners between $\Delta_1$, $\Delta_2$, and their branches is thus a term in $f_1(v)f_2(q_2,q_j)$ and consequently a term in $f_2(\ell_1^+(q_i),q_j)$. Similarly, if $C$ is of type III, then the product of the negative corners of $C$ besides $q_2$'s is a term in $f_2(q_i,\ell_1^+(q_j))$.

If $C$ is of type IV, there are two contributions to the right hand side of \eqref{eq:qiqj} coming from $C$. One comes from $f_2(\ell_1^+(q_i),q_j)$: $f_2(q_2,q_j)$ contains a term of the form $f_1(\ell_2^+(q_1,q_1)\{p_2,\ell_1^+(q_j)\})$ which corresponds to the product of the negative corners of the $p_1p_1$ and $q_2p_j$ disks in $C$ along with their attached branches, and passing from $f_2(q_2,q_j)$ to $f_2(\ell_1^+(q_i),q_j)$ is multiplication by the negative corners of the $p_iq_2$ disk along with its branches. There is symmetrically another contribution from $C$ coming from $f_2(q_i,\ell_1^+(q_j))$. Each of these contributions is multiplied by $\frac{1}{2}$, and (again up to sign) it follows that the contribution of $C$ to the right hand side of \eqref{eq:qiqj} is equal to the contribution of $\Delta$ to the left hand side.

Conversely, every term on the right hand side of \eqref{eq:qiqj} comes from some chain $C$ of disks in $\Lambda^+$, and passing $C$ through the Reidemeister II move to $\Lambda^-$ gives an honest disk in $\Lambda^-$ and thus a term on the left hand side of \eqref{eq:qiqj}. Thus we have verified \eqref{eq:qiqj} up to signs.

Finally, we check that the signs agree. For type I, this is clear from the definition of $\ell_2^\pm$. We will check type II (type III is very similar) and type IV. 

\begin{figure}
\labellist
\small\hair 2pt
\pinlabel $p_i$ at 11 86
\pinlabel $p_j$ at 75 86
\pinlabel $\Delta$ at 43 86
\pinlabel $v_1v_2$ at 43 63
\pinlabel $v_3v_4$ at 43 110
\pinlabel $p_i$ at 134 86
\pinlabel $p_j$ at 216 86
\pinlabel $q_2$ at 159 86
\pinlabel $p_1$ at 189 86
\pinlabel $\Delta_1$ at 147 92
\pinlabel $\Delta_2$ at 201 92
\pinlabel $v_1$ at 147 63
\pinlabel $v_2$ at 201 63
\pinlabel $v_3$ at 201 110
\pinlabel $v_4$ at 147 110
\pinlabel $p_i$ at 11 30
\pinlabel $p_j$ at 75 30
\pinlabel $\Delta$ at 43 30
\pinlabel $v_1v_2v_3$ at 42 5
\pinlabel $v_4v_5v_6$ at 42 54
\pinlabel $p_i$ at 107 30
\pinlabel $p_j$ at 242 30
\pinlabel $q_2$ at 133 30
\pinlabel $p_1$ at 163 30
\pinlabel $p_1$ at 186 30
\pinlabel $q_2$ at 215 30
\pinlabel $\Delta_1$ at 120 36
\pinlabel $\Delta_2$ at 174 36
\pinlabel $\Delta_3$ at 228 36
\pinlabel $v_1$ at 121 5
\pinlabel $v_6$ at 121 54
\pinlabel $v_2$ at 174 5
\pinlabel $v_5$ at 174 54
\pinlabel $v_3$ at 227 5
\pinlabel $v_4$ at 227 54
\endlabellist
\centering
\includegraphics[width=\textwidth]{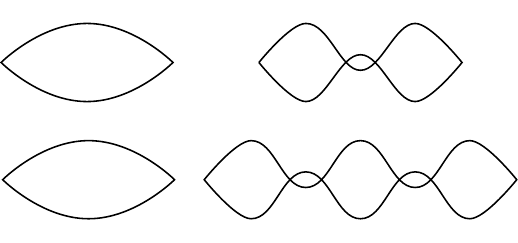}
\caption{
Top row: a disk $\Delta$ for $\Lambda^-$ with corresponding type II configuration $C$ for $\Lambda^+$; bottom row: a disk $\Delta$ for $\Lambda^-$ with corresponding type IV configuration $C$ for $\Lambda^-$. For the diagrams in $\Lambda^+$, the $v_i$'s denote the product of the disk corners along the indicated arcs; for the corresponding diagrams in $\Lambda^-$, the products of negative corners are products of $v_i$'s as shown.
}
\label{fig:RIIsigns}
\end{figure}

For type II, we label the negative corners of $\Delta$ and $C$ as shown in Figure~\ref{fig:RIIsigns}, where for simplicity we remove all branches from $C$ and correspondingly ignore the action of the $f_1$ map on the right hand side of \eqref{eq:qiqj}. (As shown earlier, adding the branches is precisely the same as applying $f_1$.) If we write $\sgn(\Delta)$ for the product of the SFT orientation signs of the corners of $\Delta$ according to Section~\ref{ssec:h} (cf.\ Figure~\ref{fig:signs}), then the disk $\Delta$ contributes $\epsilon_1(\sgn\Delta)v_1v_2p_jv_3v_4p_i$ to $h_2^-$ and thus
\[
\epsilon_1(\sgn\Delta)(-1)^{|q_i|}\{\{v_1v_2p_jv_3v_4p_i,q_i\},q_j\} = \epsilon_1(\sgn\Delta)(-1)^{|p_j|(|v_3|+|v_4|)+|q_i|}v_1v_2v_3v_4
\]
to $\ell_2^-(q_i,q_j)$. Similarly, the disk $\Delta_2$ contributes $\epsilon_2(\sgn\Delta_2)(-1)^{|p_j||v_3|+|q_1|}v_2v_3$ to $\ell_2^+(q_1,q_j)$ and the disk $\Delta_1$ contributes $\epsilon_1(\sgn\Delta_1)v_1q_2v_4$ to $\ell_1^+(q_i)$. From the definition of $f_2$ and the Leibniz rule, it follows that the configuration $C$ contributes
\[
-\epsilon_1\epsilon\epsilon_2(\sgn\Delta_1)(\sgn\Delta_2)(-1)^{|p_j||v_3|+|q_1|+|v_4||q_j|+|p_1|}v_1v_2v_3v_4
\]
to $-f_2(\ell_1^+(q_i),q_j)$. But we see from Figure~\ref{fig:RII-ori} that $\sgn\Delta = -\epsilon\epsilon_2(\sgn\Delta_1)(\sgn\Delta_2)$, and thus the above expression is equal to
$\epsilon_1(\sgn\Delta)(-1)^{|p_j||v_3|+|q_1|+|v_4||q_j|+|p_1|}v_1v_2v_3v_4$. Since $|p_k|+|q_k|=-1$ for all $k$ and $|p_i|+|v_1|+|q_2|+|v_4|=-2$ (since $|h_2^+|=-2$ and $\Delta_1$ gives a term in $h_2^+$), it follows readily that this expression is equal to the contribution of $\Delta$ to $\ell_2^-(q_i,q_j)$.

For type IV, label as shown in Figure~\ref{fig:RIIsigns}. It is readily checked that $\Delta_1$ contributes $\epsilon_1(\sgn\Delta_1)v_1q_2v_6$ to $\ell_1^+(q_i)$; $\Delta_2$ contributes $\epsilon_2(\sgn\Delta_2)(-1)^{|q_1|+|v_5||p_1|}v_2v_5$ to $\ell_2^+(q_1,q_1)$; and $\Delta_3$ contributes $\epsilon_3(\sgn\Delta_3)(-1)^{|q_2|+|v_4||p_j|}q_2v_3v_4$ to $\ell_1^+(q_j)$. From this it is straightforward to calculate that the chain of disks contributes
\[
-\frac{1}{2}\epsilon_2\epsilon_3(\sgn\Delta_2)(\sgn\Delta_3)(-1)^{|q_1|+|v_5||p_j|+|v_4||p_j|+|q_2|}v_2v_3v_4v_5
\]
to the $-\frac{1}{2}\ell_2^+(q_1,q_1)\{p_2,\ell_1^+(q_j)\}$ term in $f_2(q_2,q_j)$, and consequently it contributes
\begin{align*}
\frac{1}{2}&\epsilon_1\epsilon_2\epsilon_3(\sgn\Delta_1)(\sgn\Delta_2)(\sgn\Delta_3)(-1)^{|q_1|+(|v_4|+|v_5|+|v_6|)|p_j|+|q_2|+|v_1|}v_1v_2v_3v_4v_5v_6 \\
&=
\frac{1}{2}\epsilon_1(\sgn\Delta)(-1)^{|q_i|+(|v_4|+|v_5|+|v_6|)|p_j|}v_1v_2v_3v_4v_5v_6
\end{align*}
to $-f_2(\ell_1^+(q_i),q_j)$, where the equality follows from the fact that $(-1)^{|q_1|+1}(\sgn\Delta) = \epsilon_2\epsilon_3(\sgn\Delta_1)(\sgn\Delta_2)(\sgn\Delta_3)$, as can be deduced from Figure~\ref{fig:RII-ori}.
Similarly it can be computed that the chain of disks also contributes the same quantity, with the same sign, to 
$-(-1)^{|q_i|}f_2(q_i,\ell_1^+(q_j))$. These sum together to give 
\[
\epsilon_1(\sgn\Delta)(-1)^{|q_i|+(|v_4|+|v_5|+|v_6|)|p_j|}v_1v_2v_3v_4v_5v_6,
\]
which is exactly the contribution of the original disk $\Delta$ to $(\ell_2^-)\tosft(q_i,q_j)$. This completes the verification of \eqref{eq:qiqj} and finishes Case 1.

The remaining cases are easier to handle than Case 1. We first note the following result, which is readily checked from the definition of $F$, the $L_\infty$ relations for $\{\ell_k^\pm\}$, and the chain-map property of $f_1$ from Proposition~\ref{prop:RIIchain}:

\begin{lemma}
For any $x_1,x_2$, we have \label{lem:F}
$F(x_2,x_1) = (-1)^{|x_1||x_2|+1}F(x_1,x_2)$ and
\[
\ell_1^-(F(x_1,x_2)) = F(\ell_1^+(x_1),x_2) + (-1)^{|x_1|} F(x_1,\ell_1^+(x_2)).
\]
\end{lemma}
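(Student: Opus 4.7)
The plan is to prove Lemma~\ref{lem:F} by direct algebraic manipulation of the defining formula
\[
F(x_1,x_2) = f_1(\ell_2^+(x_1,x_2)) - \ell_2^-(f_1(x_1),f_1(x_2)) - f_2(\ell_1^+(x_1),x_2)-(-1)^{|x_1|}f_2(x_1,\ell_1^+(x_2))-\ell_1^- (f_2(x_1,x_2)),
\]
using only the graded symmetry of $\ell_2^\pm$ and $f_2$, the chain-map identity $f_1 \circ \ell_1^+ = \ell_1^- \circ f_1$ from Proposition~\ref{prop:RIIchain}, the facts that $(\ell_1^\pm)^2 = 0$, and the $k=2$ Jacobi identity $\ell_1(\ell_2(x_1,x_2)) = \ell_2(\ell_1(x_1),x_2)+(-1)^{|x_1|}\ell_2(x_1,\ell_1(x_2))$ for both $(\A^\pm)\tocomm$.

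For the first (symmetry) statement, I would examine each of the five terms in $F(x_1,x_2)$ and match it with the corresponding term in $F(x_2,x_1)$. The first two terms transform by $(-1)^{|x_1||x_2|+1}$ under swap because $\ell_2^\pm$ is graded symmetric; the fifth term does the same because $f_2$ is graded symmetric. The third and fourth terms swap into each other with the combined sign coming from the graded symmetry of $f_2$ and the shift $|\ell_1^+(x_i)|=|x_i|-1$. A brief sign check confirms the whole sum picks up the factor $(-1)^{|x_1||x_2|+1}$.

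For the second statement, my strategy is to expand both sides and verify term-by-term cancellation. Applying $\ell_1^-$ to $F(x_1,x_2)$, the final term $-(\ell_1^-)^2(f_2(x_1,x_2))$ vanishes, $\ell_1^-(f_1(\ell_2^+(x_1,x_2)))$ becomes $f_1(\ell_1^+(\ell_2^+(x_1,x_2)))$ which by the $k=2$ Jacobi identity for $(\A^+)\tocomm$ equals $f_1(\ell_2^+(\ell_1^+(x_1),x_2)) + (-1)^{|x_1|}f_1(\ell_2^+(x_1,\ell_1^+(x_2)))$, and $\ell_1^-(\ell_2^-(f_1(x_1),f_1(x_2)))$ similarly splits using the Jacobi identity for $(\A^-)\tocomm$. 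On the other side, expanding $F(\ell_1^+(x_1),x_2) + (-1)^{|x_1|} F(x_1,\ell_1^+(x_2))$ using the definition of $F$ (and using $\ell_1^+(\ell_1^+(x_i))=0$), the $f_2(\ell_1^+(x_1),\ell_1^+(x_2))$ terms appear twice with opposite signs and cancel. What remains matches the expanded $\ell_1^-(F(x_1,x_2))$ exactly.

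I expect no conceptual obstacle here; the lemma is formally of the same shape as the $k=2$ Jacobi identity, and both statements reduce to mechanical cancellations. The only thing to watch carefully is the bookkeeping of Koszul signs when $\ell_1^+$ is inserted (shifting parities by one) and when $f_2$ is swapped — but these are exactly the signs that the definition of $F$ was arranged to make work. Once verified, this lemma will be the key input for the inductive argument in the next part of Section~\ref{ssec:RII}, where one extends the vanishing of $F$ from generators to all of $(\A^+)\tocomm$.
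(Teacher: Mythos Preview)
Your proposal is correct and takes essentially the same approach as the paper, which simply remarks that the lemma ``is readily checked from the definition of $F$, the $L_\infty$ relations for $\{\ell_k^\pm\}$, and the chain-map property of $f_1$''; you have spelled out exactly those ingredients and the sign bookkeeping that makes the cancellation work. One small correction to your closing remark: the extension of $F\equiv 0$ from generators to all of $(\A^+)\tocomm$ is handled by the Leibniz rule, not by this lemma; the lemma is instead used within the generator cases (Cases 2--4) to propagate vanishing of $F$ from $q_1$ and $v$ to $q_2$ via $\ell_1^+(q_1)=\epsilon(q_2-v)$.
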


\noindent In particular, by the symmetry of $F$, it suffices to check that $F(x_1,x_2)=0$ in the following cases: $x_1=t_i$ and $x_2$ is arbitrary; $x_1=q_1$ and $x_2$ is some $q_j$; and $x_1=q_2$ and $x_2$ is some $q_j$.

\vspace{11pt}

\noindent \textsc{Case 2: $x_1=t_i$ for some $i$.}
By symmetry, we can assume $x_1=t_i$, in which case
\[
F(t_i,x_2) = f_1(\ell_2^+(t_i,x_2))-\ell_2^-(t_i,f_1(x_2)).
\]
We first consider the case where $x_2\in\{q_3,\ldots,q_n,t_1,\ldots,t_s\}$, i.e., $x_2\neq q_1,q_2$. By definition, when $t_i$ is one of the inputs to $\ell_2^\pm$, the result is precisely the string portion of $\ell_2^\pm$: $\ell_2^+(t_i,x_2) = (\ell_2^+)^\str(t_i,x_2) = \alpha^+ t_ix_2$ and $\ell_2^-(t_i,f_1(x_2)) = \ell_2^-(t_i,x_2) =(\ell_2^-)^\str(t_i,x_2) = \alpha^- t_ix_2$ for some $\alpha^\pm\in\frac{1}{2}\Z$. Furthermore, since the placement of base points relative to $x_2$ is the same in $\Lambda^+$ and $\Lambda^-$, we see from Proposition~\ref{prop:l2str} that $\alpha^+ = \alpha^-$. We conclude that $F(t_i,x_2) = f_1(\alpha^+t_ix_2)-\alpha^-t_ix_2 = 0$ in this case.

If $x_2=q_1$, then since $\ell_2^+(t_i,q_1)$ is a multiple of $t_iq_1$ and $f_1(q_1)=0$, we have $F(t_i,q_1)=0$. Finally, if $x_2=q_2$, it follows from Lemma~\ref{lem:F} that $\ell_1^-(F(t_i,q_1)) = F(t_i,\ell_1^+(q_1)) = \epsilon (F(t_i,q_2)-F(t_i,v))$. Since $v$ does not involve $q_2$, we now know that $F(t_i,v)=0$, whence $F(t_i,q_2)=0$.

\vspace{11pt}

\noindent \textsc{Case 3: $x_1=q_1$, $x_2 \in \{q_1,\ldots,q_n\}$.}
We further break this down into three subcases.

If $x_2=q_j$ for $j>2$, then we have
\[
F(q_1,q_j) = f_1(\ell_2^+(q_1,q_j))-f_2(\ell_1^+(q_1),q_j)+(-1)^{|q_1||q_j|} f_2(\ell_1^+(q_j),q_1).
\]
The second term on the right hand side is $-\epsilon f_2(q_2-v,q_j) = -\epsilon f_2(q_2,q_j)$ while the third term is 
$(-1)^{|q_1|+1}f_2(q_1,\ell_1^+(q_j)) = -\frac{\epsilon}{2} f_1(\ell_2^+(q_1,q_1)\{p_2,\ell_1^+(q_j)\})$ by Lemma~\ref{lem:f2}. It follows from the definition of $f_2(q_2,q_j)$ that $F(q_1,q_j) = 0$.

If $x_2=q_1$, note that $F(q_1,q_1)$ is trivially $0$ by symmetry if $|q_1|$ is even. If $|q_1|$ is odd, then 
\begin{align*}
F(q_1,q_1) &= f_1(\ell_2^+(q_1,q_1)) +2f_2(q_1,\ell_1^+(q_1)) \\
&= f_1(\ell_2^+(q_1,q_1)) - \epsilon f_1(\ell_2^+(q_1,q_1)\{p_2,\ell_1^+(q_1)\}) \\
& =0
\end{align*}
where the second equality comes from Lemma~\ref{lem:f2} and the third equality uses $\ell_1^+(q_1) = \epsilon(q_2-v)$.

Finally, if $x_2=q_2$, we want to show that $F(q_1,q_2)=0$. If $|q_1|$ is odd, then from Lemma~\ref{lem:F} and the fact that $F(q_1,q_1)=0$, we have $0 = 2F(q_1,\ell_1^+(q_1)) = 2\epsilon(F(q_1,q_2)-F(q_1,v))$. Since $v$ does not involve $q_2$, we know from the above cases that $F(q_1,v)=0$, whence $F(q_1,q_2)=0$. If $|q_1|$ is even, then 
$\ell_2^+(q_1,q_1)=0$ and thus $f_2(q_2,q_1) = 0$, and
we have
\begin{align*}
F(q_1,q_2) &= f_1(\ell_2^+(q_1,q_2))-f_2(\ell_1^+(q_1),q_2)-\ell_1^-(f_2(q_1,q_2)) \\
&= f_1(\ell_2^+(q_1,q_2)) +\epsilon f_2(q_2,v)- \epsilon f_2(q_2,q_2) \\
&=  f_1(\ell_2^+(q_1,q_2))+f_1(\ell_2^+(q_1,v))-\epsilon f_2(q_2,q_2) \\
&=0,
\end{align*}
where the third equality comes from Lemma~\ref{lem:f2} and the final equality is the definition of $f_2(q_2,q_2)$.

\noindent \textsc{Case 4: $x_1=q_2$, $x_2\in\{q_1,\ldots,q_n\}$.}
As in Case 2, we note from Lemma~\ref{lem:F} that
\[
\ell_1^-(F(q_1,x_2)) = F(\ell_1^+(q_1),x_2)+(-1)^{|q_1|}F(q_1,\ell_1^+(x_2)) = \epsilon F(q_2,x_2)-\epsilon F(v,x_2)+(-1)^{|q_1|}F(q_1,\ell_1^+(x_2)).
\]
Since $v$ does not involve $q_2$, if $x_2\neq q_2$ then all of $F(q_1,x_2)$, $F(v,x_2)$, and $F(q_1,\ell_1^+(x_2))$ are $0$ by previous cases; thus $F(q_2,x_2)=0$ if $x_2\neq q_2$. Finally, if $x_2=q_2$, then $F(v,q_2)=0$ since we have established that $F(x_2,q_2)=0$ if $x_2\neq q_2$. Thus again all of $F(q_1,q_2)$, $F(v,q_2)$, and $F(q_1,\ell_1^+(x_2))$ are $0$, and it follows that $F(q_2,q_2)=0$. 

This completes the proof of Proposition~\ref{prop:RII} and thus the proof of invariance.

%**************************************************
%**************************************************
\bibliographystyle{alpha}
\bibliography{linfinity-v2}

\newcommand{\etalchar}[1]{$^{#1}$}
\begin{thebibliography}{CGG{\etalchar{+}}25}

\bibitem[BEE11]{BEE2}
Fr\'{e}d\'{e}ric Bourgeois, Tobias Ekholm, and Yakov Eliashberg.
\newblock Symplectic homology product via {L}egendrian surgery.
\newblock {\em Proc. Natl. Acad. Sci. USA}, 108(20):8114--8121, 2011.

\bibitem[BEE12]{BEE}
Fr\'{e}d\'{e}ric Bourgeois, Tobias Ekholm, and Yasha Eliashberg.
\newblock Effect of {L}egendrian surgery.
\newblock {\em Geom. Topol.}, 16(1):301--389, 2012.
\newblock With an appendix by Sheel Ganatra and Maksim Maydanskiy.

\bibitem[CGG{\etalchar{+}}25]{CGGLSS}
Roger Casals, Eugene Gorsky, Mikhail Gorsky, Ian Le, Linhui Shen, and Jos\'e
  Simental.
\newblock Cluster structures on braid varieties.
\newblock {\em J. Amer. Math. Soc.}, 38(2):369--479, 2025.

\bibitem[CGN{\etalchar{+}}]{CGNSW}
Roger Casals, Honghao Gao, Lenhard Ng, Linhui Shen, and Daping Weng.
\newblock In preparation.

\bibitem[Che02]{Che}
Yuri Chekanov.
\newblock Differential algebra of {L}egendrian links.
\newblock {\em Invent. Math.}, 150(3):441--483, 2002.

\bibitem[CL09]{CL}
Kai Cieliebak and Janko Latschev.
\newblock The role of string topology in symplectic field theory.
\newblock In {\em New perspectives and challenges in symplectic field theory},
  volume~49 of {\em CRM Proc. Lecture Notes}, pages 113--146. Amer. Math. Soc.,
  Providence, RI, 2009.

\bibitem[CN22]{CN}
Roger Casals and Lenhard Ng.
\newblock Braid loops with infinite monodromy on the {L}egendrian contact
  {DGA}.
\newblock {\em J. Topol.}, 15(4):1927--2016, 2022.

\bibitem[CS99]{CS}
Moira Chas and Dennis Sullivan.
\newblock String topology.
\newblock arXiv:math/9911159, 1999.

\bibitem[EES09]{EESab}
Tobias Ekholm, John~B. Etnyre, and Joshua~M. Sabloff.
\newblock A duality exact sequence for {L}egendrian contact homology.
\newblock {\em Duke Math. J.}, 150(1):1--75, 2009.

\bibitem[EGH00]{EGH}
Y.~Eliashberg, A.~Givental, and H.~Hofer.
\newblock Introduction to symplectic field theory.
\newblock {\em Geom. Funct. Anal.}, (Special Volume, Part II):560--673, 2000.
\newblock GAFA 2000 (Tel Aviv, 1999).

\bibitem[Ekh08]{EkholmSFT}
Tobias Ekholm.
\newblock Rational symplectic field theory over {$\Bbb Z_2$} for exact
  {L}agrangian cobordisms.
\newblock {\em J. Eur. Math. Soc. (JEMS)}, 10(3):641--704, 2008.

\bibitem[Eli98]{Eli98}
Yakov Eliashberg.
\newblock Invariants in contact topology.
\newblock In {\em Proceedings of the International Congress of Mathematicians,
  Vol. II (Berlin, 1998)}, number Extra Vol. II, pages 327--338 (electronic),
  1998.

\bibitem[EN22]{ENsurvey}
John~B. Etnyre and Lenhard~L. Ng.
\newblock Legendrian contact homology in {$\mathbb{R}^3$}.
\newblock In {\em Surveys in differential geometry 2020. {S}urveys in
  3-manifold topology and geometry}, volume~25 of {\em Surv. Differ. Geom.},
  pages 103--161. Int. Press, Boston, MA, 2022.

\bibitem[ENS02]{ENS}
John~B. Etnyre, Lenhard~L. Ng, and Joshua~M. Sabloff.
\newblock Invariants of {L}egendrian knots and coherent orientations.
\newblock {\em J. Symplectic Geom.}, 1(2):321--367, 2002.

\bibitem[FN82]{FN}
H.~Flaschka and A.~C. Newell.
\newblock The inverse monodromy transform is a canonical transformation.
\newblock In {\em Nonlinear problems: present and future ({L}os {A}lamos,
  {N}.{M}., 1981)}, volume~61 of {\em North-Holland Math. Stud.}, pages 65--89.
  North-Holland, Amsterdam-New York, 1982.

\bibitem[GSW24]{GSW}
Honghao Gao, Linhui Shen, and Daping Weng.
\newblock Augmentations, fillings, and clusters.
\newblock {\em Geom. Funct. Anal.}, 34(3):798--867, 2024.

\bibitem[Lat22]{Lat}
Janko Latschev.
\newblock Remarks on the rational {SFT} formalism.
\newblock arXiv:2212.01814, 2022.

\bibitem[MZ20]{MZ}
Augustin Moreno and Zhengyi Zhou.
\newblock A landscape of contact manifolds via rational {SFT}.
\newblock arXiv:2012.04182, 2020.

\bibitem[Ng03]{CLI}
Lenhard~L. Ng.
\newblock Computable {L}egendrian invariants.
\newblock {\em Topology}, 42(1):55--82, 2003.

\bibitem[Ng10]{SFT}
Lenhard Ng.
\newblock Rational symplectic field theory for {L}egendrian knots.
\newblock {\em Invent. Math.}, 182(3):451--512, 2010.

\bibitem[NRS{\etalchar{+}}20]{NRSSZ}
Lenhard Ng, Dan Rutherford, Vivek Shende, Steven Sivek, and Eric Zaslow.
\newblock Augmentations are sheaves.
\newblock {\em Geom. Topol.}, 24(5):2149--2286, 2020.

\bibitem[Pan17]{Pan}
Yu~Pan.
\newblock Exact {L}agrangian fillings of {L}egendrian {$(2,n)$} torus links.
\newblock {\em Pacific J. Math.}, 289(2):417--441, 2017.

\bibitem[Sie19]{Siegel}
Kyler Siegel.
\newblock Higher symplectic capacities.
\newblock arXiv:1902.01490, 2019.

\bibitem[STZ17]{STZ}
Vivek Shende, David Treumann, and Eric Zaslow.
\newblock Legendrian knots and constructible sheaves.
\newblock {\em Invent. Math.}, 207(3):1031--1133, 2017.

\end{thebibliography}

\end{document}